\definecolor{cite}{rgb}{0.30,0.60,1.00}
\definecolor{url}{rgb}{0.00,0.00,0.80}
\definecolor{link}{rgb}{0.40,0.10,0.20}
\newtheorem*{theorem*}{Theorem}
\newtheorem{theorem}{Theorem}[section]
\newtheorem{proposition}[theorem]{Proposition}
\newtheorem{lemma}[theorem]{Lemma}
\newtheorem{corollary}[theorem]{Corollary}
\theoremstyle{definition}
\theoremstyle{definition}
\newtheorem{remark}[theorem]{Remark}
\theoremstyle{definition}
\newcommand{\multiplicativeGroup}[1]{#1^{\times}}
\newcommand{\normoneGroup}[1]{#1^{1}}
\newcommand{\cComplex}{\mathbb{C}}
\newcommand{\rReal}{\mathbb{R}}
\newcommand{\etaleAlgebra}{E}
\newcommand{\skewHermitianSpace}{\mathrm{W}}
\newcommand{\hermitianSpace}{\mathrm{V}}
\newcommand{\variableHermitianSpace}{\mathrm{H}}
\newcommand{\involution}[1]{#1^c}
\newcommand{\unitaryGroup}[1]{\operatorname{U}(#1)}
\newcommand{\gUnitaryGroup}[2]{\operatorname{U}_{#1}(#2)}
\newcommand{\Homspace}{\operatorname{Hom}}
\newcommand{\Hom}{\operatorname{Hom}}
\newcommand{\adeles}{\mathbb{A}}
\newcommand{\baseField}{F}
\newcommand{\quadraticExtension}{K}
\newcommand{\torus}{\mathrm{T}}
\newcommand{\innerProduct}[2]{\left\langle #1, #2 \right\rangle}
\newcommand{\restrictionOfScalars}{\operatorname{Res}}
\newcommand{\GL}{\operatorname{GL}}
\newcommand{\trace}{\operatorname{tr}}
\newcommand{\ringendo}{\operatorname{End}}
\newcommand{\discrim}{\operatorname{disc}}
\newcommand{\identityOperator}{\operatorname{id}}
\newcommand{\adjointMap}[1]{#1^{\ast}}
\newcommand{\Sp}{\operatorname{Sp}}
\newcommand{\Irr}{\operatorname{Irr}}
\newcommand{\tensorInnerProduct}[2]{\langle \langle #1, #2 \rangle \rangle}
\newcommand{\Mp}{\operatorname{Mp}}
\newcommand{\unitCircle}{\mathbb{S}^1}
\newcommand{\weilRepresentation}{\omega}
\newcommand{\fieldCharacter}{\psi}
\newcommand{\MetaplecticOfSpaces}{\operatorname{Mp}_{\fieldCharacter}\left(\hermitianSpace, \skewHermitianSpace\right)}
\newcommand{\gMetaplecticOfSpaces}[3][\fieldCharacter]{\operatorname{Mp}_{#1}\left(#2, #3\right)}
\newcommand{\multiplicationMap}[1]{\mathrm{m}_{#1}}
\newcommand{\quadraticEtaleAlgebra}[1][\etaleAlgebra]{L_{#1}}
\newcommand{\quadraticEtaleNumberAlgebra}{\quadraticEtaleAlgebra[\etaleNumberField]}
\newcommand{\adelicQuotient}[1]{\left[#1\right]}
\newcommand{\etale}{\'etale }
\newcommand{\torusEmbedding}{i}
\newcommand{\etaleEmbedding}{r}
\newcommand{\etaleEndEmbedding}{i'}
\newcommand{\etaleHermitian}[1]{\hermitianSpace_{\etaleAlgebra, #1}}
\newcommand{\etaleNumberHermitian}[1]{\hermitianSpace_{\etaleNumberField, #1}}
\newcommand{\etaleSkewHermitian}[2][\etaleAlgebra]{\skewHermitianSpace_{#1, #2}^{\delta}}
\newcommand{\etaleNumberSkewHermitian}[2][\etaleNumberField]{\skewHermitianSpace_{#1, #2}^{\delta}}
\newcommand{\quadraticEtaleHermitian}[1]{L_{\etaleAlgebra, #1}}
\newcommand{\etaleNumberField}{\mathbf{E}}
\newcommand{\quadraticEtaleNumberHermitian}[1]{L_{\etaleNumberField, #1}}
\newcommand{\numberfield}{\mathbf{F}}
\newcommand{\quadraticNumberField}{\mathbf{K}}
\newcommand{\isotropicPartOne}{\mathrm{X}}
\newcommand{\isotropicPartTwo}{\mathrm{Y}}
\newcommand{\schwartz}{\mathcal{S}}
\newcommand{\differential}[1]{\mathrm{d}#1}
\newcommand{\complexConjugate}[1]{\overline{#1}}
\newcommand{\Span}{\operatorname{Span}}
\newcommand{\centralValue}{\mathcal{L}}
\newcommand{\HermitianEquivalenceClasses}{\mathrm{Her}}
\title[Tori periods of Weil representations of unitary groups]{On tori periods of Weil representations of unitary groups}
\author[Borade]{Neelima Borade}
\address{Department of Mathematics, Princeton University, Princeton, NJ 08544, USA}
\email{nb4296@princeton.edu}
\author[Franzel]{Jonas Franzel}
\address{Universit\"at Duisburg--Essen, Fakult\"at f\"ur Mathematik, Thea-Leymann-Strasse 9, 45127 Essen, Germany}
\email{jonasfranzel@gmail.com}
\author[Girsch]{Johannes Girsch}
\address{School of Mathematics and Statistics, University of Sheffield, Sheffield, S3 7RH, United Kingdom}
\email{johannes.girsch@live.de}
\author[Yao]{Wei Yao}
\address{Department of Mathematics, The University of Chicago, 5734 S University Ave, Chicago, IL, 60637, USA}
\email{weiy@math.uchicago.edu}
\author[Yu]{Qiyao Yu}
\address{Department of Mathematics, Columbia University, New York, NY, 10027 USA}
\email{qy2266@columbia.edu}
\author[Zelingher]{Elad Zelingher}
\address{Department of Mathematics, University of Michigan, 1844 East Hall, 530 Church Street, Ann Arbor, MI 48109-1043 USA}
\email{eladz@umich.edu}
\keywords{Theta correspondence, Periods, Special values of $L$-functions, Gan--Gross--Prasad conjectures}
\subjclass[2020]{22E50, 11F27, 11F67}
\begin{document}
	
	\begin{abstract}
            We determine the restriction of Weil representations of unitary groups to maximal tori. In the local case, we show that the Weil representation contains a pair of compatible characters if and only if a root number condition holds. In the global case, we show that a torus period corresponding to a maximal anisotropic torus of the global theta lift of a character does not vanish if and only if the local condition is satisfied everywhere and a central value of an $L$-function does not vanish. Our proof makes use of the seesaw argument and of the well-known theta lifting results from $\unitaryGroup{1}$ to $\unitaryGroup{1}$. Our results are used in \cite{AlonsoHeRayRoset2023, BakicHorawaLiHuertaSweeting2024} to construct Arthur packets for $G_2$.
	\end{abstract}
	
	\maketitle
	\tableofcontents
	
	\section{Introduction}\label{sec:introduction}
	
	Branching problems are a fascinating topic in representation theory and in the theory of automorphic representations. The most famous examples are the Gan--Gross--Prasad conjectures \cite{GGP2012,GGP2020,GGP2022} and their refinements \cite{IchinoIkeda2010, Harris2014, Xue2017}, extending the original conjectures of Gross--Prasad \cite{GrossPrasad1992, GrossPrasad1994}.
	
	In this paper, we study the restriction of Weil representations of unitary groups to maximal tori. Our results are both local and global, and they are similar in nature to the Gan--Gross--Prasad conjectures. Let us describe the problems we concern.
	
	Let $\baseField$ be a field with characteristic different than $2$ and let $\quadraticExtension \slash \baseField$ be a quadratic \etale algebra with involution $x \mapsto \involution{x}$, whose set of fixed points is $\baseField$. Let $\hermitianSpace$ be a non-degenerate $n$-dimensional hermitian space over $\quadraticExtension$, and let $\skewHermitianSpace$ be a non-degenerate one-dimensional skew-hermitian space over $\quadraticExtension$.
	
	When $\baseField$ is a local field, we consider the following branching problem: given a maximal torus $\torus$ of $\unitaryGroup{\hermitianSpace}$ and characters $\alpha \colon \torus \to \multiplicativeGroup{\cComplex}$ and $\beta \colon \unitaryGroup{\skewHermitianSpace} \to \multiplicativeGroup{\cComplex}$, we would like to investigate whether the restriction of the Weil representation of the metaplectic group $\gMetaplecticOfSpaces{\hermitianSpace}{\skewHermitianSpace}$ to $\torus \times \unitaryGroup{\skewHermitianSpace}$ contains the representation $\alpha \boxtimes \beta$ as a sub-quotient. Reformulating this using the theta correspondence, this is equivalent to asking whether the space $\Hom_{\torus}\left(\Theta\left(\beta\right), \alpha\right)$ is non-zero, where $\Theta\left(\beta\right)$ is the big theta lift of $\beta$ from $\unitaryGroup{\skewHermitianSpace}$ to $\unitaryGroup{\hermitianSpace}$.
	
	Suppose that $\baseField = \numberfield$ is a number field and that $\quadraticNumberField \slash \numberfield$ is a quadratic field extension. For an algebraic group $G$, we write $\adelicQuotient{G} = G\left(\numberfield\right) \backslash G\left(\adeles_{\numberfield}\right)$. We consider the following branching problem: given a maximal torus $\torus$ of $\unitaryGroup{\hermitianSpace}$ and automorphic characters $\alpha \colon \adelicQuotient{\torus} \to \multiplicativeGroup{\cComplex}$ and $\beta \colon \adelicQuotient{\unitaryGroup{\skewHermitianSpace}} \to \multiplicativeGroup{\cComplex}$, we would like to investigate whether the $\alpha$-period of the global theta lift $\Theta\left(\beta\right)$ of $\beta$ from $\unitaryGroup{\skewHermitianSpace}\left(\adeles_{\numberfield}\right)$ to $\unitaryGroup{\hermitianSpace}\left(\adeles_{\numberfield}\right)$ is non-zero. That is, we are asking whether the assignment
	$$ \mathcal{P}_{\torus, \alpha}\left(f\right) = \int_{\adelicQuotient{\torus}} f\left(t\right) \complexConjugate{\alpha\left(t\right)} \differential{t}$$ is identically zero on the space $\Theta\left(\beta\right)$. In order to avoid convergence issues, we will assume that the torus $\torus$ is anisotropic in the global setting, so that the integrals in question converge absolutely.
	
	Notice that when $\hermitianSpace$ is one-dimensional, the theta lift of $\beta$ is either zero or a character, and our problems reduce to determining whether $\Theta\left(\beta \right)$ equals $\alpha$ or not. This problem, of determining the theta lift from $\unitaryGroup{1}$ to $\unitaryGroup{1}$, is well understood. It dates back to Moen \cite{Moen1987}, Rogawski \cite{Rogawski1992}, and Harris--Kudla--Sweet \cite{HarrisKudlaSweet1996} in the non-archimedean local field case, to Paul \cite{Paul98} in the archimedean local field case, to Minguez \cite{Minguez2008}, Fang--Sun--Xue \cite{FangSunXue2018} and Gan \cite{Gan2019} in the split local case, and to Rogawski \cite{Rogawski1992}, Yang \cite{Yang1997} and Yamana \cite{Yamana2014} in the global case. See also Section 9 of \cite{GGPExamples2012} and the last paragraph of Section 7 of \cite{GGPExamples2012}.
	
	Our technique for solving these problems in the general case, where $\dim \hermitianSpace$ is arbitrary, involves a seesaw identity that reduces the problems to the well-known case discussed above. This idea has been used before by Gan and his collaborators, see for example \cite[Sections 9 and 10]{Gan2011}, \cite[Section 5]{Gan2019} and \cite[Section 10]{GGPExamples2012}.
	
	In order to state our results, we need a classification of maximal tori in $\unitaryGroup{\hermitianSpace}$. In \Cref{sec:maximal-tori-in-unitary-groups}, we recall the classification given in \cite{PrasadRapinchuk2010}. Each maximal torus $\torus \subset \unitaryGroup{\hermitianSpace}$ corresponds to an \etale algebra $\etaleAlgebra$ of degree $n$ over $\baseField$ and an element $\lambda \in \multiplicativeGroup{\etaleAlgebra}$, such that the space $\left(\etaleHermitian{\lambda}, \innerProduct{\cdot}{\cdot}_{\lambda} \right)$ is isomorphic to $\hermitianSpace$ as hermitian spaces, where $\etaleHermitian{\lambda} = \restrictionOfScalars_{\quadraticEtaleAlgebra \slash \quadraticExtension} \quadraticEtaleAlgebra$, equipped with the hermitian product $\innerProduct{x}{y}_{\lambda} = \trace_{\quadraticEtaleAlgebra \slash \quadraticExtension}{\left(\lambda x \involution{y}\right)}$, where $\quadraticEtaleAlgebra = \quadraticExtension \otimes_{\baseField} \etaleAlgebra$. In this case, the maximal torus $\torus_{\etaleAlgebra, \lambda}$ is isomorphic to the norm one torus of $\quadraticEtaleAlgebra$, that is, $$ \normoneGroup{\quadraticEtaleAlgebra} = \left\{ x \in \restrictionOfScalars_{\etaleAlgebra \slash \baseField} \multiplicativeGroup{\quadraticEtaleAlgebra} \mid x \cdot \involution{x} = 1 \right\}.$$ For some of our results, we would like to iterate over the different embeddings $\normoneGroup{\quadraticEtaleAlgebra} \hookrightarrow \unitaryGroup{\hermitianSpace}$, modulo $\unitaryGroup{\hermitianSpace}$-conjugation. However, there are too many of these. To tackle this obstacle, we follow an idea presented in \cite[Section 3]{GGPExamples2012} and define the notion of an \emph{admissible embedding} $\torusEmbedding \colon \normoneGroup{\quadraticEtaleAlgebra} \to \unitaryGroup{\hermitianSpace}$ (\Cref{subsec:admissible-embeddings}). We show that the set of admissible embeddings of $\normoneGroup{\quadraticEtaleAlgebra}$ forms a stable conjugacy class in $\unitaryGroup{\hermitianSpace}$. Moreover, we construct a natural bijection between certain classes in $\multiplicativeGroup{\etaleAlgebra} \slash N_{\quadraticEtaleAlgebra \slash \etaleAlgebra}\left(\multiplicativeGroup{\quadraticEtaleAlgebra}\right)$ and admissible embeddings $\torusEmbedding \colon \normoneGroup{\quadraticEtaleAlgebra} \to \unitaryGroup{\hermitianSpace}$, up to $\unitaryGroup{\hermitianSpace}$-conjugation (\Cref{thm:natural-bijection-admissible-embeddings}):
	\begin{theorem}\label{thm:main-natural-bijection-between-elements-and-conjugacy-classes}
		There exists a natural bijection between the sets $$\left\{ \lambda \in \multiplicativeGroup{\etaleAlgebra} \slash N_{\quadraticEtaleAlgebra \slash \etaleAlgebra}\left(\multiplicativeGroup{\quadraticEtaleAlgebra} \right)\mid \etaleHermitian{\lambda} \text{ is isomorphic to } \hermitianSpace \text{ as hermitian spaces} \right\} $$
		and $$\Sigma_{\etaleAlgebra, \hermitianSpace} = \left\{ \torusEmbedding \colon \normoneGroup{\quadraticEtaleAlgebra} \to \unitaryGroup{\hermitianSpace} \mid i \text{ is admissible} \right\} \slash \unitaryGroup{\hermitianSpace}\text{-conjugation}.$$
	\end{theorem}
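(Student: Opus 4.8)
The plan is to exhibit two mutually inverse maps between the two sets. For the forward direction, fix $\lambda\in\multiplicativeGroup{\etaleAlgebra}$ with $\etaleHermitian{\lambda}\cong\hermitianSpace$. Multiplication makes $\etaleHermitian{\lambda}=\restrictionOfScalars_{\quadraticEtaleAlgebra/\quadraticExtension}\quadraticEtaleAlgebra$ a free rank-one $\quadraticEtaleAlgebra$-module, and by commutativity of $\quadraticEtaleAlgebra$ one has $\innerProduct{ax}{y}_\lambda=\innerProduct{x}{\involution{a}y}_\lambda$ for every $a\in\quadraticEtaleAlgebra$; in particular the norm-one elements act on $\etaleHermitian{\lambda}$ by isometries, giving the tautological embedding $\normoneGroup{\quadraticEtaleAlgebra}\hookrightarrow\unitaryGroup{\etaleHermitian{\lambda}}$. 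Transporting it along any isometry $\etaleHermitian{\lambda}\xrightarrow{\sim}\hermitianSpace$ produces an embedding $\torusEmbedding_\lambda\colon\normoneGroup{\quadraticEtaleAlgebra}\to\unitaryGroup{\hermitianSpace}$; two such isometries differ by an element of $\unitaryGroup{\hermitianSpace}$, so the $\unitaryGroup{\hermitianSpace}$-conjugacy class $[\torusEmbedding_\lambda]\in\Sigma_{\etaleAlgebra,\hermitianSpace}$ is well defined, and one checks against the definition in \Cref{subsec:admissible-embeddings} that it is admissible. If $\lambda'=\lambda\cdot\mu\involution{\mu}$ for some $\mu\in\multiplicativeGroup{\quadraticEtaleAlgebra}$, then multiplication by $\mu$ is a $\quadraticEtaleAlgebra$-linear isometry $\etaleHermitian{\lambda'}\to\etaleHermitian{\lambda}$, since $\innerProduct{\mu x}{\mu y}_\lambda=\trace_{\quadraticEtaleAlgebra/\quadraticExtension}(\lambda\mu\involution{\mu}\,x\involution{y})=\innerProduct{x}{y}_{\lambda'}$; being $\quadraticEtaleAlgebra$-linear it intertwines the tautological embeddings, so $[\torusEmbedding_\lambda]=[\torusEmbedding_{\lambda'}]$. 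Hence $\lambda\mapsto[\torusEmbedding_\lambda]$ descends to a well-defined map from the left-hand set of the theorem to $\Sigma_{\etaleAlgebra,\hermitianSpace}$.

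For the inverse, let $\torusEmbedding\colon\normoneGroup{\quadraticEtaleAlgebra}\to\unitaryGroup{\hermitianSpace}$ be admissible. The first task is to read off from the definition that $\torusEmbedding$ extends to an embedding of $\quadraticExtension$-algebras-with-involution $\quadraticEtaleAlgebra\hookrightarrow\ringendo_\quadraticExtension(\hermitianSpace)$---the involution of $\quadraticEtaleAlgebra$ corresponding to the adjoint for $\innerProduct{\cdot}{\cdot}_\hermitianSpace$---with image the centralizer of $\torusEmbedding(\normoneGroup{\quadraticEtaleAlgebra})$; comparing $\quadraticExtension$-dimensions then forces $\hermitianSpace$ to be free of rank one over $\quadraticEtaleAlgebra$. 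Choosing a generator $e$ gives an identification $\restrictionOfScalars_{\quadraticEtaleAlgebra/\quadraticExtension}\quadraticEtaleAlgebra\xrightarrow{\sim}\hermitianSpace$ via $a\mapsto ae$, under which $\innerProduct{ae}{be}_\hermitianSpace=f(a\involution{b})$, where $f$ is the $\quadraticExtension$-linear functional $f(c)=\innerProduct{ce}{e}_\hermitianSpace$. Since the trace pairing on $\quadraticEtaleAlgebra$ is non-degenerate, $f(c)=\trace_{\quadraticEtaleAlgebra/\quadraticExtension}(\lambda c)$ for a unique $\lambda\in\quadraticEtaleAlgebra$; non-degeneracy of $\hermitianSpace$ forces $\lambda\in\multiplicativeGroup{\quadraticEtaleAlgebra}$, and the symmetry $f(\involution{c})=\involution{f(c)}$ together with $\involution{\trace_{\quadraticEtaleAlgebra/\quadraticExtension}(z)}=\trace_{\quadraticEtaleAlgebra/\quadraticExtension}(\involution{z})$ forces $\involution{\lambda}=\lambda$, so $\lambda\in\multiplicativeGroup{\etaleAlgebra}$. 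By construction the identification above is an isometry $\etaleHermitian{\lambda}\xrightarrow{\sim}\hermitianSpace$, so $\lambda$ lies in the left-hand set; replacing $e$ by $\mu e$ replaces $\lambda$ by $\lambda\mu\involution{\mu}$, so the class of $\lambda$ modulo $N_{\quadraticEtaleAlgebra/\etaleAlgebra}(\multiplicativeGroup{\quadraticEtaleAlgebra})$ depends only on $[\torusEmbedding]$.

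These two assignments are mutually inverse: applied to $\torusEmbedding_\lambda$ with $e=1$, the second recovers the class of $\lambda$; and for an admissible $\torusEmbedding$ with extracted $\lambda$, the isometry $\etaleHermitian{\lambda}\xrightarrow{\sim}\hermitianSpace$ produced above is exactly a conjugation carrying $\torusEmbedding_\lambda$ to $\torusEmbedding$. Injectivity can also be seen directly: a conjugacy between $\torusEmbedding_\lambda$ and $\torusEmbedding_{\lambda'}$ unwinds to an isometry $\etaleHermitian{\lambda}\to\etaleHermitian{\lambda'}$ intertwining the tautological torus actions, and since the images of $\quadraticEtaleAlgebra$ are the respective centralizers this isometry is $\quadraticEtaleAlgebra$-linear, hence multiplication by some $\mu\in\multiplicativeGroup{\quadraticEtaleAlgebra}$; the isometry condition then gives $\lambda'=\lambda\mu\involution{\mu}$. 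The step demanding the most care is the dictionary between the definition of admissibility in \Cref{subsec:admissible-embeddings} and the two structural inputs used throughout---that an admissible embedding extends canonically to an embedding of $\quadraticExtension$-algebras-with-involution $\quadraticEtaleAlgebra\hookrightarrow\ringendo_\quadraticExtension(\hermitianSpace)$ making $\hermitianSpace$ a free rank-one $\quadraticEtaleAlgebra$-module, and that the image of $\quadraticEtaleAlgebra$ is precisely the centralizer of the torus---which must be argued over the arbitrary field $\baseField$ of characteristic $\neq 2$; after faithfully flat base change to $\overline{\baseField}$ these reduce to the fact that a maximal torus of $\GL_n$ of the relevant shape is conjugate to the diagonal one, and one descends. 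Alternatively the whole bijection may be phrased cohomologically, identifying the left-hand set with $\ker\bigl(H^1(\baseField,\normoneGroup{\quadraticEtaleAlgebra})\to H^1(\baseField,\unitaryGroup{\hermitianSpace})\bigr)$ and $H^1(\baseField,\normoneGroup{\quadraticEtaleAlgebra})\cong\multiplicativeGroup{\etaleAlgebra}/N_{\quadraticEtaleAlgebra/\etaleAlgebra}(\multiplicativeGroup{\quadraticEtaleAlgebra})$ by Hilbert 90; the argument above is essentially an unwinding of this. The remaining verifications are the trace-form manipulations indicated.
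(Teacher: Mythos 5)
Your forward map $\lambda\mapsto[\torusEmbedding_\lambda]$ is exactly the one in the paper, and your verification that a norm shift $\lambda'=\lambda\,\mu\involution{\mu}$ gives the same conjugacy class agrees with the paper's well-definedness check. The real divergence is in how you prove that the conjugacy class of the torus embedding \emph{determines} the class of $\lambda$. The paper isolates this as \Cref{prop:conjugacy-of-tori-implies-conjugacy-of-etale-end-morphism}: any $g\in\GL(\hermitianSpace)$ conjugating two admissible embeddings on $\normoneGroup{\quadraticEtaleAlgebra}$ already conjugates the extended algebra maps $\quadraticEtaleAlgebra\to\ringendo(\hermitianSpace)$. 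This is proved elementarily in \Cref{prop:extension-of-torus-embedding-to-etale-algebra} by exhibiting every $a+b\delta\in\quadraticEtaleAlgebra$ as an explicit rational expression in the norm-one elements $j_E(1\pm b\delta)$ via the identity $q_E(b)=b\delta$, so that any ring homomorphism fixing $\torusEmbedding(\normoneGroup{\quadraticEtaleAlgebra})$ pointwise fixes the whole image of $\quadraticEtaleAlgebra$. You instead argue that the image of $\quadraticEtaleAlgebra$ is the full centralizer of the torus in $\ringendo_{\quadraticExtension}(\hermitianSpace)$, established by faithfully flat descent from $\overline\baseField$ and a dimension count. Both routes deliver the same conclusion — an isometry intertwining the two multiplication actions of $\normoneGroup{\quadraticEtaleAlgebra}$ is forced to be $\quadraticEtaleAlgebra$-linear, hence $m_\mu$ — but the paper's version avoids Galois descent and works directly over the ground field (assuming only $\baseField$ infinite); yours is more conceptual and also yields the useful byproduct that $\hermitianSpace$ is free of rank one over $\quadraticEtaleAlgebra$ with the hermitian form encoded by a single $\lambda$ read off the trace pairing. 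Your cohomological aside (the left-hand set as $\ker\bigl(H^1(\baseField,\normoneGroup{\quadraticEtaleAlgebra})\to H^1(\baseField,\unitaryGroup{\hermitianSpace})\bigr)$, with $H^1$ of the norm-one torus computed by Hilbert 90) is a third legitimate route, at the cost of first proving admissible embeddings form one stable conjugacy class, which the paper records as a remark after \Cref{lem:embeddings-are-conjugate-if-and-only-if-quotient-is-a-norm}.

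Two small cautions. First, the claim ``comparing $\quadraticExtension$-dimensions then forces $\hermitianSpace$ to be free of rank one over $\quadraticEtaleAlgebra$'' needs a word: when $\quadraticEtaleAlgebra$ is a product of fields, a module of the right total $\quadraticExtension$-dimension need not be free of rank one; you also need that each primitive idempotent acts nontrivially, which holds here because the map $\quadraticEtaleAlgebra\to\ringendo_{\quadraticExtension}(\hermitianSpace)$ is an \emph{embedding}. (Alternatively, the definition of admissibility in \Cref{subsec:admissible-embeddings} already hands you an isometry $r\colon\etaleHermitian{\lambda}\to\hermitianSpace$ exhibiting the free generator, so this is available for free.) Second, your deduction ``$h$ centralizes the torus $\Rightarrow$ $h$ is $\quadraticEtaleAlgebra$-linear'' is exactly the centralizer statement applied to $h$, so the burden sits entirely on that one lemma; you correctly flag it as the delicate point, but it deserves either a full descent argument (centralizers commute with flat base change, and over $\overline\baseField$ the torus becomes a standard maximal torus of $\GL_n\times\GL_n$ with diagonal centralizer of the right dimension) or a reference to the paper's appendix, which proves the equivalent statement without descent.
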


	We now move to describe our main results. In order to make the results look cleaner, we will no longer mention the one-dimensional skew-hermitian space $\skewHermitianSpace$ in the introduction, but instead use a trace zero element $\delta$ that encodes the discriminant of such space.
	
	Suppose that $\baseField$ is a local field. By choosing a trace zero element $\delta \in \multiplicativeGroup{\quadraticExtension}$, a character $\mu \colon \multiplicativeGroup{\quadraticExtension} \to \multiplicativeGroup{\cComplex}$ such that $\mu_{\restriction_{\multiplicativeGroup{\baseField}}} = \omega_{\quadraticExtension \slash \baseField}$ is the quadratic character given by local class field theory, and a non-trivial character $\fieldCharacter \colon \baseField \to \multiplicativeGroup{\cComplex}$, we can lift characters of $\normoneGroup{\quadraticEtaleAlgebra[\baseField]}$ to representations of $\unitaryGroup{\hermitianSpace}$. Suppose that $\etaleAlgebra$ is an \etale algebra of degree $n$ over $\baseField$.
	Let $\beta \colon \normoneGroup{\quadraticEtaleAlgebra[\baseField]} \to \multiplicativeGroup{\cComplex}$, and $\alpha \colon \normoneGroup{\quadraticEtaleAlgebra} \to \multiplicativeGroup{\cComplex}$ be characters. We give the following answer (\Cref{thm:local-period-non-vanishing}) to the local problem discussed above.
	\begin{theorem}\label{thm:main-local-period-non-vanishing} Let $\torusEmbedding \colon \normoneGroup{\quadraticEtaleAlgebra} \to \unitaryGroup{\hermitianSpace}$ be an admissible embedding that corresponds to the element $\lambda \in \multiplicativeGroup{\etaleAlgebra}$ under \Cref{thm:main-natural-bijection-between-elements-and-conjugacy-classes}. Then the space
		\begin{equation}\label{eq:hom-space-of-torus-embedding}
			\Hom_{\torusEmbedding\left(\normoneGroup{\quadraticEtaleAlgebra}\right)}\left( \Theta_{\delta, \hermitianSpace, \mu, \fieldCharacter}\left(\beta\right), \alpha \circ \torusEmbedding^{-1} \right)
		\end{equation}
		is non-zero if and only if the following conditions hold:
		\begin{enumerate}
			\item Character compatibility: $\beta = \alpha_{\restriction_{\normoneGroup{\quadraticEtaleAlgebra[\baseField]}}}.$
			\item Root number condition: $\omega_{\quadraticEtaleAlgebra \slash \etaleAlgebra}\left(\lambda\right) = \varepsilon_{\quadraticEtaleAlgebra \slash \etaleAlgebra}\left(\alpha_{\quadraticEtaleAlgebra} \cdot \mu^{-1} \circ N_{\quadraticEtaleAlgebra \slash \quadraticExtension}, \fieldCharacter, \delta\right).$
		\end{enumerate}
		Moreover, in this case, this $\Hom$-space is one-dimensional.
	\end{theorem}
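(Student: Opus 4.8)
The plan is to reduce the statement to the well-understood case of theta lifts from $\unitaryGroup{1}$ to $\unitaryGroup{1}$ by means of a seesaw argument, exploiting the identification of the maximal torus $\torusEmbedding(\normoneGroup{\quadraticEtaleAlgebra})$ with a product (over the \etale-algebra $\etaleAlgebra$) of norm-one tori. First I would set up the seesaw diagram whose two pairs are $\left(\unitaryGroup{\hermitianSpace}, \unitaryGroup{\skewHermitianSpace} \times \unitaryGroup{\skewHermitianSpace}\right)$ on one side and $\left(\normoneGroup{\quadraticEtaleAlgebra}, \unitaryGroup{\skewHermitianSpace}\right)$ — viewed through the admissible embedding $\torusEmbedding$ — on the other, so that the inner product of theta lifts can be computed in two ways. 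Concretely, the hermitian space $\etaleHermitian{\lambda} = \restrictionOfScalars_{\quadraticEtaleAlgebra \slash \quadraticExtension}\quadraticEtaleAlgebra$ decomposes, after base change along the components of $\etaleAlgebra$, into a sum of one-dimensional hermitian spaces indexed by the factors $\etaleComponent$ of $\etaleAlgebra$, each carrying a hermitian form scaled by the corresponding component $\lambda_{\etaleComponent}$ of $\lambda$; correspondingly $\torusEmbedding(\normoneGroup{\quadraticEtaleAlgebra}) \cong \prod_{\etaleComponent} \normoneGroup{\quadraticEtaleHermitian{\etaleComponent}}$ and the restriction of the Weil representation to this torus factors as a tensor product of one-dimensional Weil representations.

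Having made this reduction, the $\Hom$-space \eqref{eq:hom-space-of-torus-embedding} becomes a tensor product, over the places/factors of $\etaleAlgebra$, of $\Hom$-spaces $\Hom_{\normoneGroup{\quadraticEtaleHermitian{\etaleComponent}}}\left(\Theta(\beta_{\etaleComponent}), \alpha_{\etaleComponent}\right)$ for the $\unitaryGroup{1}$-to-$\unitaryGroup{1}$ theta lift, where $\beta_{\etaleComponent}$ and $\alpha_{\etaleComponent}$ are the appropriately twisted components of $\beta$ and $\alpha$. I would then invoke the known results of Moen, Rogawski, Harris--Kudla--Sweet, Paul, Mínguez and Gan: each such $\Hom$-space is at most one-dimensional, and it is one-dimensional precisely when (a) a local character-compatibility identity holds — this accounts, after bookkeeping the contributions of the splitting characters $\chi_{\hermitianSpace}, \chi_{\skewHermitianSpace}$ via $j_{\baseField}$, for condition (1) — and (b) a local root-number/sign condition holds, relating the Hasse invariant (equivalently $\omega_{\quadraticEtaleHermitian{\etaleComponent} \slash \etaleComponent}(\lambda_{\etaleComponent})$) to a local epsilon factor $\varepsilon$ of the relevant character twisted by $\chi_{\skewHermitianSpace}^{-1}$. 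Multiplying these conditions over all components of $\etaleAlgebra$, using multiplicativity of quadratic characters and of epsilon factors in inductions, yields exactly condition (2) of the theorem, with $\varepsilon_{\quadraticEtaleAlgebra \slash \etaleAlgebra} = \prod_{\etaleComponent} \varepsilon_{\quadraticEtaleHermitian{\etaleComponent} \slash \etaleComponent}$ and similarly for $\omega_{\quadraticEtaleAlgebra \slash \etaleAlgebra}$; one-dimensionality of the global $\Hom$-space follows since it is a tensor product of one-dimensional spaces.

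The main obstacle I expect is the careful normalization and matching of the splitting data. The metaplectic splitting $\tilde{\iota}$ depends on the choice of $(\chi_{\hermitianSpace}, \chi_{\skewHermitianSpace})$, and when one passes from $\unitaryGroup{\hermitianSpace}$ to the torus $\prod_{\etaleComponent}\normoneGroup{\quadraticEtaleHermitian{\etaleComponent}}$ one must track how these characters restrict and how the several one-dimensional Weil representations must be renormalized so that their tensor product matches $\Theta(\beta \circ \torusEmbedding'_{\skewHermitianSpace})$ restricted to the torus. This is where the factor $\chi_{\skewHermitianSpace}^{-n}\cdot\chi_{\hermitianSpace}$ and the norm maps $N_{\quadraticEtaleAlgebra \slash \quadraticExtension}$, $N_{\quadraticEtaleAlgebra \slash \etaleAlgebra}$ in the statement come from, and it requires a precise comparison of splittings (in the spirit of Kudla's and Gan's normalizations) rather than any deep new idea. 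A secondary technical point is verifying that the seesaw identity for $\Hom$-spaces — as opposed to the identity for theta lifts of a single representation — holds at the level needed here; this follows from the standard argument (the doubling seesaw plus the fact that $\Theta(\beta)$ has a one-dimensional $\alpha$-isotypic quotient when nonzero), but must be stated with the admissible embedding in place to make the bijection of \Cref{thm:main-natural-bijection-between-elements-and-conjugacy-classes} bear on the root-number condition, ensuring that $\lambda$ — and hence $\omega_{\quadraticEtaleAlgebra \slash \etaleAlgebra}(\lambda)$ — is the correct invariant attached to the conjugacy class of $\torusEmbedding$.
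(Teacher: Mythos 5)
Your high-level strategy --- use a seesaw to reduce to the well-understood $\unitaryGroup{1}\times\unitaryGroup{1}$ theta lift and then track how the splitting characters $\chi_\hermitianSpace,\chi_\skewHermitianSpace$ restrict --- is the right one and is exactly what the paper does. But both the seesaw diagram you propose and the conclusion you draw from it are wrong, and these are structural gaps, not normalization issues.

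\textbf{The seesaw diagram.} You propose the pairs $\bigl(\unitaryGroup{\hermitianSpace},\,\unitaryGroup{\skewHermitianSpace}\times\unitaryGroup{\skewHermitianSpace}\bigr)$ and $\bigl(\normoneGroup{\quadraticEtaleAlgebra},\,\unitaryGroup{\skewHermitianSpace}\bigr)$. The first is the doubling dual pair; but $\bigl(\normoneGroup{\quadraticEtaleAlgebra},\unitaryGroup{\skewHermitianSpace}\bigr)$ is not a dual pair inside $\Sp\bigl(\restrictionOfScalars_{\quadraticExtension/\baseField}(\hermitianSpace\otimes(\skewHermitianSpace\oplus\skewHermitianSpace))\bigr)$, so the diagram does not close up. The seesaw that actually works is the one in \Cref{subsec:local-seesaw-identity}: the top row is $\unitaryGroup{\etaleSkewHermitian{1}}$ and $\unitaryGroup{\etaleHermitian{\lambda}}=\unitaryGroup{\hermitianSpace}$, the bottom row is $\unitaryGroup{\etaleSkewHermitian[\baseField]{1}}=\unitaryGroup{\skewHermitianSpace}$ embedded diagonally and $\unitaryGroup{\quadraticEtaleHermitian{\lambda}}\cong\normoneGroup{\quadraticEtaleAlgebra}$ embedded as the torus via $\torusEmbedding$. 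The two dual pairs are $(\unitaryGroup{\etaleHermitian{\lambda}},\unitaryGroup{\etaleSkewHermitian[\baseField]{1}})$ over $\baseField$ and $(\unitaryGroup{\quadraticEtaleHermitian{\lambda}},\unitaryGroup{\etaleSkewHermitian{1}})$ over the \etale algebra $\quadraticEtaleAlgebra$, both sitting inside the same symplectic $\baseField$-space. This is the reductive dual pair swap, not the doubling.

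\textbf{The tensor-product claim.} You then assert that the space \eqref{eq:hom-space-of-torus-embedding} becomes a tensor product over the components $\etaleComponent$ of $\etaleAlgebra$ of $\Hom$-spaces for $\unitaryGroup{1}\times\unitaryGroup{1}$, with ``components'' $\beta_\etaleComponent$ of $\beta$. This fails for two reasons. First, $\beta$ is a character of $\unitaryGroup{\skewHermitianSpace}\cong\normoneGroup{\quadraticExtension}$, a single $\unitaryGroup{1}$ over $\baseField$; it does not decompose into pieces indexed by the factors of $\etaleAlgebra$. Second, even granting the componentwise factorization of the torus $\torusEmbedding(\normoneGroup{\quadraticEtaleAlgebra})\cong\prod_j\normoneGroup{\quadraticEtaleAlgebra[\baseField_j]}$, the restriction of $\Theta(\beta\circ\torusEmbedding'_\skewHermitianSpace)$ (a representation of $\unitaryGroup{\hermitianSpace}$) to this torus has no reason to factor as a tensor product of $\normoneGroup{\quadraticEtaleAlgebra[\baseField_j]}$-representations. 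What the correct seesaw gives instead is the single isomorphism
\begin{equation*}
\Hom_{\torusEmbedding(\normoneGroup{\quadraticEtaleAlgebra})}\bigl(\Theta(\beta\circ\torusEmbedding'_\skewHermitianSpace),\alpha\circ\torusEmbedding^{-1}\bigr)\cong
\Hom_{\unitaryGroup{\skewHermitianSpace}}\bigl(\Theta(\alpha\circ\torusEmbedding^{-1})\restriction_{\unitaryGroup{\skewHermitianSpace}},\ \beta\circ\torusEmbedding'_\skewHermitianSpace\bigr),
\end{equation*}
where now $\Theta(\alpha\circ\torusEmbedding^{-1})$ is the theta lift from $\unitaryGroup{\quadraticEtaleHermitian{\lambda}}$ to $\unitaryGroup{\etaleSkewHermitian{1}}$ over $\quadraticEtaleAlgebra$. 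It is \emph{this} theta lift that factors componentwise by \Cref{thm:epsilon-dichotomy-etale}, yielding the root-number condition $\omega_{\quadraticEtaleAlgebra/\etaleAlgebra}(\lambda)=\varepsilon_{\quadraticEtaleAlgebra/\etaleAlgebra}(\cdots)$ as a vector of equalities. The character-compatibility condition (1) then arises not by multiplying $\Hom$-spaces but by restricting the resulting character of $\normoneGroup{\quadraticEtaleAlgebra}$ back along the diagonal to $\normoneGroup{\quadraticExtension}$ and demanding equality with $\beta$. Until you replace your seesaw with this one and drop the claimed tensor-product factorization of the $\Hom$-space, the argument does not go through.
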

	Here, $\Theta_{\delta, \hermitianSpace, \mu, \fieldCharacter}\left(\beta\right)$ is the big theta lift of $\beta$ to $\unitaryGroup{\hermitianSpace}$ with respect to the data $\left(\delta, \mu, \fieldCharacter\right)$, see \Cref{subsec:notation-for-theta-lifts-of-characters-of-norm-one-group}. It is either zero or irreducible in our case (since $\beta$ is supercuspidal), and therefore equals the small theta lift $\theta_{\delta, \hermitianSpace, \mu, \fieldCharacter}\left(\beta\right)$. We refer the reader to Sections \ref{subsec:invariants-of-one-dimensional-hermitian-spaces-over-an-etale-algebra} and \ref{subsec:theta-lifting-for-unitary-groups-of-one-dimensional-spaces} for the definition of the vector of quadratic characters $\omega_{\quadraticEtaleAlgebra/\etaleAlgebra}\left(\lambda\right)$, the vector of root numbers $\varepsilon_{\quadraticEtaleAlgebra \slash \etaleAlgebra}\left(\alpha_{\quadraticEtaleAlgebra} \cdot \mu^{-1} \circ N_{\quadraticEtaleAlgebra \slash \quadraticExtension}, \fieldCharacter, \delta\right)$, and other notation appearing in the theorem.
	
	We also show that for characters $\alpha$ and $\beta$ satisfying the compatibility condition, there exists a unique non-degenerate hermitian space $\variableHermitianSpace$ of dimension $n$, up to isomorphism, and a unique admissible embedding $\torusEmbedding \colon \normoneGroup{\quadraticEtaleAlgebra} \to \unitaryGroup{\variableHermitianSpace}$, up to conjugation, such that the space \eqref{eq:hom-space-of-torus-embedding} attached to $\torusEmbedding$ is non-zero. More precisely, we show the following theorem (\Cref{cor:sum-of-dimension-of-admissible-embedding}).
	
	\begin{theorem}\label{thm:intro-sum}
		For every choice of $\etaleAlgebra$, $\alpha$, and $\beta$ as above, we have $$\sum_{\variableHermitianSpace \in \HermitianEquivalenceClasses_n} \sum_{\torusEmbedding \in \Sigma_{\etaleAlgebra, \variableHermitianSpace}} \dim_{\cComplex} \Hom_{\torusEmbedding \left(\normoneGroup{\quadraticEtaleAlgebra}\right)}\left(\Theta_{\delta, \variableHermitianSpace, \mu, \fieldCharacter}\left(\beta\right), \alpha \circ \torusEmbedding^{-1} \right) = \begin{dcases}
			1 & \beta = \alpha_{\restriction_{\normoneGroup{\quadraticEtaleAlgebra[\baseField]}}},\\
			0 & \text{otherwise.}
		\end{dcases}$$
		Here, $\variableHermitianSpace$ runs over representatives of classes of $$\HermitianEquivalenceClasses_n = \left\{\variableHermitianSpace \text{ is a non-degenerate hermitian space } \mid \dim \variableHermitianSpace = n\right\} \slash \text{isomorphism},$$ and $i$ runs over representatives of classes of $\Sigma_{\etaleAlgebra, \variableHermitianSpace}$.
	\end{theorem}
	These two local theorems are similar to the local Gan--Gross--Prasad conjectures, in the sense that for compatible $\alpha$ and $\beta$, there exists a unique non-degenerate hermitian space $\variableHermitianSpace$ of dimension $n$ (up to isomorphism), a unique admissible embedding (up to $\unitaryGroup{\variableHermitianSpace}$-conjugation) $\torusEmbedding \colon \normoneGroup{\quadraticEtaleAlgebra} \to \unitaryGroup{\variableHermitianSpace}$, such that the space attached to $\torusEmbedding$ is not zero, and we pinpoint the tuple $\left(\variableHermitianSpace, \torusEmbedding\right)$ in terms of the vector of root numbers attached to the data defining $\torusEmbedding$. In our case, the set $$\mathcal{V}_{\delta, \etaleAlgebra}\left(\beta\right) =  \bigcup_{\variableHermitianSpace \in \HermitianEquivalenceClasses_n} \left\{ \left(\Theta_{\delta, \variableHermitianSpace, \mu, \fieldCharacter}\left(\beta\right) , \alpha \circ i^{-1}\right) \mid i \in \Sigma_{\etaleAlgebra, \variableHermitianSpace} \right\},$$ is analogous to the local Vogan $L$-packet appearing in the Gan--Gross--Prasad conjectures, consisting of irreducible representations of the group and of its pure inner forms. Here, $\mathcal{V}_{\delta, \etaleAlgebra}$ consists of tuples whose first component is an irreducible representation of a pure inner form of $\unitaryGroup{\hermitianSpace}$, and whose second component is a character of a torus (of the aforementioned pure inner form) isomorphic to $\normoneGroup{\quadraticEtaleAlgebra}$.
	
	We move to explain our global result. Let $\baseField = \numberfield$ be a number field and let $\quadraticExtension = \quadraticNumberField$ be a quadratic field extension of $\numberfield$.
	
	By choosing a trace zero element $\delta \in \multiplicativeGroup{\quadraticNumberField}$, an automorphic character $\mu$ of $\multiplicativeGroup{\adeles_{\quadraticNumberField}}$, such that $\mu \restriction_{\multiplicativeGroup{\adeles_{\numberfield}}} = \omega_{{\quadraticNumberField} \slash {\numberfield}}$ is the quadratic character given by global class field theory, and a non-trivial character $\fieldCharacter \colon \numberfield \backslash \adeles \to \multiplicativeGroup{\cComplex}$, we can lift automorphic characters of $\normoneGroup{\quadraticEtaleAlgebra[\numberfield]}\left(\adeles_{\numberfield}\right)$ to automorphic representations of $\unitaryGroup{\hermitianSpace}\left(\adeles_{\numberfield}\right)$. Suppose that $\etaleNumberField$ is an $n$-dimensional \etale algebra over $\numberfield$, such that there exists $\lambda \in \multiplicativeGroup{\etaleNumberField}$ satisfying that $\etaleHermitian{\lambda}$ is isomorphic to $\hermitianSpace$ as hermitian spaces, and such that $\normoneGroup{\quadraticEtaleNumberAlgebra}$ is anisotropic (equivalently, there is no embedding of $\numberfield$-algebras $\quadraticNumberField \hookrightarrow \etaleNumberField$). Let $\beta \colon \adelicQuotient{\normoneGroup{\quadraticEtaleAlgebra[\numberfield]}} \to \multiplicativeGroup{\cComplex}$ and $\alpha \colon \adelicQuotient{\normoneGroup{\quadraticEtaleNumberAlgebra}} \to \multiplicativeGroup{\cComplex}$ be automorphic characters.	Our global result (\Cref{thm:main-thm-global-case}) classifies when the $\alpha$-period is identically zero on the space of the global theta lift $\Theta_{\delta, \hermitianSpace, \mu, \fieldCharacter}\left(\beta\right)$ of $\beta$ to $\unitaryGroup{\hermitianSpace}\left(\adeles_{\numberfield}\right)$. As before, and as in the global Gan--Gross--Prasad conjectures, this classification is expressed in terms of root numbers and central values of $L$-functions.
	\begin{theorem}\label{thm:main-global-period-non-vanishing}
		Let $\torusEmbedding \colon \normoneGroup{\quadraticEtaleNumberAlgebra} \to \unitaryGroup{\hermitianSpace}$ be an admissible embedding corresponding to $\lambda \in \multiplicativeGroup{\etaleNumberField}$. The $\alpha \circ i^{-1}$-period $\mathcal{P}_{\torusEmbedding \left( \normoneGroup{\quadraticEtaleNumberAlgebra} \right), \alpha \circ \torusEmbedding^{-1}}$ is non-zero on the global theta lift $\Theta_{\delta, \hermitianSpace, \mu, \fieldCharacter}\left(\beta \right)$, that is, $$\int_{\adelicQuotient{\normoneGroup{\quadraticEtaleNumberAlgebra}}} f\left(\torusEmbedding\left(t\right)\right) \complexConjugate{\alpha\left(t\right)} \differential{t} \ne 0 \text{ for some } f \in \Theta_{\delta, \hermitianSpace, \mu, \fieldCharacter}\left(\beta\right),$$ if and only if the following conditions hold:
		\begin{enumerate}
			\item Character compatibility: $\beta = \alpha_{\restriction_{\normoneGroup{\quadraticEtaleAlgebra[\numberfield]}\left(\adeles_{\numberfield}\right)}}.$
			\item Root number condition: for every place $v$, $$\omega_{\quadraticEtaleNumberAlgebra \otimes_{\numberfield} \numberfield_v \slash \etaleNumberField \otimes_{\numberfield} \numberfield_v}\left(\lambda\right) = \varepsilon_{\quadraticEtaleNumberAlgebra \otimes_{\numberfield} \numberfield_v \slash \etaleNumberField \otimes_{\numberfield} \numberfield_v}\left(\alpha_{v,\quadraticEtaleNumberAlgebra \otimes_{\numberfield} \numberfield_v} \cdot \mu_v^{-1} \circ N_{\quadraticEtaleNumberAlgebra \otimes_{\numberfield} \numberfield_v \slash \quadraticNumberField \otimes_{\numberfield} \numberfield_{v}}, \fieldCharacter_v, \delta\right).$$
			\item Central $L$-function value condition: the following value is non-zero $$\centralValue\left( \alpha_{\quadraticEtaleNumberAlgebra \otimes_{\numberfield} \adeles_{\numberfield}} \cdot \mu^{-1} \circ N_{\quadraticEtaleNumberAlgebra \slash \quadraticNumberField}\right) \ne 0.$$
		\end{enumerate}
	\end{theorem}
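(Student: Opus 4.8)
The plan is to prove this by a global seesaw identity that reduces the $\alpha \circ \torusEmbedding^{-1}$-period to the global theta correspondence between norm-one tori, i.e.\ to the theta lift from $\unitaryGroup{1}$ to $\unitaryGroup{1}$, and then to invoke the known local and global results for that correspondence together with the local \Cref{thm:main-local-period-non-vanishing}. First I would set up the seesaw. Write $\etaleNumberField = \prod_{i=1}^{r} \etaleNumberField_i$ as a product of number fields and put $\quadraticEtaleNumberAlgebra = \quadraticNumberField \otimes_{\numberfield} \etaleNumberField = \prod_i \left(\quadraticNumberField \otimes_{\numberfield} \etaleNumberField_i\right)$, each factor being a field by the anisotropy hypothesis. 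Using the isomorphism $\hermitianSpace \cong \etaleHermitian{\lambda}$, view $\hermitianSpace$ as $\restrictionOfScalars_{\quadraticEtaleNumberAlgebra \slash \quadraticNumberField}$ of the one-dimensional hermitian space $\hermitianSpace_{\quadraticEtaleNumberAlgebra} = \left(\quadraticEtaleNumberAlgebra,\ \innerProduct{x}{y} = \lambda x \involution{y}\right)$, so that $\torusEmbedding\left(\normoneGroup{\quadraticEtaleNumberAlgebra}\right) = \restrictionOfScalars_{\etaleNumberField \slash \numberfield} \unitaryGroup{\hermitianSpace_{\quadraticEtaleNumberAlgebra}} \subseteq \unitaryGroup{\hermitianSpace}$; similarly $\skewHermitianSpace_{\etaleNumberField} := \skewHermitianSpace \otimes_{\numberfield} \etaleNumberField$ is a one-dimensional skew-hermitian space over $\quadraticEtaleNumberAlgebra$, and $\unitaryGroup{\skewHermitianSpace}$ embeds diagonally in $\restrictionOfScalars_{\etaleNumberField \slash \numberfield} \unitaryGroup{\skewHermitianSpace_{\etaleNumberField}} = \normoneGroup{\quadraticEtaleNumberAlgebra}$. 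Since $\hermitianSpace \otimes_{\quadraticNumberField} \skewHermitianSpace \cong \restrictionOfScalars_{\etaleNumberField \slash \numberfield}\left(\hermitianSpace_{\quadraticEtaleNumberAlgebra} \otimes_{\quadraticEtaleNumberAlgebra} \skewHermitianSpace_{\etaleNumberField}\right)$ as symplectic $\numberfield$-spaces, with $\fieldCharacter$ matching $\fieldCharacter \circ \trace_{\etaleNumberField \slash \numberfield}$, one obtains the seesaw
$$ \begin{array}{ccc} \unitaryGroup{\hermitianSpace} & & \restrictionOfScalars_{\etaleNumberField \slash \numberfield} \unitaryGroup{\skewHermitianSpace_{\etaleNumberField}} \\ \cup & & \cup \\ \torusEmbedding\left(\normoneGroup{\quadraticEtaleNumberAlgebra}\right) & & \unitaryGroup{\skewHermitianSpace} \end{array} $$
whose two reductive dual pairs are $\left(\unitaryGroup{\hermitianSpace}, \unitaryGroup{\skewHermitianSpace}\right)$ and $\left(\restrictionOfScalars_{\etaleNumberField \slash \numberfield} \unitaryGroup{\hermitianSpace_{\quadraticEtaleNumberAlgebra}}, \restrictionOfScalars_{\etaleNumberField \slash \numberfield} \unitaryGroup{\skewHermitianSpace_{\etaleNumberField}}\right)$. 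Comparing the metaplectic splitting attached to $\left(\chi_{\skewHermitianSpace}, \chi_{\hermitianSpace}\right)$ with the one obtained from it by base change to $\etaleNumberField$, I expect the two Weil representations---hence the two theta kernels---to agree up to an explicit character of $\unitaryGroup{\skewHermitianSpace}\left(\adeles_{\numberfield}\right)$; this is the source of the factor $\left(\chi_{\skewHermitianSpace}^{-n} \cdot \chi_{\hermitianSpace}\right) \circ j_{\adeles_{\numberfield}}^{-1}$ in condition (1).

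Next I would unfold the period. The functions $f_\phi \in \Theta\left(\beta \circ \torusEmbedding'_{\skewHermitianSpace}\right)$ obtained by pairing the theta kernel against $\beta \circ \torusEmbedding'_{\skewHermitianSpace}$ span the big theta lift, and since $\adelicQuotient{\normoneGroup{\quadraticEtaleNumberAlgebra}}$ and $\adelicQuotient{\unitaryGroup{\skewHermitianSpace}}$ are compact---this is exactly where the anisotropy of $\normoneGroup{\quadraticEtaleNumberAlgebra}$ enters---Fubini applies to
$$ \mathcal{P}_{\torusEmbedding\left(\normoneGroup{\quadraticEtaleNumberAlgebra}\right),\, \alpha \circ \torusEmbedding^{-1}}\left(f_\phi\right) = \int_{\adelicQuotient{\unitaryGroup{\skewHermitianSpace}}} \left( \int_{\adelicQuotient{\normoneGroup{\quadraticEtaleNumberAlgebra}}} \theta\left(\phi\right)\left(\torusEmbedding\left(t\right), h\right) \complexConjugate{\alpha\left(t\right)} \, \differential{t} \right) \complexConjugate{\left(\beta \circ \torusEmbedding'_{\skewHermitianSpace}\right)\left(h\right)} \, \differential{h}, $$
and the seesaw identity identifies the inner integral, up to the character above, with $\Theta'\left(\alpha\right)\left(h\right)$, where $\Theta'\left(\alpha\right)$ is the global theta lift of $\alpha$ from $\restrictionOfScalars_{\etaleNumberField \slash \numberfield} \unitaryGroup{\hermitianSpace_{\quadraticEtaleNumberAlgebra}}$ to $\restrictionOfScalars_{\etaleNumberField \slash \numberfield} \unitaryGroup{\skewHermitianSpace_{\etaleNumberField}}$, restricted along the diagonal to $\unitaryGroup{\skewHermitianSpace}$. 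Being the external tensor product $\boxtimes_i \Theta_i\left(\alpha_i\right)$ of the component theta lifts from $\unitaryGroup{1}$ to $\unitaryGroup{1}$ over the $\etaleNumberField_i$, $\Theta'\left(\alpha\right)$ is either $0$ or an automorphic character of $\normoneGroup{\quadraticEtaleNumberAlgebra}\left(\adeles_{\numberfield}\right)$; since $\adelicQuotient{\unitaryGroup{\skewHermitianSpace}}$ is compact, it follows that $\mathcal{P}_{\torusEmbedding\left(\normoneGroup{\quadraticEtaleNumberAlgebra}\right),\, \alpha \circ \torusEmbedding^{-1}}$ is not identically zero on $\Theta\left(\beta \circ \torusEmbedding'_{\skewHermitianSpace}\right)$ if and only if (a) $\Theta'\left(\alpha\right) \neq 0$, and (b) the restriction of $\Theta'\left(\alpha\right)$ to $\unitaryGroup{\skewHermitianSpace}\left(\adeles_{\numberfield}\right)$ agrees with $\beta \circ \torusEmbedding'_{\skewHermitianSpace}$ up to the character $\left(\chi_{\skewHermitianSpace}^{-n} \cdot \chi_{\hermitianSpace}\right) \circ j_{\adeles_{\numberfield}}^{-1}$.

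It remains to match (a) and (b) with conditions (1)--(3). For (b): by the explicit formula for the $\unitaryGroup{1}$-to-$\unitaryGroup{1}$ theta lift of a character (recalled in \Cref{subsec:theta-lifting-for-unitary-groups-of-one-dimensional-spaces}), whenever $\Theta'\left(\alpha\right) \neq 0$ it is $\alpha$ times an explicit character built from $\chi_{\skewHermitianSpace}$ and $\chi_{\hermitianSpace}$; restricting to the diagonal $\unitaryGroup{\skewHermitianSpace}$ and comparing with (b) yields precisely the character compatibility condition (1)---the global counterpart of condition (1) of \Cref{thm:main-local-period-non-vanishing}. For (a): $\Theta'\left(\alpha\right) = \boxtimes_i \Theta_i\left(\alpha_i\right)$ is nonzero exactly when each factor is, and by the theta dichotomy for $\unitaryGroup{1}$ (Rogawski, Yang, Yamana) the lift $\Theta_i\left(\alpha_i\right)$ to the \emph{fixed} global space $\skewHermitianSpace \otimes_{\numberfield} \etaleNumberField_i$ is nonzero if and only if (i) at every place the local theta lift of $\alpha_i$ occurs on the local component of $\skewHermitianSpace \otimes_{\numberfield} \etaleNumberField_i$---which, by the local dichotomy, is the equality between the invariant $\omega_{\quadraticEtaleNumberAlgebra \otimes_{\numberfield} \numberfield_v \slash \etaleNumberField \otimes_{\numberfield} \numberfield_v}\left(\lambda\right)$ of $\hermitianSpace_{\quadraticEtaleNumberAlgebra} \otimes_{\numberfield} \numberfield_v$ and the root number of condition (2)---and (ii) the central value $L\left(\tfrac{1}{2}, \alpha_i \cdot \chi_{\skewHermitianSpace}^{-1} \circ N\right)$ is nonzero; multiplying over $i$, (ii) becomes the non-vanishing of $\centralValue\left(\alpha_{\quadraticEtaleNumberAlgebra \otimes_{\numberfield} \adeles_{\numberfield}} \cdot \chi_{\skewHermitianSpace}^{-1} \circ N_{\quadraticEtaleNumberAlgebra \slash \quadraticNumberField}\right)$, i.e.\ condition (3). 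Combining (a) and (b) gives the asserted equivalence; note that when (1)--(3) hold this argument exhibits an $f_\phi$ with nonzero period, so $\Theta\left(\beta \circ \torusEmbedding'_{\skewHermitianSpace}\right) \neq 0$ automatically and no separate treatment of the vanishing of the target theta lift is needed.

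The step I expect to be the main obstacle is the cocycle bookkeeping underlying the seesaw: proving that the metaplectic splitting for $\left(\chi_{\skewHermitianSpace}, \chi_{\hermitianSpace}\right)$ on $\left(\unitaryGroup{\hermitianSpace}, \unitaryGroup{\skewHermitianSpace}\right)$ and the base-changed splitting on the inner dual pair differ by exactly $\left(\chi_{\skewHermitianSpace}^{-n} \cdot \chi_{\hermitianSpace}\right) \circ j_{\adeles_{\numberfield}}^{-1}$ along the diagonal $\unitaryGroup{\skewHermitianSpace}$, which requires tracking the splitting cocycles under restriction of scalars and base change place by place; and, relatedly, justifying the convergence of the theta integrals attached to the inner pair so that the interchange of integration above is legitimate---again a consequence of anisotropy of $\normoneGroup{\quadraticEtaleNumberAlgebra}$.
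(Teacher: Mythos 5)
Your proposal follows essentially the same route as the paper: a global seesaw with the dual pairs $(\unitaryGroup{\hermitianSpace},\unitaryGroup{\skewHermitianSpace})$ and $(\unitaryGroup{\quadraticEtaleNumberHermitian{\lambda}},\unitaryGroup{\etaleNumberSkewHermitian{1}})$, an interchange of integration justified by compactness (anisotropy of $\normoneGroup{\quadraticEtaleNumberAlgebra}$), reduction to the $\unitaryGroup{1}\text{-to-}\unitaryGroup{1}$ theta lift, invocation of Yamana's non-vanishing criterion for conditions (2) and (3), and the explicit formula for the $\unitaryGroup{1}$ theta lift of a character to recover the compatibility condition (1).

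One point worth flagging: in your first paragraph you attribute the factor $(\chi_{\skewHermitianSpace}^{-n} \cdot \chi_{\hermitianSpace}) \circ j_{\adeles_{\numberfield}}^{-1}$ to a mismatch between the splitting on the outer pair and the base-changed splitting on the inner pair, but the paper avoids any such discrepancy by choosing \emph{compatible} splittings (Section 4.2.1, requiring $\chi_{\etaleNumberSkewHermitian{1}} = \chi_{\skewHermitianSpace} \circ N_{\quadraticEtaleNumberAlgebra \slash \quadraticNumberField}$ and $\chi_{\quadraticEtaleNumberHermitian{\lambda}} \restriction_{\multiplicativeGroup{\adeles_{\quadraticNumberField}}} = \chi_{\hermitianSpace}$), so the two theta kernels agree on the nose along $\torusEmbedding(\normoneGroup{\quadraticEtaleNumberAlgebra}) \times \unitaryGroup{\skewHermitianSpace}$. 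The character $(\chi_{\skewHermitianSpace}^{-n} \cdot \chi_{\hermitianSpace}) \circ j_{\adeles_{\numberfield}}^{-1}$ then arises solely from the explicit formula for the theta lift of $\alpha$ along the inner pair and the identity $N_{\quadraticEtaleNumberAlgebra \slash \quadraticNumberField}(x) = x^n$ on $\multiplicativeGroup{\adeles_{\quadraticNumberField}}$, as you indeed recover in your third paragraph. Your two explanations are therefore alternative pieces of bookkeeping for the same normalization and only one is needed; the paper systematizes this once and for all with the notion of compatible splittings, which keeps the seesaw identity an exact equality rather than an equality up to a correcting character.
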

	We refer the reader to \Cref{subsec:global-theta-lifts-for-unitary-grops-of-one-dimensional-spaces} for the definition of the central $L$-function value $\centralValue\left( \alpha_{\quadraticEtaleNumberAlgebra \otimes_{\numberfield} \adeles_{\numberfield}} \cdot \mu^{-1} \circ N_{\quadraticEtaleNumberAlgebra \slash \quadraticNumberField}\right)$ and other notation appearing in the theorem.
	
	Notice that the first two conditions in \Cref{thm:main-global-period-non-vanishing} are equivalent to requiring that for every place $v$, the conditions in \Cref{thm:main-local-period-non-vanishing} hold. It is clear that the condition of the $\Hom$-space not vanishing for every $v$ is a necessary condition for the global period to not vanish. Moreover, the root number condition implies that the global root number equals one, i.e., $$\prod_{v} \varepsilon_{\quadraticEtaleNumberAlgebra \otimes_{\numberfield} \numberfield_v \slash \etaleNumberField \otimes_{\numberfield} \numberfield_v}\left(\alpha_{v,\quadraticEtaleNumberAlgebra \otimes_{\numberfield} \numberfield_v} \cdot \mu_v^{-1} \circ N_{\quadraticEtaleNumberAlgebra \otimes_{\numberfield} \numberfield_v \slash \quadraticNumberField \otimes_{\numberfield} \numberfield_{v}}, \fieldCharacter_v, \delta\right) = 1,$$ and therefore the third condition regarding the central $L$-value $\centralValue\left( \alpha_{\quadraticEtaleNumberAlgebra \otimes_{\numberfield} \adeles_{\numberfield}} \cdot \mu^{-1} \circ N_{\quadraticEtaleNumberAlgebra \slash \quadraticNumberField}\right)$ can be satisfied.
	
	Similarly to the local case, we show in \Cref{cor:unique-conjugacy-class-with-non-vanishing-period} that for characters $\alpha$ and $\beta$ satisfying the compatibility condition and such that the central $L$-function value in question does not vanish, there exists a unique non-degenerate hermitian space $\variableHermitianSpace$ of dimension $n$ and a unique class $\torusEmbedding \in \Sigma_{\etaleNumberField, \variableHermitianSpace}$, such that the period $\mathcal{P}_{\torusEmbedding\left(\normoneGroup{\quadraticEtaleNumberAlgebra}\right), \alpha \circ \torusEmbedding^{-1}}$ does not identically vanish on $\Theta_{\delta, \variableHermitianSpace, \mu, \fieldCharacter}\left(\beta\right)$.
	
	\begin{theorem}\label{thm:intro-global-period-non-vanishing}
		For every choice of $\etaleNumberField$, $\alpha$, and $\beta$, as above, there exists a non-degenerate hermitian space $\variableHermitianSpace$ of dimension $n$ and an admissible embedding $\torusEmbedding \colon \normoneGroup{\quadraticEtaleNumberAlgebra} \to \unitaryGroup{\variableHermitianSpace}$, such that $\mathcal{P}_{\torusEmbedding\left(\normoneGroup{\quadraticEtaleNumberAlgebra} \right), \alpha \circ i^{-1}}$ is not identically zero on $\Theta_{\delta, \variableHermitianSpace, \mu, \fieldCharacter}\left(\beta\right)$, if and only if the following conditions hold:
		\begin{enumerate}
			\item Character compatibility: $\beta = \alpha_{\restriction_{\normoneGroup{\quadraticEtaleAlgebra[\numberfield]}\left(\adeles_{\numberfield}\right)}}.$
			\item Central $L$-function value condition: the following value is non-zero $$\centralValue\left( \alpha_{\quadraticEtaleNumberAlgebra \otimes_{\numberfield} \adeles_{\numberfield}} \cdot \mu^{-1} \circ N_{\quadraticEtaleNumberAlgebra \slash \quadraticNumberField}\right) \ne 0.$$
		\end{enumerate}
		Moreover, in this case, the class of such $\variableHermitianSpace$ and the class of $\torusEmbedding \colon \normoneGroup{\quadraticEtaleNumberAlgebra} \to \unitaryGroup{\variableHermitianSpace}$ in $\Sigma_{\etaleNumberField, \variableHermitianSpace}$ is unique.
	\end{theorem}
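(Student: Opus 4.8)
The plan is to deduce this statement entirely from the place-by-place criterion of \Cref{thm:main-global-period-non-vanishing}, using the local classification of \Cref{thm:main-local-period-non-vanishing} and \Cref{thm:intro-sum} to pin down the hermitian space, and the functional equation of the relevant $L$-function to guarantee that the local choices actually glue to a global datum.

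\emph{Necessity.} Suppose $\variableHermitianSpace$ and $\torusEmbedding \colon \normoneGroup{\quadraticEtaleNumberAlgebra} \to \unitaryGroup{\variableHermitianSpace}$ are such that $\mathcal{P}_{\torusEmbedding(\normoneGroup{\quadraticEtaleNumberAlgebra}), \alpha \circ \torusEmbedding^{-1}}$ does not vanish identically on $\Theta_{\skewHermitianSpace, \variableHermitianSpace}(\beta \circ \torusEmbedding'_{\skewHermitianSpace})$. Let $\lambda \in \multiplicativeGroup{\etaleNumberField}$ be the element corresponding to $\torusEmbedding$ under \Cref{thm:main-natural-bijection-between-elements-and-conjugacy-classes}. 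Applying \Cref{thm:main-global-period-non-vanishing} to $(\variableHermitianSpace, \torusEmbedding, \lambda)$, its condition (1) is exactly the character compatibility and its condition (3) is exactly the non-vanishing of the central $L$-value, i.e.\ conditions (1) and (2) of the present statement; this direction needs no further input.

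\emph{Sufficiency.} Assume (1) and (2). The global character compatibility restricts, at each place $v$, to the local compatibility hypothesis of \Cref{thm:main-local-period-non-vanishing}, so by \Cref{thm:intro-sum} there is at every $v$ a unique local hermitian space and a unique admissible embedding making the relevant local $\Hom$-space nonzero, cut out by the local root number equation $\omega_v(\lambda) = \varepsilon_v$, where $\varepsilon_v$ denotes the $v$-component of the vector of local root numbers in \Cref{thm:main-global-period-non-vanishing}(2) (with $\varepsilon_v = 1$ and $\omega_v(\lambda) = 1$ for almost all $v$). The crucial point is that $(\varepsilon_v)_v$ must be the family of local Hasse-type invariants of a \emph{global} $\lambda \in \multiplicativeGroup{\etaleNumberField}$: by the Hasse principle for the one-dimensional hermitian spaces $\etaleHermitian{\lambda}$ — equivalently the Hasse norm theorem applied componentwise to the quadratic field extensions comprising $\quadraticEtaleNumberAlgebra \slash \etaleNumberField$, which is legitimate since the anisotropy hypothesis forces each component of $\etaleNumberField$ not to contain $\quadraticNumberField$ — such a $\lambda$ exists, and is unique modulo $N_{\quadraticEtaleNumberAlgebra \slash \etaleNumberField}(\multiplicativeGroup{\quadraticEtaleNumberAlgebra})$, precisely when $\prod_v \varepsilon_v = 1$. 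But $\prod_v \varepsilon_v$ is the global root number at the center of $\centralValue(\alpha_{\quadraticEtaleNumberAlgebra \otimes_{\numberfield} \adeles_{\numberfield}} \cdot \chi_{\skewHermitianSpace}^{-1} \circ N_{\quadraticEtaleNumberAlgebra \slash \quadraticNumberField})$, and since this central value is nonzero by (2), the global functional equation forces that root number to be $+1$. Hence the desired $\lambda$ exists; taking $\variableHermitianSpace = \etaleHermitian{\lambda}$ and $\torusEmbedding$ the admissible embedding attached to $\lambda$ by \Cref{thm:main-natural-bijection-between-elements-and-conjugacy-classes}, conditions (1)--(3) of \Cref{thm:main-global-period-non-vanishing} all hold, so the period is nonzero. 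For uniqueness, if $(\variableHermitianSpace', \torusEmbedding', \lambda')$ is another such triple, the necessity direction gives \Cref{thm:main-global-period-non-vanishing}(2), hence $\omega_v(\lambda') = \varepsilon_v = \omega_v(\lambda)$ for all $v$; the Hasse principle then yields $\lambda \equiv \lambda' \bmod N_{\quadraticEtaleNumberAlgebra \slash \etaleNumberField}(\multiplicativeGroup{\quadraticEtaleNumberAlgebra})$, and \Cref{thm:main-natural-bijection-between-elements-and-conjugacy-classes} gives $\variableHermitianSpace \cong \variableHermitianSpace'$ together with the conjugacy of $\torusEmbedding$ and $\torusEmbedding'$.

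The main obstacle is the sufficiency step: one must show that the finite collection of local signs $(\varepsilon_v)_v$ prescribed by the local root number conditions satisfies the global product-one relation needed to be realized by a genuine global $\lambda$, and the only mechanism available for this is the non-vanishing of the central $L$-value via the functional equation. Care is also needed in the bookkeeping of normalizations: one must check that the $L$-function and $\varepsilon$-factors of \Cref{thm:main-global-period-non-vanishing} are exactly those for which $\prod_v \varepsilon_v$ equals the sign of the functional equation (the auxiliary $\fieldCharacter_v$, $\delta$ dependences cancelling in the product), and that $\omega_v(\lambda)$ coincides with the local obstruction to $\lambda$ being a norm from $\quadraticEtaleNumberAlgebra$; both of these are already implicit in the proofs of the earlier theorems but should be invoked explicitly.
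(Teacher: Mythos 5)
Your proof is correct and follows essentially the same route as the paper: necessity is read off from \Cref{thm:main-thm-global-case}, the nonvanishing of the central $L$-value forces the global root number to equal one via the functional equation, and a local-global argument for the norm residue map $\multiplicativeGroup{\etaleNumberField}/N_{\quadraticEtaleNumberAlgebra \slash \etaleNumberField}(\multiplicativeGroup{\quadraticEtaleNumberAlgebra}) \to \prod_v' \multiplicativeGroup{(\etaleNumberField \otimes_{\numberfield} \numberfield_v)}/N_{\quadraticEtaleNumberAlgebra \otimes_{\numberfield} \numberfield_v \slash \etaleNumberField \otimes_{\numberfield} \numberfield_v}(\cdots)$ supplies the (unique) class of $\lambda$. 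One small clarification worth making: the existence step requires not merely the Hasse norm theorem (which gives injectivity, hence uniqueness) but the stronger class-field-theoretic fact that the image of the diagonal map is precisely the kernel of the product character $\prod_v \omega_{\quadraticEtaleNumberAlgebra \slash \etaleNumberField}$ — which is what the paper obtains by computing the co-kernel of $\Delta_{\etaleNumberField}$ and identifying it with $\mathrm{Gal}(\quadraticEtaleNumberAlgebra/\etaleNumberField) \cong \{\pm 1\}$.
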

	Once again, these results are similar to the global Gan--Gross--Prasad conjectures, in the sense that for $\alpha$ and $\beta$ satisfying the compatibility condition, the non-vanishing of the period $\mathcal{P}_{\torusEmbedding\left(\normoneGroup{\quadraticEtaleNumberAlgebra}\right), \alpha \circ \torusEmbedding^{-1}}$ is determined by a central $L$-function value, and in this case there exists a unique non-degenerate hermitian space $\variableHermitianSpace$ of dimension $n$ (up to isomorphism) and a unique (up to $\unitaryGroup{\variableHermitianSpace}$-conjugacy) admissible embedding, for which this period does not vanish. As in the local case, the set $$\mathcal{V}_{\delta, \etaleNumberField}\left(\beta\right) = \bigcup_{\variableHermitianSpace \in \HermitianEquivalenceClasses_n} \left\{ \left(\Theta_{\delta, \variableHermitianSpace, \mu, \fieldCharacter}\left(\beta\right),\alpha \circ i^{-1}\right) \mid i \in \Sigma_{\etaleNumberField, \variableHermitianSpace} \right\},$$ serves as a substitute for the global Vogan packet appearing in the global Gan--Gross-Prasad conjectures, consisting of automorphic representations of the group and its pure inner forms.
	
	The results of this paper, combined with the exceptional theta correspondence of Baki\'c and Savin \cite{BakicSavin2023} for $\left(\mathrm{PU}_3 \rtimes \mathbb{Z} \slash 2\mathbb{Z}\right) \times G_2$, are used in order to construct local and global Arthur packets for the exceptional group $G_2$, see \cite{AlonsoHeRayRoset2023} and \cite{BakicHorawaLiHuertaSweeting2024}.
		
	The paper is organized as follows. In \Cref{sec:maximal-tori-in-unitary-groups}, we recall the notion of $\epsilon$-hermitian spaces and the invariants attached to them. Then we discuss the classification of maximal tori in unitary groups associated to $\epsilon$-hermitian spaces, and discuss the notion of an admissible embedding. In \Cref{sec:local-theory}, we recall the theta correspondence for unitary groups over local fields. We use it to define the notion of the big theta lift for characters of unitary groups of one-dimensional $\epsilon$-hermitian spaces over an \etale algebra. We then explain a seesaw identity for this theta lift. This is a key ingredient needed for our main results. We then discuss the well-known results of theta lifting from $\unitaryGroup{1}$ to $\unitaryGroup{1}$, and use them to obtain similar results for theta lifting from $\unitaryGroup{1}$ to $\unitaryGroup{1}$ for a one-dimensional $\epsilon$-hermitian space over an \etale algebra. In \Cref{sec:global-theory} we discuss the global analogs of the statements in \Cref{sec:local-theory}. In \Cref{sec:periods-of-tori}, we state and prove our main theorems regarding toric periods of Weil representations of unitary groups. 
	In the appendix, we prove statements regarding two embeddings of a maximal torus that are conjugate.
	
	\subsection{Acknowledgments}	
	This project was suggested to us by Wee Teck Gan in Arizona Winter School 2022. We would like to thank Wee Teck for his guidance and for his continued support during our work on the project. Warm thanks are also due to Petar Baki\'c. We would like to thank the organizers of Arizona Winter School 2022 for an excellent event, and for giving us the opportunity to work on such project together.
 
    Johannes Girsch was supported by EPSRC grant EP/V001930/1 and would like to thank Justin Trias for helpful discussions about the theta correspondence.
 
	Elad Zelingher would like to thank Charlotte Chan, Yu-Sheng Lei and Mishty Ray for their interest in this project. He would also like to take this opportunity to thank Jialiang Zou for his friendship and for discussions about the theta correspondence and many other topics.
	
	Finally, we would like to thank the anonymous referee for their thorough reading and their valuable comments and suggestions that significantly improved the mathematical exposition.
	
	\section{Maximal tori in unitary groups}\label{sec:maximal-tori-in-unitary-groups}
	In this section we describe how one can classify maximal tori in unitary groups, following the results of \cite{PrasadRapinchuk2010}. Moreover, we introduce the notion of an admissible embedding, which will be of importance in \Cref{sec:periods-of-tori}.
	
	\subsection{$\epsilon$-hermitian spaces and their corresponding unitary groups}\label{subsection:hermitian-spaces}
	
	Let $\baseField$ be a field. We will always assume that $\baseField$ has characteristic $\ne 2$. Let $\quadraticExtension \slash \baseField$ be a quadratic \etale algebra with an involution, denoted $x \mapsto \involution{x}$, whose fixed points are $\baseField$. That is, $\quadraticExtension$ is either a quadratic field extension  of $\baseField$, in which case the involution is the nontrivial Galois action on $\quadraticExtension$, or $\quadraticExtension = \baseField \times \baseField$, in which case the involution is given by $(x, y)\mapsto (y, x)$ for $x, y\in \baseField$. The latter case is known as the \emph{split} case, and the unitary groups for hermitian spaces over these two quadratic \etale algebras will be treated separately.  
	
	For $\epsilon \in \left\{\pm 1\right\}$, a \emph{finite dimensional $\epsilon$-hermitian space} is a finite dimensional vector space $\hermitianSpace$ over $\quadraticExtension$, equipped with an $\epsilon$-hermitian form, i.e., there exists a function $\innerProduct{\cdot}{\cdot} \colon \hermitianSpace \times \hermitianSpace \rightarrow \quadraticExtension$, such that for all $w,x,y \in \hermitianSpace$, and $\alpha \in \quadraticExtension$:
	\begin{enumerate}
		\item $\innerProduct{x}{y} = \epsilon \involution{\innerProduct{y}{x}}$,
		\item $\innerProduct{w + x}{y} = \innerProduct{w}{y} + \innerProduct{x}{y}$,
		\item $\innerProduct{\alpha x}{y} = \alpha \innerProduct{x}{y}$.
	\end{enumerate}
	If $\epsilon = 1$, we call $\hermitianSpace$ \emph{hermitian}, and if $\epsilon = -1$, we call $\hermitianSpace$ \emph{skew-hermitian}. If $\delta \in \multiplicativeGroup{\quadraticExtension}$ is a trace zero element, that is, $\trace_{\quadraticExtension \slash \baseField}\left(\delta\right) = 0$, consider the space $(\skewHermitianSpace^{\delta}, \innerProduct{\cdot}{\cdot}_{\skewHermitianSpace^{\delta}}) = (\hermitianSpace, \innerProduct{\cdot}{\cdot}^{\delta})$ equipped with the form defined by $\innerProduct{x}{y}^{\delta} = \delta \innerProduct{x}{y}$. We have that the space $\skewHermitianSpace^{\delta}$ is a $-\epsilon$-hermitian space. Throughout the text, hermitian spaces will always be denoted using the symbol $\hermitianSpace$ or $\variableHermitianSpace$, and skew-hermitian spaces will always be denoted using the symbol $\skewHermitianSpace$.
	
	We say that the space $\hermitianSpace$ is \emph{non-degenerate} if for every $0 \ne x \in \hermitianSpace$, there exists $y \in \hermitianSpace$, such that $\innerProduct{x}{y} \ne 0$. In this case, if $T \colon \hermitianSpace \rightarrow \hermitianSpace$ is a linear map, then there exists a unique linear map $\adjointMap{T} \colon \hermitianSpace \rightarrow \hermitianSpace$, such that $$\innerProduct{Tx}{y} = \innerProduct{x}{\adjointMap{T}y},$$ for every $x,y \in \hermitianSpace$. We call $\adjointMap{T}$ the \emph{adjoint} of $T$. The assignment $\ringendo \hermitianSpace \rightarrow \ringendo \hermitianSpace$ mapping $T \mapsto \adjointMap{T}$ is an involution, such that for any $T,S \in \ringendo\left( \hermitianSpace \right)$ and any $\alpha \in \quadraticExtension$, \begin{align*}
		\adjointMap{\left( \alpha T \right)} = \involution{\alpha} \adjointMap{T}, \\
		\adjointMap{\left( T\circ  S \right)} = \adjointMap{S} \circ \adjointMap{T}.    
	\end{align*}
	
	If $\hermitianSpace$ is a non-degenerate finite dimensional $\epsilon$-hermitian space, we define its \emph{unitary group} to be $$\unitaryGroup{\hermitianSpace} = \left\{ g \in \restrictionOfScalars_{\quadraticExtension \slash \baseField} \GL_\quadraticExtension\left(\hermitianSpace\right) \mid \innerProduct{gx}{gy} = \innerProduct{x}{y} , \forall x,y \in \hermitianSpace \right\}.$$	
	Note that if $\skewHermitianSpace^{\delta}$ is given as above, then $\unitaryGroup{\hermitianSpace} = \unitaryGroup{\skewHermitianSpace^{\delta}}$.
	
	If $\hermitianSpace$ is a non-degenerate one-dimensional $\epsilon$-hermitian space over $\quadraticExtension$, we have that $\unitaryGroup{\hermitianSpace}$ is isomorphic to the group $$\normoneGroup{\quadraticExtension} = \left\{x \in \multiplicativeGroup{\quadraticExtension} \mid x \involution{x} = 1\right\}$$ by the map sending $x \in \normoneGroup{\quadraticExtension}$ to the multiplication by $x$ map $\multiplicationMap{x} \colon \hermitianSpace \to \hermitianSpace$. We refer to the inverse of this map as the \emph{obvious isomorphism} $\unitaryGroup{\hermitianSpace} \to \normoneGroup{\quadraticExtension}$.
	
	\subsubsection{Invariants of $\epsilon$-hermitian spaces}
	Let $\hermitianSpace$ be a non-degenerate $\epsilon$-hermitian space such that $\dim_\quadraticExtension \hermitianSpace=n$. The space $\hermitianSpace$ has a natural invariant known as the \emph{discriminant}, which we explain below.
	
	Let $\mathbf e=\{e_i\}_{i=1}^n$ be a basis of $\hermitianSpace$. Then the \emph{determinant} of the $\epsilon$-hermitian form $\innerProduct{\cdot}{\cdot}$ with respect to the basis $\{e_i\}_{i=1}^n$ is defined to be
	\[\det \left(\innerProduct{\cdot}{\cdot}\right)_{\mathbf e}\coloneq\det\left(\innerProduct{e_i}{e_j}\right)_{ij}.\]
	If $\hermitianSpace$ is hermitian, then $\det\left(\innerProduct{\cdot}{\cdot}\right)_{\mathbf{e}} \in \multiplicativeGroup{\baseField}$ and it is well-known that the class of $\det (\innerProduct{\cdot}{\cdot})_{\mathbf e}$ in $ \baseField^\times/N_{\quadraticExtension/\baseField}(\quadraticExtension^\times)$ is independent of the chosen basis $\mathbf e$. Hence, we may omit $\mathbf e$ from the notation and denote $\det \hermitianSpace = \det (\innerProduct{\cdot}{\cdot})_{\mathbf e} \in \multiplicativeGroup{\baseField} \slash N_{\quadraticExtension \slash \baseField}\left(\multiplicativeGroup{\quadraticExtension}\right)$. Note that in the split case where $\quadraticExtension = \baseField \times \baseField$, we have that $\multiplicativeGroup{\baseField} = N_{\quadraticExtension \slash \baseField}\left(\multiplicativeGroup{\quadraticExtension}\right)$, and therefore the invariant $\det \hermitianSpace$ is always trivial.
	
	We define the \emph{discriminant} of $\hermitianSpace$ as
	\[\discrim\hermitianSpace:=(-1)^{n(n-1)/2}\det \hermitianSpace.\]
	
	For a non-degenerate finite dimensional skew-hermitian space $\skewHermitianSpace$ and a trace zero element $\delta \in \multiplicativeGroup{\quadraticExtension}$, we have that the space $(\hermitianSpace^{\delta}, \innerProduct{\cdot}{\cdot}_{\hermitianSpace^{\delta}}) = (\skewHermitianSpace, \innerProduct{\cdot}{\cdot}^{\delta} )$ equipped with the form $$\innerProduct{x}{y}^{\delta} = \delta \innerProduct{x}{y}$$ is a hermitian space and we define
	$$\discrim \skewHermitianSpace = \delta^{-\dim \skewHermitianSpace} \discrim\left(\hermitianSpace^{\delta}\right) \in \delta^{-\dim\skewHermitianSpace}\cdot\baseField^\times/N_{\quadraticExtension/\baseField}(\quadraticExtension^\times).$$

	Suppose now that $\baseField$ is a local field (we allow both archimedean and non-archimedean local fields) and that $\quadraticExtension \slash \baseField$ is a quadratic field extension. Using the non-trivial quadratic character $\omega_{\quadraticExtension/\baseField}$ of $\baseField^\times/N_{\quadraticExtension/\baseField}(\quadraticExtension^\times)$ associated to the quadratic extension $\quadraticExtension \slash \baseField$ by local class field theory, we can encode the discriminant as a sign. For a hermitian space $\hermitianSpace$ as above we define
	$$\epsilon(\hermitianSpace)=\omega_{\quadraticExtension \slash \baseField}(\discrim\hermitianSpace).$$ Similarly, for a skew-hermitian space $\skewHermitianSpace$ as above we define
	$$ \epsilon_{\delta}\left(\skewHermitianSpace\right) \coloneqq  \epsilon\left(\hermitianSpace^{\delta} \right) = \omega_{\quadraticExtension \slash \baseField}\left(\delta^{\dim \skewHermitianSpace} \discrim \skewHermitianSpace \right).$$
	Note that the invariant $\epsilon_{\delta}\left(\skewHermitianSpace\right)$ depends on the choice of $\delta$.
	
	\subsection{One-dimensional hermitian spaces with respect to an \etale algebra}\label{subsec:one-dim-etale-hermitian-spaces}
	Let $L$ be an \etale algebra of degree $n$ over $\quadraticExtension$. We say that $\left(L, \sigma\right)$ is an \emph{\etale algebra with involution} if $\sigma \colon L \rightarrow L$ is an involution, such that for any $a \in \quadraticExtension$, $\sigma(a) = \involution{a}$. Given such an \etale algebra with involution, we may define the notion of an \emph{$L$-hermitian} (or \emph{$L$-skew-hermitian}) space as in \Cref{subsection:hermitian-spaces} by replacing $\quadraticExtension$ with $L$ and the involution $x \mapsto \involution{x}$ with the involution $\sigma$ in the definitions.
	
	It is well-known that any \etale algebra with involution is of the form $L = \quadraticEtaleAlgebra =  \quadraticExtension \otimes_{\baseField} \etaleAlgebra$, where $\etaleAlgebra$ is an \etale algebra of degree $n$ over $\baseField$ and $\sigma\colon\quadraticEtaleAlgebra \rightarrow \quadraticEtaleAlgebra$ is defined on pure tensors by $$\sigma{\left(k \otimes h\right)} =  \involution{k} \otimes h,$$
	for $k \in \quadraticExtension$ and $h \in \etaleAlgebra$. Henceforth we will often write $\sigma\left(x\right) = \involution{x}$ for $x \in \quadraticEtaleAlgebra$.
	
	Let $\etaleAlgebra$ be an \etale algebra as above. For any $\lambda \in \multiplicativeGroup{\etaleAlgebra}$, let $\quadraticEtaleHermitian{\lambda}=(\quadraticEtaleAlgebra, \innerProduct{\cdot}{\cdot}_{\quadraticEtaleHermitian{\lambda}})$ be the one-dimensional $\quadraticEtaleAlgebra$-hermitian space equipped with the following form: 
	$$\innerProduct{x}{y}_{\quadraticEtaleHermitian{\lambda}} = \lambda x \involution{y}.$$
	As before, given a trace zero element $\delta \in \multiplicativeGroup{\quadraticEtaleAlgebra}$, we may define a one-dimensional $\quadraticEtaleAlgebra$-skew-hermitian space $\etaleSkewHermitian{\lambda} = (\quadraticEtaleAlgebra, \innerProduct{\cdot}{\cdot}_{\etaleSkewHermitian{\lambda}})$ by setting $\innerProduct{x}{y}_{\etaleSkewHermitian{\lambda}} = \delta \innerProduct{x}{y}_{\quadraticEtaleHermitian{\lambda}}$.
	
	Consider the unitary group $\unitaryGroup{\quadraticEtaleHermitian{\lambda}}$, consisting of all $\quadraticEtaleAlgebra$-linear maps that preserve the $\quadraticEtaleAlgebra$-hermitian structure on $\quadraticEtaleHermitian{\lambda}$. It is naturally isomorphic to the group of norm one elements of $\quadraticEtaleAlgebra$:
	$$\normoneGroup{\quadraticEtaleAlgebra} = \left\{ x \in \restrictionOfScalars_{\etaleAlgebra \slash \baseField} \multiplicativeGroup{\quadraticEtaleAlgebra} \mid x \involution{x} = 1 \right\}.$$
	The isomorphism is given by the map sending $x \in \normoneGroup{\quadraticEtaleAlgebra}$ to the multiplication by $x$ map $\multiplicationMap{x} \colon \quadraticEtaleHermitian{\lambda} \to \quadraticEtaleHermitian{\lambda}$. We refer to the inverse of this isomorphism as the \emph{obvious isomorphism} $\unitaryGroup{\quadraticEtaleHermitian{\lambda}} \to \normoneGroup{\quadraticEtaleAlgebra}$.
	Similarly, the unitary group $\unitaryGroup{\etaleSkewHermitian{\lambda}}$ is also naturally isomorphic to $\normoneGroup{\quadraticEtaleAlgebra}$, and we define the notion of the \emph{obvious isomorphism} $\unitaryGroup{\etaleSkewHermitian{\lambda}} \to \normoneGroup{\quadraticEtaleAlgebra}$. The group $\normoneGroup{\quadraticEtaleAlgebra}$ will be fundamental for defining maximal tori in unitary groups in the next sections.
	
	If $\etaleAlgebra = \prod_{j=1}^m \baseField_j$, where for every $j$, $\baseField_j \slash \baseField$ is a field extension, and $\lambda = \left(\lambda_1,\dots,\lambda_m\right) \in \prod_{j=1}^{m} \multiplicativeGroup{\baseField_j}$, then we have that $\unitaryGroup{\quadraticEtaleHermitian{\lambda}} = \prod_{j=1}^m \unitaryGroup{L_{\baseField_j, \lambda_j}}$, which is naturally isomorphic to $\prod_{j=1}^{m} \normoneGroup{\quadraticEtaleAlgebra[\baseField_j]}$. We also have a similar statement for the unitary group $\unitaryGroup{\etaleSkewHermitian{\lambda}}$ of the skew-hermitian $\quadraticEtaleAlgebra$-space $\etaleSkewHermitian{\lambda}$.
	
	\subsubsection{The discriminant of an \etale algebra}
	Let $\etaleAlgebra$ be an \etale algebra of degree $n$ over $\baseField$. Let $\mathbf e=\{e_i\}_{i=1}^{n}$ be a basis of $\etaleAlgebra$ over $\baseField$. The \emph{determinant} of the \etale algebra $\etaleAlgebra \slash \baseField$ with respect to the basis $\mathbf e$ is defined to be $$\det \left(\trace_{\etaleAlgebra \slash \baseField}\left(\cdot\right)\right)_{\mathbf e} \coloneqq \det \left(\trace_{\etaleAlgebra \slash \baseField}\left(e_ie_j\right)\right)_{i,j}.$$ It turns out that the class of $\det \left(\trace_{\etaleAlgebra \slash \baseField}\left(\cdot\right)\right)_{\mathbf e}$ in $\multiplicativeGroup{\baseField} \slash \left(\multiplicativeGroup{\baseField}\right)^2$ does not depend on the choice of the basis $\mathbf e$, and hence we can define $${\det}_{\baseField}\left(\etaleAlgebra\right) = \det\left(\trace_{\etaleAlgebra \slash \baseField}\left(e_i e_j\right)\right)_{ij} \in \multiplicativeGroup{\baseField} \slash \left(\multiplicativeGroup{\baseField}\right)^2.$$
	The \emph{discriminant} of the \etale algebra $\etaleAlgebra \slash \baseField$ is defined as
	$$ \discrim_{\baseField}\left(\etaleAlgebra\right) = \left(-1\right)^{n\left(n-1\right)/2} {\det}_{\baseField}\left(\etaleAlgebra\right) \in \multiplicativeGroup{\baseField} \slash \left(\multiplicativeGroup{\baseField}\right)^2.$$
	Although these elements are classes of $\multiplicativeGroup{\baseField} \slash \left(\multiplicativeGroup{\baseField}\right)^2$, we will always regard them as classes of $\multiplicativeGroup{\baseField} \slash N_{\quadraticExtension \slash \baseField}\left(\multiplicativeGroup{\quadraticExtension}\right)$ using the quotient map $\multiplicativeGroup{\baseField} \slash \left(\multiplicativeGroup{\baseField}\right)^2 \to \multiplicativeGroup{\baseField} \slash N_{\quadraticExtension \slash \baseField}\left(\multiplicativeGroup{\quadraticExtension}\right)$ (recall that $\left(\multiplicativeGroup{\baseField}\right)^2 = N_{\quadraticExtension \slash \baseField}\left(\multiplicativeGroup{\baseField}\right) \subset  N_{\quadraticExtension \slash \baseField}\left(\multiplicativeGroup{\quadraticExtension}\right)$).
	\subsubsection{Invariants of one-dimensional hermitian spaces over an \etale algebra} \label{subsec:invariants-of-one-dimensional-hermitian-spaces-over-an-etale-algebra}
	Let $\lambda \in \multiplicativeGroup{\etaleAlgebra}$. 
	We define the discriminant of the space $\quadraticEtaleHermitian{\lambda}$ as $$\discrim \quadraticEtaleHermitian{\lambda} = \lambda \in \multiplicativeGroup{\etaleAlgebra} \slash N_{\quadraticEtaleAlgebra \slash \etaleAlgebra}\left(\multiplicativeGroup{\quadraticEtaleAlgebra}\right).$$ We define the discriminant of the space $\etaleSkewHermitian{\lambda}$ as $$\discrim \etaleSkewHermitian{\lambda} = \delta \lambda \in \delta \multiplicativeGroup{\etaleAlgebra} \slash N_{\quadraticEtaleAlgebra \slash \etaleAlgebra}\left(\multiplicativeGroup{\quadraticEtaleAlgebra}\right).$$
	
	Suppose that $\baseField$ is a local field. We may encode each of these invariants as a tuple of signs. If $E = \prod_{j=1}^m \baseField_j$ as before, we define a homomorphism $$\omega_{\quadraticEtaleAlgebra/\etaleAlgebra} \colon \multiplicativeGroup{\etaleAlgebra} \slash N_{\quadraticEtaleAlgebra \slash \etaleAlgebra}\left(\multiplicativeGroup{\quadraticEtaleAlgebra}\right) \rightarrow \left\{\pm 1\right\}^m$$ as follows. For $\lambda = \left(\lambda_1,\dots,\lambda_m\right) \in \prod_{j=1}^m \multiplicativeGroup{\baseField_j}$ let
	$$\omega_{\quadraticEtaleAlgebra/\etaleAlgebra}\left(\lambda\right) = \left(\omega_{\quadraticEtaleAlgebra[\baseField_1]\slash \baseField_1}\left(\lambda_1\right),\dots,\omega_{\quadraticEtaleAlgebra[\baseField_m]\slash \baseField_m}\left(\lambda_m\right)\right),$$ where $\omega_{\quadraticEtaleAlgebra[\baseField_j]\slash \baseField_j}$ is the non-trivial quadratic character if $\quadraticEtaleAlgebra[\baseField_j]$ is a field, and $\omega_{\quadraticEtaleAlgebra[\baseField_j]\slash \baseField_j}$ is the trivial character otherwise. We define \begin{align*}
		\epsilon \left( \quadraticEtaleHermitian{\lambda} \right) = \omega_{\quadraticEtaleAlgebra/\etaleAlgebra}\left(\lambda\right) \text{ and } \epsilon_\delta(\etaleSkewHermitian{\lambda}) = \omega_{\quadraticEtaleAlgebra/\etaleAlgebra}\left(\lambda\right).
	\end{align*}
	
	\subsection{Restriction of scalars of one-dimensional hermitian spaces over $\quadraticEtaleAlgebra$}\label{baseChangeHermitian}
	Let $\baseField$ be a field and let $\quadraticExtension \slash \baseField$ be a quadratic field extension equipped with an involution $x \mapsto \involution{x}$. Let $\etaleAlgebra$ be an \etale algebra of degree $n$ over $\baseField$. For any $\lambda \in \etaleAlgebra^{\times}$, consider the following hermitian space over $\quadraticExtension$
	$$\etaleHermitian{\lambda} = \left(\restrictionOfScalars_{\quadraticEtaleAlgebra \slash \quadraticExtension} \quadraticEtaleAlgebra, \innerProduct{\cdot}{\cdot}_\lambda \right),$$
	equipped with the following hermitian form
	$$ \innerProduct{x}{y}_\lambda = \trace_{\quadraticEtaleAlgebra \slash \quadraticExtension} \innerProduct{x}{y}_{\quadraticEtaleHermitian{\lambda}} = \trace_{\quadraticEtaleAlgebra \slash \quadraticExtension} \left(\lambda {x \involution{y}}\right),$$
	where $x, y \in \quadraticEtaleAlgebra$.
	
	\begin{proposition}\label{prop:relation-between-discriminant-of-etale-hermitian-and-etale-algebra}
		We have the equalities $$
			\det \etaleHermitian{1} = {\det}_{\baseField}\left(\etaleAlgebra\right)$$ and $$ \discrim \etaleHermitian{1} = \discrim_{\baseField}\left(\etaleAlgebra\right)$$
        in $\multiplicativeGroup{\baseField} \slash N_{\quadraticExtension \slash \baseField}\left(\multiplicativeGroup{\quadraticExtension}\right)$.
	\end{proposition}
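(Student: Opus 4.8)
\emph{Proof proposal.} The plan is to compute the Gram matrix of the hermitian form $\innerProduct{\cdot}{\cdot}_1$ underlying $\etaleHermitian{1}$ with respect to a well-chosen basis and to recognize it as the trace-form matrix of $\etaleAlgebra \slash \baseField$. First I would fix a basis $\mathbf{e} = \{e_i\}_{i=1}^n$ of $\etaleAlgebra$ over $\baseField$; then $\{1 \otimes e_i\}_{i=1}^n$ is simultaneously a $\quadraticExtension$-basis of $\quadraticEtaleAlgebra = \quadraticExtension \otimes_{\baseField} \etaleAlgebra$ and a $\quadraticExtension$-basis of the underlying space $\restrictionOfScalars_{\quadraticEtaleAlgebra \slash \quadraticExtension} \quadraticEtaleAlgebra$ of $\etaleHermitian{1}$. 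Because the involution $\sigma$ on $\quadraticEtaleAlgebra$ fixes the subalgebra $\etaleAlgebra$, we have $\involution{(1 \otimes e_j)} = 1 \otimes e_j$, so the $(i,j)$-entry of the Gram matrix is $\innerProduct{1 \otimes e_i}{1 \otimes e_j}_1 = \trace_{\quadraticEtaleAlgebra \slash \quadraticExtension}\left((1 \otimes e_i)(1 \otimes e_j)\right) = \trace_{\quadraticEtaleAlgebra \slash \quadraticExtension}(1 \otimes e_i e_j)$.

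The key input is the base-change compatibility of the trace: for every $h \in \etaleAlgebra$ one has $\trace_{\quadraticEtaleAlgebra \slash \quadraticExtension}(1 \otimes h) = \trace_{\etaleAlgebra \slash \baseField}(h)$, the right-hand side regarded as an element of $\baseField \subseteq \quadraticExtension$. This holds because multiplication by $1 \otimes h$ on the free $\quadraticExtension$-module $\quadraticEtaleAlgebra$ is represented, in the basis $\{1 \otimes e_i\}$, by exactly the $\baseField$-matrix that represents multiplication by $h$ on $\etaleAlgebra$ in the basis $\{e_i\}$, and taking traces of these matrices gives the identity. Substituting $h = e_i e_j$, the Gram matrix of $\etaleHermitian{1}$ in the basis $\{1 \otimes e_i\}$ is precisely $\left(\trace_{\etaleAlgebra \slash \baseField}(e_i e_j)\right)_{ij}$, whose determinant is the representative $\det\left(\trace_{\etaleAlgebra \slash \baseField}(\cdot)\right)_{\mathbf{e}}$ of ${\det}_{\baseField}(\etaleAlgebra)$ attached to $\mathbf{e}$.

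It then remains only to pass to the relevant quotient groups. The two determinants agree on the nose in $\multiplicativeGroup{\baseField}$ — they are the determinant of one and the same matrix — hence they agree in $\multiplicativeGroup{\baseField} \slash N_{\quadraticExtension \slash \baseField}(\multiplicativeGroup{\quadraticExtension})$ as well; invoking the basis-independence of the class of $\det \etaleHermitian{1}$ (recalled in \Cref{subsection:hermitian-spaces}) and of the class of ${\det}_{\baseField}(\etaleAlgebra)$ in $\multiplicativeGroup{\baseField} \slash (\multiplicativeGroup{\baseField})^2$, and hence of its image in $\multiplicativeGroup{\baseField} \slash N_{\quadraticExtension \slash \baseField}(\multiplicativeGroup{\quadraticExtension})$, we obtain $\det \etaleHermitian{1} = {\det}_{\baseField}(\etaleAlgebra)$. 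Multiplying both sides by the common sign $(-1)^{n(n-1)/2}$ converts this into $\discrim \etaleHermitian{1} = \discrim_{\baseField}(\etaleAlgebra)$. I do not anticipate a genuine obstacle here; the only point demanding a little care is to compute both determinant invariants with respect to the \emph{same} basis $\mathbf{e}$ of $\etaleAlgebra$ (pushed forward to $\quadraticEtaleAlgebra$), so that the comparison is immediate, together with the standard base-change identity for traces and the already-established fact that both invariants are well defined independently of the basis.
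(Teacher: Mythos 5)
Your proposal is correct and follows essentially the same route as the paper: choose an $\baseField$-basis $\mathbf{e}$ of $\etaleAlgebra$, observe that $\sigma$ fixes $1 \otimes e_j$, identify the Gram matrix of $\innerProduct{\cdot}{\cdot}_1$ in the basis $\{1 \otimes e_i\}$ with $\left(\trace_{\etaleAlgebra/\baseField}(e_i e_j)\right)_{ij}$ via the base-change identity for traces, and pass to the quotient $\multiplicativeGroup{\baseField}/N_{\quadraticExtension/\baseField}(\multiplicativeGroup{\quadraticExtension})$. You spell out the justification for $\trace_{\quadraticEtaleAlgebra/\quadraticExtension}(1\otimes h)=\trace_{\etaleAlgebra/\baseField}(h)$ that the paper leaves implicit, but otherwise the two arguments coincide.
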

	\begin{proof}
		Choose an $\baseField$-basis $\mathbf e = \{e_i\}_{i=1}^n$ for $\etaleAlgebra$. Then $1 \otimes_{\baseField} \mathbf e = \{1 \otimes e_i\}_{i=1}^n$ is a $\quadraticExtension$-basis for $\etaleHermitian{1}$. We have that $\sigma\left(1 \otimes e_j\right) = 1 \otimes e_j,$ and therefore $$\left(\innerProduct{e_i}{e_j}_1\right)_{ij} = \left(\trace_{\quadraticEtaleAlgebra \slash \quadraticExtension}\left(1 \otimes e_i e_j\right)\right)_{ij} = \left(\trace_{\etaleAlgebra \slash \baseField}\left(e_i e_j\right)\right)_{ij}.$$
		Taking the determinant, we get that \begin{equation}\label{eq:determinants-of-etale-hermitian-space-and-etale-algebra-coincide}
			\det \left(\innerProduct{e_i}{e_j}_1\right)_{i,j} = \det \left(\trace_{\etaleAlgebra \slash \baseField}\left(e_i e_j\right)\right)_{ij},
		\end{equation}
		which implies that the classes of these elements in $\multiplicativeGroup{\baseField} \slash {N_{\quadraticExtension \slash \baseField}\left(\multiplicativeGroup{\quadraticExtension}\right)}$ are the same. The class of the left hand side of \eqref{eq:determinants-of-etale-hermitian-space-and-etale-algebra-coincide} is $\det \etaleHermitian{1}$, while the class of the right hand side of \eqref{eq:determinants-of-etale-hermitian-space-and-etale-algebra-coincide} is the image of $\det_{\baseField} \left(\etaleAlgebra\right)$ under the quotient map $\left(\multiplicativeGroup{\baseField}\right) \slash \left(\multiplicativeGroup{\baseField}\right)^2 \to \multiplicativeGroup{\baseField} \slash N_{\quadraticExtension \slash \baseField}\left(\multiplicativeGroup{\quadraticExtension}\right)$.
	\end{proof}
	
	Combining \Cref{prop:relation-between-discriminant-of-etale-hermitian-and-etale-algebra} with \cite[Corollary 1.2.2]{Bayer2018}, we have the following relation between the determinants and discriminants of $\etaleHermitian{\lambda}$ and $\etaleAlgebra$.
	\begin{lemma}[{}]\label{lem:discriminant-of-V-lambda}
		Let $\lambda\in \multiplicativeGroup{\etaleAlgebra}$. Then we have
		$$\det \etaleHermitian{\lambda} = N_{\etaleAlgebra/\baseField}(\lambda) \cdot {\det}_{\baseField}\left(\etaleAlgebra\right)$$
		and $$\discrim \etaleHermitian{\lambda} = N_{\etaleAlgebra/\baseField}(\lambda) \cdot \discrim_{\baseField}\left(\etaleAlgebra\right).$$
	\end{lemma}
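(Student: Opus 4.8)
The plan is to reduce everything to the case $\lambda = 1$, which is already handled by \Cref{prop:relation-between-discriminant-of-etale-hermitian-and-etale-algebra}, by isolating precisely how $\det \etaleHermitian{\lambda}$ depends on $\lambda$. First I would fix an $\baseField$-basis $\mathbf{e} = \{e_i\}_{i=1}^n$ of $\etaleAlgebra$, so that $\{1 \otimes_{\baseField} e_i\}_{i=1}^n$ is a $\quadraticExtension$-basis of $\etaleHermitian{\lambda} = \restrictionOfScalars_{\quadraticEtaleAlgebra \slash \quadraticExtension} \quadraticEtaleAlgebra$. Since the involution $\sigma$ on $\quadraticEtaleAlgebra$ fixes $\etaleAlgebra$ pointwise and $\lambda \in \multiplicativeGroup{\etaleAlgebra}$, the same computation as in the proof of \Cref{prop:relation-between-discriminant-of-etale-hermitian-and-etale-algebra} yields
$$\innerProduct{1 \otimes e_i}{1 \otimes e_j}_{\lambda} = \trace_{\quadraticEtaleAlgebra \slash \quadraticExtension}\!\left( \lambda\,(1 \otimes e_i)(1 \otimes e_j) \right) = \trace_{\etaleAlgebra \slash \baseField}\!\left( \lambda e_i e_j \right),$$
so that the Gram matrix of $\etaleHermitian{\lambda}$ in this basis is the $\lambda$-twisted trace-form matrix $\left( \trace_{\etaleAlgebra \slash \baseField}(\lambda e_i e_j) \right)_{ij}$.

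The key step is the scaling identity $\det\left( \trace_{\etaleAlgebra \slash \baseField}(\lambda e_i e_j) \right)_{ij} = N_{\etaleAlgebra \slash \baseField}(\lambda) \cdot \det\left( \trace_{\etaleAlgebra \slash \baseField}(e_i e_j) \right)_{ij}$ as an equality in $\multiplicativeGroup{\baseField}$, which is exactly \cite[Corollary 1.2.2]{Bayer2018}. One can also see it directly: if $M_\lambda \in \GL_n(\baseField)$ denotes the matrix of multiplication by $\lambda$ on $\etaleAlgebra$ in the basis $\mathbf{e}$, then $\left(\trace_{\etaleAlgebra \slash \baseField}(\lambda e_i e_j)\right)_{ij} = \left(\trace_{\etaleAlgebra \slash \baseField}(e_i e_j)\right)_{ij}\, M_\lambda$, and $\det M_\lambda = N_{\etaleAlgebra \slash \baseField}(\lambda)$ by the definition of the norm. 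Reducing modulo $N_{\quadraticExtension \slash \baseField}(\multiplicativeGroup{\quadraticExtension})$ and using \Cref{prop:relation-between-discriminant-of-etale-hermitian-and-etale-algebra} to recognize the class of $\det\left(\trace_{\etaleAlgebra \slash \baseField}(e_i e_j)\right)_{ij}$ as ${\det}_{\baseField}(\etaleAlgebra)$ (equivalently as $\det \etaleHermitian{1}$), I arrive at $\det \etaleHermitian{\lambda} = N_{\etaleAlgebra \slash \baseField}(\lambda) \cdot {\det}_{\baseField}(\etaleAlgebra)$ in $\multiplicativeGroup{\baseField}\slash N_{\quadraticExtension \slash \baseField}(\multiplicativeGroup{\quadraticExtension})$.

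Finally, the discriminant statement follows for free: the space $\etaleHermitian{\lambda}$ has dimension $n$ and $\etaleAlgebra$ has degree $n$, so $\discrim \etaleHermitian{\lambda} = (-1)^{n(n-1)/2}\det \etaleHermitian{\lambda}$ and $\discrim_{\baseField}(\etaleAlgebra) = (-1)^{n(n-1)/2}{\det}_{\baseField}(\etaleAlgebra)$ carry the same sign correction, so multiplying the determinant identity through by $(-1)^{n(n-1)/2}$ gives $\discrim \etaleHermitian{\lambda} = N_{\etaleAlgebra \slash \baseField}(\lambda) \cdot \discrim_{\baseField}(\etaleAlgebra)$. I do not expect a genuine obstacle here: the only point requiring a little care is that the \etale-algebra discriminant naturally lives in $\multiplicativeGroup{\baseField}\slash(\multiplicativeGroup{\baseField})^2$ whereas the hermitian-space discriminant lives in $\multiplicativeGroup{\baseField}\slash N_{\quadraticExtension \slash \baseField}(\multiplicativeGroup{\quadraticExtension})$, so one should observe that the scaling identity is an honest equality in $\multiplicativeGroup{\baseField}$ and hence descends compatibly to either quotient, and that $N_{\etaleAlgebra \slash \baseField}(\lambda)$ is a genuine element of $\multiplicativeGroup{\baseField}$.
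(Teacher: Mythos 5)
Your proof is correct and follows essentially the same route as the paper, which simply combines \Cref{prop:relation-between-discriminant-of-etale-hermitian-and-etale-algebra} with \cite[Corollary 1.2.2]{Bayer2018}. You merely supply a self-contained verification of the cited scaling identity $\det\left(\trace_{\etaleAlgebra/\baseField}(\lambda e_i e_j)\right)_{ij} = N_{\etaleAlgebra/\baseField}(\lambda)\,\det\left(\trace_{\etaleAlgebra/\baseField}(e_i e_j)\right)_{ij}$ via the factorization through $M_\lambda$, which is exactly what Bayer's corollary asserts.
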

	
	\subsubsection{Classification of maximal tori of unitary groups} In this section, we utilize the space $\etaleHermitian{\lambda}$ and the unitary group $\unitaryGroup{\quadraticEtaleHermitian{\lambda}}$ in order to classify the maximal tori of $\unitaryGroup{\hermitianSpace}$ where $\hermitianSpace$ is a non-degenerate $n$-dimensional hermitian space.
	
	For any $x \in \quadraticEtaleAlgebra$, we may consider the $\quadraticExtension$-linear map $\multiplicationMap{x} \colon \quadraticEtaleAlgebra \rightarrow \quadraticEtaleAlgebra$ defined by $\multiplicationMap{x} \left( y \right) = xy$. For any $\lambda \in \multiplicativeGroup{\etaleAlgebra}$ and any $x \in \quadraticEtaleAlgebra^1$, we have that the map $\multiplicationMap{x}$ preserves the hermitian form $\innerProduct{\cdot}{\cdot}_\lambda$. Let $$\torus = \left\{ \multiplicationMap{x} \mid x \in \normoneGroup{\quadraticEtaleAlgebra} \right\} \subset \unitaryGroup{\etaleHermitian{\lambda}}.$$

	Consider the central simple algebra $\ringendo\left( \etaleHermitian{\lambda} \right)$ with involution $S \mapsto \adjointMap{S}$. It contains the $n$-dimensional \etale $\quadraticExtension$-subalgebra $\quadraticEtaleAlgebra$ realized by the embedding $\quadraticEtaleAlgebra \rightarrow \ringendo\left(\etaleHermitian{\lambda}\right)$ given by $x \mapsto \multiplicationMap{x}$. Notice that $$\torus = \restrictionOfScalars_{\quadraticEtaleAlgebra \slash \baseField}\left(\GL_1\right) \cap \unitaryGroup{\etaleHermitian{\lambda}}.$$ By \cite[Proposition 2.3]{PrasadRapinchuk2010} we have that $\torus$ is a maximal torus in $\unitaryGroup{\etaleHermitian{\lambda}}$.
	
	If $\hermitianSpace$ is a non-degenerate $n$-dimensional hermitian space, all maximal tori of $\unitaryGroup{\hermitianSpace}$ can be described in a similar way. First, if $\etaleEmbedding\colon\etaleHermitian{\lambda} \rightarrow \hermitianSpace$ is an isomorphism of hermitian spaces, then the map $\unitaryGroup{\etaleHermitian{\lambda}} \rightarrow \unitaryGroup{\hermitianSpace}$ given by $g \mapsto \etaleEmbedding \circ g \circ \etaleEmbedding^{-1}$ is an isomorphism, and therefore $$\torus_{\etaleAlgebra,\etaleEmbedding} = \left\{ \etaleEmbedding \circ \multiplicationMap{x} \circ \etaleEmbedding^{-1} \mid x \in \normoneGroup{\quadraticEtaleAlgebra} \right\}$$ is a maximal torus in $\unitaryGroup{\hermitianSpace}$.
	
	For the other direction, by \cite[Proposition 2.3]{PrasadRapinchuk2010} if $\torus \subset \unitaryGroup{\hermitianSpace}$ is a maximal torus, then there exists an $n$-dimensional \etale algebra $\etaleAlgebra$ and an embedding $\etaleEndEmbedding\colon \quadraticEtaleAlgebra \rightarrow \ringendo\left(\hermitianSpace\right)$, such that $\torus = \etaleEndEmbedding\left(\restrictionOfScalars_{\quadraticEtaleAlgebra \slash \baseField}\left(\GL_1\right)\right) \cap \unitaryGroup{\hermitianSpace}$, or equivalently $\torus = \etaleEndEmbedding\left( \normoneGroup{\quadraticEtaleAlgebra} \right)$. Finally, by \cite[Proposition 1.4.1]{Bayer2018}, if $\etaleAlgebra$ is an $n$-dimensional \etale algebra over $\baseField$, then there exists an embedding $\etaleEndEmbedding\colon \quadraticEtaleAlgebra \rightarrow \ringendo\left(\hermitianSpace\right)$ if and only if there exists $\lambda \in \multiplicativeGroup{\etaleAlgebra}$ and an isomorphism of hermitian spaces $\etaleEmbedding \colon \etaleHermitian{\lambda} \rightarrow \hermitianSpace$. By the proof of \cite[Proposition 1.4.1]{Bayer2018}, in this case we have $i'\left(x\right) = \etaleEmbedding \circ \multiplicationMap{x} \circ \etaleEmbedding^{-1}$ for any $x \in \quadraticEtaleAlgebra$. We summarize these results in the following theorem.
	
	\begin{theorem}\label{thm:classificaiton-of-maximal-tori}
		For any \etale algebra $\etaleAlgebra$ of degree $n$ over $\baseField$, an element $\lambda \in \multiplicativeGroup{\etaleAlgebra}$ and an isomorphism of hermitian spaces $\etaleEmbedding \colon \etaleHermitian{\lambda} \rightarrow \hermitianSpace$, we have that $$\torus_{\etaleAlgebra,\etaleEmbedding} = \left\{ \etaleEmbedding \circ \multiplicationMap{x} \circ \etaleEmbedding^{-1} \mid x \in \normoneGroup{\quadraticEtaleAlgebra} \right\}$$ is a maximal torus in $\unitaryGroup{\hermitianSpace}$. Conversely, any maximal torus $\torus$ in $\unitaryGroup{\hermitianSpace}$ can be realized in this form.
	\end{theorem}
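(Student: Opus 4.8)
The plan is to assemble the statement from the two structural inputs recalled just above: Proposition 2.3 of \cite{PrasadRapinchuk2010}, which describes the maximal tori of a unitary group in terms of embeddings of $n$-dimensional \etale $\quadraticExtension$-algebras with involution into the endomorphism algebra, and Proposition 1.4.1 of \cite{Bayer2018}, which says precisely when such an embedding exists and records its explicit shape. No new ideas are needed beyond chaining these together.

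First I would treat the forward direction. Given $\etaleAlgebra$, $\lambda \in \multiplicativeGroup{\etaleAlgebra}$, and an isomorphism of hermitian spaces $\etaleEmbedding \colon \etaleHermitian{\lambda} \to \hermitianSpace$, transport of structure turns the group isomorphism $g \mapsto \etaleEmbedding \circ g \circ \etaleEmbedding^{-1}$ from $\unitaryGroup{\etaleHermitian{\lambda}}$ to $\unitaryGroup{\hermitianSpace}$ into an isomorphism of algebraic groups. Since each left-multiplication map $\multiplicationMap{x}$ with $x \in \normoneGroup{\quadraticEtaleAlgebra}$ preserves $\innerProduct{\cdot}{\cdot}_\lambda$, and since $\torus = \restrictionOfScalars_{\quadraticEtaleAlgebra \slash \baseField}(\GL_1) \cap \unitaryGroup{\etaleHermitian{\lambda}}$ is a maximal torus of $\unitaryGroup{\etaleHermitian{\lambda}}$ by \cite[Proposition 2.3]{PrasadRapinchuk2010}, its image $\torus_{\etaleAlgebra,\etaleEmbedding}$ under this isomorphism is a maximal torus of $\unitaryGroup{\hermitianSpace}$. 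This part is essentially formal once the cited proposition is in hand.

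For the converse, let $\torus \subset \unitaryGroup{\hermitianSpace}$ be a maximal torus. By \cite[Proposition 2.3]{PrasadRapinchuk2010} there are an $n$-dimensional \etale algebra $\etaleAlgebra$ over $\baseField$ and an embedding of algebras with involution $\etaleEndEmbedding \colon \quadraticEtaleAlgebra \to \ringendo(\hermitianSpace)$ such that $\torus = \etaleEndEmbedding\left(\restrictionOfScalars_{\quadraticEtaleAlgebra \slash \baseField} \GL_1\right) \cap \unitaryGroup{\hermitianSpace} = \etaleEndEmbedding\left(\normoneGroup{\quadraticEtaleAlgebra}\right)$. Now \cite[Proposition 1.4.1]{Bayer2018} supplies some $\lambda \in \multiplicativeGroup{\etaleAlgebra}$ together with an isomorphism of hermitian spaces $\etaleEmbedding \colon \etaleHermitian{\lambda} \to \hermitianSpace$, and inspecting the proof of that proposition shows $\etaleEndEmbedding(x) = \etaleEmbedding \circ \multiplicationMap{x} \circ \etaleEmbedding^{-1}$ for every $x \in \quadraticEtaleAlgebra$. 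Restricting to $x \in \normoneGroup{\quadraticEtaleAlgebra}$ gives $\torus = \etaleEndEmbedding\left(\normoneGroup{\quadraticEtaleAlgebra}\right) = \torus_{\etaleAlgebra,\etaleEmbedding}$, as required.

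The only point that needs care — more bookkeeping than genuine obstacle — is keeping track of the involutions at each step: one must check that $\etaleEndEmbedding$ is a $\quadraticExtension$-algebra embedding intertwining the involution on $\quadraticEtaleAlgebra$ (restricting to $x \mapsto \involution{x}$ on $\quadraticExtension$) with the adjoint involution $S \mapsto \adjointMap{S}$ on $\ringendo(\hermitianSpace)$, so that intersecting with $\unitaryGroup{\hermitianSpace}$ indeed cuts out exactly $\etaleEndEmbedding(\normoneGroup{\quadraticEtaleAlgebra})$ and not something larger. Once this compatibility is in place, both inclusions are immediate consequences of the two cited propositions, so the argument is short.
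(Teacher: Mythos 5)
Your proposal is correct and follows exactly the same route as the paper: transport of structure combined with \cite[Proposition 2.3]{PrasadRapinchuk2010} for the forward direction and for producing the embedding $\etaleEndEmbedding$, then \cite[Proposition 1.4.1]{Bayer2018} (including the explicit form from its proof) to reconstruct $\lambda$ and $\etaleEmbedding$ for the converse. Your closing remark about verifying that $\etaleEndEmbedding$ intertwines the involutions is a sensible point of care but does not change the argument, which matches the paper's own.
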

	We remark that this theorem is also true in the split case $\quadraticExtension = \baseField \times \baseField$ from easier considerations. In this case, the unitary group $\unitaryGroup{\hermitianSpace}$ is isomorphic to $\GL_n\left(\baseField\right)$. We also have in this case that $\quadraticEtaleAlgebra = \etaleAlgebra \times \etaleAlgebra$ and that $$\normoneGroup{\quadraticEtaleAlgebra} = \left\{\left(x,x^{-1}\right) \mid x \in \multiplicativeGroup{\etaleAlgebra }\right\} \cong \multiplicativeGroup{\etaleAlgebra}.$$ Hence, the statement in the split case is equivalent to the well-known fact that maximal tori in $\GL_n\left(\baseField\right)$ are in bijection with subgroups of the form $\left\{ \left[\multiplicationMap{x}\right]_{\mathcal{B}_E} \mid x \in \multiplicativeGroup{\etaleAlgebra}\right\}$ for some \etale algebra $\etaleAlgebra$ of degree $n$ over $\baseField$ and some $\baseField$-basis $\mathcal{B}_{\etaleAlgebra}$ of $\etaleAlgebra$, where $\left[\multiplicationMap{x}\right]_{\mathcal{B}_E}$ is the matrix representing $\multiplicationMap{x}$ with respect to the basis $\mathcal{B}_E$. See for example \cite[Proposition 3.2.29]{Rud2022} or \cite[Section 6.1]{Voskresenski1998}.
	
	\subsection{Admissible embeddings} \label{subsec:admissible-embeddings}
	Let $\hermitianSpace$ be a non-degenerate $n$-dimensional hermitian space over $\quadraticExtension$. Let $\etaleAlgebra$ be an \etale algebra of rank $n$ over $\baseField$. In this section, we define the notion of an \emph{admissible} embedding $\torusEmbedding \colon \normoneGroup{\quadraticEtaleAlgebra} \rightarrow \unitaryGroup{\hermitianSpace}$. We classify the set of admissible embeddings modulo conjugation by elements of $\unitaryGroup{\hermitianSpace}$. These results are needed for \Cref{sec:periods-of-tori}.
	
	We say that an embedding $\torusEmbedding \colon \normoneGroup{\quadraticEtaleAlgebra} \to \unitaryGroup{\hermitianSpace}$ is \emph{admissible} if there exists $\lambda \in \multiplicativeGroup{\etaleAlgebra}$ and an isomorphism of hermitian spaces $\etaleEmbedding \colon \etaleHermitian{\lambda} \rightarrow \hermitianSpace$, such that for any $x \in \normoneGroup{\quadraticEtaleAlgebra}$,
	$$ \torusEmbedding\left( x \right) \circ \etaleEmbedding = \etaleEmbedding \circ \multiplicationMap{x}.$$
	This definition is inspired by \cite[Section 3]{GGPExamples2012}.
	
	\begin{remark}[Uniqueness of $\lambda$ and $r$]
		Suppose that $r_1 \colon \etaleHermitian{\lambda} \to \hermitianSpace$ and $r_2 \colon \etaleHermitian{\lambda'} \to \hermitianSpace$ are isomorphisms of hermitian spaces, where $\lambda, \lambda' \in \multiplicativeGroup{\etaleAlgebra}$. Then we have that \begin{equation}\label{eq:conjugacy-of-multiplication-map-is-the-same}
			r_1 \circ \multiplicationMap{x} \circ r_1^{-1} = r_2 \circ \multiplicationMap{x} \circ r_2^{-1}
		\end{equation} for every $x \in \normoneGroup{\quadraticEtaleAlgebra}$, if and only if there exists $y \in \multiplicativeGroup{\quadraticEtaleAlgebra}$, such that $\lambda = N_{\quadraticEtaleAlgebra \slash \etaleAlgebra}\left(y\right) \lambda'$ and $r_1 = r_2 \circ \multiplicationMap{y}$.
		
		Indeed, if $\lambda = \lambda' \cdot N_{\quadraticEtaleAlgebra \slash \etaleAlgebra}\left(y\right)$ for $y \in \multiplicativeGroup{\quadraticEtaleAlgebra}$ and $r_1 = r_2 \circ \multiplicationMap{y}$, it is easy to check that \eqref{eq:conjugacy-of-multiplication-map-is-the-same} holds. On the other hand if \eqref{eq:conjugacy-of-multiplication-map-is-the-same} holds, we have from \Cref{prop:conjugacy-of-tori-implies-conjugacy-of-etale-end-morphism} below that $r_2^{-1} \circ r_1 \colon \quadraticEtaleAlgebra \to \quadraticEtaleAlgebra$ is an isomorphism of $\quadraticEtaleAlgebra$-algebras, and therefore there exists $y \in \quadraticEtaleAlgebra$ such that $r_2^{-1} \circ r_1 = \multiplicationMap{y}$. Since $r_2^{-1} \circ r_1 \colon \etaleHermitian{\lambda} \to \etaleHermitian{\lambda'}$ is an isomorphism of hermitian spaces, we must have that for every $x_1,x_2 \in \quadraticEtaleAlgebra$, $$ \innerProduct{x_1}{x_2}_{\lambda} = \innerProduct{\multiplicationMap{y} x_1}{\multiplicationMap{y} x_2}_{\lambda'} = \innerProduct{x_1}{x_2}_{N_{\quadraticEtaleAlgebra \slash \etaleAlgebra}\left(y\right) \lambda'},$$
		which implies that $\lambda = N_{\quadraticEtaleAlgebra \slash \etaleAlgebra}\left(y\right) \lambda'$.
		
	\end{remark}
	
	We say that two embeddings $\torusEmbedding_1, \torusEmbedding_2 \colon  \normoneGroup{\quadraticEtaleAlgebra} \rightarrow \unitaryGroup{\hermitianSpace}$ are \emph{conjugate} if there exists $h \in \unitaryGroup{\hermitianSpace}$, such that for any $x \in \normoneGroup{\quadraticEtaleAlgebra}$, $$h \circ \torusEmbedding_1\left(x\right) \circ h^{-1} = \torusEmbedding_2\left(x\right).$$
	
	The proof of the following property is technical and will be postponed to the appendix (\Cref{prop:extension-of-torus-embedding-to-etale-algebra}).
	
	\begin{proposition}\label{prop:conjugacy-of-tori-implies-conjugacy-of-etale-end-morphism}
		For $j=1,2$, let $\torusEmbedding_j \colon \normoneGroup{\quadraticEtaleAlgebra} \rightarrow \unitaryGroup{\hermitianSpace}$ be an admissible embedding corresponding to the data $\etaleEmbedding_j \colon \etaleHermitian{\lambda_j} \rightarrow \hermitianSpace$, where $\lambda_j \in \multiplicativeGroup{\etaleAlgebra}$. Let $\etaleEndEmbedding_j \colon \quadraticEtaleAlgebra \rightarrow \ringendo\left(\hermitianSpace\right)$ be the map $$ \etaleEndEmbedding_j\left(x\right) = \etaleEmbedding_j \circ \multiplicationMap{x} \circ \etaleEmbedding_j^{-1}.$$ Suppose that there exists $g \in \GL\left(\hermitianSpace\right)$, such that for any $x \in \normoneGroup{\quadraticEtaleAlgebra}$,  $$\torusEmbedding_1\left(x\right) = g \circ \torusEmbedding_2 \left(x\right) \circ g^{-1}.$$ Then for any $x \in \quadraticEtaleAlgebra,$ $$\etaleEndEmbedding_1 \left(x\right) = g \circ \etaleEndEmbedding_2 \left(x\right) \circ g^{-1}.$$ 
	\end{proposition}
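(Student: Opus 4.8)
The plan is to reduce the proposition to the purely algebraic assertion that $\normoneGroup{\quadraticEtaleAlgebra}$ generates $\quadraticEtaleAlgebra$ as a $\quadraticExtension$-algebra. First observe that, by the definition of an admissible embedding, $\torusEmbedding_j\left(x\right) = \etaleEmbedding_j \circ \multiplicationMap{x} \circ \etaleEmbedding_j^{-1} = \etaleEndEmbedding_j\left(x\right)$ for every $x \in \normoneGroup{\quadraticEtaleAlgebra}$, so that $\torusEmbedding_j$ is nothing but the restriction of $\etaleEndEmbedding_j$ to the norm one torus. Now both $\etaleEndEmbedding_1$ and the map $x \mapsto g \circ \etaleEndEmbedding_2\left(x\right) \circ g^{-1}$ are homomorphisms of $\quadraticExtension$-algebras $\quadraticEtaleAlgebra \to \ringendo\left(\hermitianSpace\right)$ — the latter because conjugation by $g \in \GL\left(\hermitianSpace\right)$ is a $\quadraticExtension$-algebra automorphism of $\ringendo\left(\hermitianSpace\right)$ — and by hypothesis they agree on $\normoneGroup{\quadraticEtaleAlgebra}$. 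Since the locus in $\quadraticEtaleAlgebra$ on which two $\quadraticExtension$-algebra homomorphisms coincide is a $\quadraticExtension$-subalgebra, it suffices to prove that the only $\quadraticExtension$-subalgebra of $\quadraticEtaleAlgebra$ containing $\normoneGroup{\quadraticEtaleAlgebra}$ is $\quadraticEtaleAlgebra$ itself.

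To prove this, let $R \subseteq \quadraticEtaleAlgebra$ be the $\quadraticExtension$-subalgebra generated by $\normoneGroup{\quadraticEtaleAlgebra}$; as a $\quadraticExtension$-subalgebra of an \etale $\quadraticExtension$-algebra, $R$ is again \etale over $\quadraticExtension$. Suppose $R \ne \quadraticEtaleAlgebra$. Then, by the structure theory of \etale algebras (comparing numbers of homomorphisms into a separable closure $\overline{\quadraticExtension}$ of $\quadraticExtension$ and using that restriction $\Hom_{\quadraticExtension}\left(\quadraticEtaleAlgebra, \overline{\quadraticExtension}\right) \to \Hom_{\quadraticExtension}\left(R, \overline{\quadraticExtension}\right)$ is surjective), there exist distinct $\quadraticExtension$-algebra homomorphisms $\rho_1 \ne \rho_2 \colon \quadraticEtaleAlgebra \to \overline{\quadraticExtension}$ agreeing on $R$, hence on $\normoneGroup{\quadraticEtaleAlgebra}$; via $\quadraticEtaleAlgebra = \quadraticExtension \otimes_{\baseField} \etaleAlgebra$ they restrict to distinct $\baseField$-algebra homomorphisms $\widetilde{\rho}_1 \ne \widetilde{\rho}_2 \colon \etaleAlgebra \to \overline{\quadraticExtension}$ with $\rho_i\left(k \otimes e\right) = k\,\widetilde{\rho}_i\left(e\right)$. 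To reach a contradiction I would use a Hilbert~$90$ family: fixing a trace zero element $\traceZeroElement \in \multiplicativeGroup{\quadraticExtension}$ (so $\involution{\traceZeroElement} = -\traceZeroElement$), for each $t \in \etaleAlgebra$ with $1 - t\traceZeroElement \in \multiplicativeGroup{\quadraticEtaleAlgebra}$ the element $z_t = \left(1 + t\traceZeroElement\right)\left(1 - t\traceZeroElement\right)^{-1}$ satisfies $\involution{z_t} = z_t^{-1}$, hence lies in $\normoneGroup{\quadraticEtaleAlgebra}$, while $\rho_i\left(z_t\right) = \left(1 + \widetilde{\rho}_i\left(t\right)\traceZeroElement\right)\left(1 - \widetilde{\rho}_i\left(t\right)\traceZeroElement\right)^{-1}$. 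Because $a \mapsto \left(1 + a\traceZeroElement\right)\left(1 - a\traceZeroElement\right)^{-1}$ is injective wherever defined, $\rho_1\left(z_t\right) = \rho_2\left(z_t\right)$ would force $\widetilde{\rho}_1\left(t\right) = \widetilde{\rho}_2\left(t\right)$. Now the set of $t$ with $1 - t\traceZeroElement \notin \multiplicativeGroup{\quadraticEtaleAlgebra}$ is a finite union of proper affine $\baseField$-subspaces of $\etaleAlgebra$, and $\left\{t : \widetilde{\rho}_1\left(t\right) = \widetilde{\rho}_2\left(t\right)\right\}$ is a proper $\baseField$-subspace; so, $\baseField$ being infinite, we may choose $t$ outside all of them and obtain $\rho_1\left(z_t\right) \ne \rho_2\left(z_t\right)$, a contradiction. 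Hence $R = \quadraticEtaleAlgebra$, and the proposition follows.

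The crux, and main obstacle, is this last step — that the norm one torus is ``large enough'' to recover all of $\quadraticEtaleAlgebra$, including in the cases where $\quadraticExtension$ embeds into $\etaleAlgebra$ so that some factors of $\quadraticEtaleAlgebra$ split; the Hilbert~$90$ family handles all of these uniformly, the only delicate point being the genericity used to choose $t$, which is why infinitude of $\baseField$ enters (it holds in every setting relevant to this paper). The remaining cases need only minor separate arguments: for finite $\baseField$ one writes down generators componentwise, and in the split case $\quadraticExtension = \baseField \times \baseField$ one has $\normoneGroup{\quadraticEtaleAlgebra} \cong \multiplicativeGroup{\etaleAlgebra}$ and $\quadraticEtaleAlgebra = \etaleAlgebra \times \etaleAlgebra$, with $\multiplicativeGroup{\etaleAlgebra}$ already $\baseField$-spanning $\etaleAlgebra$ when $\operatorname{char}\baseField \ne 2$ (and $\unitaryGroup{\hermitianSpace} \cong \GL_n\left(\baseField\right)$, where the statement is classical). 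Alternatively, when $\baseField$ is infinite one may replace the explicit family by a density argument: the $\baseField$-points of the torus $\normoneGroup{\quadraticEtaleAlgebra}$ are Zariski dense, so the spanning may be checked over $\overline{\baseField}$, where $\quadraticEtaleAlgebra$ splits completely and the norm one elements $\left(y, y^{-1}\right)$ manifestly generate.
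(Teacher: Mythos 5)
Your proof is correct, but it takes a genuinely different route from the paper's. The paper proves this in the appendix (\Cref{prop:extension-of-torus-embedding-to-etale-algebra}) by a direct computation: it writes an arbitrary $x=a+b\delta\in\quadraticEtaleAlgebra$ explicitly in terms of the quantities $q_{\etaleAlgebra}(b)=b\delta$, which are themselves expressed as a fixed rational function of the norm-one elements $j_{\etaleAlgebra}(1\pm b\delta)$, and then pushes the hypothesized conjugation identity through that formula term by term. You instead reduce the whole proposition to the single abstract claim that $\normoneGroup{\quadraticEtaleAlgebra}$ generates $\quadraticEtaleAlgebra$ as a $\quadraticExtension$-algebra, observing that $\etaleEndEmbedding_1$ and $g\circ\etaleEndEmbedding_2(\cdot)\circ g^{-1}$ are $\quadraticExtension$-algebra homomorphisms (this uses that $g$ is $\quadraticExtension$-linear, which is the intended meaning of $\GL\left(\hermitianSpace\right)$ here) whose agreement locus is a $\quadraticExtension$-subalgebra, and you prove the generation claim by counting homomorphisms into $\overline{\quadraticExtension}$ and using the Hilbert~$90$ family $z_t=(1+t\delta)(1-t\delta)^{-1}$ to separate any two of them. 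The two proofs are secretly built from the same raw material — the elements $j_{\etaleAlgebra}(1+t\delta)$ of the norm-one torus and the need to choose $t$ (respectively $c$ in the paper) generically, which is where the standing hypothesis that $\baseField$ is infinite enters in both — but your version packages the computation as a conceptual statement about \'etale algebras, while the paper's is a bare-hands identity that doubles as the proof of the closely related \Cref{prop:extension-of-morhpism-of-etale-algebras} (which the paper needs over $\overline{\baseField}$, where the abstract reformulation would require slightly more care). Two small caveats on your write-up: the main argument via $\Hom_{\quadraticExtension}(\cdot,\overline{\quadraticExtension})$ really does require $\quadraticExtension$ to be a field, so the split case $\quadraticExtension=\baseField\times\baseField$ genuinely needs the separate sentence you give it; and the closing alternative via Zariski density of $\normoneGroup{\quadraticEtaleAlgebra}(\baseField)$ is a heavier tool than the problem requires and is stated too loosely (passing from density to $\quadraticExtension$-algebra generation over $\baseField$, rather than over $\overline{\baseField}$, needs an additional descent step), but since the Hilbert~$90$ family already closes the argument this is harmless.
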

	
	The following lemma serves as a key for our classification of admissible embeddings modulo conjugacy.
	
	\begin{lemma}\label{lem:embeddings-are-conjugate-if-and-only-if-quotient-is-a-norm}
		Let $\lambda,\lambda' \in \multiplicativeGroup{\etaleAlgebra}$, such that $\etaleHermitian{\lambda}$ and $\etaleHermitian{\lambda'}$ are isomorphic to $\hermitianSpace$ as hermitian spaces. Let $\torusEmbedding \colon \normoneGroup{\quadraticEtaleAlgebra} \rightarrow \unitaryGroup{\hermitianSpace}$ be an admissible embedding corresponding to the data $\etaleEmbedding \colon \etaleHermitian{\lambda} \rightarrow \hermitianSpace$.
		Let ${\etaleEmbedding' \colon \etaleHermitian{\lambda'} \rightarrow \hermitianSpace}$ be an isometry. Denote by $\torusEmbedding_{\etaleEmbedding'} \colon \normoneGroup{\quadraticEtaleAlgebra} \rightarrow \unitaryGroup{\hermitianSpace}$ the admissible embedding $$\torusEmbedding_{\etaleEmbedding'}\left(x\right) = \left(\etaleEmbedding \circ {\etaleEmbedding'}^{-1}\right)^{-1} \circ \torusEmbedding\left(x\right) \circ \left(\etaleEmbedding \circ {\etaleEmbedding'}^{-1}\right),$$ with respect to the data $\etaleEmbedding' \colon \etaleHermitian{\lambda'} \rightarrow \hermitianSpace $.
		Then $\torusEmbedding_{\etaleEmbedding'}$ is conjugate to $\torusEmbedding$ if and only if there exists $y \in \multiplicativeGroup{\quadraticEtaleAlgebra}$, such that $$ \lambda^{-1} \lambda' = N_{\quadraticEtaleAlgebra \slash \etaleAlgebra}\left(y\right).$$
	\end{lemma}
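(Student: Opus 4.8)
The plan is to reduce the statement to a short computation with the multiplication forms $\innerProduct{x_1}{x_2}_\lambda = \lambda x_1 \involution{x_2}$, using exactly one non-formal input. First I would record that admissibility pins the embeddings down explicitly: from $\torusEmbedding(x) \circ \etaleEmbedding = \etaleEmbedding \circ \multiplicationMap{x}$ we get $\torusEmbedding(x) = \etaleEmbedding \circ \multiplicationMap{x} \circ \etaleEmbedding^{-1}$, and likewise $\torusEmbedding_{\etaleEmbedding'}(x) = \etaleEmbedding' \circ \multiplicationMap{x} \circ \etaleEmbedding'^{-1}$ (the fact that $\torusEmbedding_{\etaleEmbedding'}$ is admissible with data $\etaleEmbedding'$ is the one-line check $\torusEmbedding_{\etaleEmbedding'}(x)\circ\etaleEmbedding' = \etaleEmbedding'\circ\multiplicationMap{x}$, already asserted in the statement). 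So the lemma asks precisely when the two embeddings $x \mapsto \etaleEmbedding \circ \multiplicationMap{x} \circ \etaleEmbedding^{-1}$ and $x \mapsto \etaleEmbedding' \circ \multiplicationMap{x} \circ \etaleEmbedding'^{-1}$ of $\normoneGroup{\quadraticEtaleAlgebra}$ into $\unitaryGroup{\hermitianSpace}$ are conjugate by an element of $\unitaryGroup{\hermitianSpace}$.

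For the ``if'' direction, suppose $\lambda^{-1}\lambda' = N_{\quadraticEtaleAlgebra \slash \etaleAlgebra}(y)$ with $y \in \multiplicativeGroup{\quadraticEtaleAlgebra}$. Since $N_{\quadraticEtaleAlgebra \slash \etaleAlgebra}(y) = y\involution{y}$, the identity $\innerProduct{\multiplicationMap{y}x_1}{\multiplicationMap{y}x_2}_\lambda = \lambda\, y\involution{y}\, x_1\involution{x_2} = \lambda'\, x_1\involution{x_2} = \innerProduct{x_1}{x_2}_{\lambda'}$ shows $\multiplicationMap{y} \colon \etaleHermitian{\lambda'} \to \etaleHermitian{\lambda}$ is an isometry. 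Then $\etaleEmbedding \circ \multiplicationMap{y} \colon \etaleHermitian{\lambda'} \to \hermitianSpace$ and $\etaleEmbedding' \colon \etaleHermitian{\lambda'} \to \hermitianSpace$ are isometries from the same space, so $h := \etaleEmbedding' \circ (\etaleEmbedding \circ \multiplicationMap{y})^{-1} \in \unitaryGroup{\hermitianSpace}$. Because $\quadraticEtaleAlgebra$ is commutative, $\multiplicationMap{y}$ commutes with every $\multiplicationMap{x}$, and conjugating $\torusEmbedding(x) = \etaleEmbedding\circ\multiplicationMap{x}\circ\etaleEmbedding^{-1}$ by $h$ telescopes to $\etaleEmbedding' \circ \multiplicationMap{x} \circ \etaleEmbedding'^{-1} = \torusEmbedding_{\etaleEmbedding'}(x)$, so the two embeddings are conjugate.

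For the ``only if'' direction, suppose $h \in \unitaryGroup{\hermitianSpace}$ conjugates $\torusEmbedding$ to $\torusEmbedding_{\etaleEmbedding'}$. Applying \Cref{prop:conjugacy-of-tori-implies-conjugacy-of-etale-end-morphism} to $h$ upgrades the conjugation relation from the norm-one torus to all of $\quadraticEtaleAlgebra$, giving $\etaleEmbedding' \circ \multiplicationMap{x} \circ \etaleEmbedding'^{-1} = h \circ \etaleEmbedding \circ \multiplicationMap{x} \circ \etaleEmbedding^{-1} \circ h^{-1}$ for every $x \in \quadraticEtaleAlgebra$; equivalently $g := \etaleEmbedding'^{-1} \circ h \circ \etaleEmbedding \in \GL_{\quadraticExtension}(\quadraticEtaleAlgebra)$ centralizes $\{\multiplicationMap{x} : x \in \quadraticEtaleAlgebra\}$ in $\ringendo_{\quadraticExtension}(\quadraticEtaleAlgebra)$. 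Since $\quadraticEtaleAlgebra$ acting on itself by multiplication is a maximal commutative $\quadraticExtension$-subalgebra of $\ringendo_{\quadraticExtension}(\quadraticEtaleAlgebra)$ — it equals its own centralizer — we get $g = \multiplicationMap{z}$ for some $z \in \quadraticEtaleAlgebra$, and $z \in \multiplicativeGroup{\quadraticEtaleAlgebra}$ because $\multiplicationMap{z}$ is invertible (in a finite-dimensional commutative algebra a non-unit is a zero divisor). Now $\multiplicationMap{z} = \etaleEmbedding'^{-1} \circ h \circ \etaleEmbedding \colon \etaleHermitian{\lambda} \to \etaleHermitian{\lambda'}$ is an isometry, and comparing forms as above yields $\lambda = N_{\quadraticEtaleAlgebra \slash \etaleAlgebra}(z)\,\lambda'$, hence $\lambda^{-1}\lambda' = N_{\quadraticEtaleAlgebra \slash \etaleAlgebra}(z^{-1})$; one takes $y = z^{-1}$.

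The only genuinely non-formal ingredient is \Cref{prop:conjugacy-of-tori-implies-conjugacy-of-etale-end-morphism}, which passes from conjugacy of the norm-one tori to conjugacy of the full \'etale embeddings $\etaleEndEmbedding_j$; that proposition is deferred to the appendix and is where the real work lies. Everything else reduces to the explicit formula $\innerProduct{x_1}{x_2}_\lambda = \lambda x_1 \involution{x_2}$, the identity $N_{\quadraticEtaleAlgebra \slash \etaleAlgebra}(y) = y\involution{y}$, commutativity of $\quadraticEtaleAlgebra$, and the self-centralizing property of an \'etale subalgebra in its regular representation.
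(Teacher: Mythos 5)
Your proof is correct and takes essentially the same route as the paper's: both invoke \Cref{prop:conjugacy-of-tori-implies-conjugacy-of-etale-end-morphism} to upgrade conjugacy from $\normoneGroup{\quadraticEtaleAlgebra}$ to conjugacy of the full $\quadraticEtaleAlgebra$-algebra embeddings, and then reduce to the elementary computation of $\innerProduct{\cdot}{\cdot}_\lambda$ along a multiplication map $\multiplicationMap{y}$. The only cosmetic difference is that you spell out the self-centralizing property of $\quadraticEtaleAlgebra$ in $\ringendo_{\quadraticExtension}(\quadraticEtaleAlgebra)$ explicitly, whereas the paper packages it into the intermediate equivalence ``$\torusEmbedding$ and $\torusEmbedding_{\etaleEmbedding'}$ conjugate iff some $\etaleEmbedding^{-1}\circ h\circ\etaleEmbedding'$ is $\quadraticEtaleAlgebra$-linear.''
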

	\begin{proof}
		By \Cref{prop:conjugacy-of-tori-implies-conjugacy-of-etale-end-morphism}, the embeddings $\torusEmbedding$ and $\torusEmbedding_{\etaleEmbedding'}$ are conjugate if and only the maps $\etaleEndEmbedding, \etaleEndEmbedding_{\etaleEmbedding'} \colon \quadraticEtaleAlgebra \rightarrow \ringendo\left(\hermitianSpace\right)$, given by $\etaleEndEmbedding\left(x\right) = \etaleEmbedding \circ \multiplicationMap{x} \circ \etaleEmbedding^{-1}$ and $\etaleEndEmbedding_{\etaleEmbedding'}\left(x\right) = \etaleEmbedding' \circ \multiplicationMap{x} \circ {\etaleEmbedding'}^{-1}$ are conjugate. This happens if and only if there exists $h \in \unitaryGroup{\hermitianSpace}$, such that $${\etaleEmbedding^{-1} \circ h \circ \etaleEmbedding' \colon \quadraticEtaleAlgebra \rightarrow \quadraticEtaleAlgebra}$$ is an $\quadraticEtaleAlgebra$-linear map. 
		
		Suppose that there exists $h \in \unitaryGroup{\hermitianSpace}$ such that ${\etaleEmbedding^{-1} \circ h \circ \etaleEmbedding'}$ is an $\quadraticEtaleAlgebra$-linear map. Then $h \circ \etaleEmbedding' = \etaleEmbedding \circ \multiplicationMap{y}$ for some $y \in \multiplicativeGroup{\quadraticEtaleAlgebra}$. For any $x_1,x_2 \in \quadraticEtaleAlgebra$, we have 
		$$ \innerProduct{h \etaleEmbedding' x_1}{ h \etaleEmbedding' x_2}_\hermitianSpace = \innerProduct{\etaleEmbedding' x_1}{\etaleEmbedding' x_2}_\hermitianSpace = \innerProduct{x_1}{x_2}_{\lambda'}.$$ and $$\innerProduct{\etaleEmbedding \multiplicationMap{y} x_1}{\etaleEmbedding  \multiplicationMap{y} x_2}_{\hermitianSpace} = \innerProduct{ \multiplicationMap{y} x_1}{\multiplicationMap{y} x_2}_{\lambda} = \innerProduct{x_1}{x_2}_{N_{\quadraticEtaleAlgebra \slash \etaleAlgebra}\left(y\right) \lambda}.$$
		Hence, we have for every $x_1, x_2 \in \quadraticEtaleAlgebra$,
		$$ \innerProduct{x_1}{x_2}_{\lambda'} = \innerProduct{x_1}{x_2}_{N_{\quadraticEtaleAlgebra \slash \etaleAlgebra}\left(y\right)  \lambda},$$
		which implies that $\lambda'  = N_{\quadraticEtaleAlgebra \slash \etaleAlgebra}\left(y\right) \lambda,$ and therefore $$ \lambda^{-1} \lambda' = N_{\quadraticEtaleAlgebra \slash \etaleAlgebra}\left(y\right),$$ as required.
		
		We move to prove the other direction. Suppose that $\lambda' = N_{\quadraticEtaleAlgebra \slash \etaleAlgebra}\left(y\right) \lambda$, where $y \in \multiplicativeGroup{\quadraticEtaleAlgebra}$. Define $h = \etaleEmbedding \circ \multiplicationMap{y} \circ {\etaleEmbedding'}^{-1}.$ Then $\etaleEmbedding^{-1} \circ h \circ \etaleEmbedding' = \multiplicationMap{y}$ is an $\quadraticEtaleAlgebra$-linear map. We check that the element $h$ we have constructed lies in the unitary group $\unitaryGroup{\hermitianSpace}$. Repeating similar steps to before, we have
		$$ \innerProduct{h \etaleEmbedding' x_1}{h \etaleEmbedding' x_2}_\hermitianSpace = \innerProduct{\etaleEmbedding  \multiplicationMap{y}   x_1}{\etaleEmbedding \multiplicationMap{y}   x_2}_\hermitianSpace = \innerProduct{ x_1}{x_2}_{N_{\quadraticEtaleAlgebra \slash \etaleAlgebra}\left(y\right) \lambda}.$$
		Since $N_{\quadraticEtaleAlgebra \slash \etaleAlgebra}\left(y\right) \lambda = \lambda'$, we get $$ \innerProduct{h \etaleEmbedding' x_1}{h \etaleEmbedding' x_2}_\hermitianSpace = \innerProduct{x_1}{x_2}_{\lambda'} = \innerProduct{\etaleEmbedding' x_1}{\etaleEmbedding' x_2}_{\hermitianSpace}.$$ Therefore, we get that $h \in \unitaryGroup{\hermitianSpace}$, as required.
	\end{proof}
	
	\begin{remark}
		If $\torusEmbedding_1, \torusEmbedding_2 \colon \normoneGroup{\quadraticEtaleAlgebra} \to \unitaryGroup{\hermitianSpace}$ are embeddings, we say that $\torusEmbedding_1$ and $\torusEmbedding_2$ are \emph{stably conjugate} if there exists $g \in \unitaryGroup{\hermitianSpace}\left(\bar{\baseField}\right)$, such that for any $x \in \normoneGroup{\quadraticEtaleAlgebra}$, $$g \circ \torusEmbedding_1\left(x\right) \circ g^{-1} = \torusEmbedding_2\left(x\right).$$
		Here, $\bar{\baseField}$ is an algebraic closure of $\baseField$, and we refer to \Cref{subsec:functoriality-for-admissible-embeddings} for the definition of $\unitaryGroup{\hermitianSpace}\left(\bar{\baseField}\right)$. We have that the set of admissible embeddings $i \colon \normoneGroup{\quadraticEtaleAlgebra} \to \unitaryGroup{\hermitianSpace}$ is a stable conjugacy class. We will show this only in the case where $\quadraticExtension \slash \baseField$ is a quadratic field extension, as the other case is simpler. In this case, we choose $\bar{\baseField}$ to be an algebraic closure of $\quadraticExtension$. We have that $\unitaryGroup{\hermitianSpace}\left(\bar{\baseField}\right)$ is naturally isomorphic to $\GL_{\bar{\baseField}}\left(\hermitianSpace \otimes_{\quadraticExtension} \bar{\baseField}\right)$, the group of invertible $\bar{\baseField}$-linear maps $\hermitianSpace \otimes_{\quadraticExtension} \bar{\baseField} \to \hermitianSpace \otimes_{\quadraticExtension} \bar{\baseField}.$ If $\torusEmbedding_1$ and $\torusEmbedding_2$ are admissible embeddings corresponding to the data $\etaleEmbedding_1 \colon \etaleHermitian{\lambda_1} \to \hermitianSpace$ and $\etaleEmbedding_2 \colon \etaleHermitian{\lambda_2} \to \hermitianSpace$ with $\lambda_1, \lambda_2 \in \multiplicativeGroup{\etaleAlgebra}$, then for any $x \in \normoneGroup{\quadraticEtaleAlgebra}$, we have $g \circ \torusEmbedding_1 \left(x\right) \circ g^{-1} = \torusEmbedding_2 \left(x\right),$ where $g = \etaleEmbedding_2 \circ \etaleEmbedding_1^{-1} \in \GL_{\quadraticExtension}\left(\hermitianSpace\right)$ (where $\GL_{\quadraticExtension}\left(\hermitianSpace\right)$ is the group of invertible $\quadraticExtension$-linear maps $\hermitianSpace \to \hermitianSpace$). Hence, $\torusEmbedding_1$ and $\torusEmbedding_2$ are stably conjugate.
		
		On the other hand, if $\torusEmbedding_1 \colon \normoneGroup{\quadraticEtaleAlgebra} \to \unitaryGroup{\hermitianSpace}$ is an admissible embedding, corresponding to the data $\lambda \in \multiplicativeGroup{\etaleAlgebra}$ and $r \colon \etaleHermitian{\lambda} \to \hermitianSpace$, and $\torusEmbedding_2 \colon \normoneGroup{\quadraticEtaleAlgebra} \to \unitaryGroup{\hermitianSpace}$ is an embedding, such that $\torusEmbedding_1$ and $\torusEmbedding_2$ are stably conjugate, then there exists an invertible $\bar{\baseField}$-linear map $g \colon \hermitianSpace \otimes_{\quadraticExtension} \bar{\baseField} \to \hermitianSpace \otimes_{\quadraticExtension} \bar{\baseField}$ such that $g \circ \left(\torusEmbedding_1\left(x\right) \otimes \identityOperator_{\bar{\baseField}}\right) \circ g^{-1} = \torusEmbedding_2\left(x\right) \otimes \identityOperator_{\bar{\baseField}}$ for any $x \in \normoneGroup{\quadraticEtaleAlgebra}$. Since $\torusEmbedding_2\left( \normoneGroup{\quadraticEtaleAlgebra} \right)$ is a maximal torus in $\unitaryGroup{\hermitianSpace}$, there exists an \etale algebra $\etaleAlgebra'$ of rank $n$ over $\baseField$, a $\lambda' \in \multiplicativeGroup{\left(\etaleAlgebra'\right)}$, and an isomorphism of hermitian spaces $\etaleEmbedding' \colon \hermitianSpace_{\etaleAlgebra', {\lambda'}} \to \hermitianSpace$, such that the image of the map $\normoneGroup{\quadraticEtaleAlgebra[\etaleAlgebra']} \to \unitaryGroup{\hermitianSpace}$ given by $x \mapsto \etaleEmbedding' \circ \multiplicationMap{x} \circ \left(\etaleEmbedding'\right)^{-1}$ is $\torusEmbedding_2\left(\normoneGroup{\quadraticEtaleAlgebra}\right)$. Consider the map $T = \left(r^{-1} \otimes \identityOperator_{\bar{\baseField}} \right) \circ g^{-1} \circ \left(r' \otimes \identityOperator_{\bar{\baseField}}\right) \colon \quadraticEtaleAlgebra[\etaleAlgebra'] \otimes_{\quadraticExtension} \bar{\baseField} \to \quadraticEtaleAlgebra \otimes_{\quadraticExtension} \bar{\baseField}$. Since we have for any $x \in \normoneGroup{\quadraticEtaleAlgebra}$, $T \circ \left(\multiplicationMap{x} \otimes \identityOperator_{\bar{\baseField}}\right) = \left(\multiplicationMap{\tau\left(x\right)} \otimes \identityOperator_{\bar{\baseField}}\right) \circ T$ for some $\tau\left(x\right) \in \normoneGroup{\quadraticEtaleAlgebra[\etaleAlgebra']}$, by \Cref{prop:extension-of-morhpism-of-etale-algebras} for any $x \in \quadraticEtaleAlgebra$, there exists a unique $\tau\left(x\right) \in \quadraticEtaleAlgebra[\etaleAlgebra']$, such that $T \circ \left(\multiplicationMap{\tau\left(x\right)} \otimes \identityOperator_{\bar{\baseField}}\right) = \left(\multiplicationMap{x} \otimes \identityOperator_{\bar{\baseField}} \right) \circ T$. It follows that $\tau \colon \quadraticEtaleAlgebra \to \quadraticEtaleAlgebra[\etaleAlgebra']$ is an isomorphism of \etale algebras over $\quadraticExtension$, that is, it is an isomorphism of rings that fixes $\quadraticExtension$. We have that for any $x \in \normoneGroup{\quadraticEtaleAlgebra}$,
		\begin{align*}
			\torusEmbedding_2\left(x\right) \otimes \identityOperator_{\bar{\baseField}} &= \left(\etaleEmbedding' \otimes \identityOperator_{\bar{\baseField}}\right) \circ T^{-1} \circ \left(\multiplicationMap{x} \otimes \identityOperator_{\bar{\baseField}}\right) \circ T \circ \left(\left(\etaleEmbedding' \right)^{-1}  \otimes \identityOperator_{\bar{\baseField}}\right) \\
			&= \left(\etaleEmbedding' \circ \multiplicationMap{\tau\left(x\right)} \circ \left(\etaleEmbedding'\right)^{-1}\right) \otimes \identityOperator_{\bar{\baseField}}.
		\end{align*}
		This implies that $\torusEmbedding_2$ is an admissible embedding with respect to the data $\tau^{-1}\left(\lambda'\right) \in \multiplicativeGroup{\etaleAlgebra}$ and $\etaleEmbedding'' \colon \etaleHermitian{\tau^{-1}\left(\lambda'\right)} \to \hermitianSpace$, given by $r''\left(y\right) = r'\left(\tau\left(y\right) \right)$, as required.
		
	\end{remark}
	
	It is clear that if $\lambda, \lambda' \in \multiplicativeGroup{\etaleAlgebra}$ are such that $\lambda = N_{\quadraticEtaleAlgebra \slash \etaleAlgebra}\left(y\right) \cdot \lambda'$, for some $y \in \multiplicativeGroup{\quadraticEtaleAlgebra}$, then the hermitian spaces $\etaleHermitian{\lambda}$ and $\etaleHermitian{\lambda'}$ are isomorphic by the map $\etaleHermitian{\lambda} \to \etaleHermitian{\lambda'}$ given by $x \mapsto \multiplicationMap{y}x$. The following theorem establishes a bijection between admissible embeddings of $\normoneGroup{\quadraticEtaleAlgebra}$ modulo conjugation and certain classes $\lambda \in \multiplicativeGroup{\etaleAlgebra} \slash N_{\quadraticEtaleAlgebra \slash \etaleAlgebra} \left(\multiplicativeGroup{\quadraticEtaleAlgebra}\right)$.
	
	\begin{theorem}\label{thm:natural-bijection-admissible-embeddings}
		There exists a natural bijection between the set
		$$ \left\{ \lambda \in \multiplicativeGroup{\etaleAlgebra} \slash N_{\quadraticEtaleAlgebra \slash \etaleAlgebra} \left(\multiplicativeGroup{\quadraticEtaleAlgebra}\right) \mid  \etaleHermitian{\lambda} \cong \hermitianSpace \text{ as hermitian spaces} \right\}$$ and the set $$\Sigma_{\etaleAlgebra, \hermitianSpace} = \left\{ \torusEmbedding \colon \normoneGroup{\quadraticEtaleAlgebra} \rightarrow \unitaryGroup{\hermitianSpace} \mid \torusEmbedding \text{ is admissible} \right\} \slash \unitaryGroup{\hermitianSpace}\text{-conjugation }.$$
		This bijection is given as follows. For any $\lambda \in \multiplicativeGroup{\etaleAlgebra}$ such that $\hermitianSpace \cong \etaleHermitian{\lambda}$, choose an isomorphism of hermitian spaces $\etaleEmbedding \colon \etaleHermitian{\lambda} \rightarrow \hermitianSpace$ and define an admissible embedding $\torusEmbedding_{\etaleEmbedding} \colon \normoneGroup{\quadraticEtaleAlgebra} \to \unitaryGroup{\hermitianSpace}$ by the formula $$ \torusEmbedding_{\etaleEmbedding}\left(x\right) = \etaleEmbedding \circ \multiplicationMap{x} \circ {\etaleEmbedding}^{-1},$$ where $x \in \normoneGroup{\quadraticEtaleAlgebra}$. The bijection sends the class $\left[\lambda\right] \in \multiplicativeGroup{\etaleAlgebra} \slash N_{\quadraticEtaleAlgebra \slash \etaleAlgebra} \left(\multiplicativeGroup{\quadraticEtaleAlgebra}\right)$ to the conjugacy class $c_{\lambda} = \left[\torusEmbedding_r\right] \in \Sigma_{\etaleAlgebra, \hermitianSpace}$ of $\torusEmbedding_r$.
	\end{theorem}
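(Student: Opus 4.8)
The plan is to show that the assignment $[\lambda] \mapsto c_\lambda$ is a well-defined map from $\left\{ [\lambda] \in \multiplicativeGroup{\etaleAlgebra} \slash N_{\quadraticEtaleAlgebra \slash \etaleAlgebra}\left(\multiplicativeGroup{\quadraticEtaleAlgebra}\right) \mid \etaleHermitian{\lambda} \cong \hermitianSpace \right\}$ to $\Sigma_{\etaleAlgebra, \hermitianSpace}$ that is both injective and surjective; the only genuinely non-formal input is \Cref{lem:embeddings-are-conjugate-if-and-only-if-quotient-is-a-norm}. \emph{Well-definedness} requires three small checks. First, the source set is itself well defined, since, as recorded just before the theorem, $\etaleHermitian{\lambda} \cong \etaleHermitian{\lambda'}$ whenever $\lambda^{-1}\lambda' \in N_{\quadraticEtaleAlgebra \slash \etaleAlgebra}\left(\multiplicativeGroup{\quadraticEtaleAlgebra}\right)$. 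Second, $c_\lambda = \left[\torusEmbedding_{\etaleEmbedding}\right]$ does not depend on the chosen isometry $\etaleEmbedding \colon \etaleHermitian{\lambda} \to \hermitianSpace$: for a second isometry $\etaleEmbedding'$ the map $h = \etaleEmbedding' \circ \etaleEmbedding^{-1}$ is a composite of isometries of $\hermitianSpace$, hence lies in $\unitaryGroup{\hermitianSpace}$, and $h \circ \torusEmbedding_{\etaleEmbedding}(x) \circ h^{-1} = \torusEmbedding_{\etaleEmbedding'}(x)$ for all $x \in \normoneGroup{\quadraticEtaleAlgebra}$ by a one-line computation. Third, $[\lambda] \mapsto c_\lambda$ is constant on cosets of $N_{\quadraticEtaleAlgebra \slash \etaleAlgebra}\left(\multiplicativeGroup{\quadraticEtaleAlgebra}\right)$: if $\lambda = N_{\quadraticEtaleAlgebra \slash \etaleAlgebra}(y)\lambda'$ with $y \in \multiplicativeGroup{\quadraticEtaleAlgebra}$, then $\multiplicationMap{y} \colon \etaleHermitian{\lambda} \to \etaleHermitian{\lambda'}$ is an isometry, so $c_\lambda$ may be computed from $\etaleEmbedding' \circ \multiplicationMap{y}$ for any isometry $\etaleEmbedding' \colon \etaleHermitian{\lambda'} \to \hermitianSpace$, and since $\quadraticEtaleAlgebra$ is commutative $\multiplicationMap{y} \circ \multiplicationMap{x} \circ \multiplicationMap{y}^{-1} = \multiplicationMap{x}$, whence $\torusEmbedding_{\etaleEmbedding' \circ \multiplicationMap{y}} = \torusEmbedding_{\etaleEmbedding'}$ and $c_\lambda = c_{\lambda'}$.

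\emph{Surjectivity} is immediate from the definition of an admissible embedding: any $\torusEmbedding \in \Sigma_{\etaleAlgebra, \hermitianSpace}$ equals $\torusEmbedding_{\etaleEmbedding}$ for some $\lambda \in \multiplicativeGroup{\etaleAlgebra}$ and some isometry $\etaleEmbedding \colon \etaleHermitian{\lambda} \to \hermitianSpace$, so $\etaleHermitian{\lambda} \cong \hermitianSpace$ and $[\lambda]$ is sent to $[\torusEmbedding]$. For \emph{injectivity}, suppose $c_\lambda = c_{\lambda'}$ with $\etaleHermitian{\lambda} \cong \hermitianSpace \cong \etaleHermitian{\lambda'}$, and fix isometries $\etaleEmbedding \colon \etaleHermitian{\lambda} \to \hermitianSpace$ and $\etaleEmbedding' \colon \etaleHermitian{\lambda'} \to \hermitianSpace$; then $\torusEmbedding_{\etaleEmbedding}$ and $\torusEmbedding_{\etaleEmbedding'}$ are $\unitaryGroup{\hermitianSpace}$-conjugate. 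I would apply \Cref{lem:embeddings-are-conjugate-if-and-only-if-quotient-is-a-norm} with $\torusEmbedding := \torusEmbedding_{\etaleEmbedding}$, after observing that the embedding denoted $\torusEmbedding_{\etaleEmbedding'}$ in that lemma, namely $x \mapsto \etaleEmbedding' \circ \etaleEmbedding^{-1} \circ \torusEmbedding_{\etaleEmbedding}(x) \circ \etaleEmbedding \circ (\etaleEmbedding')^{-1}$, collapses to $x \mapsto \etaleEmbedding' \circ \multiplicationMap{x} \circ (\etaleEmbedding')^{-1}$, which is exactly our $\torusEmbedding_{\etaleEmbedding'}$; the lemma then yields $y \in \multiplicativeGroup{\quadraticEtaleAlgebra}$ with $\lambda^{-1}\lambda' = N_{\quadraticEtaleAlgebra \slash \etaleAlgebra}(y)$, that is, $[\lambda] = [\lambda']$.

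I expect the only real obstacle to have been dealt with already: it is entirely absorbed into \Cref{lem:embeddings-are-conjugate-if-and-only-if-quotient-is-a-norm}, which in turn relies on the deferred \Cref{prop:conjugacy-of-tori-implies-conjugacy-of-etale-end-morphism} asserting that a conjugacy between the maximal tori upgrades to a conjugacy between the ambient embeddings $\quadraticEtaleAlgebra \hookrightarrow \ringendo(\hermitianSpace)$. Granting that proposition, everything above is a formal unwinding of definitions using nothing beyond commutativity of $\quadraticEtaleAlgebra$, and the argument goes through verbatim in the split case $\quadraticExtension = \baseField \times \baseField$, where $\normoneGroup{\quadraticEtaleAlgebra} \cong \multiplicativeGroup{\etaleAlgebra}$. (Naturality of the bijection — its compatibility with the functorial constructions introduced later — then follows from the explicit description of the map given in the statement.)
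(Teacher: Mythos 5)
Your proof is correct and follows essentially the same route as the paper: both reduce the substantive work to \Cref{lem:embeddings-are-conjugate-if-and-only-if-quotient-is-a-norm} (and hence ultimately to \Cref{prop:conjugacy-of-tori-implies-conjugacy-of-etale-end-morphism}), with surjectivity falling out of the definition of an admissible embedding. The one small difference is cosmetic: you give a direct elementary check that $[\lambda]\mapsto c_\lambda$ is constant on $N_{\quadraticEtaleAlgebra/\etaleAlgebra}(\multiplicativeGroup{\quadraticEtaleAlgebra})$-cosets, using that $\multiplicationMap{y}\colon\etaleHermitian{\lambda}\to\etaleHermitian{\lambda'}$ is an isometry and that $\quadraticEtaleAlgebra$ is commutative so $\multiplicationMap{y}\multiplicationMap{x}\multiplicationMap{y}^{-1}=\multiplicationMap{x}$; the paper instead absorbs this into the ``if'' direction of the biconditional in the lemma, so it handles well-definedness and injectivity in a single stroke. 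Both are fine; your version is slightly more explicit about where commutativity enters.
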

	\begin{proof}
		We claim that the conjugacy class of $\torusEmbedding_{\etaleEmbedding}$ does not depend on the choice of $\etaleEmbedding$. Indeed, if $\etaleEmbedding_2 \colon \etaleHermitian{\lambda} \rightarrow \hermitianSpace$ is another isomorphism, then for any $x \in \normoneGroup{\quadraticEtaleAlgebra}$ $$\torusEmbedding_{\etaleEmbedding_2}\left(x\right) = \left(\etaleEmbedding \circ \etaleEmbedding_2^{-1}\right)^{-1} \circ \torusEmbedding_{\etaleEmbedding}\left(x\right) \circ \etaleEmbedding \circ {\etaleEmbedding_2}^{-1},$$ and we have that $\etaleEmbedding \circ {\etaleEmbedding_2}^{-1} \in \unitaryGroup{\hermitianSpace}$ as a composition of two isometries.
		
		We show that $\lambda \mapsto c_{\lambda}$ is a bijection as in the theorem.
		
		The map is injective: given $\lambda, \lambda' \in \multiplicativeGroup{\etaleAlgebra}$, and isomorphisms $\etaleEmbedding \colon \etaleHermitian{\lambda} \rightarrow \hermitianSpace$ and ${\etaleEmbedding' \colon \etaleHermitian{\lambda'} \rightarrow \hermitianSpace}$, we have for any $x \in \normoneGroup{\quadraticEtaleAlgebra}$, $$\torusEmbedding_{\etaleEmbedding'}\left(x\right) = \left({\etaleEmbedding} \circ \etaleEmbedding'^{-1}\right)^{-1} \circ \torusEmbedding_{\etaleEmbedding}\left(x\right) \circ \etaleEmbedding \circ {\etaleEmbedding'}^{-1} .$$ It follows from \Cref{lem:embeddings-are-conjugate-if-and-only-if-quotient-is-a-norm} that the embeddings $\torusEmbedding_{\etaleEmbedding}$ and $\torusEmbedding_{\etaleEmbedding'}$ are conjugate if and only if $\left(\lambda'\right)^{-1} \lambda \in N_{\quadraticEtaleAlgebra \slash \etaleAlgebra}\left(\multiplicativeGroup{\quadraticEtaleAlgebra}\right)$.
		
		The map is surjective: let $\torusEmbedding \colon \normoneGroup{\quadraticEtaleAlgebra} \rightarrow \unitaryGroup{\hermitianSpace}$ be an admissible embedding corresponding to the data ${\etaleEmbedding \colon \etaleHermitian{\lambda} \rightarrow \hermitianSpace}$. 
		We have that for any $x \in \normoneGroup{\quadraticEtaleAlgebra}$, $$ \torusEmbedding\left(x\right) = \etaleEmbedding \circ \multiplicationMap{x} \circ {\etaleEmbedding}^{-1}.$$
		Therefore, $c_{\lambda}$ is the conjugacy class of $\torusEmbedding$, as required.
	\end{proof}
	\begin{remark}
		When $\baseField$ is a non-archimedean local field and $\quadraticExtension \slash \baseField$ is a quadratic field extension, there exist exactly two isomorphism classes of non-degenerate hermitian spaces of dimension $n$ over $\quadraticExtension$. The isomorphism class of such hermitian space is determined by its discriminant. Using \Cref{lem:discriminant-of-V-lambda}, we can rewrite the first set in the proposition as
		$$ \left\{ \lambda \in \multiplicativeGroup{\etaleAlgebra} \slash N_{\quadraticEtaleAlgebra \slash \etaleAlgebra} \left(\multiplicativeGroup{\quadraticEtaleAlgebra}\right) \mid \discrim \hermitianSpace = N_{\etaleAlgebra \slash \baseField}\left(\lambda\right) \discrim_{\baseField}\left(\etaleAlgebra\right) \right\}.$$
	\end{remark}
	\begin{remark}
		Let $\lambda, \lambda' \in \multiplicativeGroup{\etaleAlgebra}$. If $\baseField$ is a non-archimedean local field then, as in the previous remark, we have that the hermitian spaces $\etaleHermitian{\lambda}$ and $\etaleHermitian{\lambda'}$ are isomorphic if and only if $N_{\etaleAlgebra \slash \baseField}\left(\lambda^{-1} \lambda' \right) \in N_{\quadraticExtension \slash \baseField}\left(\multiplicativeGroup{\quadraticExtension}\right)$.
		
		If $\baseField$ is an archimedean local field, then the hermitian spaces $\etaleHermitian{\lambda}$ and $\etaleHermitian{\lambda'}$ are isomorphic if and only if the number of non-trivial components of $\lambda$ and $\lambda'$ as elements of $\multiplicativeGroup{\etaleAlgebra} \slash N_{\quadraticEtaleAlgebra \slash \etaleAlgebra}\left(\multiplicativeGroup{\quadraticEtaleAlgebra}\right)$ is the same.
	\end{remark}
	
	\subsection{Functoriality for admissible embeddings}\label{subsec:functoriality-for-admissible-embeddings}
	In this section, we explain how an admissible embedding of a torus $\torusEmbedding \colon \normoneGroup{\quadraticEtaleAlgebra} \to \torus \subset \unitaryGroup{\hermitianSpace}$, gives rise to a family of embeddings $\torusEmbedding\left(R\right) \colon \normoneGroup{\quadraticEtaleAlgebra}\left(R\right) \to \unitaryGroup{\hermitianSpace}\left(R\right)$ for any $\baseField$-algebra $R$. 
	
	Let $R$ be a commutative $\baseField$-algebra, and consider the ring $\quadraticEtaleAlgebra[R] = \quadraticExtension \otimes_{\baseField} R$, equipped with the involution $\sigma \colon \quadraticEtaleAlgebra[R] \to \quadraticEtaleAlgebra[R]$ defined on pure tensors by $$\sigma\left(k \otimes h\right) = \involution{k} \otimes h,$$ where $k \in \quadraticExtension$ and $h \in R$. Henceforth, we will write $\involution{x}$ instead of $\sigma\left(x\right)$ for $x \in \quadraticEtaleAlgebra[R]$.
	
	As before, one can define the notion of an $\epsilon$-hermitian space over $\quadraticEtaleAlgebra[R]$, as in \Cref{subsection:hermitian-spaces} by replacing $\quadraticExtension$ with $\quadraticEtaleAlgebra[R]$ in the definitions.
	
	Let $\left(\hermitianSpace, \innerProduct{\cdot}{\cdot}_{\hermitianSpace}\right)$ be an $\epsilon$-hermitian space over $\quadraticExtension$. Then the space $(\hermitianSpace\left(R\right), \innerProduct{\cdot}{\cdot}_{\hermitianSpace\left(R\right)})$ is an $\epsilon$-hermitian space over $\quadraticEtaleAlgebra[R]$, where $\hermitianSpace\left(R\right) = \hermitianSpace \otimes_{\baseField} R$ and $\innerProduct{\cdot}{\cdot}_{\hermitianSpace\left(R\right)}$ is defined on pure tensors by $$\innerProduct{v_1 \otimes h_1}{v_2 \otimes h_2}_{\hermitianSpace\left(R\right)} = \innerProduct{v_1}{v_2}_{\hermitianSpace} \otimes \left(h_1 h_2\right),$$ where $v_1,v_2 \in \hermitianSpace$ and $h_1,h_2 \in R$. We denote by $\unitaryGroup{\hermitianSpace}\left(R\right)$ the group consisting of invertible $\quadraticEtaleAlgebra[R]$-linear maps that preserve the form $\innerProduct{\cdot}{\cdot}_{\hermitianSpace\left(R\right)}$. Suppose that $\etaleAlgebra$ is an \etale algebra over $\baseField$ of degree $n = \dim \hermitianSpace$, and that $i \colon \normoneGroup{\quadraticEtaleAlgebra} \to \unitaryGroup{\hermitianSpace}$ is an admissible embedding corresponding to the data $\lambda \in \multiplicativeGroup{\etaleAlgebra}$ and $\etaleEmbedding \colon \etaleHermitian{\lambda} \to \hermitianSpace$. Since $\etaleEmbedding$ is an isomorphism of $\epsilon$-hermitian spaces over $\quadraticExtension$, we have that the map $\etaleEmbedding\left(R\right) = \etaleEmbedding \otimes \identityOperator_{R} \colon  \etaleHermitian{\lambda}\left(R\right) \to \hermitianSpace\left(R\right)$ is an isomorphism of $\epsilon$-hermitian spaces over $\quadraticEtaleAlgebra[R]$.
	
	We may define an embedding $\torusEmbedding\left(R\right) \colon \restrictionOfScalars_{\etaleAlgebra \slash \baseField} \normoneGroup{\quadraticEtaleAlgebra}\left(R\right) \to \unitaryGroup{\hermitianSpace}\left(R\right)$, corresponding to the data $\lambda \in \multiplicativeGroup{\etaleAlgebra}$ and $\etaleEmbedding\left(R\right) \colon \etaleHermitian{\lambda}\left(R\right) \to \hermitianSpace\left(R\right)$, by the formula $$\torusEmbedding\left(R\right)\left(x\right) = \etaleEmbedding\left(R\right) \circ \multiplicationMap{x} \circ \etaleEmbedding\left(R\right)^{-1},$$ where $$\restrictionOfScalars_{\etaleAlgebra \slash \baseField} \normoneGroup{\quadraticEtaleAlgebra}\left(R\right) = \left\{ x \in \multiplicativeGroup{\left(\quadraticEtaleAlgebra \otimes_{\baseField} R\right)} \mid x \cdot \involution{x} = 1 \right\},$$
	and for $x \in \multiplicativeGroup{\left(\quadraticEtaleAlgebra \otimes_{\baseField} R\right)}$, the map $\multiplicationMap{x} \colon \quadraticEtaleAlgebra \otimes_{\baseField} R \to \quadraticEtaleAlgebra \otimes_{\baseField} R$ is the multiplication by $x$ map. Here, as usual, $x \mapsto \involution{x}$ is the involution on $\quadraticEtaleAlgebra \otimes_{\baseField} R$, defined on pure tensors by $$\involution{\left(y \otimes h\right)} = \involution{y} \otimes h,$$ where $y \in \quadraticEtaleAlgebra$ and $h \in R$.
	
	Note that if $k \in \normoneGroup{\quadraticExtension}$, we always have that $i\left(R\right)\left(k \otimes 1\right) = \multiplicationMap{k} \otimes \identityOperator_{R}$, where $\multiplicationMap{k} \colon \hermitianSpace \to \hermitianSpace$ is the map $\multiplicationMap{k} v = kv$, i.e., the multiplication by the scalar $k$ map. In the special case where $\hermitianSpace$ is one-dimensional, and hence $\etaleAlgebra = \baseField$, we get that $i\left(R\right) \colon \normoneGroup{\quadraticEtaleAlgebra}\left(R\right) \to \unitaryGroup{\hermitianSpace}\left(R\right)$ is an isomorphism for every $R$.
	
	In the sequel, given an admissible embedding $i \colon \normoneGroup{\quadraticEtaleAlgebra} \to \unitaryGroup{\hermitianSpace}$, we will often write $i$ for $i\left(R\right)$, especially when $F = \numberfield$ is a number field and $R = \adeles_{\numberfield}$ is its ring of adeles.
	
	\section{Local theory}\label{sec:local-theory}
	
	In this section, we recall the local theta correspondence. We use it to define a theta correspondence for $1$-dimensional $\quadraticEtaleAlgebra$-hermitian spaces. We then discuss a seesaw identity satisfied by the theta lift we defined and the usual theta lift. Finally, we recall the definition of local root numbers, and the classical results regarding theta lifting from $\unitaryGroup{1}$ to $\unitaryGroup{1}$ in terms of them. We use these results to determine when our theta lift for $1$-dimensional $\quadraticEtaleAlgebra$-hermitian spaces does not vanish. These results are needed for \Cref{sec:periods-of-tori}.
	
	\subsection{The local theta correspondence} \label{subsec:local-theta-correspondence}
	Let $\baseField$ be a local field of characteristic $\ne 2$ and let $\quadraticExtension \slash \baseField$ be a quadratic \etale algebra. Let $\hermitianSpace$ and $\skewHermitianSpace$ be non-degenerate finite-dimensional hermitian and skew-hermitian spaces over $\quadraticExtension$, respectively.
	
	Consider the tensor product $\restrictionOfScalars_{\quadraticExtension/\baseField}(\hermitianSpace \otimes_\quadraticExtension \skewHermitianSpace)$. We equip this space with a symplectic form defined on pure tensors by $$\tensorInnerProduct{v \otimes w}{v' \otimes w'} = \trace_{\quadraticExtension/\baseField}\left(\innerProduct{v}{v'}_\hermitianSpace \cdot \innerProduct{w}{w'}_\skewHermitianSpace\right),$$
	where $v, v' \in \hermitianSpace$ and $w, w' \in \skewHermitianSpace$.
	
	For any non-trivial character $\fieldCharacter \colon \baseField \rightarrow \multiplicativeGroup{\cComplex}$, we have a unique (up to isomorphism) irreducible (smooth) representation $\weilRepresentation_{\fieldCharacter, \baseField}$ of the Heisenberg group associated with ${\restrictionOfScalars_{\quadraticExtension/\baseField}(\hermitianSpace \otimes_\quadraticExtension \skewHermitianSpace)}$, such that the central character of $\weilRepresentation_{\fieldCharacter, \baseField}$ is  $\fieldCharacter$. The representation $\weilRepresentation_{\fieldCharacter, \baseField}$ gives rise to an $\unitCircle$-metaplectic cover $\Mp_{\fieldCharacter}(\restrictionOfScalars_{\quadraticExtension \slash \baseField}\left(V \otimes_\quadraticExtension W\right))$ of $\Sp\left(\restrictionOfScalars_{\quadraticExtension \slash \baseField}\left(\hermitianSpace \otimes_{\quadraticExtension} \skewHermitianSpace \right)\right)$, where $\unitCircle \subset \multiplicativeGroup{\cComplex}$ is the unit circle. We denote this group by $\MetaplecticOfSpaces$ for short.  The representation $\weilRepresentation_{\fieldCharacter, \baseField}$ above gives rise to an irreducible representation of the metaplectic group $\MetaplecticOfSpaces$, which we also denote by $\weilRepresentation_{\fieldCharacter, \baseField}$. We call $\weilRepresentation_{\fieldCharacter, \baseField}$ the \emph{Weil representation associated with $\fieldCharacter$}.
	
	We have an embedding $\iota \colon \unitaryGroup{\hermitianSpace} \times  \unitaryGroup{\skewHermitianSpace} \rightarrow \Sp(\restrictionOfScalars_{\quadraticExtension \slash \baseField} (\hermitianSpace \otimes_\quadraticExtension \skewHermitianSpace) )$, where for $g_\hermitianSpace \in \unitaryGroup{\hermitianSpace}$ and $g_\skewHermitianSpace \in \unitaryGroup{\skewHermitianSpace}$, the map $\iota \left(g_\hermitianSpace,g_\skewHermitianSpace\right)$ is defined on pure tensors by $$ \iota \left(g_\hermitianSpace,g_\skewHermitianSpace\right) \left(v \otimes w\right) = g_\hermitianSpace v \otimes g_\skewHermitianSpace w,$$
	where $v \in \hermitianSpace$ and $w \in \skewHermitianSpace$.
	
	When referring to representations of $\unitaryGroup{\hermitianSpace}$ (or $\unitaryGroup{\skewHermitianSpace}$), we will always mean smooth admissible representations. Let us denote by $\Irr \unitaryGroup{\hermitianSpace}$ and $\Irr \unitaryGroup{\skewHermitianSpace}$ the set of equivalence classes of irreducible (smooth) representations of $\unitaryGroup{\hermitianSpace}$ and of $\unitaryGroup{\skewHermitianSpace}$, respectively.
	
	The theta correspondence allows us to transfer irreducible representations of $\unitaryGroup{\hermitianSpace}$ to irreducible representations of $\unitaryGroup{\skewHermitianSpace}$, and vice versa. In order to describe it, we need a lifting of $\iota$ to the metaplectic group $$\tilde{\iota} \colon \unitaryGroup{\hermitianSpace} \times \unitaryGroup{\skewHermitianSpace} \rightarrow \MetaplecticOfSpaces.$$ The existence of such liftings, usually called splittings, is due to Kudla \cite{KudlaSplitting1994}. We postpone the discussion regarding this splitting to the next subsection. Given such a splitting $\tilde{\iota}$, we may pullback $\weilRepresentation_{\fieldCharacter, \baseField}$ to a representation $\Omega_{\hermitianSpace, \skewHermitianSpace, \tilde{\iota}, \fieldCharacter}$ of $\unitaryGroup{\hermitianSpace} \times \unitaryGroup{\skewHermitianSpace}$.
	
	We proceed by describing the theta correspondence. Let $\pi$ be an irreducible representation of $\unitaryGroup{\hermitianSpace}$. The big theta lift $\Theta\left(\pi\right)$ is defined as follows. Consider the maximal $\pi$-isotypic quotient of $\Omega_{\hermitianSpace, \skewHermitianSpace, \tilde{\iota}, \fieldCharacter}$: $$\left(\Omega_{\hermitianSpace, \skewHermitianSpace, \tilde{\iota}, \fieldCharacter}\right)_{\pi, \unitaryGroup{\hermitianSpace}} \coloneqq \Omega_{\hermitianSpace, \skewHermitianSpace, \tilde{\iota}, \fieldCharacter} \slash \bigcap_f \ker f,$$ where the intersection is over all $$f \in \Hom_{\unitaryGroup{\hermitianSpace} \times 1}\left(\Omega_{\hermitianSpace, \skewHermitianSpace, \tilde{\iota}, \fieldCharacter}, \pi\right).$$ By construction, we have that $\left(\Omega_{\hermitianSpace, \skewHermitianSpace, \tilde{\iota}, \fieldCharacter}\right)_{\pi, \unitaryGroup{\hermitianSpace}}$ is of the form $\pi \otimes \sigma$, where $\sigma$ is a representation of $\unitaryGroup{\skewHermitianSpace}$. We write $\Theta_{\hermitianSpace, \skewHermitianSpace, \tilde{\iota}, \fieldCharacter}\left(\pi\right)$ for $\sigma$ and call this representation the \emph{big theta lift of $\pi$}. The big theta lift $\Theta_{\hermitianSpace, \skewHermitianSpace, \tilde{\iota}, \fieldCharacter}\left(\pi\right)$ satisfies the following functorial property: for any irreducible representation $\tau$ of $\unitaryGroup{\skewHermitianSpace}$ we have that
	$$ \Hom_{\unitaryGroup{\hermitianSpace} \times \unitaryGroup{\skewHermitianSpace}}\left(\Omega_{\hermitianSpace, \skewHermitianSpace, \tilde{\iota}, \fieldCharacter} , \pi \otimes \tau \right) \cong\Hom_{\unitaryGroup{\skewHermitianSpace}}\left(\Theta_{\hermitianSpace, \skewHermitianSpace, \tilde{\iota}, \fieldCharacter}\left(\pi\right) , \tau \right).$$
	
	More generally, for any subgroup $H \le \unitaryGroup{\skewHermitianSpace}$ and any irreducible representation $\tau$ of $H$, we have that
	\begin{equation}\label{eq:theta-restriction-identity}
		\Hom_{\unitaryGroup{\hermitianSpace} \times H}\left(\Omega_{\hermitianSpace, \skewHermitianSpace, \tilde{\iota}, \fieldCharacter} , \pi \otimes \tau \right) \cong \Hom_{H}\left(\Theta_{\hermitianSpace, \skewHermitianSpace, \tilde{\iota}, \fieldCharacter}\left(\pi\right) \restriction_H, \tau \right).
	\end{equation}
	
	We move to discuss the \emph{small} theta lift. Howe and Kudla proved that if the big theta lift defined above is non-zero, then it is of finite length. It follows that $\Theta_{\hermitianSpace, \skewHermitianSpace, \tilde{\iota}, \fieldCharacter}\left(\pi\right)$ has a maximal semisimple quotient, which we denote by $\theta_{\hermitianSpace, \skewHermitianSpace, \tilde{\iota}, \fieldCharacter}\left(\pi\right)$ and call the \emph{small theta lift of $\pi$}.  
	
	The following two theorems were proved by Howe in the archimedean case \cite{Howe1989}, by Waldspurger in the non-archimedean case for fields with residue field of odd characteristic \cite{Waldspurger1990}, and by Gan--Takeda in the non-archimedean case in general \cite{GanTakeda2016}. 
	\begin{theorem}
		If the big theta lift $\Theta_{\hermitianSpace, \skewHermitianSpace, \tilde{\iota}, \fieldCharacter}\left(\pi\right)$ is not zero, then it has a unique irreducible quotient.
	\end{theorem}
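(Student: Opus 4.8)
The statement is the ``unique irreducible quotient'' half of the Howe duality conjecture, so my plan is, honestly, to invoke it: it is due to Howe \cite{Howe1989} in the archimedean case, to Waldspurger \cite{Waldspurger1990} when $\baseField$ is non-archimedean of odd residual characteristic, and to Gan--Takeda \cite{GanTakeda2016} in the remaining non-archimedean cases. In the archimedean case the proof is algebraic, carried out at the level of Harish-Chandra modules by a double-commutant argument together with the theory of joint harmonics. Since we will lean on the non-archimedean case repeatedly, I indicate the shape of that argument; in the notation of \Cref{sec:local-theory} the goal is to show that the maximal semisimple quotient $\theta_{\hermitianSpace, \skewHermitianSpace, \tilde{\iota}, \fieldCharacter}(\pi)$ of $\Theta_{\hermitianSpace, \skewHermitianSpace, \tilde{\iota}, \fieldCharacter}(\pi)$ is irreducible whenever it is nonzero.

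First I would reduce to the case where $\pi$ is supercuspidal, by induction on $\dim_{\quadraticExtension}\hermitianSpace + \dim_{\quadraticExtension}\skewHermitianSpace$. The key input is Kudla's computation of the Jacquet module of the Weil representation $\Omega_{\hermitianSpace, \skewHermitianSpace, \tilde{\iota}, \fieldCharacter}$ along a maximal parabolic subgroup of $\unitaryGroup{\hermitianSpace}$, and symmetrically of $\unitaryGroup{\skewHermitianSpace}$: this Jacquet module carries a short, explicit filtration whose successive subquotients are, up to a twist by a representation of a general linear group, Weil representations attached to unitary groups of hermitian and skew-hermitian spaces of strictly smaller dimension. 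If $\pi$ is not supercuspidal, realize it as a subrepresentation of some $\operatorname{Ind}\bigl(\tau \boxtimes \pi_0\bigr)$ with $\pi_0$ supercuspidal on a smaller unitary group; combining Kudla's filtration with Frobenius reciprocity and the inductive hypothesis applied to $\pi_0$, one finds that every irreducible quotient of $\Theta_{\hermitianSpace, \skewHermitianSpace, \tilde{\iota}, \fieldCharacter}(\pi)$ is a quotient of a single parabolically induced representation built from $\theta(\pi_0)$, so the irreducibility of $\theta_{\hermitianSpace, \skewHermitianSpace, \tilde{\iota}, \fieldCharacter}(\pi)$ follows. The actual organization of this step in \cite{Waldspurger1990, GanTakeda2016} is considerably more delicate, because one must control the non-tempered Langlands data attached to $\pi$, but this is the mechanism.

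The substance of the theorem is the case of supercuspidal $\pi$, and this is where I expect the main obstacle. Here one exploits that the matrix coefficients of a supercuspidal representation are compactly supported modulo the centre, which makes the local doubling zeta integral of Piatetski-Shapiro--Rallis a well-behaved object; via the doubling see-saw --- relating $\unitaryGroup{\hermitianSpace} \times \unitaryGroup{\hermitianSpace}$ inside the unitary group of the doubled hermitian space to $\unitaryGroup{\skewHermitianSpace}$ --- one then shows the multiplicity bound $\dim_{\cComplex} \Hom_{\unitaryGroup{\hermitianSpace} \times \unitaryGroup{\skewHermitianSpace}}\bigl(\Omega_{\hermitianSpace, \skewHermitianSpace, \tilde{\iota}, \fieldCharacter},\, \pi \boxtimes \sigma\bigr) \le 1$ for every irreducible $\sigma$, using that the zeta integral is nonvanishing at the relevant point and has at worst a simple pole there. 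Combined with Kudla's description of how $\Theta_{\hermitianSpace, \skewHermitianSpace, \tilde{\iota}, \fieldCharacter}(\pi)$ is assembled by parabolic induction out of the first-occurrence lift of $\pi$ --- which is irreducible and supercuspidal --- this bound yields the irreducibility of $\theta_{\hermitianSpace, \skewHermitianSpace, \tilde{\iota}, \fieldCharacter}(\pi)$. Finally the Moeglin--Vign\'eras--Waldspurger involution, which identifies $\contragredient{\bigl(\Theta_{\hermitianSpace, \skewHermitianSpace, \tilde{\iota}, \fieldCharacter}(\pi)\bigr)}$ with the theta lift of $\contragredient{\pi}$ up to an outer automorphism of the unitary group, forces the socle and the cosocle of $\Theta_{\hermitianSpace, \skewHermitianSpace, \tilde{\iota}, \fieldCharacter}(\pi)$ to have the same irreducible constituents, which upgrades the ``unique irreducible quotient'' statement to the full Howe duality. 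The hard part will be the uniform control of the doubling zeta integral near the centre of its functional equation and the combinatorial bookkeeping in Kudla's filtration for non-tempered $\pi$; this is exactly what \cite{Waldspurger1990, GanTakeda2016} carry out, and we simply cite it.
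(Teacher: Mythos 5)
Your approach coincides with the paper's: this statement is Howe duality, and the paper does not prove it but simply cites Howe (archimedean), Waldspurger (non-archimedean, odd residual characteristic), and Gan--Takeda (non-archimedean in general), exactly as you do. The sketch you append of the mechanism behind those proofs (Kudla's filtration of the Jacquet module, reduction to supercuspidal $\pi$, multiplicity bounds via the doubling zeta integral, and the MVW involution) is a reasonable account of the ingredients but plays no role in the paper, which treats the result as a black box.
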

	Therefore, it follows that if $\Theta_{\hermitianSpace, \skewHermitianSpace, \tilde{\iota}, \fieldCharacter}\left(\pi\right)$ is not zero, then $\theta_{\hermitianSpace, \skewHermitianSpace, \tilde{\iota}, \fieldCharacter}\left(\pi\right)$ is the unique irreducible quotient of $\Theta_{\hermitianSpace, \skewHermitianSpace, \tilde{\iota}, \fieldCharacter}\left(\pi\right)$.
	
	Moreover, if $\pi$ and $\pi'$ are irreducible representations of $\unitaryGroup{\hermitianSpace}$ with the same non-zero small theta lift, then $\pi$ and $\pi'$ are isomorphic. These results combined yield the following statement, which is known as \emph{Howe duality}.
	
	\begin{theorem}
		We have a map $\Irr\unitaryGroup{\hermitianSpace} \rightarrow \Irr\unitaryGroup{\skewHermitianSpace} \cup \left\{0\right\}$, given by $\pi \mapsto \theta_{\hermitianSpace, \skewHermitianSpace, \tilde{\iota}, \fieldCharacter}\left(\pi\right)$. The restriction of this map to the set of representations with non-zero theta lift is an injective map.
	\end{theorem}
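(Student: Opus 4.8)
The plan is to obtain this statement by assembling the two facts recorded immediately above it: that a non-zero big theta lift has a unique irreducible quotient, and that the small theta lift is injective on its non-vanishing locus. In other words, once those inputs are granted, the theorem is a matter of checking that the assignment $\pi \mapsto \theta_{\hermitianSpace, \skewHermitianSpace, \tilde{\iota}, \fieldCharacter}(\pi)$ is a well-defined map with the asserted codomain and that the cited injectivity applies verbatim.

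First I would verify that $\pi \mapsto \theta_{\hermitianSpace, \skewHermitianSpace, \tilde{\iota}, \fieldCharacter}(\pi)$ is a genuine map into $\Irr \unitaryGroup{\skewHermitianSpace} \cup \left\{0\right\}$. If $\Theta_{\hermitianSpace, \skewHermitianSpace, \tilde{\iota}, \fieldCharacter}(\pi) = 0$, then by definition the small theta lift is $0$, and we are in the zero case. Otherwise, the Howe--Kudla finiteness property guarantees that $\Theta_{\hermitianSpace, \skewHermitianSpace, \tilde{\iota}, \fieldCharacter}(\pi)$ has finite length, hence admits a maximal semisimple quotient; the preceding theorem identifies this maximal semisimple quotient with the unique irreducible quotient of $\Theta_{\hermitianSpace, \skewHermitianSpace, \tilde{\iota}, \fieldCharacter}(\pi)$, so $\theta_{\hermitianSpace, \skewHermitianSpace, \tilde{\iota}, \fieldCharacter}(\pi) \in \Irr \unitaryGroup{\skewHermitianSpace}$. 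This establishes the codomain claim.

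Next I would establish injectivity on the locus $\left\{ \pi \in \Irr \unitaryGroup{\hermitianSpace} \mid \theta_{\hermitianSpace, \skewHermitianSpace, \tilde{\iota}, \fieldCharacter}(\pi) \ne 0 \right\}$. This is exactly the \emph{moreover} assertion stated just above: if $\pi, \pi' \in \Irr \unitaryGroup{\hermitianSpace}$ have equal and non-zero small theta lifts, then $\pi \cong \pi'$. Applying this to any two representations in the non-vanishing locus with $\theta_{\hermitianSpace, \skewHermitianSpace, \tilde{\iota}, \fieldCharacter}(\pi) = \theta_{\hermitianSpace, \skewHermitianSpace, \tilde{\iota}, \fieldCharacter}(\pi')$ forces $\pi \cong \pi'$, which is precisely the desired injectivity.

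The hard part is, of course, this last injectivity --- the Howe duality conjecture --- which is not proved in this paper but imported: from Howe in the archimedean case, from Waldspurger in the non-archimedean case with odd residual characteristic, and from Gan--Takeda in full generality. (A from-scratch argument in the non-archimedean case would run through Kudla's filtration of the Weil representation restricted to a Siegel parabolic together with an induction on the dimension, reducing to a conservation-type statement; the archimedean case is Howe's original analytic argument via the oscillator representation.) Since the present paper treats all of this as a known input, the proof above is essentially bookkeeping: it only records that the small theta lift is well-defined and that the cited injectivity applies on the non-vanishing locus.
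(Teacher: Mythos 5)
Your proposal is correct and follows the same route as the paper: the theorem is simply stated as a consequence of the two facts recorded directly above it (uniqueness of the irreducible quotient of a nonzero big theta lift, and the injectivity statement for equal nonzero small theta lifts), and both the paper and your write-up treat the underlying Howe duality as an imported result from Howe, Waldspurger, and Gan--Takeda rather than something to be proved. The paper gives no explicit proof beyond the sentence ``These results combined yield the following statement,'' and your bookkeeping is a faithful unpacking of exactly that.
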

	
	We remark that we started with an irreducible representation $\pi$ of $\unitaryGroup{\hermitianSpace}$ and constructed its big and small theta lifts. Similarly, we can start with an irreducible representation $\sigma$ of $\unitaryGroup{\skewHermitianSpace}$ and construct its big and small theta lifts. We have analogous results by exchanging the roles of $\hermitianSpace$ and $\skewHermitianSpace$, and of $\pi$ and $\sigma$, respectively. The above results yield the following multiplicity one theorem:
	
	\begin{theorem}
		For any $\pi \in \Irr \unitaryGroup{\hermitianSpace}$ and $\sigma \in \Irr \unitaryGroup{\skewHermitianSpace}$ we have
		$$\dim \Hom_{\unitaryGroup{\hermitianSpace} \times \unitaryGroup{\skewHermitianSpace}}\left(\Omega_{\hermitianSpace, \skewHermitianSpace, \tilde{\iota}, \fieldCharacter}, \pi \otimes \sigma\right) \le 1.$$
	\end{theorem}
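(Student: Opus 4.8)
The plan is to deduce the bound directly from the structural facts about the big theta lift that were just recalled, using the functoriality identity \eqref{eq:theta-restriction-identity}; once those facts are available, essentially nothing remains to be done.

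First I would apply \eqref{eq:theta-restriction-identity} with $H = \unitaryGroup{\skewHermitianSpace}$ (so that the restriction appearing there is superfluous) and $\tau = \sigma$, which gives a natural isomorphism
$$\Hom_{\unitaryGroup{\hermitianSpace} \times \unitaryGroup{\skewHermitianSpace}}\left(\Omega_{\hermitianSpace, \skewHermitianSpace, \tilde{\iota}, \fieldCharacter}, \pi \otimes \sigma\right) \cong \Hom_{\unitaryGroup{\skewHermitianSpace}}\left(\Theta_{\hermitianSpace, \skewHermitianSpace, \tilde{\iota}, \fieldCharacter}\left(\pi\right), \sigma\right),$$
so it suffices to bound the dimension of the right-hand side by one. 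I would then split into two cases. If $\Theta_{\hermitianSpace, \skewHermitianSpace, \tilde{\iota}, \fieldCharacter}\left(\pi\right) = 0$, the right-hand side vanishes and there is nothing to prove. Otherwise, by the theorems of Howe, Waldspurger, and Gan--Takeda recalled above, $\Theta_{\hermitianSpace, \skewHermitianSpace, \tilde{\iota}, \fieldCharacter}\left(\pi\right)$ has finite length and its maximal semisimple quotient is the single irreducible representation $\theta_{\hermitianSpace, \skewHermitianSpace, \tilde{\iota}, \fieldCharacter}\left(\pi\right)$; since $\sigma$ is irreducible, any homomorphism $\Theta_{\hermitianSpace, \skewHermitianSpace, \tilde{\iota}, \fieldCharacter}\left(\pi\right) \to \sigma$ factors through this maximal semisimple quotient, whence
$$\Hom_{\unitaryGroup{\skewHermitianSpace}}\left(\Theta_{\hermitianSpace, \skewHermitianSpace, \tilde{\iota}, \fieldCharacter}\left(\pi\right), \sigma\right) \cong \Hom_{\unitaryGroup{\skewHermitianSpace}}\left(\theta_{\hermitianSpace, \skewHermitianSpace, \tilde{\iota}, \fieldCharacter}\left(\pi\right), \sigma\right),$$
which is at most one-dimensional by Schur's lemma (and is nonzero precisely when $\sigma \cong \theta_{\hermitianSpace, \skewHermitianSpace, \tilde{\iota}, \fieldCharacter}\left(\pi\right)$). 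Assembling the two cases yields $\dim \Hom_{\unitaryGroup{\hermitianSpace} \times \unitaryGroup{\skewHermitianSpace}}\left(\Omega_{\hermitianSpace, \skewHermitianSpace, \tilde{\iota}, \fieldCharacter}, \pi \otimes \sigma\right) \le 1$.

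There is no genuinely hard step left in this argument: all of the depth is concentrated in the finiteness of length and the irreducibility of the maximal semisimple quotient of $\Theta_{\hermitianSpace, \skewHermitianSpace, \tilde{\iota}, \fieldCharacter}\left(\pi\right)$, which are exactly the deep inputs quoted above. The only point that requires a little attention is to invoke \eqref{eq:theta-restriction-identity} with $H$ taken to be the full group $\unitaryGroup{\skewHermitianSpace}$, rather than a proper subgroup, so that the right-hand side is a genuine $\unitaryGroup{\skewHermitianSpace}$-equivariant $\Hom$-space between $\Theta_{\hermitianSpace, \skewHermitianSpace, \tilde{\iota}, \fieldCharacter}\left(\pi\right)$ and $\sigma$ with no restriction intervening; one could equally run the argument with the roles of $\hermitianSpace$ and $\skewHermitianSpace$, and of $\pi$ and $\sigma$, interchanged, reflecting the symmetry of the statement.
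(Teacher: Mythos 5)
Your argument is correct and is precisely the standard deduction the paper has in mind when it states, without further proof, that "the above results yield" the multiplicity one theorem: reduce via the functorial identity \eqref{eq:theta-restriction-identity} (with $H = \unitaryGroup{\skewHermitianSpace}$) to a $\Hom$ out of $\Theta_{\hermitianSpace, \skewHermitianSpace, \tilde{\iota}, \fieldCharacter}(\pi)$, then use the uniqueness of its irreducible quotient together with Schur's lemma. Nothing to add.
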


	Let us mention a useful fact relating the big theta lift and the small theta lift in a special case. By \cite[Page 69, Theoreme principal]{MoeglinVignerasWaldspurger1987}, if $\pi$ is supercuspidal and $\Theta_{\hermitianSpace, \skewHermitianSpace, \tilde{\iota}, \fieldCharacter}\left(\pi\right)$ is not zero, then $\Theta_{\hermitianSpace, \skewHermitianSpace, \tilde{\iota}, \fieldCharacter}\left(\pi\right)$ is irreducible, and we have that it equals $\theta_{\hermitianSpace, \skewHermitianSpace, \tilde{\iota}, \fieldCharacter}\left(\pi\right)$. In particular, if $\hermitianSpace$ is one-dimensional, then $\pi$ is a character and hence $\Theta_{\hermitianSpace, \skewHermitianSpace, \tilde{\iota}, \fieldCharacter}\left(\pi\right)$ coincides with $\theta_{\hermitianSpace, \skewHermitianSpace, \tilde{\iota}, \fieldCharacter}\left(\pi\right)$.	

	\subsubsection{Splitting of the embedding $\iota$}\label{subsubsec:splitting-of-the-embedding}
	
	In this subsection, we discuss the details regarding the splitting provided by Kudla's work \cite{KudlaSplitting1994}. We will explain the data needed in order to define a splitting $$\tilde{\iota} \colon \unitaryGroup{\hermitianSpace} \times \unitaryGroup{\skewHermitianSpace} \rightarrow \MetaplecticOfSpaces$$ for the embedding $$\iota \colon \unitaryGroup{\hermitianSpace} \times \unitaryGroup{\skewHermitianSpace} \rightarrow \Sp\left(\restrictionOfScalars_{\quadraticExtension \slash \baseField} \left(\hermitianSpace \otimes_{\baseField} \skewHermitianSpace\right)\right)$$ described above.
	
	The splitting $\tilde{\iota}$ depends on a choice of two characters $\chi_{\hermitianSpace}, \chi_{\skewHermitianSpace}$ of $\multiplicativeGroup{\quadraticExtension}$ such that 	\begin{align*}
		{\chi_{\skewHermitianSpace}}_{\restriction_{\multiplicativeGroup{\baseField}}} = \omega_{\quadraticExtension/\baseField}^{\dim \skewHermitianSpace}  \;\;\;\text{ and }\;\;\; & {\chi_{\hermitianSpace}}_{\restriction_{\multiplicativeGroup{\baseField}}} = \omega_{\quadraticExtension/\baseField}^{\dim \hermitianSpace}.
	\end{align*}
	
	For example, we can choose a character $\mu \colon \multiplicativeGroup{\quadraticExtension} \rightarrow \multiplicativeGroup{\cComplex}$ such that $\mu_{\restriction_{\multiplicativeGroup{\baseField}}} = \omega_{\quadraticExtension/\baseField}$ and define $\chi_{\hermitianSpace} = \mu^{\dim \hermitianSpace}$ and $\chi_{\skewHermitianSpace} = \mu^{\dim \skewHermitianSpace}$.
	
	Given such $\chi_\hermitianSpace$, Kudla constructs an embedding $\tilde{\iota}_{\fieldCharacter, \chi_{\hermitianSpace}} \colon \unitaryGroup{\skewHermitianSpace} \rightarrow \MetaplecticOfSpaces$. Similarly, given $\chi_\skewHermitianSpace$, Kudla constructs an embedding $\tilde{\iota}_{\fieldCharacter, \chi_{\skewHermitianSpace}} \colon \unitaryGroup{\hermitianSpace} \rightarrow \MetaplecticOfSpaces$. It turns out that the images of $\tilde{\iota}_{\fieldCharacter, \chi_{\skewHermitianSpace}}$ and of $\tilde{\iota}_{\fieldCharacter, \chi_{\skewHermitianSpace}}$ commute. It also turns out that the images of these embeddings have mutual center. Hence, we get a splitting $\tilde{\iota} = \tilde{\iota}_{\fieldCharacter, \chi_{\hermitianSpace}, \chi_{\skewHermitianSpace}} \colon \unitaryGroup{\hermitianSpace} \times \unitaryGroup{\skewHermitianSpace} \rightarrow \MetaplecticOfSpaces$, as desired.
	
	\subsubsection{Notation for theta lifts of characters of $\normoneGroup{\quadraticExtension}$}\label{subsec:notation-for-theta-lifts-of-characters-of-norm-one-group}
	
	We introduce another notation for lifting of characters of $\normoneGroup{\quadraticExtension}$ that uses a trace zero element $\delta$ instead of a skew-hermitian space $\skewHermitianSpace$.
	
	Let $\delta \in \multiplicativeGroup{\quadraticExtension}$ be a trace zero element, and let $\mu \colon \multiplicativeGroup{\quadraticExtension} \to \multiplicativeGroup{\cComplex}$ be a character such that $\mu_{\restriction_{\multiplicativeGroup{\baseField}}} = \omega_{\quadraticExtension/\baseField}$. Suppose that $\hermitianSpace$ is a hermitian space over $\quadraticExtension$ and that $\beta \colon \normoneGroup{\quadraticExtension} \to \multiplicativeGroup{\cComplex}$ is a character. We denote $$\Theta_{\delta, \hermitianSpace, \mu, \fieldCharacter}\left(\beta\right) \coloneqq  \Theta_{\etaleSkewHermitian[\baseField]{1}, \hermitianSpace, \tilde{\iota}_{\mu}, \fieldCharacter}\left(\beta \circ i'_{\etaleSkewHermitian[\baseField]{1}}\right)$$ and
	$$\theta_{\delta, \hermitianSpace, \mu, \fieldCharacter}\left(\beta\right) \coloneqq  \theta_{\etaleSkewHermitian[\baseField]{1}, \hermitianSpace, \tilde{\iota}_{\mu}, \fieldCharacter}\left(\beta \circ i'_{\etaleSkewHermitian[\baseField]{1}}\right)$$
	where $i'_{\etaleSkewHermitian[\baseField]{1}} \colon \unitaryGroup{\etaleSkewHermitian[\baseField]{1}} \to \normoneGroup{\quadraticExtension}$ is the obvious isomorphism and where $\tilde{\iota}_{\mu}$ is the splitting associated to the characters $\left(\mu, \mu^{\dim V}\right)$.
	
	\subsubsection{Theta lifting for unitary groups of $1$-dimensional spaces over \etale algebras}\label{sec:theta-lifting-etale}
	
	Let $\etaleAlgebra$ be an \etale algebra of rank $n$ over $\baseField$. Choose a trace zero element $\delta \in \multiplicativeGroup{\quadraticExtension}$. Let $\lambda, \lambda' \in \multiplicativeGroup{\quadraticEtaleAlgebra}$ and consider the $\quadraticEtaleAlgebra$-hermitian space $\quadraticEtaleHermitian{\lambda}$ and the $\quadraticEtaleAlgebra$-skew-hermitian space $\etaleSkewHermitian{\lambda'}$. In this section, we describe the theta correspondence for the groups $\unitaryGroup{\quadraticEtaleHermitian{\lambda}}$ and $\unitaryGroup{\etaleSkewHermitian{\lambda'}}$.
	
	As before, we write $\etaleAlgebra = \prod_{j=1}^m \baseField_j$, where for every $j$, $\baseField_j \slash \baseField$ is a field extension. Then for $\lambda = \left(\lambda_1,\dots,\lambda_m\right) \in \multiplicativeGroup{\etaleAlgebra}$ and $\lambda' = \left(\lambda'_1,\dots,\lambda'_m\right) \in \multiplicativeGroup{\etaleAlgebra}$, we have that $$\unitaryGroup{\quadraticEtaleHermitian{\lambda}} = \prod_{j=1}^m \unitaryGroup{L_{\baseField_j, \lambda_j}} \;\;\;\text{ and }\;\;\; \unitaryGroup{\etaleSkewHermitian{\lambda'}} = \prod_{j=1}^{m} \unitaryGroup{\etaleSkewHermitian[\baseField_j]{\lambda'_j}}.$$
	
	For every $1 \le j \le m$, we denote $\hermitianSpace_j = L_{\baseField_j, \lambda_j}$ and $\skewHermitianSpace_j = \etaleSkewHermitian[\baseField_j]{\lambda'_j}$. Then every character $\alpha \colon\unitaryGroup{\quadraticEtaleHermitian{\lambda}} \rightarrow \multiplicativeGroup{\cComplex}$ is equivalent to a tuple $\left(\alpha_1,\dots,\alpha_m\right)$, where for every $j$, the map $\alpha_j \colon \unitaryGroup{\hermitianSpace_j} \rightarrow \multiplicativeGroup{\cComplex}$ is a character. Therefore, we may use the usual local theta correspondence to define a local theta correspondence for the groups $\unitaryGroup{\quadraticEtaleHermitian{\lambda}}$ and $\unitaryGroup{\etaleSkewHermitian{\lambda'}}$. Let us describe this correspondence.
	
	Let $\chi_{\quadraticEtaleHermitian{\lambda}}, \chi_{\etaleSkewHermitian{\lambda'}} \colon \multiplicativeGroup{\quadraticEtaleAlgebra} \rightarrow \multiplicativeGroup{\cComplex}$ be multiplicative characters, such that $${\chi_{\quadraticEtaleHermitian{\lambda}}}_{\restriction_{\multiplicativeGroup{\etaleAlgebra}}} = {\chi_{\etaleSkewHermitian{\lambda'}}}_{\restriction_{\multiplicativeGroup{\etaleAlgebra}}} = \omega_{\quadraticEtaleAlgebra/\etaleAlgebra}.$$
	We have that $\chi_{\quadraticEtaleHermitian{\lambda}}$ and $\chi_{\etaleSkewHermitian{\lambda'}}$ correspond to tuples $\left(\chi_{\hermitianSpace_1}, \dots, \chi_{\hermitianSpace_m}\right)$ and $\left(\chi_{\skewHermitianSpace_1},\dots,\chi_{\skewHermitianSpace_m}\right)$, respectively, where for every $1 \le j \le m$, 
	$\chi_{\hermitianSpace_j}, \chi_{\skewHermitianSpace_j} \colon \multiplicativeGroup{\quadraticEtaleAlgebra[\baseField_j]} \rightarrow \multiplicativeGroup{\cComplex}$ are characters, such that $${\chi_{\hermitianSpace_j}}_{\restriction_{\multiplicativeGroup{\baseField_j}}} = {\chi_{\skewHermitianSpace_j}}_{\restriction_{\multiplicativeGroup{\baseField_j}}} = \omega_{\quadraticEtaleAlgebra[\baseField_j]\slash \baseField_j}.$$
	
	Therefore, we get a splitting $$\tilde{\iota}_j \colon \unitaryGroup{\hermitianSpace_j} \times \unitaryGroup{\skewHermitianSpace_j} \rightarrow \gMetaplecticOfSpaces[\fieldCharacter_j]{\hermitianSpace_j}{\skewHermitianSpace_j},$$ where $\fieldCharacter_j = \fieldCharacter \circ \trace_{\baseField_j \slash \baseField}$. We write $\tilde{\iota} = \left(\tilde{\iota}_1, \dots ,\tilde{\iota}_m\right)$.
	
	We define the big theta lift of $\alpha$ as above by the formula $$\Theta_{\quadraticEtaleHermitian{\lambda}, \etaleSkewHermitian{\lambda'}, \tilde{\iota}, \fieldCharacter}\left(\alpha\right) = \Theta_{\hermitianSpace_1, \skewHermitianSpace_1, \tilde{\iota}_1, \fieldCharacter_1}\left(\alpha_1\right) \otimes \dots \otimes \Theta_{\hermitianSpace_m, \skewHermitianSpace_m, \tilde{\iota}_m, \fieldCharacter_m}\left(\alpha_m\right).$$
	It is a representation of $\unitaryGroup{\etaleSkewHermitian{\lambda'}}$ (might be the zero representation).
	
	Let $\omega_{\fieldCharacter, \etaleAlgebra} = \bigotimes_{j=1}^m \omega_{\fieldCharacter_j, \baseField_j}$ and $\Omega_{\quadraticEtaleHermitian{\lambda}, \etaleSkewHermitian{\lambda'}, \tilde{\iota}, \fieldCharacter} = \bigotimes_{j=1}^m \Omega_{\hermitianSpace_j, \skewHermitianSpace_j, \tilde{\iota}_j, \fieldCharacter_j}$. Once again, consider the maximal $\alpha$-isotypic quotient of $\Omega_{\quadraticEtaleHermitian{\lambda}, \etaleSkewHermitian{\lambda'}, \tilde{\iota}, \fieldCharacter}$: $$\left(\Omega_{\quadraticEtaleHermitian{\lambda}, \etaleSkewHermitian{\lambda'}, \tilde{\iota}, \fieldCharacter}\right)_{\alpha, \unitaryGroup{\quadraticEtaleHermitian{\lambda}}} = \Omega_{\quadraticEtaleHermitian{\lambda}, \etaleSkewHermitian{\lambda'}, \tilde{\iota}, \fieldCharacter} \slash  \bigcap_f \ker f,$$
	where the intersection is over all $f \in \Hom_{\unitaryGroup{\quadraticEtaleHermitian{\lambda}}\times 1}\left(\Omega_{\quadraticEtaleHermitian{\lambda}, \etaleSkewHermitian{\lambda'}, \tilde{\iota}, \fieldCharacter}, \alpha\right).$ Then, similarly to before, we have that $$\left(\Omega_{\quadraticEtaleHermitian{\lambda}, \etaleSkewHermitian{\lambda'}, \tilde{\iota}, \fieldCharacter}\right)_{\alpha, \unitaryGroup{\quadraticEtaleHermitian{\lambda}}} \cong \alpha \otimes \Theta_{\quadraticEtaleHermitian{\lambda}, \etaleSkewHermitian{\lambda'}, \tilde{\iota}, \fieldCharacter}\left(\alpha\right).$$ As before, for any subgroup $H \subset \unitaryGroup{\etaleSkewHermitian{\lambda}}$ and any irreducible representation $\tau$ of $H$, we have that 
	\begin{equation}\label{eq:etale-theta-restriction-identity}
		\Hom_{\unitaryGroup{\quadraticEtaleHermitian{\lambda}} \times H}\left(\Omega_{\quadraticEtaleHermitian{\lambda}, \etaleSkewHermitian{\lambda'}, \tilde{\iota}, \fieldCharacter}, \alpha \otimes \tau \right) \cong \Hom_{H}\left(\Theta_{\quadraticEtaleHermitian{\lambda}, \etaleSkewHermitian{\lambda'}, \tilde{\iota}, \fieldCharacter}\left(\alpha\right) \restriction_H, \tau \right).
	\end{equation}
	
	We make the following remark which will be useful later. Suppose that $\lambda' = 1$. We have the following decomposition of symplectic spaces $$\restrictionOfScalars_{\quadraticExtension \slash \baseField} \left(\etaleHermitian{\lambda} \otimes_{\quadraticExtension} \etaleSkewHermitian[\baseField]{1}\right) = \bigoplus_{j=1}^m \restrictionOfScalars_{\quadraticEtaleAlgebra[\baseField_j] \slash \baseField} \left(\hermitianSpace_j \otimes_{\quadraticEtaleAlgebra[\baseField_j]} \skewHermitianSpace_j\right).$$
	Hence, we get a natural map (see \cite[Remark 2.7]{Prasad1993}) $$\prod_{j=1}^m \gMetaplecticOfSpaces[\fieldCharacter_j]{\hermitianSpace_j}{\skewHermitianSpace_j} \to \gMetaplecticOfSpaces{\etaleHermitian{\lambda}}{\etaleSkewHermitian[\baseField]{1}}.$$
	This map is not injective, but its restriction to $\gMetaplecticOfSpaces[\fieldCharacter_j]{\hermitianSpace_j}{\skewHermitianSpace_j}$ is injective for every $j$. 
	
	Therefore, we may regard $\tilde{\iota}$ as a map  $$\tilde{\iota} \colon \unitaryGroup{\quadraticEtaleHermitian{\lambda}} \times \unitaryGroup{\etaleSkewHermitian{1}} \to \gMetaplecticOfSpaces{\etaleHermitian{\lambda}}{\etaleSkewHermitian[\baseField]{1}}.$$
	Furthermore, since the restriction of the Weil representation $\omega_{\fieldCharacter, \etaleAlgebra}$ to $\gMetaplecticOfSpaces[\fieldCharacter_j]{\hermitianSpace_j}{\skewHermitianSpace_j}$ is $\omega_{\fieldCharacter_j, \baseField_j}$, we have that the restriction of $\Omega_{\etaleHermitian{\lambda}, \etaleSkewHermitian[\baseField]{1}, \tilde{\iota}, \fieldCharacter}$ to $\unitaryGroup{\hermitianSpace_j} \times \unitaryGroup{\skewHermitianSpace_j}$ is $\Omega_{\hermitianSpace_j, \skewHermitianSpace_j, \tilde{\iota}, \fieldCharacter_j}$. This compatibility is important for the local seesaw identity, which we will describe in the next section.
	
	\begin{remark}\label{rem:kernel-of-metaplectic-embedding}
		By \cite[Pages 36-37, Remarque (6)]{MoeglinVignerasWaldspurger1987}, the kernel of the map $\prod_{j=1}^m \gMetaplecticOfSpaces[\fieldCharacter_j]{\hermitianSpace_j}{\skewHermitianSpace_j} \to \gMetaplecticOfSpaces{\etaleHermitian{\lambda}}{\etaleSkewHermitian[\baseField]{1}}$ is given by
		all tuples $\left(g_1,\dots,g_m\right)$ such that for every $j$ the projection of $g_j$ to $\Sp\left(\restrictionOfScalars_{\quadraticEtaleAlgebra[\baseField_j] \slash \baseField_j}\left(\hermitianSpace_j \otimes_{\quadraticEtaleAlgebra[\baseField_j]} \skewHermitianSpace_j \right)\right)$ is the identity, and such that if $t_j$ is the projection of $g_j$ to $\unitCircle$ then $\prod_{j=1}^m t_j = 1$.
	\end{remark}
	
	\subsection{A local seesaw identity}\label{sec:local-seesaw-identity}
	\subsubsection{Splitting set up}\label{subsec:local-splitting-set-up}
	
	Suppose we are in the setup of \Cref{sec:theta-lifting-etale} with $\lambda' = 1$. We will consider the following seesaw diagram:
	$$
	\xymatrix{\unitaryGroup{\etaleSkewHermitian{1}} \ar@{-}[d] \ar@{-}[dr] & \unitaryGroup{\etaleHermitian{\lambda}} \ar@{-}[d] \ar@{-}[dl] \\ \unitaryGroup{\etaleSkewHermitian[\baseField]{1}} & \; \unitaryGroup{\quadraticEtaleHermitian{\lambda}}.}
	$$
	Here $\unitaryGroup{\etaleSkewHermitian[\baseField]{1}}$ is realized as a subgroup of $\unitaryGroup{\etaleSkewHermitian{1}}$ diagonally, that is, an element $x \in \normoneGroup{\quadraticExtension} \cong \unitaryGroup{\etaleSkewHermitian[\baseField]{1}}$ acts on $\etaleSkewHermitian{1}$ by the multiplication by $x$ map $\multiplicationMap{x}$, given by $\multiplicationMap{x} w = xw$, for $w \in \etaleSkewHermitian{1}$.
	
	In order to write down a seesaw identity, we need to fix compatible splittings. We explain this now.
	
	Given characters $\chi_{\quadraticEtaleHermitian{\lambda}}, \chi_{\etaleSkewHermitian{1}} \colon\multiplicativeGroup{\quadraticEtaleAlgebra} \to \multiplicativeGroup{\cComplex}$, such that $$\chi_{\quadraticEtaleHermitian{\lambda}} \restriction_{\multiplicativeGroup{\etaleAlgebra}} = \chi_{\etaleSkewHermitian{1}} \restriction_{\multiplicativeGroup{\etaleAlgebra}} = \omega_{\quadraticEtaleAlgebra/\etaleAlgebra},$$ we constructed a map $$\tilde{\iota} \colon\unitaryGroup{\quadraticEtaleHermitian{\lambda}} \times \unitaryGroup{\etaleSkewHermitian{1}} \to \gMetaplecticOfSpaces{\etaleHermitian{\lambda}}{\etaleSkewHermitian[\baseField]{1}}.$$ By Kudla's construction, this splitting is of the form $\tilde{\iota} = \tilde{\iota}_{\chi_{\etaleSkewHermitian{1}}} \times \tilde{\iota}_{\chi_{\quadraticEtaleHermitian{\lambda}}}$, where $$\tilde{\iota}_{\chi_{\etaleSkewHermitian{1}}} \colon \unitaryGroup{\quadraticEtaleHermitian{\lambda}} \rightarrow \gMetaplecticOfSpaces{\etaleHermitian{\lambda}}{\etaleSkewHermitian[\baseField]{1}} \;\;\;\text{ and }\;\;\; \tilde{\iota}_{\chi_{\quadraticEtaleHermitian{\lambda}}} \colon\unitaryGroup{\etaleSkewHermitian{1}} \rightarrow \gMetaplecticOfSpaces{\etaleHermitian{\lambda}}{\etaleSkewHermitian[\baseField]{1}}.$$
	On the other hand, given characters $\chi_{\etaleHermitian{\lambda}}, \chi_{\etaleSkewHermitian[\baseField]{1}}\colon \multiplicativeGroup{\quadraticExtension} \rightarrow \multiplicativeGroup{\cComplex}$, such that $${\chi_{\etaleHermitian{\lambda}}}_{\restriction_{\multiplicativeGroup{\baseField}}} = \omega_{\quadraticExtension/\baseField}^{\dim_\quadraticExtension \etaleHermitian{\lambda}} \;\;\;\text{ and }\;\;\; {\chi_{\etaleSkewHermitian[\baseField]{1}}}_{\restriction_{\multiplicativeGroup{\baseField}}} = \omega_{\quadraticExtension/\baseField},$$
	we have a splitting $$\tilde{\iota}' \colon\unitaryGroup{\etaleHermitian{\lambda}} \times \unitaryGroup{\etaleSkewHermitian[\baseField]{1}} \rightarrow \gMetaplecticOfSpaces{\etaleHermitian{\lambda}}{\etaleSkewHermitian[\baseField]{1}}.$$ Once again, by Kudla's construction, this embedding is of the form $\tilde{\iota}' = \tilde{\iota}'_{\chi_{\etaleSkewHermitian[\baseField]{1}}} \times \tilde{\iota}'_{\chi_{\etaleHermitian{\lambda}}},$ where
	$$\tilde{\iota}'_{\chi_{\etaleSkewHermitian[\baseField]{1}}} \colon\unitaryGroup{\etaleHermitian{\lambda}} \rightarrow \gMetaplecticOfSpaces{\etaleHermitian{\lambda}}{\etaleSkewHermitian[\baseField]{1}} \;\;\;\text{ and }\;\;\; \tilde{\iota}'_{\chi_{\etaleHermitian{\lambda}}} \colon \unitaryGroup{\etaleSkewHermitian[\baseField]{1}} \rightarrow \gMetaplecticOfSpaces{\etaleHermitian{\lambda}}{\etaleSkewHermitian[\baseField]{1}}.$$
	
	We say that these splitting $\tilde{\iota}$ and $\tilde{\iota}$' are \emph{compatible} if they agree on the subgroup ${\unitaryGroup{\quadraticEtaleHermitian{\lambda}} \times \unitaryGroup{\etaleSkewHermitian[\baseField]{1}}}$. This is equivalent to requiring the following equalities between the characters involved:
	$$ \chi_{\etaleSkewHermitian{1}} = \chi_{\etaleSkewHermitian[\baseField]{1}} \circ N_{\quadraticEtaleAlgebra \slash \quadraticExtension} \;\;\;\text{ and }\;\;\; \chi_{\quadraticEtaleHermitian{\lambda}} \restriction_{\multiplicativeGroup{\quadraticExtension}} = \chi_{\etaleHermitian{\lambda}}.$$
	We refer to the discussion in \cite[Section 1]{HarrisKudlaSweet1996} for more details.
	
	\subsubsection{The local seesaw identity}\label{subsec:local-seesaw-identity}
	
	Let $\alpha \colon \unitaryGroup{\quadraticEtaleHermitian{\lambda}} \rightarrow \multiplicativeGroup{\cComplex}$ and $\beta \colon \unitaryGroup{\etaleSkewHermitian[\baseField]{1}} \to \multiplicativeGroup{\cComplex}$ be characters.
	
	Choose compatible splittings $\tilde{\iota}$ and $\tilde{\iota}'$ as above. Let $\Theta\left(\beta\right)$ be the big theta lift of $\beta$ to $\unitaryGroup{\etaleHermitian{\lambda}}$ with respect to the splitting $\tilde{\iota}'$. By \eqref{eq:theta-restriction-identity}, we have the following equality, where we take $H = \unitaryGroup{\quadraticEtaleHermitian{\lambda}}$:
	$$\Hom_{ \unitaryGroup{\quadraticEtaleHermitian{\lambda}} \times \unitaryGroup{\etaleSkewHermitian[\baseField]{1}}}\left(\Omega_{\etaleHermitian{\lambda}, \etaleSkewHermitian[\baseField]{1}, \tilde{\iota}', \fieldCharacter} , \alpha \otimes \beta\right) \cong \Hom_{\unitaryGroup{\quadraticEtaleHermitian{\lambda}}}\left(\Theta\left(\beta\right) \restriction_{\unitaryGroup{\quadraticEtaleHermitian{\lambda}}}, \alpha \right).$$
	Similarly, Let $\Theta\left(\alpha\right)$ be the big theta lift of $\alpha$ to $\unitaryGroup{\etaleSkewHermitian{1}}$, with respect to the splitting $\tilde{\iota}$. Similarly to before, by \eqref{eq:etale-theta-restriction-identity}, we have the following equality, where this time we take $H = \unitaryGroup{\etaleSkewHermitian[\baseField]{1}}$:
	$$ 	\Hom_{\unitaryGroup{\quadraticEtaleHermitian{\lambda}} \times \unitaryGroup{\etaleSkewHermitian[\baseField]{1}}}\left(\Omega_{\etaleHermitian{\lambda}, \etaleSkewHermitian[\baseField]{1}, \tilde{\iota}, \fieldCharacter} , \alpha \otimes \beta \right) \cong \Hom_{\unitaryGroup{\etaleSkewHermitian[\baseField]{1}}}\left(\Theta\left(\alpha\right) \restriction_{\unitaryGroup{\etaleSkewHermitian[\baseField]{1}}}, \beta \right). $$
	Since the splittings $\tilde{\iota}'$ and $\tilde{\iota}$ are compatible, they agree on the subgroup $\unitaryGroup{\quadraticEtaleHermitian{\lambda}} \times \unitaryGroup{\etaleSkewHermitian[\baseField]{1}}.$ Hence, we get the following identity, which is called \emph{the local seesaw identity}:
	
	\begin{equation}\label{eq:local-seesaw-identity-etale-theta}
		\Hom_{\unitaryGroup{\quadraticEtaleHermitian{\lambda}}}\left(\Theta\left(\beta\right) \restriction_{\unitaryGroup{\quadraticEtaleHermitian{\lambda}}}, \alpha \right) \cong\Hom_{\unitaryGroup{\etaleSkewHermitian[\baseField]{1}}}\left(\Theta\left(\alpha\right) \restriction_{\unitaryGroup{\etaleSkewHermitian[\baseField]{1}}}, \beta \right).
	\end{equation}
	This identity will serve as a key ingredient in the proof of our main result.
	
	\subsection{Theta lifting for unitary groups of one-dimensional spaces}\label{subsec:theta-lifting-for-unitary-groups-of-one-dimensional-spaces}
	
	In this section, we recall results regarding theta lifting of characters of $\unitaryGroup{\skewHermitianSpace}$ to $\unitaryGroup{\hermitianSpace}$, where $\skewHermitianSpace$ and $\hermitianSpace$ are one-dimensional. These results are stated in terms of a relation between the root number of a character and the discriminants of $\hermitianSpace$ and $\skewHermitianSpace$. We begin with recalling the definition of the root number of a character of $\multiplicativeGroup{\quadraticExtension}$ and defining a similar notion for a conjugate-dual character of $\multiplicativeGroup{\quadraticEtaleAlgebra}$, where $\etaleAlgebra \slash \baseField$ is an \etale algebra. We then state results of \cite{HarrisKudlaSweet1996} and \cite{Paul98} regarding the non-vanishing of a theta lift of a character of $\unitaryGroup{\skewHermitianSpace}$. We finish with using our definition for root numbers of characters of $\multiplicativeGroup{\quadraticEtaleAlgebra}$ in order to deduce a similar statement for theta lifts of characters of unitary groups of one-dimensional $\quadraticEtaleAlgebra$-skew-hermitian spaces.
	
	\subsubsection{Vector of root numbers of characters of $\multiplicativeGroup{\quadraticEtaleAlgebra}$}
	Let $\fieldCharacter \colon\baseField \rightarrow \multiplicativeGroup{\cComplex}$ be a non-trivial character. For any trace zero element $\delta \in \multiplicativeGroup{\quadraticExtension}$ we define $\fieldCharacter_{\delta} \colon \quadraticExtension \rightarrow \multiplicativeGroup{\cComplex}$ by the formula $\fieldCharacter_{\delta}\left(x\right) = \fieldCharacter\left( \trace_{\quadraticExtension\slash\baseField} (\delta x)\right)$. Note that for any $x \in \multiplicativeGroup{\quadraticExtension}$, we have  $\fieldCharacter_{\delta}\left(\involution{x}\right) = \fieldCharacter_{\delta}^{-1}\left(x\right)$.
	
	For any character $\chi \colon \multiplicativeGroup{\quadraticEtaleAlgebra} \rightarrow \multiplicativeGroup{\cComplex}$ such that $\chi\left(\involution{x}\right) = \chi\left(x^{-1}\right)$, we will define a vector of root numbers $\varepsilon\left(\chi,\fieldCharacter,\delta\right)$.
	
	Assume first that $\etaleAlgebra$ is a field extension of $\baseField$. The \etale algebra $\quadraticEtaleAlgebra = \etaleAlgebra \otimes_{\baseField} \quadraticExtension$ is either a field or is isomorphic to $\etaleAlgebra \times \etaleAlgebra$. Let $\chi \colon \multiplicativeGroup{\quadraticEtaleAlgebra} \rightarrow \multiplicativeGroup{\cComplex}$ be a character as above. If $\quadraticEtaleAlgebra$ is a field, we set $$\varepsilon_{\quadraticEtaleAlgebra \slash \etaleAlgebra}\left(\chi, \fieldCharacter, \delta\right) = \varepsilon^{\mathrm{Tate}}_{\quadraticEtaleAlgebra}\left(\tfrac{1}{2},\chi, \fieldCharacter_\delta \circ \trace_{\quadraticEtaleAlgebra \slash \quadraticExtension} \right),$$ where for a complex number $s$, the factor $\varepsilon^{\mathrm{Tate}}_{\quadraticEtaleAlgebra}\left(s,\chi, \fieldCharacter_\delta \circ \trace_{\quadraticEtaleAlgebra \slash \quadraticExtension} \right)$ is the epsilon factor defined by Tate \cite{Tate1967,Kudla2003}. If $\quadraticEtaleAlgebra = \etaleAlgebra \times \etaleAlgebra$, then we define $\varepsilon_{\quadraticEtaleAlgebra \slash \etaleAlgebra}\left(\chi, \fieldCharacter, \delta \right) = 1$.
	
	We move to the general case. Let $\etaleAlgebra$ be a finite-dimensional \etale algebra. As before, we may write $\etaleAlgebra = \prod_{j=1}^m \baseField_j$, where $\baseField_j \slash \baseField$ is a field extension. Given a character $\chi \colon\multiplicativeGroup{\quadraticEtaleAlgebra} \to \multiplicativeGroup{\cComplex}$, we may regard it as a tuple $\left(\chi_1, \dots, \chi_m\right)$, where $\chi_j \colon \multiplicativeGroup{\quadraticEtaleAlgebra[\baseField_j]} \rightarrow \multiplicativeGroup{\cComplex}$ is a character satisfying $\chi_j\left(\involution{x}\right) = \chi_j^{-1}\left(x\right)$ for every $j$ and $x \in \multiplicativeGroup{\quadraticEtaleAlgebra[\baseField_j]}$. We define $\varepsilon_{\quadraticEtaleAlgebra \slash \etaleAlgebra}\left(\chi , \fieldCharacter, \delta\right)$ as the following tuple:
	$$\varepsilon_{\quadraticEtaleAlgebra \slash \etaleAlgebra}\left(\chi, \fieldCharacter, \delta \right) = \left(\varepsilon_{\quadraticEtaleAlgebra[\baseField_1] \slash \baseField_1}\left(\chi_1 , \fieldCharacter, \delta \right), \dots, \varepsilon_{\quadraticEtaleAlgebra[\baseField_m] \slash \baseField_m}\left(\chi_m , \fieldCharacter, \delta\right)\right).$$
	Recall that for any $x \in \multiplicativeGroup{\quadraticEtaleAlgebra}$ we have that $\chi(\involution{x}) = \chi^{-1}(x)$ and $\fieldCharacter_{\delta}(\involution{x}) = \fieldCharacter_{\delta}^{-1}(x)$. This implies that $\varepsilon_{\quadraticEtaleAlgebra \slash \etaleAlgebra}\left(\chi,\fieldCharacter,\delta\right)$ is a tuple of signs.
	
	\subsubsection{Base change for characters of $\normoneGroup{\quadraticEtaleAlgebra}$} \label{subsec:base-change-characters-one-dimension-unitary-group-local}
	We have an isomorphism $j_{\etaleAlgebra} \colon\multiplicativeGroup{\quadraticEtaleAlgebra} \slash \multiplicativeGroup{\etaleAlgebra} \to \normoneGroup{\quadraticEtaleAlgebra}$ given by $j_{\etaleAlgebra}\left(x\right) = \frac{x}{\involution{x}}$.
	
	Given a character $\beta \colon\normoneGroup{\quadraticEtaleAlgebra} \rightarrow \multiplicativeGroup{\cComplex}$, we define a character $\beta_{\quadraticEtaleAlgebra} \colon\multiplicativeGroup{\quadraticEtaleAlgebra} \rightarrow \multiplicativeGroup{\cComplex}$ by the formula
	$$\beta_{\quadraticEtaleAlgebra}\left(x\right) = \left(\beta \circ j_\etaleAlgebra\right) \left(x\right) = \beta\left(\frac{x}{\involution{x}}\right).$$
	
	Notice that $\beta_{\quadraticEtaleAlgebra}\left(\involution{x}\right) = \beta_{\quadraticEtaleAlgebra}^{-1}\left(x\right)$. Therefore, if $\chi \colon\multiplicativeGroup{\quadraticEtaleAlgebra} \rightarrow \multiplicativeGroup{\cComplex}$ is a character such that $\chi \restriction_{N_{\quadraticEtaleAlgebra \slash \etaleAlgebra}\left(\multiplicativeGroup{\quadraticEtaleAlgebra}\right)} = 1$, then for any $x \in \multiplicativeGroup{\quadraticEtaleAlgebra}$, $$\left(\chi^{-1} \beta_{\quadraticEtaleAlgebra}\right)\left(x^{-1}\right) = \left(\chi^{-1} \beta_{\quadraticEtaleAlgebra}\right)\left(\involution{x}\right),$$ and the vector of root numbers $\varepsilon_{\quadraticEtaleAlgebra \slash \etaleAlgebra}\left(\chi^{-1} \cdot \beta_{\quadraticEtaleAlgebra}, \fieldCharacter, \delta\right)$ is defined.
	
	\subsubsection{Non-vanishing of theta lifts}
	
	Suppose that $\quadraticExtension \slash \baseField$ is a quadratic field extension, and let $\delta \in \multiplicativeGroup{\quadraticExtension}$ be a trace zero element. Let $\hermitianSpace$ and $\skewHermitianSpace$ be non-degenerate one-dimensional hermitian and skew hermitian spaces over $\quadraticExtension$, respectively.
	
	The non-vanishing of a theta lift of a character of $\unitaryGroup{\hermitianSpace}$ is treated separately for the archimedean case and the non-archimedean case. When $\baseField$ is non-archimedean, the statement is given by \cite[Theorem 6.1]{HarrisKudlaSweet1996}. There are subtle differences between the versions of this result presented in \cite{HarrisKudlaSweet1996} and the version we state below. Such differences are explained in \cite[Section 9]{GGPExamples2012}. When $\baseField = \rReal$, the result is given by \cite[Theorem 6.1]{Paul98}. Once again, it is written in a different language, and we refer to \cite[Section 3.2]{Xue2023} for the translation\footnote{We warn the reader that the characters $\chi_\hermitianSpace$ and $\chi_{\hermitianSpace'}$ in \cite{Xue2023} are $\chi_\skewHermitianSpace$ and $\chi_\hermitianSpace$, respectively, in our notation.}. Let $\torusEmbedding'_{\hermitianSpace} \colon \unitaryGroup{\hermitianSpace} \to \normoneGroup{\quadraticExtension}$ and $\torusEmbedding'_{\skewHermitianSpace} \colon \unitaryGroup{\skewHermitianSpace} \to \normoneGroup{\quadraticExtension}$ be the obvious isomorphisms. Let $\alpha \colon\normoneGroup{\quadraticExtension} \to \multiplicativeGroup{\cComplex}$ be a character. The following result determines the theta lift $\Theta_{\hermitianSpace,\skewHermitianSpace,\tilde{\iota},\fieldCharacter}\left(\alpha \circ \torusEmbedding'_\hermitianSpace \right)$.
	
	\begin{theorem}[Epsilon Dichotomy]\label{thm:epsilon-dichotomy}
		The theta lift $\Theta_{\hermitianSpace,\skewHermitianSpace, \tilde{\iota}, \fieldCharacter}\left(\alpha \circ \torusEmbedding'_\hermitianSpace \right)$ with respect to the splitting $\tilde{\iota}$ associated to the characters $(\chi_{\hermitianSpace},\chi_{\skewHermitianSpace})$ is non-zero if and only if
		$$\varepsilon_{\quadraticExtension \slash \baseField}\left(\chi_{\skewHermitianSpace}^{-1} \cdot \alpha_{\quadraticExtension}, \fieldCharacter, \delta\right) = \epsilon\left(\hermitianSpace\right)\cdot \epsilon_{\delta}\left(\skewHermitianSpace\right).$$
		Moreover, in this case $$\Theta_{\hermitianSpace, \skewHermitianSpace, \tilde{\iota}, \fieldCharacter}\left(\alpha \circ \torusEmbedding'_\hermitianSpace\right) = \left(\left(\chi_\skewHermitianSpace^{-1} \cdot \chi_\hermitianSpace\right) \circ j_\baseField^{-1} \cdot  \alpha\right) \circ \torusEmbedding'_\skewHermitianSpace.$$
	\end{theorem}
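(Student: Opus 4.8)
The plan is to derive both assertions from the classical determination of the theta lift from $\unitaryGroup{1}$ to $\unitaryGroup{1}$: \cite[Theorem 6.1]{HarrisKudlaSweet1996} when $\baseField$ is non-archimedean, and \cite[Theorem 6.1]{Paul98} (reformulated in \cite[Section 3.2]{Xue2023}) when $\baseField = \rReal$; the real work is to match the normalizations used there with ours. Since $\dim \hermitianSpace = \dim \skewHermitianSpace = 1$, the admissible isomorphisms $\torusEmbedding'_{\hermitianSpace}$ and $\torusEmbedding'_{\skewHermitianSpace}$ identify both $\unitaryGroup{\hermitianSpace}$ and $\unitaryGroup{\skewHermitianSpace}$ with the abelian group $\normoneGroup{\quadraticExtension}$, so by Howe duality the big theta lift $\Theta_{\hermitianSpace,\skewHermitianSpace,\tilde{\iota},\fieldCharacter}\left(\alpha \circ \torusEmbedding'_{\hermitianSpace}\right)$ is either zero or irreducible, hence a character of $\unitaryGroup{\skewHermitianSpace}$. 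Thus there are two things to establish: the non-vanishing criterion, and, when it holds, the identity of the resulting character.

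For the non-vanishing criterion, I would first record \cite[Theorem 6.1]{HarrisKudlaSweet1996} in the non-archimedean case and then supply a dictionary with three entries. First, the splittings: Kudla's splittings $\tilde{\iota}_{\fieldCharacter,\chi_{\skewHermitianSpace}}$ of $\unitaryGroup{\hermitianSpace}$ and $\tilde{\iota}_{\fieldCharacter,\chi_{\hermitianSpace}}$ of $\unitaryGroup{\skewHermitianSpace}$ correspond to the normalizing characters of \cite{HarrisKudlaSweet1996} up to the discrepancy discussed in \cite[Section 1]{HarrisKudlaSweet1996} and \cite[Section 9]{GGPExamples2012}, and this discrepancy is exactly what accounts for the $\chi_{\skewHermitianSpace}^{-1}$ inside the $\varepsilon$-factor and the $\chi_{\hermitianSpace}$ appearing in the formula for the lift. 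Second, the $\varepsilon$-factor side: with $\alpha_{\quadraticExtension}(x) = \alpha(x/\involution{x})$, the character of $\multiplicativeGroup{\quadraticExtension}$ governing the lift is $\chi_{\skewHermitianSpace}^{-1}\alpha_{\quadraticExtension}$, and the auxiliary additive character is forced to be $\fieldCharacter_\delta$ because $\delta$ represents $\discrim \skewHermitianSpace$ modulo norms; this produces $\varepsilon_{\quadraticExtension/\baseField}\left(\chi_{\skewHermitianSpace}^{-1}\alpha_{\quadraticExtension},\fieldCharacter,\delta\right)$. Third, the target side: $\epsilon(\hermitianSpace)\cdot\epsilon_\delta(\skewHermitianSpace) = \omega_{\quadraticExtension/\baseField}(\discrim \hermitianSpace)\cdot\omega_{\quadraticExtension/\baseField}(\delta\,\discrim \skewHermitianSpace)$ is precisely the product of discriminant-parity invariants on the right-hand side of the dichotomy in \cite{HarrisKudlaSweet1996}. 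The archimedean case is entirely parallel using \cite[Theorem 6.1]{Paul98} together with \cite[Section 3.2]{Xue2023}, being careful about the convention swap $\chi_{\hermitianSpace} \leftrightarrow \chi_{\skewHermitianSpace}$ flagged in the footnote.

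Finally, to identify the non-zero lift as $\left((\chi_{\skewHermitianSpace}^{-1}\chi_{\hermitianSpace})\circ j_{\baseField}^{-1}\cdot\alpha\right)\circ\torusEmbedding'_{\skewHermitianSpace}$, I would observe that this character is already output explicitly by the same two theorems; as an independent check, one can compute the action on $\Omega_{\hermitianSpace,\skewHermitianSpace,\tilde{\iota},\fieldCharacter}$ of the copy of $\normoneGroup{\quadraticExtension}$ sitting diagonally inside $\unitaryGroup{\hermitianSpace}\times\unitaryGroup{\skewHermitianSpace}$ via $\torusEmbedding'_{\hermitianSpace}$ and $\torusEmbedding'_{\skewHermitianSpace}$: Kudla's splitting cocycle contributes the factor $(\chi_{\hermitianSpace}\chi_{\skewHermitianSpace}^{-1})\circ j_{\baseField}^{-1}$ there (this is the part of $\chi_{\hermitianSpace}\chi_{\skewHermitianSpace}^{-1}$ that descends to $\multiplicativeGroup{\quadraticExtension}/\multiplicativeGroup{\baseField}$, its restriction to $\multiplicativeGroup{\baseField}$ being trivial since $\dim\hermitianSpace=\dim\skewHermitianSpace$), and since the $\unitaryGroup{\hermitianSpace}$-isotypic part is $\alpha \circ \torusEmbedding'_{\hermitianSpace}$ the resulting character of $\unitaryGroup{\skewHermitianSpace}$ is forced to be the stated one. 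The main obstacle is not any single computation but the consistent bookkeeping of these three normalizations — the splitting characters, the Tate $\varepsilon$-factor conventions at $s=\tfrac{1}{2}$, and the sign conventions for the discriminants — across the three references; once the dictionary is fixed, each assertion is a direct quotation.
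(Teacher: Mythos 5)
Your proposal matches what the paper does: Theorem \ref{thm:epsilon-dichotomy} is not proved in the paper but is quoted directly from \cite[Theorem 6.1]{HarrisKudlaSweet1996} in the non-archimedean case and \cite[Theorem 6.1]{Paul98} (via \cite[Section 3.2]{Xue2023}) in the archimedean case, with the convention discrepancies handled exactly as you describe through \cite[Section 9]{GGPExamples2012} and the $\chi_{\hermitianSpace} \leftrightarrow \chi_{\skewHermitianSpace}$ swap noted in the footnote. Your extra consistency check via the splitting cocycle is harmless but not needed; otherwise this is the same citation-and-translation argument the paper relies on.
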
	
	
	We remark that this theorem is also true in the split case, i.e., it is true when $\quadraticExtension = \baseField \times \baseField$. In this case, the characters $\chi_{\skewHermitianSpace}$ and $\chi_{\hermitianSpace}$, are trivial, and all the invariants specified in the theorem are also trivial. Hence, the condition is always satisfied. By \cite{Minguez2008, FangSunXue2018, Gan2019}, we have in this case that $\Theta_{\hermitianSpace, \skewHermitianSpace, \tilde{\iota}, \fieldCharacter}\left(\alpha \circ \torusEmbedding'_\hermitianSpace\right) = \alpha \circ \torusEmbedding'_\skewHermitianSpace$, which is the same statement as in the theorem, since the characters $\chi_{\hermitianSpace}$ and $\chi_{\skewHermitianSpace}$ are trivial.
	
	\subsubsection{Non-vanishing of theta lifts for one-dimensional spaces over an \etale algebra}
	Let $\etaleAlgebra$ be an \etale algebra of degree $n$ over $\baseField$. Let $\lambda \in \multiplicativeGroup{\etaleAlgebra}$ and let $\delta \in \multiplicativeGroup{\quadraticExtension}$ be a trace zero element. Consider the same setup as in \Cref{sec:local-seesaw-identity}. \Cref{thm:epsilon-dichotomy} has an obvious extension for the spaces $\etaleSkewHermitian{1}$ and $\etaleHermitian{\lambda}$, which will be useful for the proof of our main result.
	
	Let $\torusEmbedding'_{\hermitianSpace} \colon\unitaryGroup{\quadraticEtaleHermitian{\lambda}} \to \normoneGroup{\quadraticEtaleAlgebra}$ and $\torusEmbedding'_{\skewHermitianSpace} \colon\unitaryGroup{\etaleSkewHermitian{1}} \to \normoneGroup{\quadraticEtaleAlgebra}$ be the obvious isomorphisms. Let $\alpha\colon \normoneGroup{\quadraticEtaleAlgebra} \rightarrow \multiplicativeGroup{\cComplex}$ be a character.
	
	\begin{theorem}\label{thm:epsilon-dichotomy-etale}
		The theta lift $\Theta\left(\alpha \circ \torusEmbedding'_{\hermitianSpace}\right) = \Theta_{\quadraticEtaleHermitian{\lambda}, \etaleSkewHermitian{1}, \tilde{\iota}, \fieldCharacter}\left(\alpha \circ \torusEmbedding'_{\hermitianSpace} \right)$ is non-zero if and only if
		$$ \varepsilon_{\quadraticEtaleAlgebra \slash \etaleAlgebra}\left( \chi_{\etaleSkewHermitian[\baseField]{1}}^{-1} \circ N_{\quadraticEtaleAlgebra \slash \quadraticExtension} \cdot \alpha_{\quadraticEtaleAlgebra}, \fieldCharacter, \delta \right) = \epsilon \left( \etaleHermitian{\lambda} \right) \cdot \epsilon_{\delta}\left( \etaleSkewHermitian{1}\right) = \omega_{\quadraticEtaleAlgebra \slash \etaleAlgebra}\left(\lambda\right),$$ and in this case $$\Theta\left(\alpha \circ \torusEmbedding'_{\hermitianSpace}\right) = \left(\left(\chi_{\etaleSkewHermitian[\baseField]{1}}^{-1} \circ N_{\quadraticEtaleAlgebra \slash \quadraticExtension} \cdot \chi_{\quadraticEtaleHermitian{\lambda}}\right) \circ j_{\etaleAlgebra}^{-1} \cdot \alpha\right) \circ \torusEmbedding'_{\skewHermitianSpace}.$$
		
	\end{theorem}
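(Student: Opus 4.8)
The plan is to reduce \Cref{thm:epsilon-dichotomy-etale} entirely to the one-dimensional field case established in \Cref{thm:epsilon-dichotomy}, using the product decomposition set up in \Cref{sec:theta-lifting-etale}. Write $\etaleAlgebra = \prod_{j=1}^m \baseField_j$ with each $\baseField_j \slash \baseField$ a field extension, so that $\quadraticEtaleAlgebra = \prod_{j=1}^m \quadraticEtaleAlgebra[\baseField_j]$, $\lambda = \left(\lambda_1, \dots, \lambda_m\right)$, $\unitaryGroup{\quadraticEtaleHermitian{\lambda}} = \prod_j \unitaryGroup{\hermitianSpace_j}$ and $\unitaryGroup{\etaleSkewHermitian{1}} = \prod_j \unitaryGroup{\skewHermitianSpace_j}$ with $\hermitianSpace_j = L_{\baseField_j, \lambda_j}$ and $\skewHermitianSpace_j = \etaleSkewHermitian[\baseField_j]{1}$, the character $\alpha$ corresponds to a tuple $\left(\alpha_1, \dots, \alpha_m\right)$, and $\tilde{\iota} = \left(\tilde{\iota}_1, \dots, \tilde{\iota}_m\right)$. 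By the very definition of the \etale theta lift, $\Theta\left(\alpha \circ \torusEmbedding'_{\hermitianSpace}\right)$ is the external tensor product $\bigotimes_{j=1}^m \Theta_{\hermitianSpace_j, \skewHermitianSpace_j, \tilde{\iota}_j, \fieldCharacter_j}\left(\alpha_j \circ \left(\torusEmbedding'_{\hermitianSpace}\right)_j\right)$, hence it is non-zero precisely when every factor is non-zero, and in that case it equals this tensor product.

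First I would apply \Cref{thm:epsilon-dichotomy} over the base field $\baseField_j$ to each pair $\left(\hermitianSpace_j, \skewHermitianSpace_j\right)$ of one-dimensional $\quadraticEtaleAlgebra[\baseField_j]$-spaces, taking the additive character $\fieldCharacter_j = \fieldCharacter \circ \trace_{\baseField_j \slash \baseField}$, the trace-zero element $\delta$ (which is still trace zero over $\baseField_j$ inside $\multiplicativeGroup{\quadraticEtaleAlgebra[\baseField_j]}$, whether $\quadraticEtaleAlgebra[\baseField_j]$ is a field or is split), and the splitting $\tilde{\iota}_j$ attached to the $j$-th components $\left(\chi_{\hermitianSpace_j}, \chi_{\skewHermitianSpace_j}\right)$ of $\left(\chi_{\quadraticEtaleHermitian{\lambda}}, \chi_{\etaleSkewHermitian{1}}\right)$; for those indices $j$ where $\quadraticEtaleAlgebra[\baseField_j]$ is split one invokes the split-case remark following \Cref{thm:epsilon-dichotomy}. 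This gives, for each $j$, that the $j$-th factor is non-zero if and only if $\varepsilon_{\quadraticEtaleAlgebra[\baseField_j] \slash \baseField_j}\left(\chi_{\skewHermitianSpace_j}^{-1} \cdot \left(\alpha_j\right)_{\quadraticEtaleAlgebra[\baseField_j]}, \fieldCharacter_j, \delta\right) = \epsilon\left(\hermitianSpace_j\right) \cdot \epsilon_{\delta}\left(\skewHermitianSpace_j\right)$, and, when it is non-zero, that it equals $\left(\left(\chi_{\skewHermitianSpace_j}^{-1} \cdot \chi_{\hermitianSpace_j}\right) \circ j_{\baseField_j}^{-1} \cdot \alpha_j\right) \circ \left(\torusEmbedding'_{\skewHermitianSpace}\right)_j$.

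Then I would assemble these $m$ componentwise statements into the single vector-valued statement of \Cref{thm:epsilon-dichotomy-etale}. The inputs needed here are: the compatibility of the splittings, which forces $\chi_{\etaleSkewHermitian{1}} = \chi_{\etaleSkewHermitian[\baseField]{1}} \circ N_{\quadraticEtaleAlgebra \slash \quadraticExtension}$ and therefore, on the $j$-th factor, $\chi_{\skewHermitianSpace_j} = \chi_{\etaleSkewHermitian[\baseField]{1}} \circ N_{\quadraticEtaleAlgebra[\baseField_j] \slash \quadraticExtension}$; the fact that $\varepsilon_{\quadraticEtaleAlgebra \slash \etaleAlgebra}\left(-, \fieldCharacter, \delta\right)$, $\alpha_{\quadraticEtaleAlgebra}$, $j_{\etaleAlgebra}$, and $\omega_{\quadraticEtaleAlgebra \slash \etaleAlgebra}$ are by construction the tuples of their field-case analogues over the $\baseField_j$; and the discriminant identities $\discrim \hermitianSpace_j = \lambda_j$ and $\discrim \skewHermitianSpace_j = \delta$, which under the normalizations of \Cref{subsec:invariants-of-one-dimensional-hermitian-spaces-over-an-etale-algebra} give $\epsilon\left(\hermitianSpace_j\right) \cdot \epsilon_{\delta}\left(\skewHermitianSpace_j\right) = \omega_{\quadraticEtaleAlgebra[\baseField_j] \slash \baseField_j}\left(\lambda_j\right)$, the $j$-th entry of $\omega_{\quadraticEtaleAlgebra \slash \etaleAlgebra}\left(\lambda\right)$. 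With these identifications, both the stated equivalence and the stated formula for $\Theta\left(\alpha \circ \torusEmbedding'_{\hermitianSpace}\right)$ drop out entry by entry.

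I expect essentially all the effort to go into this last bookkeeping step: one has to check carefully that, under the product identifications, the $j$-th components of the splitting characters, of the additive and trace-zero data, and of the root-number and discriminant invariants are exactly the inputs that \Cref{thm:epsilon-dichotomy} over $\baseField_j$ produces --- in particular that the sign normalization of $\epsilon_{\delta}$ adopted for the \etale spaces matches the one appearing in \Cref{thm:epsilon-dichotomy}. No new representation-theoretic ingredient is required beyond \Cref{thm:epsilon-dichotomy} and the product structure recalled in \Cref{sec:theta-lifting-etale}.
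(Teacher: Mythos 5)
Your reduction is correct and is exactly the argument the paper intends: the paper states \Cref{thm:epsilon-dichotomy-etale} without proof, calling it an ``obvious extension'' of \Cref{thm:epsilon-dichotomy}, and the extension is obvious precisely because the \'etale theta lift, the splitting, the root-number vector, $\alpha_{\quadraticEtaleAlgebra}$, $j_{\etaleAlgebra}$, and $\omega_{\quadraticEtaleAlgebra \slash \etaleAlgebra}$ are all \emph{defined} componentwise over the factor fields $\baseField_j$, so both the non-vanishing criterion and the formula for the lift drop out factor by factor from \Cref{thm:epsilon-dichotomy} (including the split components via the remark following that theorem). Your bookkeeping checks — in particular that $\fieldCharacter_\delta \circ \trace_{\quadraticEtaleAlgebra[\baseField_j] \slash \quadraticExtension}$ agrees with the additive character obtained by applying the definition over $\baseField_j$ with $\fieldCharacter_j = \fieldCharacter \circ \trace_{\baseField_j \slash \baseField}$, by transitivity of the trace, and that $\epsilon\left(\hermitianSpace_j\right) \cdot \epsilon_{\delta}\left(\skewHermitianSpace_j\right) = \omega_{\quadraticEtaleAlgebra[\baseField_j]\slash\baseField_j}\left(\lambda_j\right)$ — are exactly what is needed, and they go through.
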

	
	\section{Global theory}\label{sec:global-theory}
	We now consider the global analogs of the previous section. We introduce the global theta correspondence and a global seesaw identity that we will need in \Cref{sec:periods-of-tori}. Finally, we recall a result of Yamana regarding the non-vanishing of the global theta lift. 
	\subsection{The global theta correspondence}
	Let $\numberfield$ be a number field and let $\quadraticNumberField \slash \numberfield$ be a quadratic field extension with involution $x \mapsto \involution{x}$, whose set of fixed points is $\numberfield$.
	
	Let $\hermitianSpace$ and $\skewHermitianSpace$ be non-degenerate finite dimensional hermitian and skew-hermitian spaces over $\quadraticNumberField$, respectively. As in the local case, we consider the tensor product $\restrictionOfScalars_{\quadraticNumberField \slash \numberfield}\left(\hermitianSpace \otimes_{\numberfield} \skewHermitianSpace\right)$. Let $\Sp\left(\hermitianSpace, \skewHermitianSpace\right)\left(\numberfield\right) = \Sp\left( \restrictionOfScalars_{\quadraticNumberField \slash \numberfield}\left(\hermitianSpace \otimes_{\quadraticNumberField} \skewHermitianSpace \right) \right)$.
	
	Let $\adeles_{\numberfield}$ be the adeles of $\numberfield$. For an algebraic group $G$ defined over $\numberfield$, denote by $\adelicQuotient{G} = G\left(\numberfield\right) \backslash G\left(\adeles_{\numberfield}\right)$ its automorphic quotient. Let $\fieldCharacter : \numberfield \backslash \adeles_{\numberfield} \rightarrow \multiplicativeGroup{\cComplex}$ be a non-trivial character. Write $\fieldCharacter = \bigotimes_v \fieldCharacter_v$. For every place $v$ of $\numberfield$, we denote $\hermitianSpace_v = \hermitianSpace \otimes_{\numberfield} \numberfield_v$ and $\skewHermitianSpace_v = \skewHermitianSpace \otimes_{\numberfield} \numberfield_v$. We also denote $\quadraticNumberField_v = \quadraticNumberField \otimes_{\numberfield} \numberfield_v$.
	
	For almost all $v$, the covering $$\xymatrix{1 \ar[r] & \unitCircle \ar[r] & \gMetaplecticOfSpaces[\fieldCharacter_v]{\hermitianSpace_v}{\skewHermitianSpace_v} \ar[r] &  \Sp\left(\restrictionOfScalars_{\quadraticNumberField_v \slash \numberfield_v}\left(\hermitianSpace_v \otimes_{\quadraticNumberField_v} \skewHermitianSpace_v \right)\right) \ar[r] & 1}$$ splits uniquely over the maximal hyperspecial subgroup $\mathcal{K}_v$ of $\Sp\left( \restrictionOfScalars_{\quadraticNumberField_v \slash \numberfield_v}\left( \hermitianSpace_v \otimes_{\quadraticNumberField_v} \skewHermitianSpace_v \right) \right)$. Let $$\Sp \left( \hermitianSpace, \skewHermitianSpace \right)\left(\adeles_{\numberfield}\right) = {\prod_v}' \Sp\left(\restrictionOfScalars_{\quadraticNumberField_v \slash \numberfield_v} \left(\hermitianSpace_v \otimes_{\quadraticNumberField_v} \skewHermitianSpace_v\right) \right)$$ be the restricted product with respect to $\mathcal{K}_v \subset \Sp\left(\restrictionOfScalars_{\quadraticNumberField_v \slash \numberfield_v}\left(\hermitianSpace_v \otimes_{\quadraticNumberField_v} \skewHermitianSpace_v\right)\right)$. Consider the restricted product  $\prod'_{v} \gMetaplecticOfSpaces[\fieldCharacter_v]{\hermitianSpace_v}{\skewHermitianSpace_v}$ with respect to $\mathcal{K}_v \subset \gMetaplecticOfSpaces[\fieldCharacter_v]{\hermitianSpace_v}{\skewHermitianSpace_v}$. We denote by $\gMetaplecticOfSpaces{\hermitianSpace}{\skewHermitianSpace}\left(\adeles_{\numberfield}\right)$ the quotient of the latter restricted product by the central subgroup $$Z = \left\{ \left(z_v\right)_v \in \bigoplus_v \unitCircle \mid \prod_v z_v = 1 \right\}.$$
	Then $\gMetaplecticOfSpaces{\hermitianSpace}{\skewHermitianSpace}\left(\adeles_{\numberfield}\right)$ fits into the following exact sequence $$ \xymatrix{1 \ar[r] & \unitCircle \ar[r] & \gMetaplecticOfSpaces{\hermitianSpace}{\skewHermitianSpace}\left(\adeles_{\numberfield}\right) \ar[r] & \Sp\left(\hermitianSpace, \skewHermitianSpace\right)\left(\adeles_{\numberfield} \right) \ar[r] & 1.}$$ 
	We have that $\gMetaplecticOfSpaces{\hermitianSpace}{\skewHermitianSpace}\left(\adeles_{\numberfield}\right)$ splits canonically over $\Sp\left(\hermitianSpace,\skewHermitianSpace\right)\left(\numberfield\right)$. Thus, we may regard $\Sp\left(\hermitianSpace,\skewHermitianSpace\right)\left(\numberfield\right)$ as a subgroup of $\gMetaplecticOfSpaces{\hermitianSpace}{\skewHermitianSpace}\left(\adeles_{\numberfield}\right)$
	and define $$\left[ \gMetaplecticOfSpaces{\hermitianSpace}{\skewHermitianSpace} \right] = \Sp\left(\hermitianSpace, \skewHermitianSpace\right)\left(\numberfield\right) \backslash \gMetaplecticOfSpaces{\hermitianSpace}{\skewHermitianSpace} \left(\adeles_{\numberfield}\right).$$
	
	Let $\restrictionOfScalars_{\quadraticNumberField \slash \numberfield}\left( \hermitianSpace \otimes_{\quadraticNumberField} \skewHermitianSpace \right) = \isotropicPartOne \oplus \isotropicPartTwo$ be a polarization, and for every $v$ let $\isotropicPartOne_v = \isotropicPartOne \otimes_{\numberfield} \numberfield_v$ and $\isotropicPartTwo_v = \isotropicPartTwo \otimes_{\numberfield} \numberfield_v$. For every $v$, we realize the Weil representation $\weilRepresentation_{\fieldCharacter_v, \numberfield_v}$ of $\Mp_{\fieldCharacter_v}\left(\hermitianSpace_v, \skewHermitianSpace_v\right)$ via its Schr\"odinger model, acting on the space $\schwartz\left( \isotropicPartTwo_v \right)$ consisting of Schwartz functions on $\isotropicPartTwo_v$. We denote by $\schwartz\left(\isotropicPartTwo, \adeles_{\numberfield}\right) = \bigotimes'_v \schwartz\left(\isotropicPartTwo_v\right)$ the restricted tensor product and by $\weilRepresentation_{\fieldCharacter, \adeles_{\numberfield}} = \bigotimes_v \weilRepresentation_{\fieldCharacter_v, \numberfield_v}$ the global Weil representation of $\Mp_{\fieldCharacter}\left(\hermitianSpace, \skewHermitianSpace\right)$.
	
	For a function $\varphi \in \schwartz\left(\isotropicPartTwo, \adeles_{\numberfield}\right)$, we consider its theta series, defined for $g \in \Mp_{\fieldCharacter}\left(\hermitianSpace, \skewHermitianSpace\right)\left(\adeles_{\numberfield}\right)$ by
	$$ \theta\left(\varphi\right)\left(g\right) = \sum_{y \in \isotropicPartTwo} \left(\weilRepresentation_{\fieldCharacter, \adeles_{\numberfield}}\left(g\right) \varphi\right)\left(y\right).$$ Then it is well-known that $\theta\left(\varphi\right)$ is an automorphic form of $\left[ \Mp_{\fieldCharacter}\left(\hermitianSpace, \skewHermitianSpace\right) \right]$.
	
	The embedding $\iota \colon \unitaryGroup{\hermitianSpace} \times \unitaryGroup{\skewHermitianSpace} \to \Sp\left(\hermitianSpace, \skewHermitianSpace\right)$ discussed in \Cref{subsec:local-theta-correspondence} has a global analog $$\iota \colon \unitaryGroup{\hermitianSpace}\left(\adeles_{\numberfield}\right) \times \unitaryGroup{\skewHermitianSpace}\left(\adeles_{\numberfield}\right) \to \Sp\left(\hermitianSpace, \skewHermitianSpace\right)\left(\adeles_{\numberfield}\right).$$
	
	As in the local case, in order to describe the theta correspondence, we need a lifting of $\iota$ to the metaplectic group $$\tilde{\iota} \colon \unitaryGroup{\hermitianSpace}\left(\adeles_{\numberfield}\right) \times \unitaryGroup{\skewHermitianSpace}\left(\adeles_{\numberfield}\right) \to  \gMetaplecticOfSpaces{\hermitianSpace}{\skewHermitianSpace}\left(\adeles_{\numberfield}\right),$$ such that the image of $\unitaryGroup{\hermitianSpace}\left(\numberfield\right) \times \unitaryGroup{\skewHermitianSpace}\left(\numberfield\right)$ under $\tilde{\iota}$ lies in $\Sp\left(\hermitianSpace, \skewHermitianSpace\right)\left(\numberfield\right)$. Such a lifting exists, and we postpone the discussion regarding the data needed in order to construct it to the next subsection.
	
	For a cuspidal automorphic form $f : \adelicQuotient{\unitaryGroup{\hermitianSpace}} \to \cComplex$, a Schwartz function $\varphi \in \schwartz\left(\isotropicPartTwo, \adeles_{\numberfield}\right)$, and an element $g_{\skewHermitianSpace} \in \unitaryGroup{\skewHermitianSpace}\left(\adeles_{\numberfield}\right),$ we denote $$\theta_{\tilde{\iota}}\left( \varphi, f\right)\left(g_{\skewHermitianSpace}\right) = \int_{\adelicQuotient{\unitaryGroup{\hermitianSpace}}} \theta\left(\varphi\right)\left(\tilde{\iota}\left(g_{\hermitianSpace}, g_{\skewHermitianSpace}\right)\right) \complexConjugate{f\left( g_{\hermitianSpace} \right)} \differential{g_{\hermitianSpace}}. $$
	It is well-known that $\theta_{\tilde{\iota}}\left( \varphi, f\right)$ is an automorphic form of $\adelicQuotient{\unitaryGroup{\skewHermitianSpace}}$. Given an irreducible cuspidal automorphic representation $\pi$ of $\unitaryGroup{\hermitianSpace}$, we denote $$\Theta_{\hermitianSpace, \skewHermitianSpace, \tilde{\iota}, \fieldCharacter}\left(\pi\right) = \Span_{\cComplex} \left\{ \theta_{\tilde{\iota}} \left(\varphi, f\right) \mid \varphi \in \schwartz\left(\isotropicPartTwo, \adeles_{\numberfield}\right), f \in \pi  \right\},$$ and call $\Theta_{\hermitianSpace, \skewHermitianSpace, \tilde{\iota}, \fieldCharacter}\left(\pi\right)$ the \emph{global theta lift of $\pi$}.
	
	By \cite[Corollary 7.3]{KudlaRallis1994} if $\Theta_{\hermitianSpace, \skewHermitianSpace, \tilde{\iota}, \fieldCharacter}\left(\pi\right)$ lies in the space of square-integrable automorphic forms, then $\Theta_{\hermitianSpace, \skewHermitianSpace, \tilde{\iota}, \fieldCharacter}\left(\pi\right) = \bigotimes'_{v} \theta_{\hermitianSpace_v, \skewHermitianSpace_v, \tilde{\iota}_v, \fieldCharacter_v}\left(\pi_v\right)$.
	
	\subsubsection{Splitting of the embedding $\iota$}
	The goal of this subsection is to describe the data needed in order to construct a splitting $\tilde{\iota} \colon \unitaryGroup{\hermitianSpace}\left(\adeles_{\numberfield}\right) \times \unitaryGroup{\skewHermitianSpace}\left(\adeles_{\numberfield}\right) \to \gMetaplecticOfSpaces{\hermitianSpace}{\skewHermitianSpace}\left(\adeles_{\numberfield}\right)$ of $\iota$. 
	Let $\adeles_{\quadraticNumberField}$ be the adeles of $\quadraticNumberField$, and let $\omega_{\quadraticNumberField \slash \numberfield}$ be the quadratic character attached to the field extension $\quadraticNumberField \slash \numberfield$ by global class field theory.
	
	Similarly to \Cref{subsubsec:splitting-of-the-embedding}, in order to construct a splitting, we need to choose automorphic characters $\chi_{\hermitianSpace} = \bigotimes_v \chi_{\hermitianSpace_v}$ and $\chi_{\skewHermitianSpace} = \bigotimes_v \chi_{\skewHermitianSpace_v}$ of $\multiplicativeGroup{\quadraticNumberField} \backslash \multiplicativeGroup{\adeles_{\quadraticNumberField}}$, such that 	\begin{align*}
		{\chi_{\skewHermitianSpace}}_{\restriction_{\multiplicativeGroup{\adeles_{\numberfield}}}} = \omega_{\quadraticNumberField/\numberfield}^{\dim \skewHermitianSpace}  \;\;\;\text{ and }\;\;\; & {\chi_{\hermitianSpace}}_{\restriction_{\multiplicativeGroup{\adeles_{\numberfield}}}} = \omega_{\quadraticNumberField/\numberfield}^{\dim \hermitianSpace}.
	\end{align*}
	By choosing such characters, for any place $v$ we get an embedding
	$\tilde{\iota}_{\fieldCharacter_v, \chi_{\skewHermitianSpace_v}} \colon \unitaryGroup{\hermitianSpace_v} \rightarrow \gMetaplecticOfSpaces[\fieldCharacter_v]{\hermitianSpace_v}{\skewHermitianSpace_v}$ and an embedding $\tilde{\iota}_{\fieldCharacter_v, \chi_{\hermitianSpace_v}} \colon \unitaryGroup{\skewHermitianSpace_v} \rightarrow \gMetaplecticOfSpaces[\fieldCharacter_v]{\hermitianSpace_v}{\skewHermitianSpace_v}$. We obtain the desired embedding $\tilde{\iota} = \tilde{\iota}_{\fieldCharacter, \chi_{\hermitianSpace}, \chi_{\skewHermitianSpace}}$ by forming the tensor products $\tilde{\iota}_{\fieldCharacter, \chi_{\skewHermitianSpace}} = \bigotimes_v \tilde{\iota}_{\fieldCharacter_v, \chi_{\skewHermitianSpace_v}}$ and $\tilde{\iota}_{\fieldCharacter, \chi_{\hermitianSpace}} = \bigotimes_v \tilde{\iota}_{\fieldCharacter_v, \chi_{\hermitianSpace_v}}$ and setting $\tilde{\iota}_{\fieldCharacter, \chi_{\hermitianSpace}, \chi_{\skewHermitianSpace}} = \tilde{\iota}_{\fieldCharacter, \chi_{\skewHermitianSpace}} \times \tilde{\iota}_{\fieldCharacter, \chi_{\hermitianSpace}}$.	
	
	\subsubsection{Notation for theta lifts of automorphic characters of $\adelicQuotient{\normoneGroup{\quadraticEtaleAlgebra[\numberfield]}}$}\label{subsec:notation-for-theta-lifts-of-characters-global}
	
	We introduce the global counterpart of the notation in \Cref{subsec:notation-for-theta-lifts-of-characters-of-norm-one-group}.
	
	Let $\delta \in \multiplicativeGroup{\quadraticNumberField}$ be a trace zero element, and let $\mu \colon \multiplicativeGroup{\quadraticNumberField} \backslash \multiplicativeGroup{\adeles_{\quadraticNumberField}} \to \multiplicativeGroup{\cComplex}$ be an automorphic character such that $\mu_{\restriction_{\multiplicativeGroup{\adeles_{\numberfield}}}} = \omega_{\quadraticNumberField/\numberfield}$. Suppose that $\hermitianSpace$ is a hermitian space over $\quadraticNumberField$ and that $\beta \colon  \adelicQuotient{\normoneGroup{\quadraticEtaleAlgebra[\numberfield]}} \to \multiplicativeGroup{\cComplex}$ is an automorphic character (see \Cref{subsec:functoriality-for-admissible-embeddings}). We denote $$\Theta_{\delta, \hermitianSpace, \mu, \fieldCharacter}\left(\beta\right) = \Theta_{\etaleSkewHermitian[\numberfield]{1}, \hermitianSpace, \tilde{\iota}_{\mu}, \fieldCharacter}\left(\beta \circ i'_{\etaleSkewHermitian[\numberfield]{1}}\right),$$
	where $i'_{\etaleSkewHermitian[\numberfield]{1}} \colon \unitaryGroup{\etaleSkewHermitian[\numberfield]{1}}\left(\adeles_{\numberfield}\right) \to \normoneGroup{\quadraticEtaleAlgebra[\numberfield]}\left(\adeles_{\numberfield}\right)$ is the obvious isomorphism and where $\tilde{\iota}_{\mu}$ is the splitting associated to the characters $\left(\mu, \mu^{\dim V}\right)$.
	
	\subsubsection{Theta lifting for unitary groups of $1$-dimensional spaces over \etale algebras}\label{subsec:global-theta-lifting-etale}
	Similarly to \Cref{sec:theta-lifting-etale}, we define a theta correspondence for unitary groups of one-dimensional spaces over an \etale algebra.
	
	We use the definitions in \Cref{subsec:one-dim-etale-hermitian-spaces} with $\baseField = \numberfield$ and $\quadraticExtension = \quadraticNumberField$. Let $\delta \in \multiplicativeGroup{\quadraticNumberField}$ be a trace zero element.
	
	If $\etaleNumberField \slash \numberfield$ is a finite field extension, then $\adeles_{\etaleNumberField} = \etaleNumberField \otimes_{\numberfield} \adeles_{\numberfield}$. Recall that in this case, if $\lambda \in \multiplicativeGroup{\etaleNumberField}$ and if $R$ is a ring over $\numberfield$, then $$\unitaryGroup{\quadraticEtaleNumberHermitian{\lambda}}\left(R\right) \cong \normoneGroup{\quadraticEtaleNumberAlgebra}\left(R\right) = \left\{ x \in \left(\restrictionOfScalars_{\quadraticEtaleNumberAlgebra \slash \numberfield} \multiplicativeGroup{\quadraticEtaleNumberAlgebra}\right) \left(R\right) \mid x \cdot \involution{x} = 1 \right\}.$$
	In particular, we have that the $\numberfield$-adelic points of the unitary group $\unitaryGroup{\quadraticEtaleNumberHermitian{\lambda}}$ are the same as the $\etaleNumberField$-adelic points of the unitary group of $\quadraticEtaleNumberHermitian{\lambda}$ defined over $\etaleNumberField$, which we denote $\gUnitaryGroup{\etaleNumberField}{\quadraticEtaleNumberHermitian{\lambda}}$, i.e.,
	$$\unitaryGroup{\quadraticEtaleNumberHermitian{\lambda}}\left(\adeles_{\numberfield}\right) = \gUnitaryGroup{\etaleNumberField}{\quadraticEtaleNumberHermitian{\lambda}}\left(\adeles_{\etaleNumberField}\right)\cong \left\{ x \in \multiplicativeGroup{\left(\quadraticEtaleNumberAlgebra \otimes_{\etaleNumberField} \adeles_{\etaleNumberField}\right)} \mid x \cdot \involution{x} = 1 \right\}.$$
	This relation will allow us to make use of statements about theta lifts of characters of $\gUnitaryGroup{\etaleNumberField}{\quadraticEtaleNumberHermitian{\lambda}}\left(\adeles_{\etaleNumberField}\right)$.
	
	Let $\etaleNumberField \slash \numberfield$ be an \etale algebra of rank $n$ over $\numberfield$. As before, we write $\etaleNumberField = \prod_{j = 1}^m \numberfield_j$, where for every $j$, $\numberfield_j \slash \numberfield$ is a finite field extension. We will assume that $\normoneGroup{\quadraticEtaleNumberAlgebra}$ is anisotropic, i.e., we will assume that $\quadraticEtaleAlgebra[\numberfield_j]$ is a field for every $j$. This is equivalent to the assumption that there is no embedding of $\numberfield$-algebras $\quadraticNumberField \hookrightarrow \etaleNumberField$.
	
	As in \Cref{sec:theta-lifting-etale}, for $\lambda = \left(\lambda_1,\dots,\lambda_m\right) \in \multiplicativeGroup{\etaleNumberField}$ and $\lambda' = \left(\lambda'_1,\dots,\lambda'_m\right) \in \multiplicativeGroup{\etaleNumberField}$, we have that $$\unitaryGroup{\quadraticEtaleNumberHermitian{\lambda}}\left(\adeles_{\numberfield}\right) = \prod_{j=1}^m \unitaryGroup{L_{\numberfield_j, \lambda_j}}\left(\adeles_{\numberfield}\right) \;\;\;\text{ and }\;\;\; \unitaryGroup{\etaleSkewHermitian[\etaleNumberField]{\lambda'}}\left(\adeles_{\numberfield}\right) = \prod_{j=1}^{m} \unitaryGroup{\etaleSkewHermitian[\numberfield_j]{\lambda'_j}}\left(\adeles_{\numberfield}\right).$$
	
	For every $1 \le j \le m$, let $\hermitianSpace_j = L_{\numberfield_j, \lambda_j}$ and $\skewHermitianSpace_j = \etaleSkewHermitian[\numberfield_j]{\lambda'_j}$. Every automorphic character $\alpha \colon \adelicQuotient{\unitaryGroup{\quadraticEtaleNumberHermitian{\lambda}}} \to \multiplicativeGroup{\cComplex}$ is equivalent to a choice $\left(\alpha_1,\dots,\alpha_m\right)$, where $\alpha_j \colon \adelicQuotient{\unitaryGroup{\hermitianSpace_j}} \to \multiplicativeGroup{\cComplex}$ is an automorphic character for $1\leq j\leq m$. We will use the usual global theta correspondence to define a theta correspondence for the adelic groups $\unitaryGroup{\quadraticEtaleNumberHermitian{\lambda}}\left(\adeles_{\numberfield}\right)$ and $\unitaryGroup{\etaleSkewHermitian[\etaleNumberField]{\lambda'}}\left(\adeles_{\numberfield}\right)$.
	
	Let $\chi_{\quadraticEtaleNumberHermitian{\lambda}}, \chi_{\etaleSkewHermitian[\etaleNumberField]{\lambda'}} \colon \multiplicativeGroup{\quadraticEtaleNumberAlgebra} \backslash \multiplicativeGroup{\left(\quadraticEtaleNumberAlgebra \otimes_{\numberfield} \adeles_{\numberfield}\right)} \to \multiplicativeGroup{\cComplex}$ be automorphic characters such that $$\chi_{\quadraticEtaleNumberHermitian{\lambda} \restriction_{\multiplicativeGroup{\adeles_{\numberfield}}}} = \chi_{\etaleSkewHermitian[\etaleNumberField]{\lambda'} {\restriction_{\multiplicativeGroup{\adeles_{\numberfield}}}}} = \omega_{\quadraticEtaleNumberAlgebra \slash \etaleNumberField}.$$
	By this we mean that $\chi_{\quadraticEtaleNumberHermitian{\lambda}}$ and $\chi_{\etaleSkewHermitian[\etaleNumberField]{\lambda'}}$ correspond to tuples $\left(\chi_{\hermitianSpace_1},\dots,\chi_{\hermitianSpace_m}\right)$ and $\left(\chi_{\skewHermitianSpace_1},\dots,\chi_{\skewHermitianSpace_m}\right)$, respectively, where $\chi_{\hermitianSpace_j}, \chi_{\skewHermitianSpace_j} \colon \multiplicativeGroup{\quadraticEtaleAlgebra[\numberfield_j]} \backslash \multiplicativeGroup{\left(\quadraticEtaleAlgebra[\numberfield_j] \otimes_{\numberfield_j} \adeles_{\numberfield_j} \right)} \to \multiplicativeGroup{\cComplex}$ are automorphic characters satisfying $$\chi_{\hermitianSpace_j \restriction_{\multiplicativeGroup{\adeles_{\numberfield_j}}}}  = \chi_{\skewHermitianSpace_j \restriction_{\multiplicativeGroup{\adeles_{\numberfield_j}}}}  = \omega_{\quadraticEtaleAlgebra[\numberfield_j] \slash \numberfield_j}.$$
	As in the local case, for every $j$, we get a splitting $$\tilde{\iota}_{j} \colon \gUnitaryGroup{\numberfield_j}{\hermitianSpace_j}\left(\adeles_{\numberfield_j}\right) \times \gUnitaryGroup{\numberfield_j}{\skewHermitianSpace_j}\left(\adeles_{\numberfield_j}\right) \to \gMetaplecticOfSpaces[\fieldCharacter_j]{\hermitianSpace_j}{\skewHermitianSpace_j}\left(\adeles_{\numberfield_j}\right).$$
	Here $\fieldCharacter_j \colon \numberfield_j \backslash \adeles_{\numberfield_j} \to \multiplicativeGroup{\cComplex}$ is the character $\fieldCharacter_j = \fieldCharacter \circ \trace_{\numberfield_j \slash \numberfield}$, where $\trace_{\numberfield_j \slash \numberfield} \colon \numberfield_j \otimes_{\numberfield} \adeles_{\numberfield} \to \adeles_{\numberfield}$ is the trace map. Denote $\tilde{\iota} = \left(\tilde{\iota}_1,\dots,\tilde{\iota}_m\right)$.
	
	We define the global theta lift of $\alpha$ as above by the formula
	$$\Theta_{\quadraticEtaleNumberHermitian{\lambda}, \etaleSkewHermitian[\etaleNumberField]{\lambda'}, \tilde{\iota}, \fieldCharacter}\left(\alpha\right) = \Theta_{\hermitianSpace_1, \skewHermitianSpace_1, \tilde{\iota}_1, \fieldCharacter_1}\left(\alpha_1\right) \otimes \dots \otimes \Theta_{\hermitianSpace_m, \skewHermitianSpace_m, \tilde{\iota}_m, \fieldCharacter_m}\left(\alpha_m\right).$$
	
	Suppose that $\lambda' = 1$. For every $j$, let $$\restrictionOfScalars_{\quadraticEtaleAlgebra[\numberfield_j] \slash \numberfield}\left( \hermitianSpace_j \otimes_{\quadraticEtaleAlgebra[\numberfield_j]} \skewHermitianSpace_j \right) = \isotropicPartOne_j \oplus \isotropicPartTwo_j$$ be a polarization. Let $\isotropicPartOne = \bigoplus_{j=1}^m \isotropicPartOne_j$ and $\isotropicPartTwo = \bigoplus_{j=1}^m \isotropicPartTwo_j$. Then $$\restrictionOfScalars_{\quadraticNumberField \slash \numberfield}\left( \etaleNumberHermitian{\lambda} \otimes_{\quadraticNumberField} \etaleSkewHermitian[\numberfield]{1} \right) = \isotropicPartOne \oplus \isotropicPartTwo$$ is a polarization. As explained in the local case, we have a natural map $$\prod_{j=1}^m \gMetaplecticOfSpaces[\fieldCharacter_j]{\hermitianSpace_j}{\skewHermitianSpace_j}\left(\adeles_{\numberfield_j}\right) \to \gMetaplecticOfSpaces{\etaleNumberHermitian{\lambda}}{\etaleNumberSkewHermitian[\numberfield]{1}}\left(\adeles_{\numberfield}\right),$$
	which is not injective, but its restriction to $\gMetaplecticOfSpaces[\fieldCharacter_j]{\hermitianSpace_j}{\skewHermitianSpace_j}\left(\adeles_{\numberfield_j}\right)$, for every $j$, is injective. Hence, we may regard $\tilde{\iota}$ as a map $$\tilde{\iota} \colon \unitaryGroup{\quadraticEtaleNumberHermitian{\lambda}}\left(\adeles_{\numberfield}\right) \times \unitaryGroup{\etaleNumberSkewHermitian{1}}\left(\adeles_{\numberfield}\right) \to \gMetaplecticOfSpaces{\etaleNumberHermitian{\lambda}}{\etaleNumberSkewHermitian[\numberfield]{1}}\left(\adeles_{\numberfield}\right).$$
	
	\begin{remark}
		Similarly to \Cref{rem:kernel-of-metaplectic-embedding}, we have that the kernel of the map $\prod_{j=1}^m \gMetaplecticOfSpaces[\fieldCharacter_j]{\hermitianSpace_j}{\skewHermitianSpace_j}\left(\adeles_{\numberfield_j}\right) \to \gMetaplecticOfSpaces{\etaleNumberHermitian{\lambda}}{\etaleNumberSkewHermitian[\numberfield]{1}}\left(\adeles_{\numberfield}\right)$ consists of tuples $\left(g_1,\dots,g_m\right)$ such that for every $j$, the projection of $g_j$ to $\Sp \left(\restrictionOfScalars_{\quadraticEtaleAlgebra[\numberfield_j] \slash \numberfield_j}\left( \hermitianSpace_j \otimes_{\quadraticEtaleAlgebra[\numberfield_j]} \skewHermitianSpace_j \right)\right)$ is the identity and such that if $t_j$ is the projection of $g_j$ to $\unitCircle$ then $\prod_{j=1}^m t_j = 1$.
	\end{remark}
	
	Let $\varphi \in \schwartz\left( \isotropicPartTwo, \adeles_{\numberfield} \right)$ be a decomposable Schwartz function, that is, $\varphi = \bigotimes_{j=1}^m \varphi_j$, where $\varphi_j \in \schwartz\left(\isotropicPartTwo_j, \adeles_{\numberfield_j}\right)$.
	Denote for $h \in \unitaryGroup{\etaleNumberSkewHermitian{1}}\left(\adeles_{\numberfield}\right)$, $$\theta_{\tilde{\iota}}\left(\varphi,\alpha\right)\left(h\right) = \int_{\adelicQuotient{\unitaryGroup{\quadraticEtaleNumberHermitian{\lambda}}}}\theta\left(\varphi \right)\left(\tilde{\iota}\left(g, h\right)\right) \complexConjugate{\alpha\left(g\right)} \differential{g}.$$
	Then $\theta_{\tilde{\iota}}\left(\varphi, \alpha\right) \in \theta_{\quadraticEtaleNumberHermitian{\lambda}, \etaleSkewHermitian[\etaleNumberField]{1}, \tilde{\iota}, \fieldCharacter}\left(\alpha\right)$. Analogously to the local case, for any $\left(g_1,h_1\right), \dots, \left(g_m, h_m\right)$ such that $\left(g_j, h_j\right) \in \unitaryGroup{\hermitianSpace_j}\left(\adeles_\numberfield\right) \times \unitaryGroup{\skewHermitianSpace_j}\left(\adeles_{\numberfield}\right)$, we have
	\begin{align*}
		&\weilRepresentation_{\fieldCharacter, \adeles_{\numberfield}}\left(\tilde{\iota}\left(\left(g_1,\dots,g_m\right),\left(h_1,\dots,h_m\right)\right)\right) \varphi \\
		&=  \weilRepresentation_{\fieldCharacter_1, \adeles_{\numberfield_1}}\left(\tilde{\iota}_1\left(g_1, h_1\right)\right) \varphi_1 \otimes \dots \otimes \weilRepresentation_{\fieldCharacter_m, \adeles_{\numberfield_m}}\left(\tilde{\iota}_m\left(g_m, h_m\right)\right) \varphi_m.
	\end{align*}
	This implies that for $h_1,\dots,h_m$, where $h_j \in \unitaryGroup{\skewHermitianSpace_j}\left(\adeles_{\numberfield}\right)$, we have that \begin{equation}\label{eq:theta-of-etale-is-tensor-product}
		\theta_{\tilde{\iota}}\left(\varphi,\alpha\right)\left(h_1,\dots,h_m\right) = \theta_{\tilde{\iota}_1}\left(\varphi_1,\alpha_1\right)\left(h_1\right) \cdot \dots \cdot \theta_{\tilde{\iota}_m}\left(\varphi_m,\alpha_m\right)\left(h_m\right).
	\end{equation}
	This compatibility will be important for the seesaw identity which we will discuss in the next section.
	
	\subsection{A global seesaw identity}\label{subsec:global-seesaw-identity}
	The goal of this section is to introduce a global seesaw identity, analogous to the local one we described in \Cref{sec:local-seesaw-identity}. This identity will be a key ingredient for the proof of our main global theorem. 
	
	\subsubsection{Splitting setup}
	Similarly to \Cref{subsec:local-splitting-set-up}, we first need set up our splittings in a way that they are compatible. Let us be in the setup of \Cref{subsec:global-theta-lifting-etale} with $\lambda' = 1$. We consider the following seesaw diagram:
	$$\xymatrix{\unitaryGroup{\etaleNumberSkewHermitian{1}}\left(\adeles_{\numberfield}\right) \ar@{-}[d] \ar@{-}[rd] &  \unitaryGroup{\etaleNumberHermitian{\lambda}}\left(\adeles_{\numberfield}\right) \ar@{-}[d] \ar@{-}[ld]\\
		\unitaryGroup{\etaleNumberSkewHermitian[\numberfield]{1}}\left(\adeles_{\numberfield}\right) &  \unitaryGroup{\quadraticEtaleNumberHermitian{\lambda}}\left(\adeles_{\numberfield}\right).}$$
	Given automorphic characters $\chi_{\quadraticEtaleNumberHermitian{\lambda}}, \chi_{\etaleSkewHermitian[\etaleNumberField]{1}} \colon \multiplicativeGroup{\left(\quadraticEtaleNumberAlgebra \otimes_{\numberfield} \adeles_{\numberfield}\right)} \to \multiplicativeGroup{\cComplex}$, such that $$\chi_{\quadraticEtaleNumberHermitian{\lambda} \restriction_{\multiplicativeGroup{\adeles_{\numberfield}}}} = \chi_{\etaleSkewHermitian[\etaleNumberField]{1} {\restriction_{\multiplicativeGroup{\adeles_{\numberfield}}}}} = \omega_{\quadraticEtaleNumberAlgebra \slash \etaleNumberField},$$ we constructed a map $$\tilde{\iota} \colon \unitaryGroup{\quadraticEtaleNumberHermitian{\lambda}}\left(\adeles_{\numberfield}\right) \times \unitaryGroup{\etaleNumberSkewHermitian{1}}\left(\adeles_{\numberfield}\right) \to \gMetaplecticOfSpaces{\etaleNumberHermitian{\lambda}}{\etaleNumberSkewHermitian[\numberfield]{1}}\left(\adeles_{\numberfield}\right).$$
	Similarly, given automorphic characters $\chi_{\etaleNumberSkewHermitian[\numberfield]{1}}, \chi_{\etaleNumberHermitian{\lambda}} \colon \multiplicativeGroup{\adeles_{\quadraticNumberField}} \to \multiplicativeGroup{\cComplex},$ such that $$ \chi_{\etaleNumberSkewHermitian[\numberfield]{1}\restriction_{\multiplicativeGroup{\adeles_{\numberfield}}}} = \omega_{{\quadraticNumberField} \slash {\numberfield}} \;\;\;\text{ and }\;\;\; \chi_{\etaleNumberHermitian{\lambda} \restriction_{\multiplicativeGroup{\adeles_{\numberfield}}}} = \omega_{{\quadraticNumberField} \slash {\numberfield}}^{{\dim_{\quadraticNumberField} \etaleNumberHermitian{\lambda} }},$$ we constructed a map $$\tilde{\iota}' \colon \unitaryGroup{\etaleNumberHermitian{\lambda}}\left(\adeles_{\numberfield}\right) \times \unitaryGroup{\etaleNumberSkewHermitian[\numberfield]{1}}\left(\adeles_{\numberfield}\right) \to \gMetaplecticOfSpaces{\etaleNumberHermitian{\lambda}}{\etaleNumberSkewHermitian[\numberfield]{1}}\left(\adeles_{\numberfield}\right).$$
	
	We say that the splittings $\tilde{\iota}$ and $\tilde{\iota}'$ are \emph{compatible} if their restrictions to the subgroup $\unitaryGroup{\quadraticEtaleNumberHermitian{\lambda}}\left(\adeles_{\numberfield}\right) \times \unitaryGroup{\etaleSkewHermitian[\numberfield]{1}}\left(\adeles_{\numberfield}\right)$ coincide. As in \Cref{subsec:local-splitting-set-up}, this is equivalent to requiring the following relations between the characters defining the splittings:
	$$ \chi_{\etaleNumberSkewHermitian{1}} = \chi_{\etaleSkewHermitian[\numberfield]{1}} \circ N_{\quadraticEtaleNumberAlgebra \slash \quadraticNumberField} \;\;\;\text{ and }\;\;\; \chi_{\quadraticEtaleNumberHermitian{\lambda}} \restriction_{\multiplicativeGroup{\adeles_{\quadraticNumberField}}} = \chi_{\etaleNumberHermitian{\lambda}},$$
	where $N_{\quadraticEtaleNumberAlgebra \slash \quadraticNumberField} \colon \multiplicativeGroup{\left( \quadraticEtaleNumberAlgebra \otimes_{\numberfield} \adeles_{\numberfield}  \right)} \to \multiplicativeGroup{\left(\quadraticNumberField \otimes_{\numberfield} \adeles_{\numberfield}\right)} = \multiplicativeGroup{\adeles_{\quadraticNumberField}}$ is the norm map.
	
	\subsubsection{The global seesaw identity}
	We are ready to state our global seesaw identity. Choose compatible splittings $\tilde{\iota}$ and $\tilde{\iota}'$ as above. Let $\beta \colon \adelicQuotient{\unitaryGroup{\etaleSkewHermitian[\numberfield]{1}}} \to \multiplicativeGroup{\cComplex}$ be an automorphic character, and let $\varphi \in \schwartz\left(\isotropicPartTwo, \adeles_{\numberfield}\right)$. Consider the element $\theta_{\tilde{\iota}'}\left(\varphi, \beta\right)$ in the global theta lift of $\beta$ from $\unitaryGroup{\etaleSkewHermitian[\numberfield]{1}}\left(\adeles_{\numberfield}\right)$ to $\unitaryGroup{\etaleNumberHermitian{\lambda}}\left(\adeles_{\numberfield}\right)$. Given an automorphic character $\alpha \colon \adelicQuotient{\unitaryGroup{\quadraticEtaleNumberHermitian{\lambda}}} \to \multiplicativeGroup{\cComplex}$, we consider the $\alpha$-period of $\theta_{\tilde{\iota}'}\left(\varphi, \beta\right)$: \begin{equation}\label{eq:global-theta-alpha-period}
		\int_{\adelicQuotient{\unitaryGroup{\quadraticEtaleNumberHermitian{\lambda}}}} \theta_{\tilde{\iota}'}\left(\varphi, \beta\right)\left(g\right) \complexConjugate{\alpha\left(g\right)} \differential{g} = \int_{\adelicQuotient{\unitaryGroup{\quadraticEtaleNumberHermitian{\lambda}}}} \int_{\adelicQuotient{\unitaryGroup{\etaleSkewHermitian[\numberfield]{1}}}} \theta\left(\varphi\right)\left(\tilde{\iota}'\left(g,h\right)\right) \complexConjugate{\alpha\left(g\right)} \complexConjugate{\beta\left(h\right)}  \differential{h} \differential{g}.
	\end{equation}
	The latter integral converges absolutely because the automorphic quotients $\adelicQuotient{\unitaryGroup{\quadraticEtaleNumberHermitian{\lambda}}}$ and $\adelicQuotient{\unitaryGroup{\etaleSkewHermitian[\numberfield]{1}}}$ are compact (the former due to our assumption that $\normoneGroup{\quadraticEtaleNumberAlgebra}$ is anisotropic). By exchanging the order of integration and using the fact that $\tilde{\iota}$ and $\tilde{\iota}'$ are compatible, we get that \eqref{eq:global-theta-alpha-period} is equal to
	$$\int_{\adelicQuotient{\unitaryGroup{\etaleSkewHermitian[\numberfield]{1}}}} \int_{\adelicQuotient{\unitaryGroup{\quadraticEtaleNumberHermitian{\lambda}}}} \theta\left(\varphi\right)\left(\tilde{\iota}\left(g,h\right)\right) \complexConjugate{\alpha\left(g\right)} \complexConjugate{\beta\left(h\right)} \differential{g} \differential{h} = \int_{\adelicQuotient{\unitaryGroup{\etaleSkewHermitian[\numberfield]{1}}}} \theta_{\tilde{\iota}}\left(\varphi, \alpha\right)\left(h\right) \complexConjugate{\beta\left(h\right)} \differential{h}.$$
	Hence, we obtained the global seesaw identity
	$$\int_{\adelicQuotient{\unitaryGroup{\quadraticEtaleNumberHermitian{\lambda}}}} \theta_{\tilde{\iota}'}\left(\varphi, \beta\right)\left(g\right) \complexConjugate{\alpha\left(g\right)} \differential{g} =  \int_{\adelicQuotient{\unitaryGroup{\etaleSkewHermitian[\numberfield]{1}}}} \theta_{\tilde{\iota}}\left(\varphi, \alpha\right)\left(h\right) \complexConjugate{\beta\left(h\right)} \differential{h}.$$
	If $\varphi$ is decomposable, i.e., $\varphi = \bigotimes_{j=1}^m \varphi_j$, where $\varphi_j \in \schwartz\left( \isotropicPartTwo_j, \adeles_{\numberfield_j} \right)$, we may use \eqref{eq:theta-of-etale-is-tensor-product} to decompose further and get the identity
	$$\int_{\adelicQuotient{\unitaryGroup{\quadraticEtaleNumberHermitian{\lambda}}}} \theta_{\tilde{\iota}'}\left(\varphi, \beta\right)\left(g\right) \complexConjugate{\alpha\left(g\right)} \differential{g} =  \int_{\adelicQuotient{\unitaryGroup{\etaleSkewHermitian[\numberfield]{j}}}} \complexConjugate{\beta\left(h\right)} \cdot \prod_{j=1}^m \theta_{\tilde{\iota}_j}\left(\varphi_j, \alpha\right)\left(h\right) \differential{h}.$$
	
	\subsection{Global theta lifts for unitary groups of one-dimensional spaces} \label{subsec:global-theta-lifts-for-unitary-grops-of-one-dimensional-spaces}
	\subsubsection{Central $L$-function values of automorphic characters of $\multiplicativeGroup{\left(\quadraticEtaleNumberAlgebra \otimes_{\numberfield} \adeles_{\numberfield}\right)}$}
	In this section, we discuss the definition of the central value of an $L$-function associated with an automorphic character of $\multiplicativeGroup{\left(\quadraticEtaleNumberAlgebra \otimes_{\numberfield} \adeles_{\numberfield}\right)}$.
	
	Assume first that $\etaleNumberField \slash \numberfield$ is a field extension. Then $\quadraticEtaleNumberAlgebra =  \etaleNumberField \otimes_{\numberfield} \quadraticNumberField$ is a field extension (as we assume that $\normoneGroup{\quadraticEtaleNumberAlgebra}$ is anisotropic). We have that $\quadraticEtaleNumberAlgebra \otimes_{\numberfield} \adeles_{\numberfield} = \adeles_{\quadraticEtaleNumberAlgebra}$, and therefore an automorphic character of $\multiplicativeGroup{\left(\quadraticEtaleNumberAlgebra \otimes_{\numberfield} \adeles_{\numberfield}\right)}$ is the same as an automorphic character of $\multiplicativeGroup{\adeles_{\quadraticEtaleNumberAlgebra}}$. For an automorphic character $\chi \colon \multiplicativeGroup{\quadraticEtaleNumberAlgebra} \backslash {\multiplicativeGroup{\adeles_{\quadraticEtaleNumberAlgebra}}} \to \multiplicativeGroup{\cComplex}$, we define $$\centralValue\left(\chi\right) = L\left(\tfrac{1}{2}, \chi\right).$$
	
	Next, suppose that $\etaleNumberField$ is an \etale algebra of degree $n$ over $\numberfield$, such that $\normoneGroup{\quadraticEtaleNumberAlgebra}$ is anisotropic. As before, write $\etaleNumberField = \prod_{j=1}^m \numberfield_j$. Let $\chi \colon \multiplicativeGroup{\quadraticEtaleNumberAlgebra} \backslash \multiplicativeGroup{\left(\quadraticEtaleNumberAlgebra \otimes_{\numberfield} \adeles_{\numberfield}\right)} \to \multiplicativeGroup{\cComplex}$ be an automorphic character. As before, $\chi$ corresponds to a tuple $\left(\chi_1,\dots,\chi_m\right)$, where for every $j$, $\chi_j \colon \multiplicativeGroup{\numberfield_j} \backslash \multiplicativeGroup{\left(\quadraticEtaleAlgebra[\numberfield_j] \otimes_{\numberfield} \adeles_{\numberfield} \right)} \to \multiplicativeGroup{\cComplex}$ is an automorphic character. We define
	$$ \centralValue\left(\chi\right) = \prod_{j=1}^m \centralValue\left(\chi_j\right).$$
	
	\subsubsection{Base change for characters of $\restrictionOfScalars_{\etaleNumberField \slash \numberfield} \normoneGroup{\quadraticEtaleNumberAlgebra}\left(\adeles_{\numberfield}\right)$}
	Let us write $$\multiplicativeGroup{\left(\quadraticEtaleNumberAlgebra \otimes_{\numberfield} \adeles_{\numberfield} \right)} = {\prod_{v}}' \multiplicativeGroup{\left( \quadraticEtaleNumberAlgebra \otimes_{\numberfield} \numberfield_{v}\right)}.$$
	
	Recall the definition of $\restrictionOfScalars_{\etaleNumberField \slash \numberfield} \normoneGroup{\quadraticEtaleNumberAlgebra}\left(\adeles_{\numberfield}\right)$ from \Cref{subsec:functoriality-for-admissible-embeddings}. We have that $$\restrictionOfScalars_{\etaleNumberField \slash \numberfield} \normoneGroup{\quadraticEtaleNumberAlgebra}\left(\adeles_{\numberfield}\right) = {\prod_{v}}' \restrictionOfScalars_{\etaleNumberField \slash \numberfield} \normoneGroup{\quadraticEtaleNumberAlgebra}\left(\numberfield_v\right) = {\prod_{v}}' \normoneGroup{\left(\quadraticEtaleNumberAlgebra \otimes_{\numberfield} \numberfield_v  \right)}.$$
	By \Cref{subsec:base-change-characters-one-dimension-unitary-group-local}, for every $v$ we have an isomorphism $$j_{\etaleNumberField \otimes_{\numberfield} \numberfield_v} \colon \multiplicativeGroup{\left(\quadraticEtaleNumberAlgebra \otimes_{\numberfield} \numberfield_v\right)} \slash \multiplicativeGroup{\left(\etaleNumberField \otimes_{\numberfield} \numberfield_v\right)} \to \normoneGroup{\left(\quadraticEtaleNumberAlgebra \otimes_{\numberfield} \numberfield_v\right)}$$ given by $j_{\etaleNumberField \otimes_{\numberfield} \numberfield_v}\left(x\right) = \frac{x}{\involution{x}}.$
	Hence, the map $$j_{\etaleNumberField \otimes_{\numberfield} \adeles_{\numberfield}} \colon \multiplicativeGroup{\left(\quadraticEtaleNumberAlgebra \otimes_{\numberfield}  \adeles_{\numberfield}\right)} \slash \multiplicativeGroup{\left(\etaleNumberField \otimes_{\numberfield} \adeles_{\numberfield}\right)} \to \normoneGroup{\left( \quadraticEtaleNumberAlgebra \otimes_{\numberfield} \adeles_{\numberfield}\right)} = \restrictionOfScalars_{\etaleNumberField \slash \numberfield} \normoneGroup{\quadraticEtaleNumberAlgebra}\left(\adeles_{\numberfield}\right)$$ given by $j_{\etaleNumberField \otimes_{\numberfield}  \adeles_{\numberfield}}\left(x\right) = \frac{x}{\involution{x}}$ decomposes as $j_{\etaleNumberField \otimes_{\numberfield} \adeles_{\numberfield}} = \bigotimes_v j_{\etaleNumberField \otimes_{\numberfield} \numberfield_{v}}$, and therefore is an isomorphism.
	
	Analogously to \Cref{subsec:base-change-characters-one-dimension-unitary-group-local}, given an automorphic character $\beta \colon \adelicQuotient{\normoneGroup{\quadraticEtaleNumberAlgebra}} \to \multiplicativeGroup{\cComplex}$, we define an automorphic character $\beta_{\quadraticEtaleNumberAlgebra \otimes_{\numberfield}  \adeles_{\numberfield}} \colon \multiplicativeGroup{\quadraticEtaleNumberAlgebra} \backslash \multiplicativeGroup{\left(\quadraticEtaleNumberAlgebra \otimes_{\numberfield}  \adeles_{\numberfield}\right)} \to \multiplicativeGroup{\cComplex}$ by the formula $$\beta_{\quadraticEtaleNumberAlgebra \otimes_{\numberfield} \adeles_{\numberfield}}\left(x\right) = \left(\beta \circ j_{\etaleNumberField \otimes_{\numberfield} \adeles_{\numberfield}}\right)\left(x\right) = \beta\left(\frac{x}{\involution{x}}\right).$$
	
	\subsubsection{Non-vanishing of global theta lifts}\label{subsec:non-vanishing-global-theta-lifts}
	In this section, we recall a result regarding the non-vanishing of the global theta lift. This result serves as an analog of \Cref{thm:epsilon-dichotomy}. It is established using the Rallis inner product formula. Since we do not need the generality of the Rallis inner product formula, we will just state the non-vanishing result in the generality we need.
	
	Let $\hermitianSpace$ and $\skewHermitianSpace$ be non-degenerate one-dimensional hermitian and skew-hermitian spaces over $\quadraticNumberField$, respectively. Let $\torusEmbedding'_{\skewHermitianSpace} \colon \unitaryGroup{\skewHermitianSpace} \to \normoneGroup{\quadraticEtaleAlgebra[\numberfield]}$ and $\torusEmbedding'_{\hermitianSpace} \colon \unitaryGroup{\hermitianSpace} \to \normoneGroup{\quadraticEtaleAlgebra[\numberfield]}$ be the obvious isomorphisms. Let $\alpha \colon \adelicQuotient{\normoneGroup{\quadraticEtaleAlgebra[\numberfield]}} \to \multiplicativeGroup{\cComplex}$ be an automorphic character. The following theorem due to Yamana follows from \cite[Lemma 10.2]{Yamana2014}\footnote{We warn the reader that in \cite{Yamana2014}, $G = \unitaryGroup{\skewHermitianSpace}$ and $H = \unitaryGroup{\hermitianSpace}$ in our notation.}.
	\begin{theorem}
		The global theta lift $\Theta_{\hermitianSpace,\skewHermitianSpace,\tilde{\iota},\fieldCharacter}\left(\alpha \circ \torusEmbedding_{\hermitianSpace}' \right)$ with respect to the splitting $\tilde{\iota}$ associated to the characters $\left(\chi_{\hermitianSpace}, \chi_{\skewHermitianSpace}\right)$ is non-zero if and only if the following two conditions are satisfied.
		\begin{enumerate}
			\item For every place $v$, the big theta lift $\Theta_{\hermitianSpace_v, \skewHermitianSpace_v, \tilde{\iota}_v, \fieldCharacter_v}\left(\alpha_v \circ \torusEmbedding_{\hermitianSpace, v}'\right)$ does not vanish.
			\item The central $L$-function value $\centralValue \left(\chi_{\skewHermitianSpace}^{-1} \cdot \alpha_{\quadraticNumberField \otimes_{\numberfield} \adeles_{\numberfield}} \right)$ is non-zero.
		\end{enumerate}
	\end{theorem}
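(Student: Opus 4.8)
The theorem is Yamana's, and I would establish it, as he does in \cite{Yamana2014}, via the combination of the Rallis inner product formula with the (regularized) Siegel--Weil formula; I sketch the argument and flag where the difficulty lies. First, the necessity of condition~(1) is essentially formal. Since $\adelicQuotient{\unitaryGroup{\hermitianSpace}}$ is compact the character $\alpha\circ\torusEmbedding_{\hermitianSpace}'$ is cuspidal, and if its global theta lift $\Theta_{\hermitianSpace,\skewHermitianSpace,\tilde{\iota},\fieldCharacter}(\alpha\circ\torusEmbedding_{\hermitianSpace}')$ is nonzero then it is a one--dimensional automorphic representation, hence square--integrable; by \cite[Corollary 7.3]{KudlaRallis1994} it then decomposes as the restricted tensor product $\bigotimes_v'\theta_{\hermitianSpace_v,\skewHermitianSpace_v,\tilde{\iota}_v,\fieldCharacter_v}(\alpha_v\circ\torusEmbedding_{\hermitianSpace,v}')$ of the local theta lifts (which here coincide with the big ones), and such a restricted tensor product is nonzero precisely when each local factor is. So the substance is the converse, together with the role of the central $L$--value, and for that I would compute the Petersson norm of the global lift.

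Concretely, $\Theta_{\hermitianSpace,\skewHermitianSpace,\tilde{\iota},\fieldCharacter}(\alpha\circ\torusEmbedding_{\hermitianSpace}')$ is nonzero if and only if $\langle\theta_{\tilde{\iota}}(\varphi,f),\theta_{\tilde{\iota}}(\varphi,f)\rangle_{\adelicQuotient{\unitaryGroup{\skewHermitianSpace}}}\neq 0$ for some Schwartz function $\varphi$ and some $f$ in the space of $\alpha\circ\torusEmbedding_{\hermitianSpace}'$, all integrals converging because $\adelicQuotient{\unitaryGroup{\hermitianSpace}}$ and $\adelicQuotient{\unitaryGroup{\skewHermitianSpace}}$ are compact. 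Rewriting the product of two theta kernels $\theta(\varphi)(\tilde{\iota}(g_1,h))\,\overline{\theta(\varphi)(\tilde{\iota}(g_2,h))}$ as a single theta kernel for the doubled pair $\bigl(\unitaryGroup{\hermitianSpace\oplus(-\hermitianSpace)},\unitaryGroup{\skewHermitianSpace}\bigr)$ --- here $\hermitianSpace\oplus(-\hermitianSpace)$ is the split $2$--dimensional hermitian space obtained by doubling $\hermitianSpace$, so that $\unitaryGroup{\hermitianSpace\oplus(-\hermitianSpace)}\cong\unitaryGroup{1,1}$ --- and unfolding along the seesaw
$$\xymatrix{\unitaryGroup{\hermitianSpace\oplus(-\hermitianSpace)} \ar@{-}[d] \ar@{-}[dr] & \unitaryGroup{\skewHermitianSpace}\times\unitaryGroup{\skewHermitianSpace} \ar@{-}[d] \ar@{-}[dl] \\ \unitaryGroup{\hermitianSpace}\times\unitaryGroup{\hermitianSpace} & \diagonalSubgroup{\unitaryGroup{\skewHermitianSpace}}}$$
one is reduced to a theta integral over $\adelicQuotient{\unitaryGroup{\skewHermitianSpace}}$. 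Since $\unitaryGroup{\skewHermitianSpace}$ is anisotropic, the (regularized) Siegel--Weil formula identifies this theta integral with a nonzero multiple of a Siegel Eisenstein series on $\unitaryGroup{1,1}$ in the degenerate principal series determined by $\chi_{\skewHermitianSpace}$, evaluated at the point $s_0$ which, because $\dim\skewHermitianSpace=\dim\hermitianSpace$, is the center of its functional equation.

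Substituting this Eisenstein series into the remaining integral over $\adelicQuotient{\unitaryGroup{\hermitianSpace}\times\unitaryGroup{\hermitianSpace}}$ produces the doubling zeta integral for the character $\alpha\circ\torusEmbedding_{\hermitianSpace}'$ of $\unitaryGroup{\hermitianSpace}=\normoneGroup{\quadraticEtaleAlgebra[\numberfield]}$. The basic identity of the doubling method expresses it, up to normalizing factors that are finite and nonzero at $s_0$, in terms of the standard $L$--function of $\alpha\circ\torusEmbedding_{\hermitianSpace}'$ twisted by $\chi_{\skewHermitianSpace}^{-1}$, whose value at $s_0$ is precisely the central value $\centralValue\bigl(\chi_{\skewHermitianSpace}^{-1}\cdot\alpha_{\quadraticNumberField\otimes_{\numberfield}\adeles_{\numberfield}}\bigr)$, times a product over all places of local doubling integrals $Z_v(\varphi_v,f_v)$. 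The last ingredient is the local statement that, for each $v$, the family $\{Z_v(\varphi_v,f_v)\}$ is not identically zero if and only if the local theta lift $\Theta_{\hermitianSpace_v,\skewHermitianSpace_v,\tilde{\iota}_v,\fieldCharacter_v}(\alpha_v\circ\torusEmbedding_{\hermitianSpace,v}')$ is nonzero --- the local analogue of the inner product formula, which for characters of $\unitaryGroup{1}$ reduces to the non-vanishing of a local Tate--type integral and is consistent with \Cref{thm:epsilon-dichotomy}. Since $\alpha\circ\torusEmbedding_{\hermitianSpace}'$ is factorizable, chaining these equivalences gives exactly: the global lift is nonzero $\iff$ the norm is nonzero for some $(\varphi,f)$ $\iff$ every $Z_v$ is not identically zero and $\centralValue(\chi_{\skewHermitianSpace}^{-1}\cdot\alpha_{\quadraticNumberField\otimes_{\numberfield}\adeles_{\numberfield}})\neq 0$ $\iff$ conditions~(1) and~(2) hold.

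The main obstacle I anticipate is the Siegel--Weil input together with the bookkeeping of the splitting characters. Because $(\dim\hermitianSpace,\dim\skewHermitianSpace)=(1,1)$ sits on the boundary between the various ranges, one must make sure the doubled Eisenstein series is regular at $s_0$ and that the appropriate \emph{first term} form of the Siegel--Weil formula is the one in force --- comparatively mild here since $\unitaryGroup{\skewHermitianSpace}$ is anisotropic, but exactly the kind of subtlety that the general apparatus of \cite{Yamana2014} is built to control --- and one must track how $\chi_{\hermitianSpace}$ and $\chi_{\skewHermitianSpace}$ enter both the inducing data of the doubled Eisenstein series and the base change $\alpha\mapsto\alpha_{\quadraticNumberField\otimes_{\numberfield}\adeles_{\numberfield}}$, so that the $L$--function emerging is the stated $\centralValue(\chi_{\skewHermitianSpace}^{-1}\cdot\alpha_{\quadraticNumberField\otimes_{\numberfield}\adeles_{\numberfield}})$ rather than a different twist --- which is precisely where one cross--checks against the local \Cref{thm:epsilon-dichotomy}. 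Most economically, however, one simply verifies that the hypotheses of \cite[Lemma 10.2]{Yamana2014} hold in our situation and transcribes its conclusion into the present normalization, which is the route taken here.
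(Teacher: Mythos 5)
Your proposal is correct and matches the paper's approach: the paper states this theorem as a direct consequence of \cite[Lemma 10.2]{Yamana2014} (with a footnote warning that in Yamana's notation $G=\unitaryGroup{\skewHermitianSpace}$ and $H=\unitaryGroup{\hermitianSpace}$, the reverse of the present paper's), and your final paragraph lands on exactly that citation as the ``economical route.'' Your preceding sketch of the Rallis inner product formula via the doubling seesaw, the (first-term) Siegel--Weil identification of the theta integral with a Siegel Eisenstein series on $\unitaryGroup{1,1}$ at the center of its functional equation, and the factorization of the doubling zeta integral into $\centralValue(\chi_{\skewHermitianSpace}^{-1}\cdot\alpha_{\quadraticNumberField\otimes_{\numberfield}\adeles_{\numberfield}})$ times local integrals whose non-vanishing is equivalent to condition (1) is an accurate unpacking of what Yamana's lemma rests on, but the paper does not reproduce that argument.
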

	
	If the global theta lift is not zero, we may use the compatibility with the local theta lift to describe it.
	\begin{proposition}
		If the global theta lift $\Theta_{\hermitianSpace,\skewHermitianSpace,\tilde{\iota},\fieldCharacter}\left(\alpha \circ \torusEmbedding'_{\hermitianSpace} \right)$ is not zero, then it is given by $$\Theta_{\hermitianSpace,\skewHermitianSpace,\tilde{\iota},\fieldCharacter}\left(\alpha \circ \torusEmbedding'_{\hermitianSpace} \right) = \left(\left(\chi_{\skewHermitianSpace}^{-1} \cdot \chi_{\hermitianSpace}\right) \circ j_{\adeles_{\numberfield}}^{-1} \cdot \alpha \right) \circ \torusEmbedding'_{\skewHermitianSpace}. $$
	\end{proposition}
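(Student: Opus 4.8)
The plan is to deduce this from the local Epsilon Dichotomy (\Cref{thm:epsilon-dichotomy}) by first decomposing the global theta lift into a restricted tensor product of local theta lifts. The key structural point is that $\unitaryGroup{\hermitianSpace}$ and $\unitaryGroup{\skewHermitianSpace}$ are norm-one tori of the quadratic field extension $\quadraticNumberField \slash \numberfield$, hence anisotropic, so the automorphic quotients $\adelicQuotient{\unitaryGroup{\hermitianSpace}}$ and $\adelicQuotient{\unitaryGroup{\skewHermitianSpace}}$ are compact. Thus $\alpha \circ \torusEmbedding'_{\hermitianSpace}$ is a cuspidal automorphic character and every automorphic form in its global theta lift is square-integrable, so \cite[Corollary 7.3]{KudlaRallis1994} (already invoked above) applies and gives, assuming the lift is non-zero,
$$\Theta_{\hermitianSpace,\skewHermitianSpace,\tilde{\iota},\fieldCharacter}\left(\alpha \circ \torusEmbedding'_{\hermitianSpace}\right) = {\bigotimes_{v}}' \; \theta_{\hermitianSpace_v,\skewHermitianSpace_v,\tilde{\iota}_v,\fieldCharacter_v}\left(\alpha_v \circ \torusEmbedding'_{\hermitianSpace,v}\right),$$
where $\tilde{\iota}_v$ is the local splitting attached to $\left(\chi_{\hermitianSpace_v},\chi_{\skewHermitianSpace_v}\right)$, i.e.\ the $v$-component of the global splitting $\tilde{\iota} = \tilde{\iota}_{\fieldCharacter,\chi_{\hermitianSpace},\chi_{\skewHermitianSpace}}$.

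Next I would identify each local factor. Since by hypothesis the global theta lift does not vanish, condition (1) of the non-vanishing theorem stated above (following \cite[Lemma 10.2]{Yamana2014}) holds, so $\Theta_{\hermitianSpace_v,\skewHermitianSpace_v,\tilde{\iota}_v,\fieldCharacter_v}\left(\alpha_v \circ \torusEmbedding'_{\hermitianSpace,v}\right) \ne 0$ at every place $v$; for these one-dimensional unitary groups the big and small theta lifts coincide. Applying \Cref{thm:epsilon-dichotomy} at each $v$ to the splitting $\tilde{\iota}_v$ therefore yields
$$\theta_{\hermitianSpace_v,\skewHermitianSpace_v,\tilde{\iota}_v,\fieldCharacter_v}\left(\alpha_v \circ \torusEmbedding'_{\hermitianSpace,v}\right) = \left(\left(\chi_{\skewHermitianSpace_v}^{-1} \cdot \chi_{\hermitianSpace_v}\right) \circ j_{\numberfield_v}^{-1} \cdot \alpha_v\right) \circ \torusEmbedding'_{\skewHermitianSpace,v},$$
using that the localizations $\torusEmbedding'_{\hermitianSpace,v}$ and $\torusEmbedding'_{\skewHermitianSpace,v}$ of the global admissible isomorphisms are again admissible isomorphisms, as in \Cref{subsec:functoriality-for-admissible-embeddings}.

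Finally I would assemble the local identities. Taking the restricted tensor product over all $v$ and using $\chi_{\hermitianSpace} = \bigotimes_v \chi_{\hermitianSpace_v}$, $\chi_{\skewHermitianSpace} = \bigotimes_v \chi_{\skewHermitianSpace_v}$, $\alpha = \bigotimes_v \alpha_v$, $\torusEmbedding'_{\skewHermitianSpace} = \bigotimes_v \torusEmbedding'_{\skewHermitianSpace,v}$, together with the factorization $j_{\adeles_{\numberfield}} = \bigotimes_v j_{\numberfield_v}$ recorded in the base-change discussion, the right-hand side becomes the character $\left(\left(\chi_{\skewHermitianSpace}^{-1} \cdot \chi_{\hermitianSpace}\right) \circ j_{\adeles_{\numberfield}}^{-1} \cdot \alpha\right) \circ \torusEmbedding'_{\skewHermitianSpace}$ of $\unitaryGroup{\skewHermitianSpace}\left(\adeles_{\numberfield}\right)$; since the global theta lift is a non-zero space of automorphic forms on $\adelicQuotient{\unitaryGroup{\skewHermitianSpace}}$ abstractly isomorphic to this one-dimensional representation, it is realized precisely as this automorphic character, which forces the character to be automorphic and gives the claim. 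I expect the only point requiring genuine care to be this last bookkeeping step — verifying that $\tilde{\iota}$ restricts place-by-place to the splitting used in \Cref{thm:epsilon-dichotomy} and that the restricted tensor product of the local epsilon-dichotomy characters is literally the automorphic character in the statement — since all of the analytic content (finiteness, square-integrability, the Rallis inner product input) is already packaged in the cited results.
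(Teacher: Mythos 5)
Your proposal is correct and fills in exactly the argument the paper leaves implicit: the paper states this proposition with only the remark ``we may use the compatibility with the local theta lift to describe it,'' and your proof spells out that compatibility — anisotropy of $\unitaryGroup{\skewHermitianSpace}$ giving compactness and square-integrability, the restricted tensor product decomposition from \cite[Corollary 7.3]{KudlaRallis1994}, the local epsilon dichotomy at each place, and the place-by-place factorization of $j_{\adeles_{\numberfield}}$, $\tilde{\iota}$, and $\torusEmbedding'_{\skewHermitianSpace}$.
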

	
	\subsubsection{Non-vanishing of global theta lifts for one-dimensional spaces over an \etale algebra}
	Let $\etaleNumberField$ be an \etale algebra of degree $n$ over $\numberfield$, such that $\normoneGroup{\quadraticEtaleNumberAlgebra}$ is anisotropic, and let $\lambda \in \multiplicativeGroup{\etaleNumberField}$. Choose a trace zero element $\delta \in \multiplicativeGroup{\quadraticNumberField}$. Consider the same setup as in \Cref{subsec:global-seesaw-identity}. The non-vanishing result described in \Cref{subsec:non-vanishing-global-theta-lifts} has a straightforward extension that allows us to determine whether the global theta lift of an automorphic character of $\unitaryGroup{\etaleNumberSkewHermitian[\numberfield]{1}}\left(\adeles_{\numberfield}\right)$ to $\unitaryGroup{\etaleNumberHermitian{\lambda}}\left(\adeles_{\numberfield}\right)$ is non-zero. This extension will be important for our main result.
	
	Let $i_{\hermitianSpace}' \colon \unitaryGroup{\etaleNumberHermitian{\lambda}} \to \normoneGroup{\quadraticEtaleNumberAlgebra}$ and $i_{\skewHermitianSpace}' \colon \unitaryGroup{\etaleNumberSkewHermitian{1}} \to \normoneGroup{\quadraticEtaleNumberAlgebra}$ be the obvious isomorphisms. Let $\alpha \colon \adelicQuotient{\normoneGroup{\quadraticEtaleNumberAlgebra}} \to \multiplicativeGroup{\cComplex}$ be an automorphic character.
	\begin{theorem}\label{thm:global-theta-lift-non-vanishing}
		The global theta lift $\Theta\left(\alpha \circ \torusEmbedding'_{\hermitianSpace} \right) = \Theta_{\quadraticEtaleNumberHermitian{\lambda}, \etaleNumberSkewHermitian{1}, \tilde{\iota}, \fieldCharacter}\left(\alpha \circ \torusEmbedding'_{\hermitianSpace} \right)$ is non-zero if and only if the following two conditions hold:
		\begin{itemize}
			\item For every place $v$, the big theta lift $$\Theta_{{\quadraticEtaleNumberHermitian{\lambda,v}}, {\etaleNumberSkewHermitian{1,v}}, \tilde{\iota}_v, \fieldCharacter_v}\left(\alpha_v \circ \torusEmbedding'_{\hermitianSpace,v} \right)$$ does not vanish.\\
			\item The central $L$-function value $$\centralValue\left( \chi^{-1}_{\etaleNumberSkewHermitian[\numberfield]{1}} \circ N_{\quadraticEtaleNumberAlgebra \slash \quadraticNumberField} \cdot \alpha_{\quadraticEtaleNumberAlgebra \otimes_{\numberfield} \adeles_{\numberfield}} \right)$$ is non-zero.
		\end{itemize}
		Moreover, in this case, we have that $$\Theta\left(\alpha \circ \torusEmbedding'_{\hermitianSpace} \right) = \left(\left(\chi^{-1}_{\etaleNumberSkewHermitian[\numberfield]{1}} \circ N_{\quadraticEtaleNumberAlgebra \slash \quadraticNumberField} \cdot \chi_{\quadraticEtaleNumberHermitian{\lambda}}\right) \circ j_{\etaleNumberField \otimes_{\numberfield} \adeles_{\numberfield}}^{-1} \cdot \alpha\right) \circ \torusEmbedding'_{\skewHermitianSpace}.$$
	\end{theorem}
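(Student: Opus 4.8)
The plan is to reduce the statement to the non-vanishing theorem of Yamana recalled in \Cref{subsec:non-vanishing-global-theta-lifts}, applied over each field component of $\etaleNumberField$, in exactly the manner in which \Cref{thm:epsilon-dichotomy-etale} was deduced from \Cref{thm:epsilon-dichotomy} in the local setting.

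First I would write $\etaleNumberField = \prod_{j=1}^m \numberfield_j$ as in \Cref{subsec:global-theta-lifting-etale}. Because $\normoneGroup{\quadraticEtaleNumberAlgebra}$ is anisotropic, each $\quadraticEtaleAlgebra[\numberfield_j] = \quadraticNumberField \otimes_{\numberfield} \numberfield_j$ is a quadratic field extension of $\numberfield_j$, the spaces $\hermitianSpace_j = L_{\numberfield_j, \lambda_j}$ and $\skewHermitianSpace_j = \etaleSkewHermitian[\numberfield_j]{1}$ are one-dimensional over it, and both $\torusEmbedding'_{\hermitianSpace}$ and $\alpha$ decompose compatibly with the product $\unitaryGroup{\quadraticEtaleNumberHermitian{\lambda}} = \prod_{j=1}^m \unitaryGroup{\hermitianSpace_j}$, so that $\alpha \circ \torusEmbedding'_{\hermitianSpace}$ corresponds to the tuple $\left(\alpha_j \circ \torusEmbedding'_{\hermitianSpace_j}\right)_{j}$ and, by the definition of the \etale theta lift, $$\Theta\left(\alpha \circ \torusEmbedding'_{\hermitianSpace}\right) = \bigotimes_{j=1}^m \Theta_{\hermitianSpace_j, \skewHermitianSpace_j, \tilde{\iota}_j, \fieldCharacter_j}\left(\alpha_j \circ \torusEmbedding'_{\hermitianSpace_j}\right).$$ Since a tensor product of representations is non-zero exactly when every tensor factor is, the left-hand side is non-zero if and only if each $\Theta_{\hermitianSpace_j, \skewHermitianSpace_j, \tilde{\iota}_j, \fieldCharacter_j}\left(\alpha_j \circ \torusEmbedding'_{\hermitianSpace_j}\right)$ is. For each $j$, the splitting $\tilde{\iota}_j$ is the one attached to the $j$-th components $\left(\chi_{\hermitianSpace_j}, \chi_{\skewHermitianSpace_j}\right)$ of $\left(\chi_{\quadraticEtaleNumberHermitian{\lambda}}, \chi_{\etaleSkewHermitian[\etaleNumberField]{1}}\right)$, so Yamana's theorem applies over $\numberfield_j$ and gives that this lift is non-zero if and only if (a) the local big theta lift is non-zero at every place of $\numberfield_j$, and (b) $\centralValue\left(\chi_{\skewHermitianSpace_j}^{-1} \cdot \alpha_{j, \quadraticEtaleAlgebra[\numberfield_j] \otimes_{\numberfield_j} \adeles_{\numberfield_j}}\right) \ne 0$; and that in that case it equals $\left(\left(\chi_{\skewHermitianSpace_j}^{-1} \cdot \chi_{\hermitianSpace_j}\right) \circ j_{\adeles_{\numberfield_j}}^{-1} \cdot \alpha_j\right) \circ \torusEmbedding'_{\skewHermitianSpace_j}$.

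It then remains to reassemble the local statements. For a place $v$ of $\numberfield$ one has $\etaleNumberField \otimes_{\numberfield} \numberfield_v = \prod_j \prod_{w \mid v} \numberfield_{j,w}$, and by the local \etale construction of \Cref{sec:theta-lifting-etale} the local big theta lift at $v$ factors as the tensor product of the local theta lifts over all pairs $(j, w)$ with $w \mid v$; hence the requirement that the local big theta lift be non-zero at every place $v$ of $\numberfield$ is equivalent to condition (a) holding for every $j$. For the central $L$-values, the compatibility of $\tilde{\iota}$ and $\tilde{\iota}'$ from \Cref{subsec:global-seesaw-identity} gives $\chi_{\etaleSkewHermitian[\etaleNumberField]{1}} = \chi_{\etaleNumberSkewHermitian[\numberfield]{1}} \circ N_{\quadraticEtaleNumberAlgebra \slash \quadraticNumberField}$, and by the definition of $\centralValue$ for a character of $\multiplicativeGroup{\left(\quadraticEtaleNumberAlgebra \otimes_{\numberfield} \adeles_{\numberfield}\right)}$ as a product over its components, $$\prod_{j=1}^m \centralValue\left(\chi_{\skewHermitianSpace_j}^{-1} \cdot \alpha_{j, \quadraticEtaleAlgebra[\numberfield_j] \otimes_{\numberfield_j} \adeles_{\numberfield_j}}\right) = \centralValue\left(\chi_{\etaleNumberSkewHermitian[\numberfield]{1}}^{-1} \circ N_{\quadraticEtaleNumberAlgebra \slash \quadraticNumberField} \cdot \alpha_{\quadraticEtaleNumberAlgebra \otimes_{\numberfield} \adeles_{\numberfield}}\right),$$ which is non-zero precisely when condition (b) holds for every $j$. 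Putting these equivalences together yields the stated ``if and only if''; and when it holds, tensoring the componentwise formulas above and using the compatibility relations $\chi_{\etaleSkewHermitian[\etaleNumberField]{1}} = \chi_{\etaleNumberSkewHermitian[\numberfield]{1}} \circ N_{\quadraticEtaleNumberAlgebra \slash \quadraticNumberField}$ and $\chi_{\quadraticEtaleNumberHermitian{\lambda}} \restriction_{\multiplicativeGroup{\adeles_{\quadraticNumberField}}} = \chi_{\etaleNumberHermitian{\lambda}}$, together with the factorization $j_{\etaleNumberField \otimes_{\numberfield} \adeles_{\numberfield}} = \bigotimes_v j_{\etaleNumberField \otimes_{\numberfield} \numberfield_v}$, produces the asserted formula for $\Theta\left(\alpha \circ \torusEmbedding'_{\hermitianSpace}\right)$.

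I expect the only real difficulty to be organizational: keeping the three identifications of the last paragraph mutually consistent --- the places of $\numberfield$ versus those of the $\numberfield_j$, the passage of the splitting characters $\chi_{\skewHermitianSpace_j}$ through the relevant norm maps, and the factorization of the central $L$-value --- so that the non-vanishing conditions and the formula for the lift come out in exactly the shape of the statement. No new analytic input is needed, all of it being already contained in Yamana's theorem and in the Rallis inner product formula on which that theorem rests.
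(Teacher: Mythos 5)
Your argument is correct and is exactly the route the paper takes (the paper in fact offers no written proof, presenting \Cref{thm:global-theta-lift-non-vanishing} as the ``straightforward extension'' of Yamana's result obtained by decomposing $\etaleNumberField = \prod_{j} \numberfield_j$, applying the one-dimensional non-vanishing theorem over each $\numberfield_j$, and reassembling the local conditions and the central $L$-value, precisely as \Cref{thm:epsilon-dichotomy-etale} extends \Cref{thm:epsilon-dichotomy} locally). Your bookkeeping of places of $\numberfield$ versus places of the $\numberfield_j$, and your use of the compatibility $\chi_{\etaleSkewHermitian[\etaleNumberField]{1}} = \chi_{\etaleNumberSkewHermitian[\numberfield]{1}} \circ N_{\quadraticEtaleNumberAlgebra \slash \quadraticNumberField}$ to put the $L$-value and the final character formula in the stated form, is exactly what is needed.
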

	
	\section{Toric periods of Weil representations}\label{sec:periods-of-tori}
	In this section we prove our main results on toric periods of Weil representations.	
	\subsection{Local problem}
	Let $\baseField$ be a local field (either archimedean or non-archimedean of characteristic $\ne 2$) and let $\quadraticExtension \slash \baseField$ be a quadratic \etale algebra. Let $\hermitianSpace$ be a non-degenerate $n$-dimensional hermitian space over $\quadraticExtension$, and let $\skewHermitianSpace$ be a non-degenerate one-dimensional skew-hermitian space over $\quadraticExtension$. Let $\torusEmbedding_{\skewHermitianSpace}' \colon\unitaryGroup{\skewHermitianSpace} \to \normoneGroup{\quadraticExtension}$ be the obvious isomorphism. Fix a character $\beta \colon \normoneGroup{\quadraticExtension} \to \multiplicativeGroup{\cComplex}$, and let $\Theta\left(\beta  \circ  \torusEmbedding'_\skewHermitianSpace \right) = \Theta_{\skewHermitianSpace,\hermitianSpace, \tilde{\iota}, \fieldCharacter}(\beta \circ \torusEmbedding'_\skewHermitianSpace)$ be the big theta lift of $\beta \circ \torusEmbedding'_\skewHermitianSpace$ to $\unitaryGroup{\hermitianSpace}$, where $\tilde{\iota}$ is the splitting associated with the characters $\left(\chi_\skewHermitianSpace,\chi_\hermitianSpace\right)$. Recall that in this case $\Theta\left(\beta  \circ  \torusEmbedding'_\skewHermitianSpace \right)$ coincides with the small theta lift $\theta\left(\beta  \circ  \torusEmbedding'_\skewHermitianSpace \right) = \theta_{\skewHermitianSpace,\hermitianSpace, \tilde{\iota}, \fieldCharacter}(\beta \circ \torusEmbedding'_\skewHermitianSpace)$ because $\beta \circ \torusEmbedding'_\skewHermitianSpace$ is supercuspidal.
	
	Given a maximal torus $\torus \subset \unitaryGroup{\hermitianSpace}$ and a character $\alpha' \colon \torus \to \multiplicativeGroup{\cComplex}$, we would like to investigate whether the space $\Hom_{\torus}\left(\Theta\left(\beta  \circ  \torusEmbedding'_\skewHermitianSpace\right), \alpha' \right)$ is non-zero.
	
	By \Cref{thm:classificaiton-of-maximal-tori}, we have that if $\torus \subset \unitaryGroup{\hermitianSpace}$ is a maximal torus, then there exists an $n$-dimensional \etale algebra $\etaleAlgebra$ over $\baseField$, an element $\lambda \in \multiplicativeGroup{\etaleAlgebra}$, and an isomorphism $\etaleEmbedding\colon \etaleHermitian{\lambda} \to \hermitianSpace$ of hermitian spaces, such that $$\torus = \torus_{\etaleAlgebra, \etaleEmbedding} = \left\{ i\left(x\right) = \etaleEmbedding \circ \multiplicationMap{x} \circ \etaleEmbedding^{-1}  \mid x \in \normoneGroup{\quadraticEtaleAlgebra} \right\}.$$
	
	We formulate an answer to our question in the following theorem.
	
	\begin{theorem}\label{thm:local-period-non-vanishing}
		Let $\etaleAlgebra$ be an \etale algebra of degree $n$ over $\baseField$, $\alpha \colon\normoneGroup{\quadraticEtaleAlgebra} \to \multiplicativeGroup{\cComplex}$ be a character, and let $\torusEmbedding \colon \normoneGroup{\quadraticEtaleAlgebra} \to \unitaryGroup{\hermitianSpace}$ be an admissible embedding, corresponding to the data $\lambda \in \multiplicativeGroup{\etaleAlgebra}$ and $\etaleEmbedding \colon \etaleHermitian{\lambda} \to \hermitianSpace$. Then $\Hom_{\torusEmbedding\left(\normoneGroup{\quadraticEtaleAlgebra}\right)}\left( \Theta\left( \beta \circ \torusEmbedding'_\skewHermitianSpace \right), \alpha \circ \torusEmbedding^{-1} \right)$ is non-zero if and only if $$\beta  = \left(\chi_{\skewHermitianSpace}^{-n} \cdot  \chi_{\hermitianSpace}\right) \circ j_\baseField^{-1} \cdot \alpha_{\restriction_{\quadraticExtension^1}}$$ and
		$$\omega_{\quadraticEtaleAlgebra/\etaleAlgebra}\left(\lambda\right) = \varepsilon_{\quadraticEtaleAlgebra \slash \etaleAlgebra}(\alpha_{\quadraticEtaleAlgebra}\cdot \chi_{\skewHermitianSpace}^{-1} \circ N_{\quadraticEtaleAlgebra \slash \quadraticExtension},\psi, \delta),$$ where $\delta \in \multiplicativeGroup{\quadraticExtension}$ is a trace zero element, such that $\delta = \discrim \skewHermitianSpace$ $\left(\bmod N_{\quadraticExtension \slash \baseField}\left(\multiplicativeGroup{\quadraticExtension}\right)\right)$. Moreover, in the case that the space $\Hom_{\torusEmbedding\left(\normoneGroup{\quadraticEtaleAlgebra}\right)}\left( \Theta\left( \beta \circ \torusEmbedding'_\skewHermitianSpace \right), \alpha \circ \torusEmbedding^{-1} \right)$ is non-zero, it is one-dimensional.
	\end{theorem}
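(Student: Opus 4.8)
The plan is to deduce the theorem from the local seesaw identity \eqref{eq:local-seesaw-identity-etale-theta} together with the $\unitaryGroup{1}$-to-$\unitaryGroup{1}$ dichotomy over an \etale algebra, \Cref{thm:epsilon-dichotomy-etale}.

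First I would normalize the data so as to place ourselves in the situation of \Cref{sec:local-seesaw-identity}. Since $\etaleEmbedding \colon \etaleHermitian{\lambda} \to \hermitianSpace$ is an isometry, conjugation by $\etaleEmbedding$ identifies $\unitaryGroup{\etaleHermitian{\lambda}}$ with $\unitaryGroup{\hermitianSpace}$, carries the standard torus $\torus = \{\multiplicationMap{x} \mid x \in \normoneGroup{\quadraticEtaleAlgebra}\} \subset \unitaryGroup{\etaleHermitian{\lambda}}$ onto $\torusEmbedding(\normoneGroup{\quadraticEtaleAlgebra})$, and intertwines the big theta lifts; hence it suffices to treat the case $\hermitianSpace = \etaleHermitian{\lambda}$, $\etaleEmbedding = \identityOperator$, $\torusEmbedding = \multiplicationMap{\cdot}$. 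Because $\discrim \etaleSkewHermitian[\baseField]{1} = \delta$, the choice of $\delta$ in the statement makes $\skewHermitianSpace$ isometric to $\etaleSkewHermitian[\baseField]{1}$, so I may also assume $\skewHermitianSpace = \etaleSkewHermitian[\baseField]{1}$ with $\torusEmbedding'_{\skewHermitianSpace}$ the canonical isomorphism. Finally I would fix compatible splittings $\tilde{\iota}$, $\tilde{\iota}'$ as in \Cref{subsec:local-splitting-set-up}, chosen so that the characters of $\multiplicativeGroup{\quadraticExtension}$ appearing in $\tilde{\iota}'$ are $\chi_{\etaleHermitian{\lambda}} = \chi_{\hermitianSpace}$ and $\chi_{\etaleSkewHermitian[\baseField]{1}} = \chi_{\skewHermitianSpace}$; the compatibility relations then pin down $\chi_{\etaleSkewHermitian{1}} = \chi_{\skewHermitianSpace} \circ N_{\quadraticEtaleAlgebra/\quadraticExtension}$ and force ${\chi_{\quadraticEtaleHermitian{\lambda}}}_{\restriction_{\multiplicativeGroup{\quadraticExtension}}} = \chi_{\hermitianSpace}$.

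With these normalizations the space \eqref{eq:hom-space-of-torus-embedding} is exactly the left-hand side of \eqref{eq:local-seesaw-identity-etale-theta} (under the identification $\torusEmbedding(\normoneGroup{\quadraticEtaleAlgebra}) \cong \unitaryGroup{\quadraticEtaleHermitian{\lambda}}$ matching $\alpha \circ \torusEmbedding^{-1}$ with $\alpha \circ \torusEmbedding'_{\hermitianSpace}$), so the seesaw identity yields
$$\Hom_{\torusEmbedding(\normoneGroup{\quadraticEtaleAlgebra})}\!\left(\Theta(\beta \circ \torusEmbedding'_{\skewHermitianSpace}), \alpha \circ \torusEmbedding^{-1}\right) \cong \Hom_{\unitaryGroup{\etaleSkewHermitian[\baseField]{1}}}\!\left(\Theta(\alpha \circ \torusEmbedding'_{\hermitianSpace}) \restriction_{\unitaryGroup{\etaleSkewHermitian[\baseField]{1}}},\ \beta \circ \torusEmbedding'_{\skewHermitianSpace}\right),$$
where $\Theta(\alpha \circ \torusEmbedding'_{\hermitianSpace}) = \Theta_{\quadraticEtaleHermitian{\lambda}, \etaleSkewHermitian{1}, \tilde{\iota}, \fieldCharacter}(\alpha \circ \torusEmbedding'_{\hermitianSpace})$ is a theta lift between unitary groups of \emph{one-dimensional} $\quadraticEtaleAlgebra$-spaces. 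I would then feed this into \Cref{thm:epsilon-dichotomy-etale}: that lift is non-zero precisely when $\varepsilon_{\quadraticEtaleAlgebra/\etaleAlgebra}(\alpha_{\quadraticEtaleAlgebra} \cdot \chi_{\skewHermitianSpace}^{-1} \circ N_{\quadraticEtaleAlgebra/\quadraticExtension}, \fieldCharacter, \delta) = \omega_{\quadraticEtaleAlgebra/\etaleAlgebra}(\lambda)$ --- which is the root number condition --- and in that case it equals the character $\gamma \circ \torusEmbedding'_{\skewHermitianSpace}$ of $\unitaryGroup{\etaleSkewHermitian{1}}$, where $\gamma = (\chi_{\skewHermitianSpace}^{-1} \circ N_{\quadraticEtaleAlgebra/\quadraticExtension} \cdot \chi_{\quadraticEtaleHermitian{\lambda}}) \circ j_{\etaleAlgebra}^{-1} \cdot \alpha$ is a character of $\normoneGroup{\quadraticEtaleAlgebra}$. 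Since $\unitaryGroup{\etaleSkewHermitian[\baseField]{1}} \cong \normoneGroup{\quadraticExtension}$ sits diagonally in $\unitaryGroup{\etaleSkewHermitian{1}} \cong \normoneGroup{\quadraticEtaleAlgebra}$, and since on the diagonal copy of $\multiplicativeGroup{\quadraticExtension}$ one has $j_{\etaleAlgebra} = j_{\baseField}$, $N_{\quadraticEtaleAlgebra/\quadraticExtension}(x) = x^{n}$, and ${\chi_{\quadraticEtaleHermitian{\lambda}}}_{\restriction_{\multiplicativeGroup{\quadraticExtension}}} = \chi_{\hermitianSpace}$, a short computation should give
$$\gamma_{\restriction_{\normoneGroup{\quadraticExtension}}} = \left(\chi_{\skewHermitianSpace}^{-n} \cdot \chi_{\hermitianSpace}\right) \circ j_{\baseField}^{-1} \cdot \alpha_{\restriction_{\normoneGroup{\quadraticExtension}}}.$$
As $\Theta(\alpha \circ \torusEmbedding'_{\hermitianSpace})$, when non-zero, is a one-dimensional character, the right-hand $\Hom$-space is non-zero, and then one-dimensional, exactly when this restriction equals $\beta$, i.e., exactly when the character compatibility condition holds; and if the root number condition fails the lift vanishes and the $\Hom$-space is zero. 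This establishes the dichotomy and the multiplicity-one statement.

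I expect the genuinely laborious part to be the bookkeeping rather than anything conceptual: checking that compatible splittings with the prescribed $\multiplicativeGroup{\quadraticExtension}$-characters exist, verifying that the canonical admissible isomorphisms $\torusEmbedding'_{\skewHermitianSpace}$ and $\torusEmbedding'_{\hermitianSpace}$ used on the two sides of \eqref{eq:local-seesaw-identity-etale-theta} are the same ones appearing in the statement (for which one appeals to \Cref{prop:conjugacy-of-tori-implies-conjugacy-of-etale-end-morphism} and the uniqueness discussion around \Cref{thm:natural-bijection-admissible-embeddings}), and carrying out the restriction of $\gamma$ to $\normoneGroup{\quadraticExtension}$ carefully, tracking how $\chi_{\quadraticEtaleHermitian{\lambda}}$, $N_{\quadraticEtaleAlgebra/\quadraticExtension}$, and $j_{\etaleAlgebra}$ interact with the diagonal embedding $\multiplicativeGroup{\quadraticExtension} \hookrightarrow \multiplicativeGroup{\quadraticEtaleAlgebra}$.
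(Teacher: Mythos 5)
Your proposal is correct and follows essentially the same route as the paper: reduce to the standard model $\hermitianSpace = \etaleHermitian{\lambda}$, $\skewHermitianSpace = \etaleSkewHermitian[\baseField]{1}$, apply the local seesaw identity \eqref{eq:local-seesaw-identity-etale-theta} to flip the problem to a theta lift between $1$-dimensional $\quadraticEtaleAlgebra$-unitary groups, then invoke \Cref{thm:epsilon-dichotomy-etale} for non-vanishing (root number condition) and the explicit formula for the lift (character compatibility after restricting to $\normoneGroup{\quadraticExtension}$). The paper compresses the normalization by the isometry $\etaleEmbedding$ into the notation, whereas you make it an explicit first step, but this is presentational rather than a mathematical difference.
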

	\begin{proof}
		By the choice of $\delta$, we have that $\skewHermitianSpace \cong \etaleSkewHermitian[\baseField]{1}$, as hermitian spaces, where we recall that $\etaleSkewHermitian[\baseField]{1} = (\quadraticExtension, \innerProduct{\cdot}{\cdot}_{\etaleSkewHermitian[\baseField]{1}})$ is the one-dimensional space equipped with the skew-hermitian form $$\innerProduct{x}{y}_{\etaleSkewHermitian[\baseField]{1}} = \delta x \involution{y}.$$ Henceforth, we will identify $\skewHermitianSpace$ with $\etaleSkewHermitian[\baseField]{1}$.
		
		Let us be in the setup of \Cref{sec:local-seesaw-identity}. We will use the following seesaw diagram:
		$$
		\xymatrix{\unitaryGroup{ \etaleSkewHermitian{1} } \ar@{-}[d] \ar@{-}[dr] & \unitaryGroup{\hermitianSpace} \ar@{-}[d] \ar@{-}[dl] \\ \unitaryGroup{\skewHermitianSpace} & \; \unitaryGroup{\quadraticEtaleHermitian{\lambda}},}
		$$
		where $\unitaryGroup{\skewHermitianSpace}$ is embedded in $\unitaryGroup{\etaleSkewHermitian{1}}$ diagonally, i.e., $\unitaryGroup{\skewHermitianSpace}$ acts on elements of $\etaleSkewHermitian{1}$ by scalar multiplication via the obvious isomorphism $\torusEmbedding'_{\skewHermitianSpace} \colon\unitaryGroup{\skewHermitianSpace} \to \normoneGroup{\quadraticExtension}$.
		
		By the local seesaw identity \eqref{eq:local-seesaw-identity-etale-theta}, we have that
		$$\Hom_{\torusEmbedding\left(\normoneGroup{\quadraticEtaleAlgebra}\right)}\left(\Theta\left(\beta \circ \torusEmbedding'_\skewHermitianSpace \right)_{\restriction_{\torusEmbedding\left(\normoneGroup{\quadraticEtaleAlgebra}\right)}}, \alpha \circ \torusEmbedding^{-1} \right) \cong \Hom_{\unitaryGroup{\skewHermitianSpace}}\left(\Theta\left(\alpha \circ \torusEmbedding^{-1} \right)_{\restriction_{\unitaryGroup{\skewHermitianSpace}}}, \beta \circ \torusEmbedding'_{\skewHermitianSpace} \right),$$
		where $\Theta\left(\alpha \circ \torusEmbedding^{-1}\right)$ is the theta lift of $\alpha \circ \torusEmbedding^{-1}$ from $\unitaryGroup{\quadraticEtaleHermitian{\lambda}}$ to $\unitaryGroup{\etaleSkewHermitian{1}}$. By \Cref{thm:epsilon-dichotomy-etale}, we have that $\Theta\left(\alpha \circ i^{-1}\right)$ is non-zero if and only if $$ \varepsilon_{\quadraticEtaleAlgebra \slash \etaleAlgebra}\left( \chi_{\skewHermitianSpace}^{-1} \circ N_{\quadraticEtaleAlgebra \slash \quadraticExtension} \cdot \alpha_{\quadraticEtaleAlgebra}, \fieldCharacter, \delta \right) = \omega_{\quadraticEtaleAlgebra \slash \etaleAlgebra}\left(\lambda\right),$$ and in this case \begin{equation}\label{eq:theta-lift-of-alpha-composed-with-i-inverse}
			\Theta\left(\alpha \circ \torusEmbedding^{-1}\right) = \left(\left(\chi_{\skewHermitianSpace}^{-1} \circ N_{\quadraticEtaleAlgebra \slash \quadraticExtension} \cdot \chi_{\quadraticEtaleHermitian{\lambda}}\right) \circ j_{\etaleAlgebra}^{-1} \cdot \alpha\right) \circ \torusEmbedding'_{\etaleSkewHermitian{1}},
		\end{equation}
		where $\torusEmbedding'_{\etaleSkewHermitian{1}} \colon \unitaryGroup{\etaleSkewHermitian{1}} \to \normoneGroup{\quadraticEtaleAlgebra}$ is the obvious isomorphism. Since $\torusEmbedding'_{\etaleSkewHermitian{1}}$ is the obvious isomorphism, we have that ${\torusEmbedding'_{\etaleSkewHermitian{1}}}_{\restriction_{ \unitaryGroup{\skewHermitianSpace}}}$ agrees with $\torusEmbedding'_{\skewHermitianSpace}$.
		
		Since the theta lift  $\Theta\left(\alpha \circ \torusEmbedding^{-1}\right)$ is a character, given that it is not zero, we have that the space $\Hom_{\unitaryGroup{\skewHermitianSpace}}\left(\Theta\left(\alpha \circ \torusEmbedding^{-1} \right)_{\restriction_{\unitaryGroup{\skewHermitianSpace}}}, \beta \circ \torusEmbedding'_{\skewHermitianSpace} \right)$ is non-zero if and only if $\Theta\left(\alpha \circ \torusEmbedding^{-1} \right)\restriction_{\unitaryGroup{\skewHermitianSpace}}$ is the same as $\beta \circ \torusEmbedding'_{\skewHermitianSpace}$. By \eqref{eq:theta-lift-of-alpha-composed-with-i-inverse}, this is equivalent to $$\left(\left(\chi_{\skewHermitianSpace}^{-1} \circ N_{\quadraticEtaleAlgebra \slash \quadraticExtension} \cdot \chi_{\quadraticEtaleHermitian{\lambda}}\right) \circ j_{\etaleAlgebra}^{-1} \cdot \alpha\right)_{\restriction_{\normoneGroup{\quadraticExtension}}} = \beta.$$
		Since ${\chi_{\quadraticEtaleHermitian{\lambda}}}_{\restriction_{\multiplicativeGroup{\quadraticExtension}}} = \chi_{\hermitianSpace},$ we get that this condition is equivalent to $$\beta =  \left(\chi_{\skewHermitianSpace}^{-n} \cdot \chi_{\hermitianSpace}\right) \circ j_{\baseField}^{-1} \cdot {\alpha}_{\restriction_{\normoneGroup{\quadraticExtension}}},$$
		as required.
		
		Finally, if $\Hom_{\unitaryGroup{\skewHermitianSpace}}\left(\Theta\left(\alpha \circ \torusEmbedding^{-1} \right) \restriction_{\unitaryGroup{\skewHermitianSpace}}, \beta \circ \torusEmbedding'_{\skewHermitianSpace} \right)$ is non-zero, it has to be one-dimensional, since all the representations involved are characters.
	\end{proof}
	\begin{remark}\label{rem:intro-theorem-follows}
		By substituting $\skewHermitianSpace = \etaleSkewHermitian[\baseField]{1}$ and $\left(\chi_{\skewHermitianSpace}, \chi_{\hermitianSpace}\right) = \left(\mu, \mu^{n}\right)$ as in \Cref{subsec:notation-for-theta-lifts-of-characters-of-norm-one-group}, we obtain \Cref{thm:main-local-period-non-vanishing}.
	\end{remark}
	\begin{remark}
		If $\baseField$ is non-archimedean and $\quadraticExtension \slash \baseField$ is a quadratic field extension, then there exist exactly two isomorphism classes of non-degenerate hermitian spaces over $\quadraticExtension$ of dimension $n$. The isomorphism class of such hermitian space is determined by its discriminant. We may use this to determine when there exists an admissible embedding $\torusEmbedding \colon \normoneGroup{\quadraticEtaleAlgebra} \to \unitaryGroup{\hermitianSpace}$ with non-zero $\Hom$-space, where $\etaleAlgebra$ is an arbitrary \etale algebra of degree $n$ over $\baseField$.
		
		Let $\alpha \colon \normoneGroup{\quadraticEtaleAlgebra} \to \multiplicativeGroup{\cComplex}$ be a character and let $\lambda \in \multiplicativeGroup{\etaleAlgebra}$ be such that $$\omega_{\quadraticEtaleAlgebra/\etaleAlgebra}\left(\lambda\right) = \varepsilon_{\quadraticEtaleAlgebra \slash \etaleAlgebra}(\alpha_{\quadraticEtaleAlgebra}\cdot \chi_{\skewHermitianSpace}^{-1} \circ N_{\quadraticEtaleAlgebra \slash \quadraticExtension},\psi, \delta).$$
		(By \Cref{thm:local-period-non-vanishing}, this is the only possible class $\lambda \in \multiplicativeGroup{\etaleAlgebra} \slash N_{\quadraticEtaleAlgebra \slash \etaleAlgebra}\left(\multiplicativeGroup{\quadraticEtaleAlgebra}\right)$, such that there exists an admissible embedding corresponding to $\lambda$ with non-vanishing Hom-space).
		
		Then there exists an admissible embedding $\torusEmbedding \colon \normoneGroup{\quadraticEtaleAlgebra} \to \unitaryGroup{\hermitianSpace}$ corresponding to an isomorphism $\etaleEmbedding \colon \etaleHermitian{\lambda} \to \hermitianSpace$ if and only if  \begin{equation}\label{eq:discriminant-condition-for-V-E-lambda-and-V}
		\discrim \hermitianSpace = N_{\etaleAlgebra \slash \baseField}\left(\lambda\right) \cdot \discrim_{\baseField}\left(\etaleAlgebra\right).
		\end{equation}
		In this case, \begin{equation}\label{eq:hom-theta-space-is-non-zero}
			\Hom_{\torusEmbedding\left(\normoneGroup{\quadraticEtaleAlgebra}\right)}\left( \Theta\left( \beta \circ \torusEmbedding'_\skewHermitianSpace \right), \alpha \circ \torusEmbedding^{-1} \right) \ne 0,
		\end{equation} if and only if
		\begin{equation}\label{eq:restriciton-condition-for-characters}
			\beta  = \left(\chi_{\skewHermitianSpace}^{-n} \cdot  \chi_{\hermitianSpace}\right) \circ j_\baseField^{-1} \cdot \alpha_{\restriction_{\quadraticExtension^1}}
		\end{equation}
		Here we used \Cref{lem:discriminant-of-V-lambda} and \Cref{thm:local-period-non-vanishing}.
		
		If $\baseField = \rReal$ and $\quadraticExtension = \cComplex$, then \eqref{eq:discriminant-condition-for-V-E-lambda-and-V} is only a necessary condition for the existence of an isomorphism $\etaleEmbedding \colon \etaleHermitian{\lambda} \to \hermitianSpace$. Given that such isomorphism exists, we have by \Cref{thm:local-period-non-vanishing} that \eqref{eq:hom-theta-space-is-non-zero} holds if and only \eqref{eq:restriciton-condition-for-characters} holds.
	\end{remark}
	
	\begin{corollary}\label{cor:sum-of-dimension-of-admissible-embedding}Let $\etaleAlgebra$ be an \etale algebra of degree $n$ over $\baseField$, and let $\alpha \colon\normoneGroup{\quadraticEtaleAlgebra} \to \multiplicativeGroup{\cComplex}$ be a character. Then we have that
		\begin{align*}
			&\sum_{\variableHermitianSpace} \sum_{\torusEmbedding} \dim \Hom_{\torusEmbedding\left(\normoneGroup{\quadraticEtaleAlgebra}\right)}\left(\Theta_{\skewHermitianSpace, \variableHermitianSpace}\left(\beta \circ \torusEmbedding'_\skewHermitianSpace \right)_{\restriction_{\torusEmbedding\left(\normoneGroup{\quadraticEtaleAlgebra}\right)}}, \alpha \circ \torusEmbedding^{-1} \right) \\
			&= \begin{dcases}
				1 & \text{if } \beta =  \left(\chi_{\skewHermitianSpace}^{-n} \cdot \chi_{\hermitianSpace}\right) \circ j_{\baseField}^{-1} \cdot \alpha_{\restriction_{\normoneGroup{\quadraticExtension}}},\\
				0 & \text{otherwise,}
			\end{dcases}
		\end{align*}
		where the sum over $\variableHermitianSpace$ is over a set of representatives of isomorphism classes of non-degenerate hermitian spaces of degree $n$, and the sum over $\torusEmbedding$ is over a set of representatives for  $\Sigma_{\etaleAlgebra, \variableHermitianSpace}$. Here, $\Theta_{\skewHermitianSpace, \variableHermitianSpace}\left(\beta \circ \torusEmbedding'_{\skewHermitianSpace}\right)$ is the big theta lift from $\unitaryGroup{\skewHermitianSpace}$ to $\unitaryGroup{\variableHermitianSpace}$, with respect to a splitting corresponding to a prescribed choice of characters $\left(\chi_{\skewHermitianSpace}, \chi_{\hermitianSpace}\right)$.
	\end{corollary}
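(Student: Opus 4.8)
The plan is to derive this from \Cref{thm:local-period-non-vanishing} together with the bijection of \Cref{thm:natural-bijection-admissible-embeddings}; once both are in hand the corollary reduces to a short counting argument. Applying \Cref{thm:local-period-non-vanishing} with $\hermitianSpace = \variableHermitianSpace$ for each $\variableHermitianSpace \in \HermitianEquivalenceClasses_n$, every summand $\dim_{\cComplex} \Hom_{\torusEmbedding(\normoneGroup{\quadraticEtaleAlgebra})}(\Theta_{\skewHermitianSpace, \variableHermitianSpace}(\beta \circ \torusEmbedding'_{\skewHermitianSpace}), \alpha \circ \torusEmbedding^{-1})$ is at most $1$, and equals $1$ exactly when both the character compatibility condition and the root number condition hold for the datum attached to $\torusEmbedding$. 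First I would observe that, since $\chi_{\hermitianSpace}$ is subject only to the constraint ${\chi_{\hermitianSpace}}_{\restriction_{\multiplicativeGroup{\baseField}}} = \omega_{\quadraticExtension/\baseField}^{n}$, which depends on $\variableHermitianSpace$ only through $n = \dim \variableHermitianSpace$, the compatibility condition $\beta = (\chi_{\skewHermitianSpace}^{-n} \cdot \chi_{\hermitianSpace}) \circ j_{\baseField}^{-1} \cdot \alpha_{\restriction_{\normoneGroup{\quadraticExtension}}}$ is literally the same for all $\variableHermitianSpace$ and all $\torusEmbedding$. Hence if it fails, every summand vanishes and the sum is $0$, which is the second case; assume henceforth that it holds.

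Next I would reindex the double sum. For a fixed $\variableHermitianSpace$, \Cref{thm:natural-bijection-admissible-embeddings} identifies $\Sigma_{\etaleAlgebra, \variableHermitianSpace}$ with the set of classes $\lambda \in \multiplicativeGroup{\etaleAlgebra} \slash N_{\quadraticEtaleAlgebra \slash \etaleAlgebra}(\multiplicativeGroup{\quadraticEtaleAlgebra})$ with $\etaleHermitian{\lambda} \cong \variableHermitianSpace$, and the embedding attached to such a class is precisely the admissible embedding whose datum is $\lambda$ in the sense of \Cref{thm:local-period-non-vanishing}. The sets $\{\lambda \mid \etaleHermitian{\lambda} \cong \variableHermitianSpace\}$ are pairwise disjoint as $\variableHermitianSpace$ ranges over $\HermitianEquivalenceClasses_n$ (isomorphism classes partition), and since every $\etaleHermitian{\lambda}$ is an $n$-dimensional hermitian space, hence isomorphic to some member of $\HermitianEquivalenceClasses_n$, these sets together exhaust $\multiplicativeGroup{\etaleAlgebra} \slash N_{\quadraticEtaleAlgebra \slash \etaleAlgebra}(\multiplicativeGroup{\quadraticEtaleAlgebra})$. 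Plugging in the value of each summand from \Cref{thm:local-period-non-vanishing}, the double sum collapses to
$$ \#\left\{ \lambda \in \multiplicativeGroup{\etaleAlgebra} \slash N_{\quadraticEtaleAlgebra \slash \etaleAlgebra}(\multiplicativeGroup{\quadraticEtaleAlgebra}) \;\middle|\; \omega_{\quadraticEtaleAlgebra/\etaleAlgebra}(\lambda) = \varepsilon_{\quadraticEtaleAlgebra \slash \etaleAlgebra}\bigl(\alpha_{\quadraticEtaleAlgebra} \cdot \chi_{\skewHermitianSpace}^{-1} \circ N_{\quadraticEtaleAlgebra \slash \quadraticExtension}, \fieldCharacter, \delta\bigr) \right\}, $$
and it remains to show this count is $1$.

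For the last step I would analyze the homomorphism $\omega_{\quadraticEtaleAlgebra/\etaleAlgebra}$. Writing $\etaleAlgebra = \prod_{j=1}^{m} \baseField_j$, we have $\multiplicativeGroup{\etaleAlgebra} \slash N_{\quadraticEtaleAlgebra \slash \etaleAlgebra}(\multiplicativeGroup{\quadraticEtaleAlgebra}) = \prod_{j=1}^{m} \multiplicativeGroup{\baseField_j} \slash N_{\quadraticEtaleAlgebra[\baseField_j] \slash \baseField_j}(\multiplicativeGroup{\quadraticEtaleAlgebra[\baseField_j]})$ and $\omega_{\quadraticEtaleAlgebra/\etaleAlgebra} = \prod_{j} \omega_{\quadraticEtaleAlgebra[\baseField_j]/\baseField_j}$. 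If $\quadraticEtaleAlgebra[\baseField_j]$ is split then $N_{\quadraticEtaleAlgebra[\baseField_j] \slash \baseField_j}(\multiplicativeGroup{\quadraticEtaleAlgebra[\baseField_j]}) = \multiplicativeGroup{\baseField_j}$, so that factor of the index group is trivial; if $\quadraticEtaleAlgebra[\baseField_j] \slash \baseField_j$ is a quadratic field extension then the local norm subgroup has index two and $\omega_{\quadraticEtaleAlgebra[\baseField_j]/\baseField_j}$ is an isomorphism onto $\{\pm 1\}$ (local class field theory, or, when $\baseField_j$ is archimedean, the identification $\multiplicativeGroup{\rReal} \slash N_{\cComplex \slash \rReal}(\multiplicativeGroup{\cComplex}) \cong \{\pm 1\}$ via the sign). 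Therefore $\omega_{\quadraticEtaleAlgebra/\etaleAlgebra}$ is injective, with image exactly the set of sign vectors in $\{\pm 1\}^{m}$ equal to $+1$ in every split component. Since $\varepsilon_{\quadraticEtaleAlgebra \slash \etaleAlgebra}(\cdot, \fieldCharacter, \delta)$ is $+1$ in each split component by definition, it lies in this image, and by injectivity there is a unique class $\lambda$ realizing it, which gives the value $1$ in the first case.

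The only step with genuine content is this last one: identifying the image of $\omega_{\quadraticEtaleAlgebra/\etaleAlgebra}$ and checking that the prescribed sign vector always lands inside it; everything before it is bookkeeping with the correspondences already established. Even this step is light — the split components reduce to unwinding the definitions in \Cref{subsec:invariants-of-one-dimensional-hermitian-spaces-over-an-etale-algebra}, and the non-split components are governed by the standard index-two property of local norm subgroups.
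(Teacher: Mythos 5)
Your proof is correct and follows essentially the same route as the paper: reindex the double sum via the bijection of \Cref{thm:natural-bijection-admissible-embeddings} and then apply \Cref{thm:local-period-non-vanishing} to each summand. The only difference is that you spell out the final counting step (injectivity of $\omega_{\quadraticEtaleAlgebra/\etaleAlgebra}$ on $\multiplicativeGroup{\etaleAlgebra}\slash N_{\quadraticEtaleAlgebra\slash\etaleAlgebra}(\multiplicativeGroup{\quadraticEtaleAlgebra})$, its image, and that the root-number vector is $+1$ in split components), a detail the paper's proof asserts without elaboration.
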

	\begin{proof}
		By \Cref{thm:natural-bijection-admissible-embeddings}, for a fixed non-degenerate hermitian space $\variableHermitianSpace$ of dimension $n$, the set $\Sigma_{\etaleAlgebra, \variableHermitianSpace}$ is in bijection with $\lambda \in \multiplicativeGroup{\etaleAlgebra} \slash N_{\quadraticEtaleAlgebra \slash \etaleAlgebra}\left(\multiplicativeGroup{\quadraticEtaleAlgebra}\right)$, such that $\etaleHermitian{\lambda} \cong \variableHermitianSpace$. Therefore, we have that \begin{align*}
			&\sum_{\variableHermitianSpace} \sum_{i \in \Sigma_{\etaleAlgebra, \variableHermitianSpace}} \dim \Hom_{\torusEmbedding\left(\normoneGroup{\quadraticEtaleAlgebra}\right)}\left(\Theta_{\skewHermitianSpace, \variableHermitianSpace}\left(\beta \circ \torusEmbedding'_\skewHermitianSpace \right)_{\restriction_{\torusEmbedding\left(\normoneGroup{\quadraticEtaleAlgebra}\right)}}, \alpha \circ \torusEmbedding^{-1} \right) \\
			= & \sum_{\lambda} \dim \Homspace_{\torusEmbedding_{\etaleEmbedding_\lambda} ( \normoneGroup{\quadraticEtaleAlgebra} ) }\left(\Theta_{\skewHermitianSpace, \variableHermitianSpace_{\lambda} }\left( \beta \circ \torusEmbedding'_{\skewHermitianSpace} \right), \alpha \circ \torusEmbedding_{\etaleEmbedding_\lambda}^{-1} \right),
		\end{align*} where the summation is over a set of representatives $\lambda \in \multiplicativeGroup{\etaleAlgebra}$ for $\multiplicativeGroup{\etaleAlgebra} \slash N_{\quadraticEtaleAlgebra \slash \etaleAlgebra}\left(\multiplicativeGroup{\quadraticEtaleAlgebra}\right)$, where $\variableHermitianSpace_{\lambda}$ is the representative for the class of the hermitian space $\etaleHermitian{\lambda}$, and where $\torusEmbedding_{\etaleEmbedding_\lambda} \colon \normoneGroup{\quadraticEtaleAlgebra} \to \torus_{\etaleAlgebra, \etaleEmbedding_{\lambda}} \subset \unitaryGroup{\variableHermitianSpace_{\lambda}}$ is an arbitrary admissible embedding corresponding to the data $\etaleEmbedding_{\lambda} \colon \etaleHermitian{\lambda} \rightarrow \variableHermitianSpace_{\lambda}$. Given such $\lambda$, we have by \Cref{thm:local-period-non-vanishing} that the space $\Homspace_{\torusEmbedding_{\etaleEmbedding_\lambda} ( \normoneGroup{\quadraticEtaleAlgebra} ) }\left(\Theta_{\skewHermitianSpace, \variableHermitianSpace_{\lambda}}\left( \beta \circ \torusEmbedding'_{\skewHermitianSpace} \right), \alpha \circ \torusEmbedding_{\etaleEmbedding_\lambda}^{-1} \right)$ can be non-zero only when $\beta =  \left(\chi_{\skewHermitianSpace}^{-n} \cdot \chi_{\hermitianSpace}\right) \circ j_{\baseField}^{-1} \cdot \alpha \restriction_{\normoneGroup{\quadraticExtension}}$, and in this case the space is non-zero only for one class in $\multiplicativeGroup{\etaleAlgebra} \slash N_{\quadraticEtaleAlgebra \slash \etaleAlgebra}\left(\multiplicativeGroup{\quadraticEtaleAlgebra}\right)$, and for that class it is one-dimensional. Therefore, we get the result.
	\end{proof}
	\begin{remark}
		By substituting the same data as in \Cref{rem:intro-theorem-follows}, we get \Cref{thm:intro-sum}.
	\end{remark}
	
	\subsection{Global problem}
	Let $\numberfield$ be a number field, and let $\quadraticNumberField \slash \numberfield$ be a quadratic field extension. Let $\hermitianSpace$ be a non-degenerate $n$-dimensional hermitian space over $\quadraticNumberField$, and let $\skewHermitianSpace$ be a non-degenerate one-dimensional skew-hermitian space over $\quadraticNumberField$. Let $\torusEmbedding'_{\skewHermitianSpace} \colon \unitaryGroup{\skewHermitianSpace} \to \normoneGroup{\quadraticEtaleAlgebra[\numberfield]}$ be the obvious isomorphism. Fix an automorphic character $\beta \colon \adelicQuotient{\normoneGroup{\quadraticEtaleAlgebra[\numberfield]}} \to \multiplicativeGroup{\cComplex}$ and let $\Theta\left(\beta \circ \torusEmbedding'_{\skewHermitianSpace}\right) = \Theta_{\skewHermitianSpace, \hermitianSpace, \tilde{\iota}, \fieldCharacter}\left(\beta \circ \torusEmbedding'_{\skewHermitianSpace}\right)$ be the global theta lift of $\beta \circ \torusEmbedding'_{\skewHermitianSpace}$ to $\unitaryGroup{\hermitianSpace}\left(\adeles_{\numberfield}\right)$, where $\tilde{\iota}$ is the splitting associated with the characters $\left( \chi_{\skewHermitianSpace}, \chi_{\hermitianSpace}\right)$.
	
	Given a maximal anisotropic torus $\torus \subset \unitaryGroup{\hermitianSpace}$ and an automorphic character $\alpha \colon \adelicQuotient{\torus} \to \multiplicativeGroup{\cComplex}$, we would like to investigate whether the $\alpha$-period of $\torus$ is identically zero on $\Theta\left(\beta \circ \torusEmbedding'_{\skewHermitianSpace}\right)$, that is, we would like to check whether the integral
	$$\mathcal{P}_{\torus, \alpha}\left(f\right) = \int_{\adelicQuotient{\torus}} f\left(t\right) \complexConjugate{\alpha\left(t\right)} \differential{t}$$
	is zero for every $f \in \Theta\left(\beta \circ \torusEmbedding'_{\skewHermitianSpace}\right)$.
	
	As before, given such $\torus$, by \Cref{thm:classificaiton-of-maximal-tori}, we may find an \etale algebra $\etaleNumberField$ of degree $n$ over $\numberfield$, an element $\lambda \in \multiplicativeGroup{\etaleNumberField}$, and an isomorphism $\etaleEmbedding \colon \etaleHermitian{\lambda} \to \hermitianSpace$ of hermitian spaces over $\quadraticNumberField$, such that $$\torus = \left\{ \etaleEmbedding \circ \multiplicationMap{x} \circ \etaleEmbedding^{-1} \mid x \in \normoneGroup{\quadraticEtaleNumberAlgebra} \right\}.$$
	
	Analogously to the local case, we formulate an answer to this problem in the following theorem.
	
	\begin{theorem}\label{thm:main-thm-global-case}
		Let $\etaleNumberField$ be an \etale algebra of degree $n$ over $\numberfield$, such that $\normoneGroup{\quadraticEtaleNumberAlgebra}$ is anisotropic, $\alpha \colon \adelicQuotient{\normoneGroup{\quadraticEtaleNumberAlgebra}} \to \multiplicativeGroup{\cComplex}$ be an automorphic character, and let $\torusEmbedding \colon \normoneGroup{\quadraticEtaleNumberAlgebra} \to \unitaryGroup{\hermitianSpace}$ be an admissible embedding corresponding to the data $\lambda \in \multiplicativeGroup{\etaleNumberField}$ and $\etaleEmbedding \colon \etaleNumberHermitian{\lambda} \to \hermitianSpace$. Then $\mathcal{P}_{\torusEmbedding\left(\normoneGroup{\quadraticEtaleNumberAlgebra}\right), \alpha \circ \torusEmbedding^{-1}}$ is not identically zero on $\Theta\left(\beta \circ \torusEmbedding'_{\skewHermitianSpace} \right)$ if and only the three following conditions are satisfied:
		\begin{enumerate}
			\item \label{item:automorphic-characters-relation} $\beta = \left(\chi_{\skewHermitianSpace}^{-n} \cdot \chi_{\hermitianSpace} \right) \circ j_{\adeles_{\numberfield}}^{-1} \cdot \alpha_{\restriction_{\normoneGroup{\quadraticEtaleAlgebra[\numberfield]}\left(\adeles_{\numberfield}\right)}}.$
			\item \label{item:root-numbers-relation} For every place $v$, $$\omega_{\quadraticEtaleNumberAlgebra \otimes_{\numberfield} \numberfield_v \slash \etaleNumberField \otimes_{\numberfield} \numberfield_v}\left(\lambda\right) = \varepsilon_{\quadraticEtaleNumberAlgebra \otimes_{\numberfield} \numberfield_v \slash \etaleNumberField \otimes_{\numberfield} \numberfield_v}\left(\alpha_{v, \quadraticEtaleNumberAlgebra \otimes_{\numberfield} \numberfield_v } \cdot \chi^{-1}_{\skewHermitianSpace, v} \circ N_{\quadraticEtaleNumberAlgebra \otimes_{\numberfield} \numberfield_v \slash \quadraticNumberField_v}, \fieldCharacter_v, \delta \right).$$
			\item \label{item:central-value-relation} The following central $L$-function value does not vanish:
			$$ \centralValue\left(\chi_{\skewHermitianSpace}^{-1} \circ N_{\quadraticEtaleNumberAlgebra \slash \quadraticNumberField} \cdot \alpha_{\quadraticEtaleNumberAlgebra \otimes_{\numberfield} \adeles_{\numberfield}} \right) \ne 0.$$
		\end{enumerate}
		Here, $\delta \in \multiplicativeGroup{\quadraticNumberField}$ is a trace zero element, such that $\delta = \discrim \skewHermitianSpace \left(\bmod N_{ \quadraticNumberField \slash \numberfield }\left(\multiplicativeGroup{\quadraticNumberField}\right)\right)$.
	\end{theorem}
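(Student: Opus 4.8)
The plan is to run the global analogue of the seesaw argument from the proof of \Cref{thm:local-period-non-vanishing}, and then feed the outcome into \Cref{thm:global-theta-lift-non-vanishing} together with the orthogonality of automorphic characters on a compact quotient. First I would fix the trace zero element $\delta$ as in the statement, so that $\skewHermitianSpace \cong \etaleNumberSkewHermitian[\numberfield]{1}$ as skew-hermitian spaces over $\quadraticNumberField$, and identify the two; likewise, via the isometry $\etaleEmbedding \colon \etaleNumberHermitian{\lambda} \to \hermitianSpace$ I would transport the whole problem onto $\etaleNumberHermitian{\lambda}$, so that $\torusEmbedding\left(\normoneGroup{\quadraticEtaleNumberAlgebra}\right)$ becomes the torus $\unitaryGroup{\quadraticEtaleNumberHermitian{\lambda}} \subset \unitaryGroup{\etaleNumberHermitian{\lambda}}$ and the $\alpha \circ \torusEmbedding^{-1}$-period of $\Theta\left(\beta \circ \torusEmbedding'_{\skewHermitianSpace}\right)$ becomes the $\alpha$-period, over $\adelicQuotient{\normoneGroup{\quadraticEtaleNumberAlgebra}}$, of the global theta lift of $\beta$ from $\unitaryGroup{\etaleNumberSkewHermitian[\numberfield]{1}}\left(\adeles_{\numberfield}\right)$ to $\unitaryGroup{\etaleNumberHermitian{\lambda}}\left(\adeles_{\numberfield}\right)$. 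Then I would set up the seesaw diagram of \Cref{subsec:global-seesaw-identity} with $\unitaryGroup{\etaleNumberSkewHermitian{1}} \times \unitaryGroup{\quadraticEtaleNumberHermitian{\lambda}}$ against $\unitaryGroup{\etaleNumberHermitian{\lambda}} \times \unitaryGroup{\etaleNumberSkewHermitian[\numberfield]{1}}$, with $\unitaryGroup{\skewHermitianSpace} = \unitaryGroup{\etaleNumberSkewHermitian[\numberfield]{1}}$ embedded diagonally in $\unitaryGroup{\etaleNumberSkewHermitian{1}}$ via $\torusEmbedding'_{\skewHermitianSpace}$, choosing compatible splittings so that the resulting $\tilde{\iota}'$ on $\unitaryGroup{\etaleNumberHermitian{\lambda}}\left(\adeles_{\numberfield}\right) \times \unitaryGroup{\etaleNumberSkewHermitian[\numberfield]{1}}\left(\adeles_{\numberfield}\right)$ is the given splitting; this forces $\chi_{\etaleNumberSkewHermitian[\numberfield]{1}} = \chi_{\skewHermitianSpace}$, $\chi_{\etaleNumberHermitian{\lambda}} = \chi_{\hermitianSpace}$, $\chi_{\etaleNumberSkewHermitian{1}} = \chi_{\skewHermitianSpace} \circ N_{\quadraticEtaleNumberAlgebra \slash \quadraticNumberField}$, and $\chi_{\quadraticEtaleNumberHermitian{\lambda}} \restriction_{\multiplicativeGroup{\adeles_{\quadraticNumberField}}} = \chi_{\hermitianSpace}$.

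Applying the global seesaw identity of \Cref{subsec:global-seesaw-identity} then gives, for every decomposable $\varphi$,
\[
\int_{\adelicQuotient{\normoneGroup{\quadraticEtaleNumberAlgebra}}} \theta_{\tilde{\iota}'}\left(\varphi, \beta\right)\left(\torusEmbedding(t)\right)\, \complexConjugate{\alpha(t)}\, \differential{t} = \int_{\adelicQuotient{\normoneGroup{\quadraticEtaleAlgebra[\numberfield]}}} \theta_{\tilde{\iota}}\left(\varphi, \alpha\right)(h)\, \complexConjugate{\beta(h)}\, \differential{h},
\]
where all integrals converge absolutely because the quotients $\adelicQuotient{\normoneGroup{\quadraticEtaleNumberAlgebra}}$ and $\adelicQuotient{\normoneGroup{\quadraticEtaleAlgebra[\numberfield]}}$ are compact (the former by the anisotropy hypothesis on $\normoneGroup{\quadraticEtaleNumberAlgebra}$). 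Since the functions $\theta_{\tilde{\iota}'}\left(\varphi, \beta\right)$ span $\Theta\left(\beta \circ \torusEmbedding'_{\skewHermitianSpace}\right)$, the period $\mathcal{P}_{\torusEmbedding\left(\normoneGroup{\quadraticEtaleNumberAlgebra}\right), \alpha \circ \torusEmbedding^{-1}}$ is not identically zero on $\Theta\left(\beta \circ \torusEmbedding'_{\skewHermitianSpace}\right)$ if and only if the right-hand side above is nonzero for some $\varphi$.

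Next I would analyze that right-hand side. The functions $\theta_{\tilde{\iota}}\left(\varphi, \alpha\right)$ span the global theta lift of $\alpha$ from $\normoneGroup{\quadraticEtaleNumberAlgebra}\left(\adeles_{\numberfield}\right)$ to $\unitaryGroup{\etaleNumberSkewHermitian{1}}\left(\adeles_{\numberfield}\right)$, which by \Cref{thm:global-theta-lift-non-vanishing} is nonzero precisely when (a) at every place $v$ the local big theta lift does not vanish, and (b) the central value $\centralValue\left(\chi_{\skewHermitianSpace}^{-1} \circ N_{\quadraticEtaleNumberAlgebra \slash \quadraticNumberField} \cdot \alpha_{\quadraticEtaleNumberAlgebra \otimes_{\numberfield} \adeles_{\numberfield}}\right)$ is nonzero, and in that case equals the explicit automorphic character given there. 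By \Cref{thm:epsilon-dichotomy-etale} applied at each place, (a) is exactly the root number condition of the theorem (after identifying $\chi_{\etaleNumberSkewHermitian[\numberfield]{1},v} = \chi_{\skewHermitianSpace, v}$), and (b) is exactly the central $L$-value condition. Assuming (a) and (b), some $\theta_{\tilde{\iota}}\left(\varphi, \alpha\right)$ is a nonzero scalar multiple of that character, so — since $\adelicQuotient{\normoneGroup{\quadraticEtaleAlgebra[\numberfield]}}$ is compact — the right-hand side is nonzero for some $\varphi$ if and only if the restriction of the character to the diagonal $\unitaryGroup{\skewHermitianSpace} = \unitaryGroup{\etaleNumberSkewHermitian[\numberfield]{1}}$ equals $\beta$, by orthogonality of characters.

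Finally I would unwind that equality. Restricting the character of \Cref{thm:global-theta-lift-non-vanishing} along the diagonal $\normoneGroup{\quadraticEtaleAlgebra[\numberfield]} \hookrightarrow \normoneGroup{\quadraticEtaleNumberAlgebra}$, under which $N_{\quadraticEtaleNumberAlgebra \slash \quadraticNumberField}$ restricts to the $n$-th power map on $\multiplicativeGroup{\adeles_{\quadraticNumberField}}$, $\chi_{\quadraticEtaleNumberHermitian{\lambda}}$ restricts to $\chi_{\hermitianSpace}$, and $j_{\etaleNumberField \otimes_{\numberfield} \adeles_{\numberfield}}$ restricts to $j_{\adeles_{\numberfield}}$, one finds that the restriction equals $\beta$ exactly when $\beta = \left(\chi_{\skewHermitianSpace}^{-n} \cdot \chi_{\hermitianSpace}\right) \circ j_{\adeles_{\numberfield}}^{-1} \cdot \alpha_{\restriction_{\normoneGroup{\quadraticEtaleAlgebra[\numberfield]}\left(\adeles_{\numberfield}\right)}}$, i.e. the character compatibility condition. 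Combining the three equivalences proves the theorem. The step I expect to be the main obstacle is precisely this character bookkeeping — keeping careful track of the power $n$ and of the norm and $j$-maps upon restriction to the diagonal torus — together with making the reduction ``period not identically zero on the span $\leftrightarrow$ single equality of characters'' rigorous, which relies on the global theta lift of $\alpha$ being at most one-dimensional, so that a nonvanishing member of it is, up to a scalar, the automorphic character produced by \Cref{thm:global-theta-lift-non-vanishing}.
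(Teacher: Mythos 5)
Your proposal is correct and matches the paper's proof essentially step for step: identify $\skewHermitianSpace$ with $\etaleNumberSkewHermitian[\numberfield]{1}$ via the choice of $\delta$, invoke the global seesaw identity of \Cref{subsec:global-seesaw-identity} with compatible splittings, use \Cref{thm:global-theta-lift-non-vanishing} together with \Cref{thm:epsilon-dichotomy-etale} to translate non-vanishing of $\Theta_{\tilde{\iota}}(\alpha \circ \torusEmbedding^{-1})$ into conditions \eqref{item:root-numbers-relation} and \eqref{item:central-value-relation}, and then use orthogonality of characters on the compact quotient $\adelicQuotient{\normoneGroup{\quadraticEtaleAlgebra[\numberfield]}}$ to extract the character compatibility condition \eqref{item:automorphic-characters-relation}. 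The one worry you flag at the end is already resolved by the structure of \Cref{thm:global-theta-lift-non-vanishing}: when the global theta lift of $\alpha \circ \torusEmbedding^{-1}$ is non-zero it is equal to a single explicit automorphic character, hence one-dimensional, so any non-zero $\theta_{\tilde{\iota}}(\varphi,\alpha\circ\torusEmbedding^{-1})$ is a scalar multiple of it and orthogonality applies directly.
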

	\begin{remark}
		If $\mathcal{P}_{\torusEmbedding\left(\normoneGroup{\quadraticEtaleNumberAlgebra}\right), \alpha \circ \torusEmbedding^{-1}}$ is non-zero then for every $v$, we must have that \begin{equation}\label{eq:hom-space-is-non-zero-for-every-v}
			\Hom_{i\left(\quadraticEtaleNumberAlgebra\right)\left(\numberfield_v\right)}\left(\Theta\left(\beta \circ \torusEmbedding'_{\skewHermitianSpace}\right)_v, \alpha_v \circ \torusEmbedding^{-1}_v\right) \ne 0.
		\end{equation}
		If $\Theta\left(\beta \circ \torusEmbedding'_{\skewHermitianSpace}\right)$ lies in the space of square-integrable automorphic forms, then by \Cref{thm:local-period-non-vanishing} and the compatibility between the global and local theta lifts, \eqref{eq:hom-space-is-non-zero-for-every-v} is equivalent to the first two conditions of the theorem. We mention that if $\dim \hermitianSpace \ne 2$, then $\Theta\left(\beta \circ \torusEmbedding'_{\skewHermitianSpace} \right)$ is square-integrable. When $\hermitianSpace$ is one-dimensional this follows from the fact that the groups are anisotropic, and when $\dim \hermitianSpace \ge 3$, this follows from \cite[Corollary 10.1 part (4)]{Yamana2014} (in the notations of \cite{Yamana2014}, $\rho_n = 1$, $\dim \hermitianSpace = m \ge 3$ and $0 \le r \le \left[\frac{m}{2}\right]$ so $m - r > 1 = \rho_n$).
	\end{remark}
	\begin{proof}
		We will use the global seesaw identity. By our choice of $\delta$, we have that $\skewHermitianSpace \cong \etaleSkewHermitian[\numberfield]{1}$ as hermitian spaces. Henceforth, we will identify $\skewHermitianSpace$ with $\etaleSkewHermitian[\numberfield]{1}$. Let us be in the setup of \Cref{subsec:global-seesaw-identity}. Consider the following seesaw diagram:
		\begin{equation*}
			\xymatrix{
				\unitaryGroup{\etaleNumberSkewHermitian{1}}\left(\adeles_{\numberfield}\right) \ar@{-}[d]\ar@{-}[rd] & \unitaryGroup{\hermitianSpace}\left(\adeles_{\numberfield}\right) \ar@{-}[d]\ar@{-}[ld]\\
				\unitaryGroup{\skewHermitianSpace}\left(\adeles_{\numberfield}\right) & \unitaryGroup{\quadraticEtaleNumberHermitian{\lambda}}\left(\adeles_{\numberfield}\right).}
		\end{equation*}
		As in \Cref{subsec:local-splitting-set-up}, $\unitaryGroup{\skewHermitianSpace}$ is realized as a subgroup of $\unitaryGroup{\etaleNumberSkewHermitian{1}}$ diagonally.
		
		By the results of \Cref{subsec:global-seesaw-identity}, we have that \begin{equation}\label{eq:global-period-expressed-using-see-saw-identity}
			\mathcal{P}_{\torusEmbedding\left(\normoneGroup{\quadraticEtaleNumberAlgebra} \right), \alpha \circ \torusEmbedding^{-1}}\left(\theta_{\tilde{\iota}}\left(\varphi, \beta \circ \torusEmbedding'_{\skewHermitianSpace} \right)\right) = \int_{\adelicQuotient{\unitaryGroup{\etaleSkewHermitian[\numberfield]{1}}}} \theta_{\tilde{\iota}}\left(\varphi, \alpha \circ \torusEmbedding^{-1}\right)\left(h\right) \complexConjugate{\beta \left(\torusEmbedding'_{\skewHermitianSpace}\left(h\right)\right)} \differential{h},
		\end{equation}
		where $\varphi \in \schwartz\left(\isotropicPartTwo, \adeles_{\numberfield}\right)$. Hence, the period $\mathcal{P}_{\torusEmbedding\left(\normoneGroup{\quadraticEtaleNumberAlgebra}\right),\alpha \circ \torusEmbedding^{-1}}$ is non-zero if and only if the global theta lift $\Theta_{\tilde{\iota}}\left(\alpha \circ \torusEmbedding^{-1}\right)$ from $\unitaryGroup{\quadraticEtaleNumberHermitian{\lambda}}\left(\adeles_{\numberfield}\right)$ to $\unitaryGroup{\etaleNumberSkewHermitian{1}}\left(\adeles_{\numberfield}\right)$ is non-zero, and the integral \eqref{eq:global-period-expressed-using-see-saw-identity} is non-zero. By Theorems \ref{thm:global-theta-lift-non-vanishing} and \ref{thm:epsilon-dichotomy-etale}, the global theta lift $\Theta_{\tilde{\iota}}\left(\alpha\right)$ is non-zero if and only if conditions \eqref{item:root-numbers-relation} and \eqref{item:central-value-relation} hold. In this case, we have that $$\Theta_{\tilde{\iota}}\left(\alpha \circ \torusEmbedding^{-1} \right) = \left(\left(\chi^{-1}_{\skewHermitianSpace} \circ N_{\quadraticEtaleNumberAlgebra \slash \quadraticNumberField} \cdot \chi_{\quadraticEtaleNumberHermitian{\lambda}}\right) \circ j_{\etaleNumberField \otimes_{\numberfield} \adeles_{\numberfield}}^{-1} \cdot \alpha\right) \circ \torusEmbedding'_{\etaleNumberSkewHermitian{1}},$$
		where $\torusEmbedding'_{\etaleNumberSkewHermitian{1}} \colon \unitaryGroup{\etaleNumberSkewHermitian{1}} \to \normoneGroup{\quadraticEtaleNumberAlgebra}$ is the obvious isomorphism.
		Hence, by choosing $\varphi$ such that $\theta_{\tilde{\iota}}\left(\varphi, \alpha\right) \ne 0$, we get from the fact that two different characters of a group are orthogonal, that if $\Theta_{\tilde{\iota}}\left(\alpha \circ \torusEmbedding^{-1}\right)$ is not zero, then $\mathcal{P}_{\torusEmbedding\left(\normoneGroup{\quadraticEtaleNumberAlgebra}\right),\alpha \circ \torusEmbedding^{-1}}$ is not identically zero if and only if $$\beta \circ {\torusEmbedding_{\skewHermitianSpace}'} = \left(\left(\chi^{-1}_{\skewHermitianSpace} \circ N_{\quadraticEtaleNumberAlgebra \slash \quadraticNumberField} \cdot \chi_{\quadraticEtaleNumberHermitian{\lambda}}\right) \circ j_{\etaleNumberField \otimes_{\numberfield} \adeles_{\numberfield}}^{-1} \cdot \alpha \right) \circ {\torusEmbedding'_{\etaleNumberSkewHermitian{1}}}_{ \restriction_{\unitaryGroup{\etaleSkewHermitian[\numberfield]{1}}\left(\adeles_{\numberfield}\right)}}.$$
		Since $\torusEmbedding_{\skewHermitianSpace}'$ and $\torusEmbedding'_{\etaleNumberSkewHermitian{1}}$ are the obvious isomorphisms, we have that they agree on $\unitaryGroup{\etaleSkewHermitian[\numberfield]{1}}\left(\adeles_{\numberfield}\right)$, and therefore the condition is equivalent to
		$$\beta =  \left(\left(\chi^{-1}_{\skewHermitianSpace} \circ N_{\quadraticEtaleNumberAlgebra \slash \quadraticNumberField} \cdot \chi_{\quadraticEtaleNumberHermitian{\lambda}}\right) \circ j_{\etaleNumberField \otimes_{\numberfield} \adeles_{\numberfield}}^{-1} \cdot \alpha\right)_{\restriction_{\normoneGroup{\quadraticEtaleAlgebra[\numberfield]}\left(\adeles_{\numberfield}\right)}}.$$
		Using the relations $j_{\adeles_{\numberfield}} = j_{\etaleNumberField \otimes_{\numberfield} \adeles_{\numberfield}} \restriction_{\normoneGroup{\quadraticEtaleAlgebra[\numberfield]}\left(\adeles_{\numberfield}\right)}$,  $\chi_{\quadraticEtaleNumberHermitian{\lambda}} \restriction_{\multiplicativeGroup{\adeles_{\quadraticNumberField}}} = \chi_{\hermitianSpace}$, and the fact that $N_{\quadraticEtaleNumberAlgebra \slash \quadraticNumberField}\left(x\right) = x^n$ for $x \in \multiplicativeGroup{\left(\quadraticNumberField \otimes_{\numberfield} \adeles_{\numberfield}\right)}$, we get this condition is equivalent to
		$$\beta =  \left(\chi^{-n}_{\skewHermitianSpace} \cdot \chi_{\hermitianSpace}\right) \circ j_{\adeles_{\numberfield}}^{-1} \cdot \alpha_{ \restriction_{\normoneGroup{\quadraticEtaleAlgebra[\numberfield]}\left(\adeles_{\numberfield}\right)}},$$
		which is condition \eqref{item:automorphic-characters-relation}. Hence, we proved the theorem.
	\end{proof}
	
	If two admissible embeddings $\torusEmbedding_1, \torusEmbedding_2 \colon \normoneGroup{\quadraticEtaleNumberAlgebra} \to \unitaryGroup{\hermitianSpace}$ are conjugate, then there exists $h \in \unitaryGroup{\hermitianSpace}$ such that $\torusEmbedding_1\left(x\right) = h^{-1} \torusEmbedding_2\left(x\right) h$ for every $x \in \normoneGroup{\quadraticEtaleNumberAlgebra}$. For an automorphic form $f \colon \adelicQuotient{\unitaryGroup{\hermitianSpace}} \to \cComplex$ we have that $f\left(h^{-1} g\right) = f\left(g\right)$, for any $g \in \unitaryGroup{\hermitianSpace}\left(\adeles_{\numberfield}\right)$, and hence we have the relation $\mathcal{P}_{\torusEmbedding_1\left(\normoneGroup{\quadraticEtaleNumberAlgebra}\right),\alpha \circ \torusEmbedding_1^{-1}}\left(f\right) = \mathcal{P}_{\torusEmbedding_2\left(\normoneGroup{\quadraticEtaleNumberAlgebra}\right),\alpha \circ \torusEmbedding_2^{-1}}\left( \rho\left(h\right) f \right)$, where $\rho\left(h\right)$ represents right translation by $h$. Therefore, the non-vanishing of the period $\mathcal{P}_{i\left(\normoneGroup{\quadraticEtaleNumberAlgebra}\right),\alpha \circ i^{-1}}$ does not depend on the representative $i$ of a class of $\Sigma_{\etaleNumberField, \hermitianSpace}$. The following corollary describes when there exists a class in $\Sigma_{\etaleNumberField, \hermitianSpace}$ with non-vanishing period, and shows that if this class exists, it is unique.
	
	\begin{corollary}\label{cor:unique-conjugacy-class-with-non-vanishing-period}
		Let $\etaleNumberField$ be an \etale algebra of degree $n$ over $\numberfield$, such that $\normoneGroup{\quadraticEtaleNumberAlgebra}$ is anisotropic, and let $\alpha \colon \adelicQuotient{\normoneGroup{\quadraticEtaleNumberAlgebra}} \to \multiplicativeGroup{\cComplex}$ be an automorphic character. Then there exists a non-degenerate hermitian space $\variableHermitianSpace$ of degree $n$, and an admissible embedding $\torusEmbedding \colon \normoneGroup{\quadraticEtaleNumberAlgebra} \to \unitaryGroup{\variableHermitianSpace}$, such that the period $\mathcal{P}_{i\left(\normoneGroup{\quadraticEtaleNumberAlgebra}\right),\alpha \circ i^{-1}}$ is not identically zero on $\Theta_{\skewHermitianSpace, \variableHermitianSpace}\left(\beta \circ \torusEmbedding'_{\skewHermitianSpace}\right)$, if and only if the following two conditions hold
		\begin{enumerate}
			\item $\beta = \left(\chi_{\skewHermitianSpace}^{-n} \cdot \chi_{\hermitianSpace} \right) \circ j_{\adeles_{\numberfield}}^{-1} \cdot \alpha_{\restriction_{\normoneGroup{\quadraticEtaleAlgebra[\numberfield]}\left(\adeles_{\numberfield}\right)}}.$
			\item The following central $L$-function value does not vanish:
			$$ \centralValue\left(\chi_{\skewHermitianSpace}^{-1} \circ N_{\quadraticEtaleNumberAlgebra \slash \quadraticNumberField} \cdot \alpha_{\quadraticEtaleNumberAlgebra \otimes_{\numberfield} \adeles_{\numberfield}} \right) \ne 0.$$
		\end{enumerate}
		Moreover, when these conditions hold, the isomorphism class of $\variableHermitianSpace$ as a hermitian space, and the class $\left[i\right] \in \Sigma_{\etaleNumberField, \variableHermitianSpace}$ are unique.
		
		Here, $\Theta_{\skewHermitianSpace, \variableHermitianSpace}\left(\beta \circ \torusEmbedding'_{\skewHermitianSpace}\right)$ is the global theta lift of $\beta \circ \torusEmbedding'_{\skewHermitianSpace}$ from $\unitaryGroup{\skewHermitianSpace}\left(\adeles_{\numberfield}\right)$ to $\unitaryGroup{\variableHermitianSpace}\left(\adeles_{\numberfield}\right)$, taken with respect to the splitting defined by a prescribed choice of characters $\left( \chi_{\skewHermitianSpace}, \chi_{\hermitianSpace}\right)$. 
	\end{corollary}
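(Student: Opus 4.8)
The plan is to derive this corollary from \Cref{thm:main-thm-global-case} by supplying a local--global existence statement for the parameter $\lambda$. For a fixed admissible embedding $\torusEmbedding$, corresponding via \Cref{thm:natural-bijection-admissible-embeddings} to a class $\lambda \in \multiplicativeGroup{\etaleNumberField} \slash N_{\quadraticEtaleNumberAlgebra \slash \etaleNumberField}(\multiplicativeGroup{\quadraticEtaleNumberAlgebra})$, \Cref{thm:main-thm-global-case} tells us that the period is non-zero if and only if (a) the character-compatibility relation holds, (b) at every place $v$ the local root-number identity $\omega_{\quadraticEtaleNumberAlgebra \otimes_{\numberfield} \numberfield_v \slash \etaleNumberField \otimes_{\numberfield} \numberfield_v}(\lambda) = \varepsilon_v$ holds, where $\varepsilon_v$ denotes the local root-number vector appearing there, and (c) the central $L$-value $\centralValue(\chi_{\skewHermitianSpace}^{-1} \circ N_{\quadraticEtaleNumberAlgebra \slash \quadraticNumberField} \cdot \alpha_{\quadraticEtaleNumberAlgebra \otimes_{\numberfield} \adeles_{\numberfield}})$ is non-zero. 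Since (a) and (c) are exactly the two conditions in the corollary and do not involve $\torusEmbedding$, the ``only if'' direction is immediate. For the ``if'' direction I must produce, assuming (a) and (c), a class $\lambda$ for which the family $(\varepsilon_v)_v$ is simultaneously realized by $\omega_{\bullet}(\lambda)$ at all $v$; then $\variableHermitianSpace := \etaleNumberHermitian{\lambda}$ together with the admissible embedding attached to $\lambda$ by \Cref{thm:natural-bijection-admissible-embeddings} does the job, and $\dim \variableHermitianSpace = n$ automatically.

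First I would reduce (b) to matching a prescribed finite-support family of signs. Write $\etaleNumberField = \prod_{j=1}^m \numberfield_j$, so each $\quadraticEtaleAlgebra[\numberfield_j] \slash \numberfield_j$ is a field extension by the anisotropy hypothesis, and let $\chi_j$ be the $j$-th component of the conjugate-dual automorphic character $\chi_{\skewHermitianSpace}^{-1} \circ N_{\quadraticEtaleNumberAlgebra \slash \quadraticNumberField} \cdot \alpha_{\quadraticEtaleNumberAlgebra \otimes_{\numberfield} \adeles_{\numberfield}}$ of $\multiplicativeGroup{\adeles_{\quadraticEtaleAlgebra[\numberfield_j]}}$. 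Componentwise, (b) requires that $\lambda_j$ be everywhere locally a prescribed class in $\numberfield_{j}^{\times} \slash N(\cdot)$, namely the one recording the local root numbers of $\chi_j$. The crucial input is that the product of all the local root numbers of $\chi_j$ equals the global root number $\varepsilon(\tfrac12, \chi_j)$: since $\chi_j$ is conjugate-dual one has $L(s, \chi_j) = L(s, \chi_j^{-1})$, so the functional equation gives $L(\tfrac12, \chi_j) = \varepsilon(\tfrac12, \chi_j)\, L(\tfrac12, \chi_j)$, whence (c), i.e. $\centralValue(\chi_j) = L(\tfrac12, \chi_j) \ne 0$, forces $\varepsilon(\tfrac12, \chi_j) = +1$ for every $j$. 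On the other side, $\omega_{\quadraticEtaleAlgebra[\numberfield_j] \slash \numberfield_j} = \bigotimes_v \omega_{\quadraticEtaleAlgebra[\numberfield_j]_v \slash \numberfield_{j,v}}$ is a non-trivial quadratic Hecke character, so by global class field theory the map $\lambda_j \mapsto \left( \omega_{\quadraticEtaleAlgebra[\numberfield_j]_v \slash \numberfield_{j,v}}(\lambda_j) \right)_v$ on $\multiplicativeGroup{\numberfield_j}$ has image precisely the group of finite-support sign families with product $+1$; since the target signs are trivial for almost all $v$ and at every place where $\quadraticEtaleAlgebra[\numberfield_j]_v$ splits, and since their total product is $\varepsilon(\tfrac12,\chi_j) = +1$, such a $\lambda_j$ exists. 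Assembling $\lambda = (\lambda_1, \dots, \lambda_m)$ and invoking \Cref{thm:epsilon-dichotomy-etale} and \Cref{thm:main-thm-global-case} finishes this direction. I expect the fusion of these two facts --- that the global root number of $\chi_j$ is the product of the local ones and is pinned to $+1$ by the non-vanishing of $\centralValue$, matched against the product formula for $\omega_{\quadraticEtaleAlgebra[\numberfield_j] \slash \numberfield_j}$ --- to be the one genuine point; everything else is bookkeeping on top of \Cref{thm:main-thm-global-case}.

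Finally, for uniqueness I would observe that if $(\variableHermitianSpace_1, \torusEmbedding_1)$ and $(\variableHermitianSpace_2, \torusEmbedding_2)$ both give a non-vanishing period, then their associated classes $\lambda^{(1)}, \lambda^{(2)} \in \multiplicativeGroup{\etaleNumberField}$ satisfy $\omega_{\quadraticEtaleNumberAlgebra \otimes_{\numberfield} \numberfield_v \slash \etaleNumberField \otimes_{\numberfield} \numberfield_v}(\lambda^{(1)}) = \varepsilon_v = \omega_{\quadraticEtaleNumberAlgebra \otimes_{\numberfield} \numberfield_v \slash \etaleNumberField \otimes_{\numberfield} \numberfield_v}(\lambda^{(2)})$ at every $v$, so $\lambda^{(1)} \slash \lambda^{(2)}$ is everywhere locally a norm from $\quadraticEtaleNumberAlgebra$. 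By the Hasse norm theorem for each quadratic extension $\quadraticEtaleAlgebra[\numberfield_j] \slash \numberfield_j$ it is then a global norm, hence $\lambda^{(1)}$ and $\lambda^{(2)}$ coincide in $\multiplicativeGroup{\etaleNumberField} \slash N_{\quadraticEtaleNumberAlgebra \slash \etaleNumberField}(\multiplicativeGroup{\quadraticEtaleNumberAlgebra})$; by \Cref{thm:natural-bijection-admissible-embeddings} this pins down $\variableHermitianSpace \cong \etaleNumberHermitian{\lambda^{(1)}}$ up to isomorphism of hermitian spaces and the class of $\torusEmbedding$ in $\Sigma_{\etaleNumberField, \variableHermitianSpace}$. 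This is the exact global counterpart of the local statement \Cref{cor:sum-of-dimension-of-admissible-embedding}, with the product formula for $\omega_{\quadraticEtaleAlgebra[\numberfield_j] \slash \numberfield_j}$ taking over the role played there by the finiteness of the local norm groups.
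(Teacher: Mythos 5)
Your proof is correct and follows essentially the same route as the paper: necessity from \Cref{thm:main-thm-global-case}, uniqueness via the Hasse norm theorem applied to each quadratic extension $\quadraticEtaleAlgebra[\numberfield_j]/\numberfield_j$, and existence by identifying, via global class field theory, the image of $\multiplicativeGroup{\etaleNumberField}$ in $\bigoplus_v \multiplicativeGroup{(\etaleNumberField\otimes\numberfield_v)}/N_v$ with the families of local signs of trivial product and then observing that $\centralValue \ne 0$ forces the global root number to equal $+1$. The one point where you go slightly beyond the paper is making explicit why this last forcing holds, namely that the conjugate-dual property gives $L(s,\chi_j)=L(s,\chi_j^{-1})$ so that the functional equation at $s=\tfrac12$ pins the sign; the paper states this consequence without spelling out the self-duality argument.
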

	\begin{proof}
		By \Cref{thm:main-thm-global-case}, the conditions in the theorem are necessary. Assuming these conditions, we will show the existence and uniqueness of a non-degenerate hermitian space $\variableHermitianSpace$ of dimension $n$ and class $\left[\torusEmbedding\right] \in \Sigma_{\etaleNumberField, \variableHermitianSpace}$, such that the period $\mathcal{P}_{\torusEmbedding\left(\normoneGroup{\quadraticEtaleNumberAlgebra}\right),\alpha \circ \torusEmbedding^{-1}}$ is not identically zero.
		
		Let us begin with uniqueness. By \Cref{thm:natural-bijection-admissible-embeddings}, for every $\variableHermitianSpace$, a choice of a class $\left[\torusEmbedding\right]$ corresponds to an element $\lambda \in \multiplicativeGroup{\etaleNumberField} \slash N_{\quadraticEtaleNumberAlgebra \slash \etaleNumberField}\left(\multiplicativeGroup{\quadraticEtaleNumberAlgebra}\right)$, such that there exists an isomorphism of $\quadraticNumberField$-hermitian spaces $\etaleEmbedding_{\lambda} \colon \etaleHermitian{\lambda} \to \variableHermitianSpace$. If $\lambda_1, \lambda_2 \in \multiplicativeGroup{\etaleNumberField}$ are such that there exist isomorphisms of hermitian spaces $\etaleEmbedding_{\lambda_1} \colon \etaleHermitian{\lambda_1} \to \variableHermitianSpace$ and $\etaleEmbedding_{\lambda_2}  \colon \etaleHermitian{\lambda_2} \to \variableHermitianSpace$, such that the corresponding admissible embeddings $\torusEmbedding_{\lambda_1}$ and $\torusEmbedding_{\lambda_2}$ admit a non-zero period, then by condition (\ref{item:root-numbers-relation}) of \Cref{thm:local-period-non-vanishing}, we must have that for every place $v$, $$\omega_{\quadraticEtaleNumberAlgebra \otimes_{\numberfield} \numberfield_v \slash \etaleNumberField \otimes_{\numberfield} \numberfield_v}\left(\lambda_1\right) = \omega_{\quadraticEtaleNumberAlgebra \otimes_{\numberfield} \numberfield_v \slash \etaleNumberField \otimes_{\numberfield} \numberfield_v}\left(\lambda_2\right).$$
		This implies that for every $v$,
		$$\omega_{\quadraticEtaleNumberAlgebra \otimes_{\numberfield} \numberfield_v \slash \etaleNumberField \otimes_{\numberfield} \numberfield_v}\left(\lambda_2 \lambda_1^{-1}\right) = 1,$$
		and therefore for every place $v$, we have that $\lambda_2 \lambda_1^{-1} \in N_{\quadraticEtaleNumberAlgebra \otimes_{\numberfield} \numberfield_v \slash \etaleNumberField \otimes_{\numberfield} \numberfield_v}\left(\multiplicativeGroup{\left(\quadraticEtaleNumberAlgebra \otimes_{\numberfield} \numberfield_v \right)}\right)$. By the Hasse norm principle for quadratic extensions, this implies that $\lambda_2 \lambda_1^{-1} \in N_{\quadraticEtaleNumberAlgebra \slash \etaleNumberField}\left(\multiplicativeGroup{\quadraticEtaleNumberAlgebra}\right)$. Hence, $i_{\lambda_1}$ and $i_{\lambda_2}$ are conjugate.
		
		Suppose that $\variableHermitianSpace$ and $\variableHermitianSpace'$ are both non-degenerate hermitian space of dimension $n$, and suppose that there exist $\lambda, \lambda' \in \multiplicativeGroup{\etaleNumberField}$ and isomorphisms of hermitian spaces $\etaleEmbedding_{\lambda} \colon \etaleNumberHermitian{\lambda} \to \variableHermitianSpace$ and $\etaleEmbedding_{\lambda'} \colon \etaleNumberHermitian{\lambda'} \to \variableHermitianSpace'$, such that the corresponding admissible embeddings $\torusEmbedding_{\lambda}$ and $\torusEmbedding_{\lambda'}$ admit a non-zero period. Then, as above, we get that $\lambda' \lambda^{-1} \in N_{\quadraticEtaleNumberAlgebra \slash \etaleNumberField}\left(\multiplicativeGroup{\quadraticEtaleNumberAlgebra}\right)$. This implies that $\etaleNumberHermitian{\lambda}$ is isomorphic to $\etaleNumberHermitian{\lambda'}$, and therefore $\variableHermitianSpace$ is isomorphic to $\variableHermitianSpace'$.
		
		We move to show the existence of a non-degenerate hermitian space $\variableHermitianSpace$ of dimension $n$ and a class $\left[i\right] \in \Sigma_{\etaleNumberField, \variableHermitianSpace}$ with non-zero period $\mathcal{P}_{\torusEmbedding\left(\normoneGroup{\quadraticEtaleNumberAlgebra}\right),\alpha \circ \torusEmbedding^{-1}}$. We need to find an element $\lambda \in \multiplicativeGroup{\etaleNumberField}$ such that for every place $v$, condition (\ref{item:root-numbers-relation}) of \Cref{thm:main-thm-global-case} holds. By writing $\etaleNumberField = \prod_{j=1}^m \numberfield_j$, where $\numberfield_j \slash \numberfield$ is a finite field extension, the problem is reduced to the case where $\etaleNumberField \slash \numberfield$ is a field extension, and that $\quadraticEtaleNumberAlgebra \slash \etaleNumberField$ is a quadratic field extension. Consider the diagonal map $\Delta_{\etaleNumberField} \colon \multiplicativeGroup{\etaleNumberField} \slash N_{\quadraticEtaleNumberAlgebra \slash  \etaleNumberField}\left(\multiplicativeGroup{\quadraticEtaleNumberAlgebra}\right) \to \multiplicativeGroup{\left(\etaleNumberField \otimes_{\numberfield} \adeles_{\numberfield}\right)} \slash N_{ \quadraticEtaleNumberAlgebra \slash \etaleNumberField }\left(\multiplicativeGroup{\left( \quadraticEtaleNumberAlgebra \otimes_{\numberfield} \adeles_{\numberfield} \right)}\right)$. This map has co-kernel \begin{align*}
			& \left(\multiplicativeGroup{\left(\etaleNumberField \otimes_{\numberfield} \adeles_{\numberfield}\right)} \slash N_{ \quadraticEtaleNumberAlgebra \slash \etaleNumberField }\left(\multiplicativeGroup{\left( \quadraticEtaleNumberAlgebra \otimes_{\numberfield} \adeles_{\numberfield} \right)}\right)\right) \slash \left(\multiplicativeGroup{\etaleNumberField} \slash N_{\quadraticEtaleNumberAlgebra \slash  \etaleNumberField}\left(\multiplicativeGroup{\quadraticEtaleNumberAlgebra}\right)\right)\\
			\cong & \left(\multiplicativeGroup{\left(\etaleNumberField \otimes_{\numberfield} \adeles_{\numberfield}\right)} \slash \multiplicativeGroup{\etaleNumberField} \right) \slash \left(N_{\quadraticEtaleNumberAlgebra \slash  \etaleNumberField}\left(\multiplicativeGroup{\left(\quadraticEtaleNumberAlgebra \otimes \adeles_{\numberfield}\right)} \slash \multiplicativeGroup{\quadraticEtaleNumberAlgebra}\right)\right),
		\end{align*}
		which by global class field theory is isomorphic to the Galois group $\mathrm{Gal}\left(\quadraticEtaleNumberAlgebra \slash \etaleNumberField\right) \cong \left\{\pm 1\right\}$. It follows that the image of $\Delta_{\etaleNumberField}$ is the kernel of the quadratic character $\omega_{\quadraticEtaleNumberAlgebra \slash \etaleNumberField}$, that is, $$\mathrm{Im} \Delta_{\etaleNumberField} = \left\{ \left(x_v\right)_v \in {\prod_v}' \multiplicativeGroup{\left(\etaleNumberField \otimes_{\numberfield} \numberfield_v\right)} \slash N_{\quadraticEtaleNumberAlgebra \slash \etaleNumberField} \left(\multiplicativeGroup{\left(\quadraticEtaleNumberAlgebra \otimes_{\numberfield} \numberfield_v\right)}\right) \mid \prod_v \omega_{\quadraticEtaleNumberAlgebra \slash \etaleNumberField} \left(x_v\right) = 1 \right\}.$$
		Since the central $L$-function value $\centralValue\left(\chi_{\skewHermitianSpace}^{-1} \circ N_{\quadraticEtaleNumberAlgebra \slash \quadraticNumberField} \cdot \alpha_{\quadraticEtaleNumberAlgebra \otimes_{\numberfield} \adeles_{\numberfield}} \right)$ is non-zero, we must have that the global root number $\prod_v \varepsilon_{\quadraticEtaleNumberAlgebra \otimes_{\numberfield} \numberfield_v \slash \etaleNumberField \otimes_{\numberfield} \numberfield_v}\left(\alpha_{v, \quadraticEtaleNumberAlgebra \otimes_{\numberfield} \numberfield_v } \cdot \chi^{-1}_{\skewHermitianSpace, v} \circ N_{\quadraticEtaleNumberAlgebra \otimes_{\numberfield} \numberfield_v \slash \quadraticNumberField_v}, \fieldCharacter_v, \delta \right)$ is $1$. Hence, the sequence $$\left(\varepsilon_{\quadraticEtaleNumberAlgebra \otimes_{\numberfield} \numberfield_v \slash \etaleNumberField \otimes_{\numberfield} \numberfield_v}\left(\alpha_{v, \quadraticEtaleNumberAlgebra \otimes_{\numberfield} \numberfield_v } \cdot \chi^{-1}_{\skewHermitianSpace, v} \circ N_{\quadraticEtaleNumberAlgebra \otimes_{\numberfield} \numberfield_v \slash \quadraticNumberField_v}, \fieldCharacter_v, \delta \right)\right)_v$$ lies in the image of $\Delta_{\etaleNumberField}$, and we can find $\lambda \in \multiplicativeGroup{\etaleNumberField}$ as desired.
		
	\end{proof}
	\begin{remark}\label{rem:intro-theorem-follows-global}
	Similarly to \Cref{rem:intro-theorem-follows}, by substituting $\skewHermitianSpace = \etaleSkewHermitian[\numberfield]{1}$ and $\left(\chi_{\skewHermitianSpace}, \chi_{\hermitianSpace}\right) = \left(\mu, \mu^n\right)$ as in \Cref{subsec:notation-for-theta-lifts-of-characters-global}, we get \Cref{thm:intro-global-period-non-vanishing}.
	\end{remark}
	
	\appendix
	
	\section{Morphisms of norm one tori}
	
	In this appendix, we prove some technical statements regarding extensions of morphisms of $\normoneGroup{\quadraticEtaleAlgebra}$ to $\quadraticEtaleAlgebra$. 	
	
	Let $\baseField$ be an infinite field with characteristic different than $2$, and let $\quadraticExtension \slash \baseField$ be a quadratic \etale algebra, equipped with an involution $x \mapsto \involution{x}$ whose set of fixed points is $\baseField$. Let $0 \ne \delta \in \quadraticExtension$ be a trace zero element. Any element $x \in \quadraticExtension$ can be written in the form $x = a + b \delta$, where $a, b \in \baseField$. We have that $\involution{\delta} = -\delta$, and therefore  $N_{\quadraticExtension \slash \baseField}\left(a + b\delta \right) = a^2 - b^2 \delta^2 \in \baseField$.
	
	Consider the map $j_{\quadraticExtension \slash \baseField} \colon \multiplicativeGroup{\quadraticExtension} \to \normoneGroup{\quadraticExtension}$ given by $$j_{\baseField}\left(x\right) = \frac{x}{\involution{x}}.$$
	Denote for $b \in \baseField$ with $b^2 \delta^2 \ne 1$ (that is, $N_{\quadraticExtension \slash \baseField}\left(1+b\delta\right) \ne 0$), \begin{equation}\label{eq:definition-of-q-f}
		q_{\baseField}\left(b\right) = \frac{j_{\baseField}\left(1 + b\delta\right) - j_{\baseField}\left(1 - b \delta\right)}{j_{\baseField}\left(1 + b \delta\right) + j_{\baseField}\left(1 - b \delta\right) + 2}.
	\end{equation}
	Then a simple computation yields $q_{\baseField}\left(b\right) = b \delta$.
	
	Let $\hermitianSpace$ be an $n$-dimensional non-degenerate hermitian space over $\quadraticExtension$. We are ready to prove our results.
	
	\begin{proposition}\label{prop:extension-of-torus-embedding-to-etale-algebra}
		Let $\etaleAlgebra$ be an \etale algebra of degree $n$ over $\baseField$, and let $\lambda \in \multiplicativeGroup{\etaleAlgebra}$ be such that $\etaleHermitian{\lambda}$ and $\hermitianSpace$ are isomorphic as hermitian spaces. Let $\etaleEmbedding_1, \etaleEmbedding_2 \colon \etaleHermitian{\lambda} \to \hermitianSpace$ be isomorphisms of hermitian spaces. For $j = 1,2$, let $\torusEmbedding'_j \colon \quadraticEtaleAlgebra \to \ringendo\left(\hermitianSpace\right)$ be the map $$\torusEmbedding'_j\left(x\right) = \etaleEmbedding_j \circ \multiplicationMap{x} \circ \etaleEmbedding_j^{-1}.$$
		Suppose that there exists $g \in \GL\left(\hermitianSpace\right)$ such that for any $x \in \normoneGroup{\quadraticEtaleAlgebra}$, \begin{equation}\label{eq:conjugation-of-embedding-of-tori}
			i'_1\left(x\right) = g \circ i'_2\left(x\right) \circ g^{-1}.
		\end{equation}
		Then \eqref{eq:conjugation-of-embedding-of-tori} holds for any $x \in \quadraticEtaleAlgebra$.
	\end{proposition}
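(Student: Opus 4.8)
The plan is to reduce the statement to an elementary fact about $\baseField$-linear spans. First observe that $\torusEmbedding'_1$ and the twisted map $x \mapsto g \circ \torusEmbedding'_2\left(x\right) \circ g^{-1}$ are both $\baseField$-linear maps $\quadraticEtaleAlgebra \to \ringendo\left(\hermitianSpace\right)$ — in fact $\quadraticExtension$-linear, since $x \mapsto \multiplicationMap{x}$ depends $\quadraticExtension$-linearly on $x$ and $\etaleEmbedding_1$, $\etaleEmbedding_2$ and $g$ are $\quadraticExtension$-linear. By hypothesis these two maps agree on the subset $\normoneGroup{\quadraticEtaleAlgebra} \subseteq \quadraticEtaleAlgebra$. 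Hence it is enough to show that $\normoneGroup{\quadraticEtaleAlgebra}$ spans $\quadraticEtaleAlgebra$ over $\baseField$: two $\baseField$-linear maps that agree on a spanning set agree everywhere, which is exactly \eqref{eq:conjugation-of-embedding-of-tori} for all $x \in \quadraticEtaleAlgebra$.

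To prove the spanning statement, write $\quadraticEtaleAlgebra = \quadraticExtension \otimes_{\baseField} \etaleAlgebra = \etaleAlgebra \oplus \etaleAlgebra\delta$, using $\quadraticExtension = \baseField \oplus \baseField\delta$ with $\involution{\delta} = -\delta$ and $\delta^2 \in \multiplicativeGroup{\baseField}$; this decomposition, and all the computations below, are valid also in the split case $\quadraticExtension = \baseField \times \baseField$. Let $V = \left\{ b \in \etaleAlgebra \mid 1 - \delta^2 b^2 \in \multiplicativeGroup{\etaleAlgebra} \right\}$, a nonempty Zariski-open subset of the affine $\baseField$-space $\etaleAlgebra$ (it contains $0$). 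For $b \in V$, each of $\tfrac{1 + b\delta}{1 - b\delta}$, $\tfrac{1 - b\delta}{1 + b\delta}$, and $1$ is of the form $\tfrac{x}{\involution{x}}$ and so lies in $\normoneGroup{\quadraticEtaleAlgebra}$. A direct computation — the one behind the definition \eqref{eq:definition-of-q-f} of $q_{\baseField}$ — gives
\[
\frac{1 + b\delta}{1 - b\delta} - \frac{1 - b\delta}{1 + b\delta} = \frac{4b}{1 - \delta^2 b^2}\,\delta,
\qquad
\frac{1 + b\delta}{1 - b\delta} + \frac{1 - b\delta}{1 + b\delta} + 2 = \frac{4}{1 - \delta^2 b^2}.
\]
Writing $W$ for the $\baseField$-span of $\normoneGroup{\quadraticEtaleAlgebra}$, it follows that $\tfrac{4b}{1 - \delta^2 b^2}\,\delta \in W$ and $\tfrac{4}{1 - \delta^2 b^2} \in W$ for every $b \in V$.

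It remains to see that these elements already span $\quadraticEtaleAlgebra$. The assignments $b \mapsto \tfrac{4b}{1 - \delta^2 b^2}$ and $b \mapsto \tfrac{4}{1 - \delta^2 b^2}$ define dominant rational self-maps of the affine $\baseField$-space $\etaleAlgebra$: over an algebraic closure, splitting $\etaleAlgebra$ into copies of $\overline{\baseField}$, they act coordinate-wise by the nonconstant rational functions $t \mapsto \tfrac{4t}{1 - \delta^2 t^2}$ and $t \mapsto \tfrac{4}{1 - \delta^2 t^2}$ (here it is essential that $\delta^2 \in \multiplicativeGroup{\baseField}$ is a scalar), and a product of dominant rational maps is dominant. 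Since $\baseField$ is infinite, $V\left(\baseField\right)$ is Zariski-dense in $\etaleAlgebra$, and the image of a Zariski-dense set of $\baseField$-points under a dominant rational $\baseField$-map is again Zariski-dense — a relation $P = 0$ holding on the image pulls back along the map to a rational function vanishing on a Zariski-dense set, hence to $0$, hence $P = 0$ by injectivity of pullback for dominant maps. Finally, a Zariski-dense subset of an affine space over an infinite field lies in no proper linear subspace, so the elements $\tfrac{4b}{1 - \delta^2 b^2}$ span $\etaleAlgebra$ over $\baseField$ and the elements $\tfrac{4}{1 - \delta^2 b^2}$ span $\etaleAlgebra$ over $\baseField$. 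Hence $W \supseteq \etaleAlgebra\delta$ and $W \supseteq \etaleAlgebra$, so $W = \etaleAlgebra \oplus \etaleAlgebra\delta = \quadraticEtaleAlgebra$, and the proposition follows.

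I expect the last paragraph to be the delicate part: one must correctly identify those two rational self-maps as dominant, via the coordinate-wise description over $\overline{\baseField}$ where the scalarity of $\delta^2$ is used, and then justify carefully that dominance over the infinite field $\baseField$ forces the image of $\baseField$-rational points — and hence its $\baseField$-linear span — to fill $\etaleAlgebra$. The reduction to the spanning statement and the two displayed identities are routine, essentially repeating the one-dimensional computation recorded just after \eqref{eq:definition-of-q-f}. A cleaner but less elementary alternative to the whole spanning argument is to observe that $\normoneGroup{\quadraticEtaleAlgebra}\left(\baseField\right)$ is Zariski-dense in the torus $\normoneGroup{\quadraticEtaleAlgebra}$ (tori are unirational and $\baseField$ is infinite), while $\normoneGroup{\quadraticEtaleAlgebra}$ is contained in no proper linear subspace of $\quadraticEtaleAlgebra$ by linear independence of the coordinate characters on the split torus over $\overline{\baseField}$; I would likely mention this in a remark.
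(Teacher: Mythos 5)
Your proof is correct, and it takes a genuinely different route from the paper's. Both arguments start from the same elementary identities (the ones encoded in \eqref{eq:definition-of-q-f}), but they use them in incompatible ways. The paper exploits the fact that each $\torusEmbedding'_j$ is an $\baseField$-\emph{algebra} homomorphism and that conjugation by $g$ is an algebra automorphism of $\ringendo(\hermitianSpace)$: it writes a general element of $\quadraticEtaleAlgebra$ as a rational expression (sums, products, and inverses) in norm-one elements, and pushes the identity \eqref{eq:conjugation-of-embedding-of-tori} through that rational expression. Your argument, by contrast, discards the multiplicative structure entirely and retains only $\baseField$-linearity of $\torusEmbedding'_1$ and of $x \mapsto g\,\torusEmbedding'_2(x)\,g^{-1}$; the burden then shifts to the standalone geometric fact that $\normoneGroup{\quadraticEtaleAlgebra}$ spans $\quadraticEtaleAlgebra$ over $\baseField$, which you prove via a dominant-rational-map and Zariski-density argument (and the alternative you sketch, via unirationality of tori and linear independence of distinct characters, is also correct and rather cleaner). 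The trade-off is real: your reduction is more modular and uses a strictly weaker hypothesis — linearity rather than multiplicativity — and isolates a reusable spanning lemma about norm-one tori; the paper's proof is more pedestrian and self-contained, needing nothing beyond commutative algebra, at the cost of the slightly fiddly choice of the scalar $c$. One small point worth making explicit if you write this up: when you invoke dominance of $b \mapsto 4b/(1-\delta^2 b^2)$ you are using that $\delta^2$ lands in $\multiplicativeGroup{\baseField}$ even in the split case $\quadraticExtension = \baseField\times\baseField$ (where $\delta = (a,-a)$ and $\delta^2 = (a^2,a^2)$), which you do note but which is essential for the coordinate-wise description over $\overline{\baseField}$.
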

	\begin{proof}
		Write $\etaleAlgebra = \prod_{j=1}^m \baseField_j$, where $\baseField_j \slash \baseField$ is a field extension. Suppose first that $m = 1$. Then $\etaleAlgebra \slash \baseField$ is a finite field extension and $\quadraticEtaleAlgebra[\etaleAlgebra] \slash \etaleAlgebra$ is a quadratic \etale algebra, and $\quadraticEtaleAlgebra = \etaleAlgebra \oplus \etaleAlgebra \delta$. Let $a,b \in \etaleAlgebra$, and choose $c \in \multiplicativeGroup{\baseField}$, such that $\left(ac\right)^2, c^2, \left(bc\right)^2 \ne \delta^{-2}$ (if $\quadraticEtaleAlgebra \slash \etaleAlgebra$ is a field extension, any $c \in \multiplicativeGroup{\baseField}$ satisfies this). Then \begin{equation}\label{eq:formula-for-a-plus-delta-b}
			a + b \delta =  \frac{q_{\etaleAlgebra}\left(ac\right)}{q_{\etaleAlgebra}\left(c\right)} + \frac{q_{\etaleAlgebra}\left(bc\right)}{c}.
		\end{equation}
		We have that $$\torusEmbedding'_j\left(a + b\delta\right) = \etaleEmbedding_j \circ \multiplicationMap{a + b \delta} \circ \etaleEmbedding_j^{-1}$$
		and $$ \multiplicationMap{a + b \delta} = \multiplicationMap{q_{\etaleAlgebra}\left(ac\right)} \circ \multiplicationMap{q_{\etaleAlgebra}\left(c\right)^{-1}} + c^{-1} \cdot \multiplicationMap{q_{\etaleAlgebra}\left(bc\right)}.$$
		Therefore,
		$$\torusEmbedding'_j\left(a+b\delta\right) = \torusEmbedding'_j\left(q_{\etaleAlgebra}\left(ac\right)\right) \circ \torusEmbedding'_j\left(q_{\etaleAlgebra}\left(c\right)^{-1}\right) + c^{-1} \cdot \torusEmbedding'_j\left(q_{\etaleAlgebra}\left(bc\right)\right),$$
		and it suffices to explain why for any $b \in \baseField$ with $\delta^2 b^2 \ne 1$, we have the equality \begin{equation}\label{eq:equality-for-q-etale-algebra}
			\torusEmbedding'_1\left(q_{\etaleAlgebra}\left(b\right)\right) = g \circ \torusEmbedding'_2\left(q_{\etaleAlgebra}\left(b\right)\right) \circ g^{-1}.
		\end{equation}
		Using the definition of $q_{\etaleAlgebra}$, and the fact that the assignment $\etaleAlgebra \to \ringendo\left(\hermitianSpace\right)$ given by $x \mapsto \torusEmbedding'_j\left(x\right)$ is an isomorphism for $j=1,2$, we get, similarly to above, that is $\torusEmbedding'_j\left(q_E\left(b\right)\right)$ is given by the formula
		$$\left(\torusEmbedding'_j\left(j_{\etaleAlgebra}\left(1+b\delta\right)\right) - \torusEmbedding'_j\left(j_{\etaleAlgebra}\left(1-b\delta\right)\right)\right) \circ \left(\torusEmbedding'_j\left(j_{\etaleAlgebra}\left(1+b\delta\right)\right) + \torusEmbedding'_j\left(j_{\etaleAlgebra}\left(1-b\delta\right)\right) + 2 \identityOperator_{\hermitianSpace} \right)^{-1}.$$
		The equality \eqref{eq:equality-for-q-etale-algebra} now follows from the fact that $j_{\etaleAlgebra}\left(1 \pm b \delta\right)$ lies in $\normoneGroup{\quadraticEtaleAlgebra}$, and from the assumption that \eqref{eq:conjugation-of-embedding-of-tori} holds for elements in $\normoneGroup{\quadraticEtaleAlgebra}$.
		
		If $m > 1$, then by restricting to $\quadraticEtaleAlgebra[\baseField_j]$, we get from the proof above that \eqref{eq:conjugation-of-embedding-of-tori} holds for every $x \in \quadraticEtaleAlgebra[\baseField_j]$, for every $1 \le j \le m$. Using linearity, this implies that \eqref{eq:conjugation-of-embedding-of-tori} holds for every $x \in \quadraticEtaleAlgebra$.
	\end{proof}
	
	\begin{proposition}\label{prop:extension-of-morhpism-of-etale-algebras}
		Let $\etaleAlgebra$ and $\etaleAlgebra'$ be \etale algebras of degree $n$ over $\baseField$. Suppose that there exists an invertible $\bar{\baseField}$-linear map $T \colon \quadraticEtaleAlgebra[\etaleAlgebra'] \otimes_{\baseField} \bar{\baseField} \to \quadraticEtaleAlgebra \otimes_{\baseField} \bar{\baseField}$, such that for any $x \in \normoneGroup{\quadraticEtaleAlgebra}$ there exists $\tau\left(x\right) \in \normoneGroup{\quadraticEtaleAlgebra[\etaleAlgebra']}$, such that \begin{equation}\label{eq:multiplication-maps-compatibility}
			T \circ \left(\multiplicationMap{\tau\left(x\right)} \otimes \identityOperator_{\bar{\baseField}}\right) \circ T^{-1} = \multiplicationMap{x} \otimes \identityOperator_{\bar{\baseField}}.
		\end{equation}
		Then for any $x \in \quadraticEtaleAlgebra$ there exists $\tau\left(x\right) \in \quadraticEtaleAlgebra[\etaleAlgebra']$, such that \eqref{eq:multiplication-maps-compatibility} holds. Moreover, such $\tau\left(x\right)$ is unique.
	\end{proposition}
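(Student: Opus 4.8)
The plan is to reprise the computation in the proof of Proposition~\ref{prop:extension-of-torus-embedding-to-etale-algebra}, the point being that an arbitrary element of $\quadraticEtaleAlgebra$ is, ring-theoretically, a rational expression in norm-one elements $j_{\etaleAlgebra}(1\pm b\traceZeroElement)$ through the function $q_{\etaleAlgebra}$ of~\eqref{eq:definition-of-q-f}. I would start with two observations. First, \emph{uniqueness}, which in particular proves the ``moreover'': the regular representation is faithful, since $\multiplicationMap{y}\otimes\identityOperator_{\bar{\baseField}}$ evaluated at $1$ returns $y$; hence an element $\tau(x)\in\quadraticEtaleAlgebra[\etaleAlgebra']$ satisfying~\eqref{eq:multiplication-maps-compatibility} is unique, and this will be used below to check that the construction is well defined. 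Second, $\Phi(S)\coloneqq T^{-1}\circ S\circ T$ is a $\bar{\baseField}$-algebra isomorphism $\ringendo_{\bar{\baseField}}(\quadraticEtaleAlgebra\otimes_{\baseField}\bar{\baseField})\to\ringendo_{\bar{\baseField}}(\quadraticEtaleAlgebra[\etaleAlgebra']\otimes_{\baseField}\bar{\baseField})$, and the hypothesis is precisely that $\Phi(\multiplicationMap{x}\otimes\identityOperator_{\bar{\baseField}})=\multiplicationMap{\tau(x)}\otimes\identityOperator_{\bar{\baseField}}$ for every $x\in\normoneGroup{\quadraticEtaleAlgebra}$; by uniqueness, $\tau$ is then a group homomorphism $\normoneGroup{\quadraticEtaleAlgebra}\to\normoneGroup{\quadraticEtaleAlgebra[\etaleAlgebra']}$.

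To extend $\tau$ to all of $\quadraticEtaleAlgebra$, decompose $\quadraticEtaleAlgebra=\etaleAlgebra\oplus\etaleAlgebra\traceZeroElement$ and $\quadraticEtaleAlgebra[\etaleAlgebra']=\etaleAlgebra'\oplus\etaleAlgebra'\traceZeroElement$, and call $b\in\etaleAlgebra$ \emph{admissible} when $1\pm b\traceZeroElement\in\multiplicativeGroup{\quadraticEtaleAlgebra}$. For admissible $b$, the elements $j_{\etaleAlgebra}(1\pm b\traceZeroElement)$ lie in $\normoneGroup{\quadraticEtaleAlgebra}$ and $b\traceZeroElement=q_{\etaleAlgebra}(b)$ is the expression~\eqref{eq:definition-of-q-f} in them; applying $\Phi$ to $\multiplicationMap{b\traceZeroElement}\otimes\identityOperator_{\bar{\baseField}}$ written in this form --- and using that $\Phi$ commutes with sums, products, scalars and, since it preserves invertibility, with the inverse in~\eqref{eq:definition-of-q-f} --- yields $\multiplicationMap{y}\otimes\identityOperator_{\bar{\baseField}}$, where $y$ is the same expression in $\tau(j_{\etaleAlgebra}(1\pm b\traceZeroElement))\in\normoneGroup{\quadraticEtaleAlgebra[\etaleAlgebra']}$. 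The relevant denominator is a unit of $\quadraticEtaleAlgebra[\etaleAlgebra']$, not merely of its scalar extension to $\bar{\baseField}$, because invertibility in a finite \etale algebra is reflected by base change; so $y\in\quadraticEtaleAlgebra[\etaleAlgebra']$ and I set $\tau(b\traceZeroElement)\coloneqq y$. For a general $b\in\etaleAlgebra$ (resp.\ $a\in\etaleAlgebra$) I pick $c\in\multiplicativeGroup{\baseField}$ --- possible as $\baseField$ is infinite and only finitely many $c$ are excluded --- with $bc,c$ (resp.\ $ac,c$) admissible, and put $\tau(b\traceZeroElement)\coloneqq c^{-1}\,\tau((bc)\traceZeroElement)$ and $\tau(a)\coloneqq\tau((ac)\traceZeroElement)\cdot\tau(c\traceZeroElement)^{-1}$, as suggested by~\eqref{eq:formula-for-a-plus-delta-b}; applying $\Phi$ confirms~\eqref{eq:multiplication-maps-compatibility} for these (here $\tau(c\traceZeroElement)\in\multiplicativeGroup{\quadraticEtaleAlgebra[\etaleAlgebra']}$ since $\Phi$ takes the unit $\multiplicationMap{c\traceZeroElement}\otimes\identityOperator_{\bar{\baseField}}$ to a unit), and independence of $c$ follows from uniqueness. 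Finally, for $x=a+b\traceZeroElement$ we have $\multiplicationMap{x}\otimes\identityOperator_{\bar{\baseField}}=\multiplicationMap{a}\otimes\identityOperator_{\bar{\baseField}}+\multiplicationMap{b\traceZeroElement}\otimes\identityOperator_{\bar{\baseField}}$, so $\Phi$ sends it to $\multiplicationMap{\tau(a)+\tau(b\traceZeroElement)}\otimes\identityOperator_{\bar{\baseField}}$, and $\tau(x)\coloneqq\tau(a)+\tau(b\traceZeroElement)$ works.

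The argument is not deep beyond Proposition~\ref{prop:extension-of-torus-embedding-to-etale-algebra}; the real difference is that $\tau$ is given only on the torus $\normoneGroup{\quadraticEtaleAlgebra}$ and has to be built on $\quadraticEtaleAlgebra$, so the two points I expect to require care are (i) that the element output by the rational formula~\eqref{eq:definition-of-q-f} does not depend on the auxiliary scalar $c$, and (ii) that it lies in the $\baseField$-form $\quadraticEtaleAlgebra[\etaleAlgebra']$ and not merely in $\quadraticEtaleAlgebra[\etaleAlgebra']\otimes_{\baseField}\bar{\baseField}$. Point (i) reduces to the faithfulness of the regular representation recorded in the first paragraph, and point (ii) to the observation that a denominator occurring in~\eqref{eq:definition-of-q-f} is a unit of the \etale algebra exactly when its image over $\bar{\baseField}$ is --- which is what allows the merely $\bar{\baseField}$-linear isomorphism $\Phi$ to return values defined over $\baseField$. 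The selection of $c$ avoiding the non-admissible locus is the same bookkeeping as in Proposition~\ref{prop:extension-of-torus-embedding-to-etale-algebra}, relying on $\baseField$ being infinite; as there, one may first reduce to the case of $\etaleAlgebra$ a field to streamline it.
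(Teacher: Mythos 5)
Your proof is correct and follows essentially the same approach as the paper's: expressing general elements of $\quadraticEtaleAlgebra$ as rational functions of norm-one elements via the formula $q_{\etaleAlgebra}$ of \eqref{eq:definition-of-q-f} together with the affine rescaling by $c\in\multiplicativeGroup{\baseField}$, and transporting through the algebra isomorphism $S\mapsto T^{-1}\circ S\circ T$. The only differences are cosmetic: you work on $\etaleAlgebra$ directly rather than first splitting it into field factors $\baseField_j$, and you spell out the (correct, and worth recording) base-change reflection argument showing the denominator in the $q$-expression is a unit of $\quadraticEtaleAlgebra[\etaleAlgebra']$ itself and not merely of its scalar extension to $\bar{\baseField}$, a point the paper's proof leaves implicit.
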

	\begin{proof}
		Uniqueness follows from writing $\multiplicationMap{\tau\left(x\right)} \otimes_{\baseField} \identityOperator_{\bar{\baseField}} = T^{-1} \circ \left(\multiplicationMap{x} \otimes_{\baseField} \identityOperator_{\bar{\baseField}}\right) \circ T$ and applying both sides to $1 \otimes 1 \in \quadraticEtaleAlgebra[\etaleAlgebra'] \otimes_{\baseField} \bar{\baseField}$.
		
		To show existence, first write $\etaleAlgebra = \prod_{j=1}^m \baseField_j$, where $\baseField_j \slash \baseField$ is a field extension. Define for $b \in \baseField_j$ with $\delta^2 b^2 \ne 1$,
		$$\tau\left(q_{\baseField_j}\left(b\right)\right) = \frac{\tau\left(j_{\baseField_j}\left(1 + b \delta \right) \right) - \tau\left(j_{\baseField_j}\left(1 - b \delta\right)\right)}{\tau\left(j_{\baseField_j}\left(1 + b \delta\right) \right) + \tau\left(j_{\baseField_j}\left(1 - b \delta\right)\right) + 2}.$$
		The fact that \eqref{eq:multiplication-maps-compatibility} holds for $x = q_{\baseField_j}\left(b\right)$, follows from \eqref{eq:definition-of-q-f}, from the fact that $q_{\baseField_j}\left(b\right) = b \delta$, and from the fact that \eqref{eq:multiplication-maps-compatibility} holds for elements in $\normoneGroup{\quadraticEtaleAlgebra[\baseField_j]}$.
		For any $a,b \in \baseField_j$, choose $c \in \multiplicativeGroup{\baseField}$, such that $\left(ac\right)^2, c^2, \left(bc\right)^2 \ne \delta^{-2}$. Define $$ \tau\left(a + b \delta \right) = \frac{\tau\left(q_{\baseField_j}\left(ac\right)\right)}{\tau\left(q_{\baseField_j}\left(c\right)\right)} + \frac{\tau\left(q_{\baseField_j}\left(bc\right)\right)}{c}.$$
		It follows from \eqref{eq:formula-for-a-plus-delta-b} that \eqref{eq:multiplication-maps-compatibility} holds for $x = a + b\delta$. Finally, extend $\tau$ to a general element of $\quadraticEtaleAlgebra = \prod_{j=1}^m \quadraticEtaleAlgebra[\baseField_j]$ by linearity.
	\end{proof}

	\bibliographystyle{abbrv}
	\bibliography{references}
\end{document}